\newtheorem{theorem}{Theorem}
\newtheorem{proposition}[theorem]{Proposition}
\newtheorem{lemma}[theorem]{Lemma}
\newtheorem{definition}[theorem]{Definition}
\newtheorem{corollary}[theorem]{Corollary}
\newtheorem{example}[theorem]{Example}
\newtheorem{remark}[theorem]{Remark}
\newtheorem{conjecture}[theorem]{Conjecture}
\newcommand{\bR}{\mathbb{R}}
\newcommand{\bC}{\mathbb{C}}
\newcommand{\ee}{\end{equation}}
\newcommand {\al}{\alpha}
\newcommand {\ga}{\gamma}
\newcommand {\Ga}{\Gamma}
\newcommand{\A}{\mathcal A}
\newcommand{\La} {\Lambda}
\newcommand{\bN}{\mathbb N}
\newcommand{\bD}{\mathbb D}
\newcommand{\D}{\mathcal D}
\newcommand{\C}{\mathcal C}
\newcommand \E {\mathcal E}
\newcommand \prt {\partial}
\newcommand \supp {\rm{\text{supp}}}
\newcommand \diff {\mathop{}\!\mathrm{d}}
\newcommand \q {\mathfrak q}
\newcommand{\B}{\mathcal {BR}}
\newcommand{\R}{\mathcal R}
\newcommand\W{\mathcal W}
\newcommand \mC{\mathfrak C}
\newcommand \Sh{\Upsilon}
\newcommand{\lline}[1]{\overline{#1}}
\newcommand \minispace {\hspace{.05em}}
\newtheorem{PROP}{Proposition}
\newtheorem{LEMM}{Lemma}
\newtheorem{CORR}{Corollary}
\begin{document}
\numberwithin{equation}{section}
\numberwithin{theorem}{section}

\title[Rodrigues' descendants of a polynomial and Boutroux curves]{Rodrigues' descendants of a polynomial and Boutroux curves}

\author[R.~B\o gvad]{Rikard B\o gvad}
\address{Department of Mathematics, Stockholm University, SE-106 91
Stockholm, Sweden}
\email {rikard@math.su.se }

\author[Ch.~H\"agg]{Christian H\"agg}
\address{Department of Mathematics, Stockholm University, SE-106 91
Stockholm, Sweden}
\email{hagg@math.su.se}

\author[B.~Shapiro]{Boris Shapiro}
\address{Department of Mathematics, Stockholm University, SE-106 91
Stockholm, Sweden}
\email {shapiro@math.su.se }

\date{\today}
\keywords{Rodrigues' formula, successive differentiation, root-counting measures, affine Boutroux curves} 
\subjclass[2020]{Primary 31A35,\; Secondary 12D10, 26C10}

\begin{abstract} 
Motivated by the classical Rodrigues' formula, we study below the root asymptotic of the polynomial sequence 
$$\R_{[\al n],n,P}(z)=\frac{\diff^{[\al n]}P^n(z)}{\diff z^{[\al n]}}, n= 0,1,\dots$$
where ${P(z)}$ is a fixed univariate polynomial, $\al $ is a fixed positive number smaller than $\deg P$, and $[\al n]$ stands for the integer part of $\al n$. 

Our description of this asymptotic is expressed in terms of an explicit harmonic function uniquely determined by the plane rational curve  emerging from the application of  the saddle point method  to the integral representation  of the latter polynomials using Cauchy's formula for higher derivatives.   As a consequence of our method, we conclude that this curve is birationally equivalent to the zero locus of the bivariate algebraic equation satisfied by the Cauchy transform of the asymptotic root-counting measure for the latter polynomial sequence.  We show that this harmonic function is also associated with an abelian differential having only  purely imaginary periods and the latter plane curve belongs to the class of Boutroux curves initially introduced in \cite{Be,BM}. 
   As an additional relevant piece of information, we derive a linear ordinary differential equation satisfied by $\{\R_{[\al n],n,P}(z)\}$ as well as  higher derivatives of powers of more general functions. \end{abstract}

\maketitle

\hskip5cm{\vbox{\noindent platt och avintetgjord\\
sl\"apar jag nollan min\\
vid h{\aa}ret\\
in i o\"andlighet. \\
(Ur ``I grund och botten",\\ Majken Johansson, 195$6^1$)}}\footnote{To our chagrin, we were not able to find a professional English  translation of these  highly relevant for the present article  four lines written by  the well-known Swedish poet Majken Johansson.
Therefore we include here our homemade intepretation: ``Flattened and downtrodden / I drag  my zero / by its hair / all the way to infinity.''}

\tableofcontents
\section{Introduction}\label{sec:introduction} 
Around 1816 (Benjamin) Olinde Rodrigues\footnote{Born in a Jewish family of sephardic origin in Bordeaux on October 6, 1795, O.~Rodrigues, thanks to Napoleon's measures ensuring equality of rights for different religious minorities, was able to attend Lyce\'e Imperial which he joined in 1808 at the age of 14. Besides his mathematical interests, he had another passion: banking and its usage for social purposes. He was a close friend and supporter of Saint-Simon and a very peculiar philanthropic figure with strong socialist undertones, see more details  in \cite{Alt} and \cite{AltOr}.}  discovered his famous formula 
\begin{equation}\label{eq:Leg}
 P_n(z)=\frac{1}{2^nn!} \frac{\diff ^n}{\diff z^n} \left((z^2-1)^n\right)
 \end{equation}
  for the Legendre polynomials which undoubtedly became a standard tool in the toolbox of classical orthogonal polynomials and special functions, see e.g. \cite{AbSt}. (Later this formula was also rediscovered by Sir J.~Ivory and C.~G.~Jacobi, see \cite{As}.)
  



 Among other properties, the $n$-th Legendre polynomial $P_n(z)$ satisfies the linear ordinary differential equation
\begin{equation}\label{eq:GeneralizedLegendreEq}
(1-z^2)y'' - 2zy'+n(n+1)y = 0,
\end{equation}
and the asymptotic of the zeros as $n\to \infty$ is described by classical results.
 
\subsection{Main Problem}
Imitating Rodrigues' approach, given a polynomial $P$ of degree $d\ge 1$, let us consider a double-indexed family of polynomials determined by the Rodrigues-like expression
 \begin{equation*}\label{eq:pnmPolynomial}
\R_{m,n,P}(z) :=\ \frac{\diff ^m}{\diff z^m}\left(P^n(z)\right),\; n=0,1,\dots \text{ and } m=0,1,\dots, nd. 
\end{equation*}
These polynomials which we below call \textit{Rodrigues' descendants} of $P$ were apparently for the first time considered by N. Cior\^{a}nescu in 1933 (see \cite{Ci}) where he, in particular, derived linear differential equations satisfied by them. In 1965, and, to the best of our knowledge, independently of N. Cior\^{a}nescu's work  a linear differential equation satisfied by $\R_{n,n,P}(z)$ has been (re)discovered by J.~M.~Horner, (see \cite{Ho}).

\smallskip
If $P=z^2-1$ and $m=n$, we get the above classical case of the Legendre polynomials up to a scalar factor. In Fig. ~\ref{fig:polShadowExamples} we display the zeros of $\R_{m,n,P}(z)$ for some choices of $P$.
 
In the present paper, we study the asymptotic root distribution for natural sequences of Rodrigues' descendants of $P$. (There is a straightforward generalization of our set-up to the case of rational/meromorphic $P$ which we plan to adress in a future publication.) 
\medskip

\subsection{Main results} In what follows, we will always assume that a polynomial $P(z)$ under consideration satisfies the condition $d:=\deg P \ge 2$. The remaining case $d\le 1$ is trivial. 

\smallskip
For any polynomial $P$ 
and its Rodrigues' descendant $\R_{m,n,P}(z),$ denote by $\mu_{m,n,P}$ the {\it root-counting measure} of $\R_{m,n,P}(z)$ and by $$\C_{m,n,P}(z):= \frac{\R^\prime_{m,n,P}(z)}{(dn-m)\cdot \R_{m,n,P}(z) }$$ the {\it Cauchy transform of $\mu_{m,n,P}$}, see \eqref{eq:CauchyTr}. Note that $dn-m=\deg \R_{m,n,P}$. (For the basic notions of the logarithmic potential theory such as the Cauchy transform $\C_\mu$ and the logarithmic potential $L_\mu$ of a measure $\mu$ supported in $\bC$ consult \S~2.2 and \cite{Ra}.)

\smallskip
We say that a polynomial $P$ is \emph{strongly generic} if both $P$ and $P^\prime$ have simple roots.

\begin{theorem}\label{th:Cauchy}
For any strongly generic polynomial $P$ 
 and a given positive number $\al<\deg P$, there exists a weak limit 
$$\mu_{\al,P}:=\lim_{n\to \infty} \mu_{[\al n] ,n,P}.$$ Moreover, its Cauchy transform $\C_{\al,P}$ defined as the pointwise limit 
$$\C:=\C_{\al,P}(z):=\lim_{n\to \infty}\C_{[\al n],n,P}(z)  $$
 exists almost everywhere (a.e.)  in $\bC$ and satisfies 
 the algebraic equation
\begin{equation}
\label{eq:symbolcurve1}
(d-\al) \C=\frac{\diff }{\diff z}\log{P}\left(z+\frac{\al}{(d-\al)\C}\right).\end{equation}

\end{theorem}


\smallskip 

\begin{remark}
{\rm Observe that, by the Gauss-Lucas theorem, for any $0<\al <d,$ the support $S_{\al, P}$ of $\mu_{\al,P}$ is contained in the convex hull  of the zero locus  of  $P$.}
\end{remark}

\begin{remark}
{\rm The condition of strong genericity is apparently redundant and  is an  artefact of our particular proofs.}
\end{remark}

Reinterpretation of formula \eqref{eq:symbolcurve1} in Theorem~\ref{th:Cauchy} implies the following result. 

\begin{corollary} {\rm  The Cauchy transform $\C:=\C_{\al,P}(z)$ of the limiting measure $\mu_{\al,P}$ satisfies the equation
\begin{equation}\label{eq:algebraicDiffEq}
\sum_{k=0}^{d} \frac{\alpha^{k-1}\,(\alpha-k)(d-\alpha)^{d-k}}{k!}\,P^{(k)}\C^{d-k} = 0.
\end{equation}}
\end{corollary}
\begin{example}

\medskip
\noindent
{\rm (i)} For $P = z^2 +az+b$, equation (\ref{eq:algebraicDiffEq}) reduces to
\begin{equation}\label{eq:LegendreAlgebraicEq}
(2-\al)(z^2+az+b)\C^2 + (\alpha - 1)(2z+a)\C - \al = 0. 
\end{equation}

\smallskip
\noindent
{\rm (ii)} For $P = z^3 +az^2+bz+c$, it reduces to
\begin{equation}\label{eq:LegendreAlgebraicEq2}
(3-\al)^2(z^3+az^2+bz+c)\C^3 + (\alpha - 1)(3-\al)(3z^2+2az+b)\C^2 + \al(\al-2)(3z+a)\C -\al^2= 0.
\end{equation}

\smallskip
\begin{remark} {\rm Observe that equations \eqref{eq:symbolcurve1}  and \eqref{eq:algebraicDiffEq} will substantially simplify if instead of the above Cauchy transform $\C$ one uses its scaled version $\W$ introduced in   \eqref{eq:scaled}, see \S~3.}  
\end{remark} 




\end{example} 

\begin{figure}[htp]
\begin{center}
\includegraphics[width=.21\textwidth]{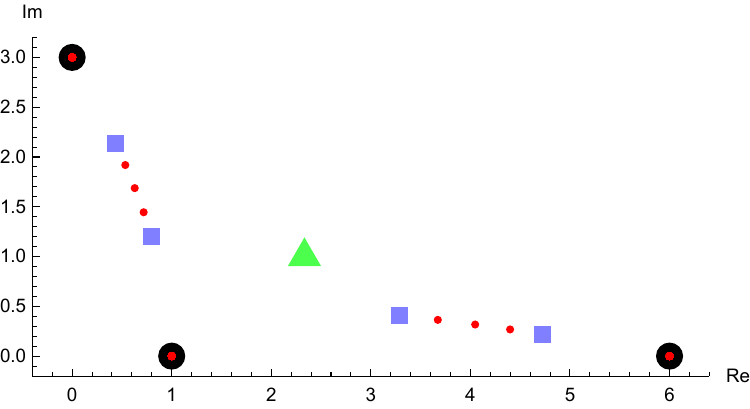}\hfill
\includegraphics[width=.21\textwidth]{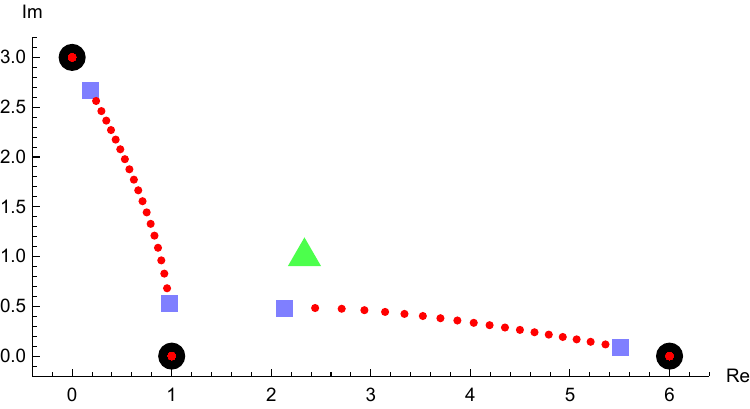}
\includegraphics[width=.11\textwidth]{}
\includegraphics[width=.21\textwidth]{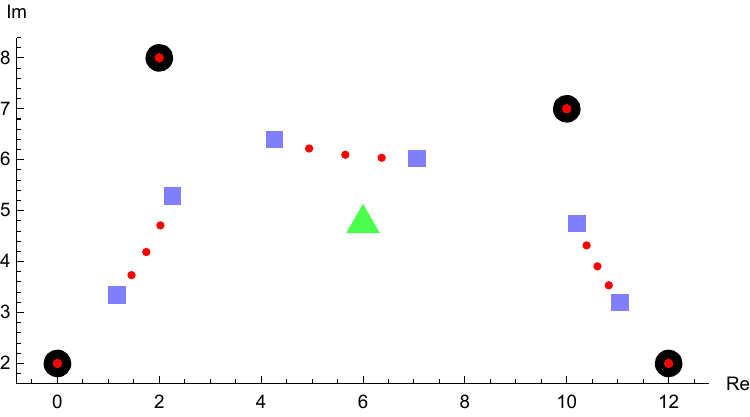}
\includegraphics[width=.21\textwidth]{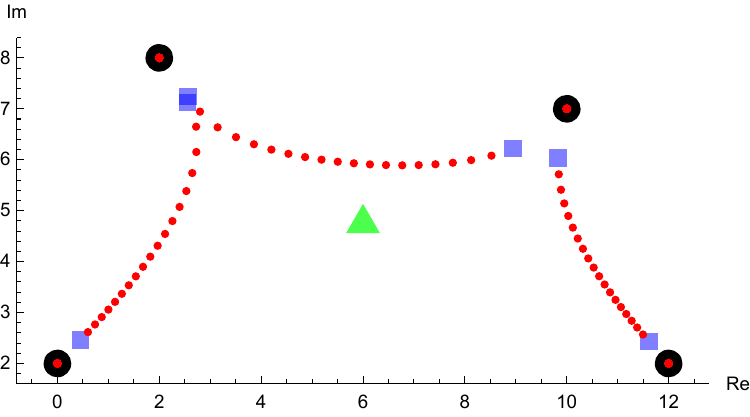}
\includegraphics[width=.21\textwidth]{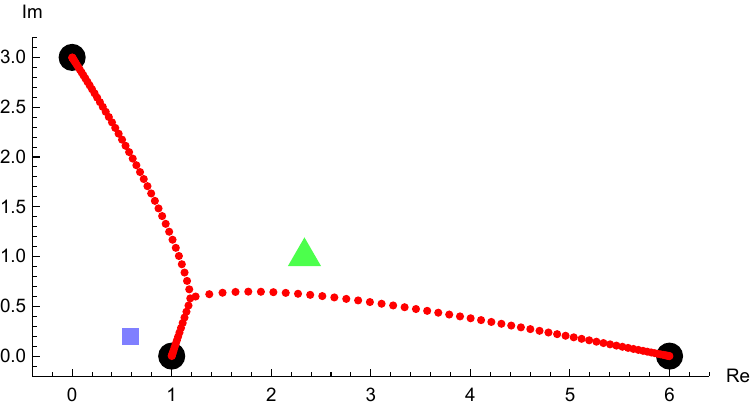}\hfill
\includegraphics[width=.21\textwidth]{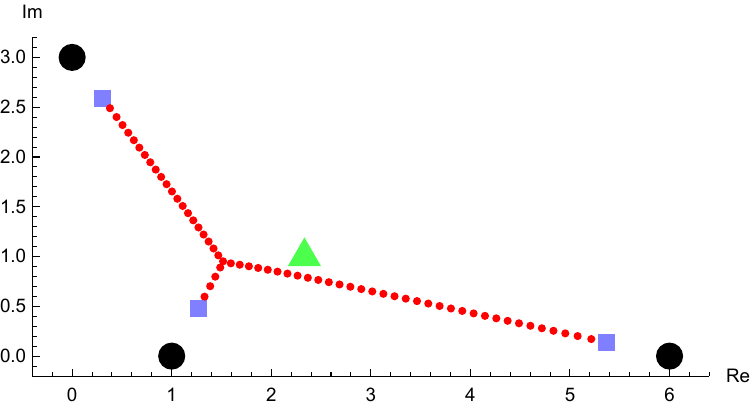}
\includegraphics[width=.12\textwidth]{filler.jpg}
\includegraphics[width=.21\textwidth]{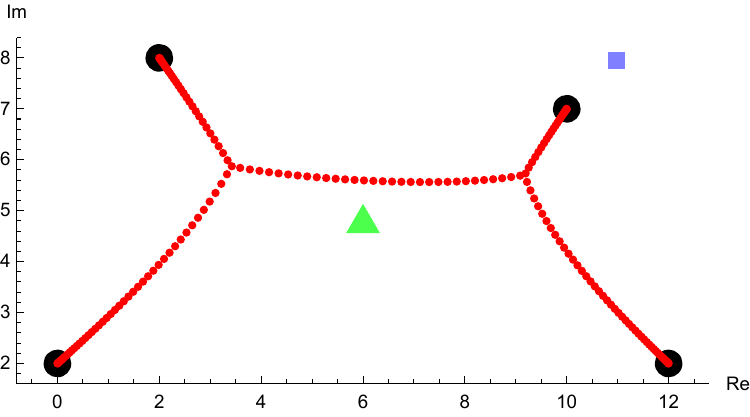}
\includegraphics[width=.21\textwidth]{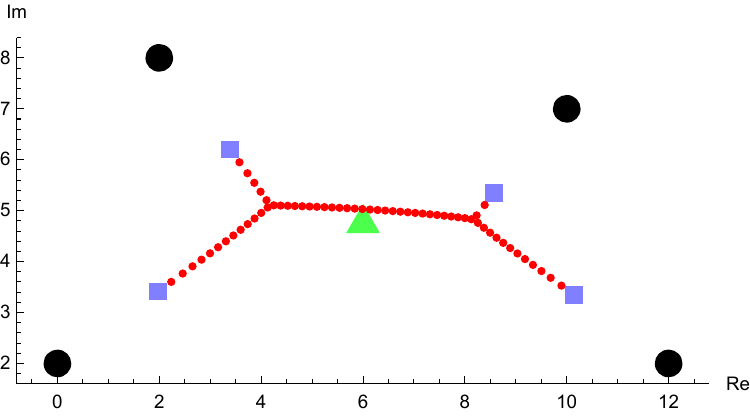}
\end{center}
\caption{The zeros of $\R_{m,60,P}(z)$ shown by the small red dots. (The larger dots are the zeros of $P$, the triangle is the center of mass of the zero locus of $P$, and the squares are branch points of \eqref{eq:algebraicDiffEq} and \eqref{eq:algebraicDiffEq2} in the $z$-plane when $\alpha = m/60$.) Both in the left and in the right subfigures, $m=3$ (top left), $m=18$ (top right), $m=60$ (bottom left), and $m=60(\deg{P}-1)$ (bottom right).}
\label{fig:frozenDerivative}
\end{figure}

The next theorem is our main technical result on the asymptotic limit of the above root-counting measures. (Although several notions in its formulation are explicated only later in the text we want to give a reader the flavour of our results.)

\smallskip
Set    
\begin{equation}
\label{eq:H}
H(z,u):=
\frac{1}{d- \alpha }(\log\vert P(u)\vert-\alpha \log \vert u-z\vert).
\end{equation}

\smallskip
Let $\pi: \bC_z\times \bC_u \to \bC_z$ be the standard projection, and $\mathcal D\subset \bC_z\times \bC_u$ be the \emph{ saddle point curve} of $H$, see details in \S~\ref{sec:contour}. ($\mathcal D$ is a rational curve defined by an explicit algebraic equation \eqref{eq:saddle-pointsu} whose coefficients depend on $P$ and $\alpha$.) Further, let $U_{rel}\subset \mathcal D$ be the open set of relevant saddle points, see  \S~\ref{sec:contour}. Denote by $\tilde \pi: U_{rel}\to O\subset \bC_z$ the restriction of $\pi$ to $U_{rel}$ and  define the {\it tropical trace} $\tilde \pi_*H(z):O \to \bR\cup \pm\infty $ as a piecewise-harmonic function obtained by taking the fiberwise maximum of $H(z,u)$, see \S~\ref{sec:trace}. (We will show that $\tilde \pi_*H(z) $ is an $L^1_{loc}$-function defined on the dense and open subset $ O\subset \bC_z$, see Prop.~\ref{prop:cont}. Since the complement of $\bC_z\setminus O$ is a zero set, $\tilde \pi_*H(z) $ extends to a subharmonic $L^1_{loc}$-function on the whole $\bC_z$.)

\begin{theorem}
\label{prop:primitive} In the above notation, for any strongly generic polynomial $P$ of degree $d\ge 2$,  there exists a real number $B$ (explicitly calculated in Lemma~\ref{lem:constantas}) such that 
$$\lim_{n\to\infty}L_{\mu_{[\al n], n,P}}(z )= B+\tilde\pi_*H(z), $$
where the above relation is understood as the equality of $L^1_{loc}$-functions. Here $L_\mu$ stands for the logarithmic potential  of a measure $\mu$, see \eqref{eq:logpot}. 
Consequently,  
$$\lim_{n\to\infty} \C_{\mu_{[\al n], n,P}}(z) = 2\frac{\partial}{\partial z}\tilde \pi_*H(z), 
$$
and  
$$\lim_{n\to\infty} \mu_{[\al n], n,P} = \mu_{\al ,P}:=\frac{2}{\pi}\frac{\partial^2}{\partial z\partial \bar z}\tilde\pi_*H(z), 
$$
where the latter two limits are understood in the sense of distributions and  $\mu_{\al ,P}$ is a positive measure.
\end{theorem}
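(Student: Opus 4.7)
The plan is to use Cauchy's integral formula for higher derivatives to write
\[
\R_{[\al n],n,P}(z)\;=\;\frac{[\al n]!}{2\pi i}\oint_{\Ga}\frac{P(u)^n}{(u-z)^{[\al n]+1}}\,\diff u\;=\;\frac{[\al n]!}{2\pi i}\oint_{\Ga} e^{\,n\Phi_n(u,z)}\,\frac{\diff u}{u-z},
\]
where $\Phi_n(u,z):=\log P(u)-\tfrac{[\al n]}{n}\log(u-z)$ and $\Ga$ encircles $z$ while avoiding the zeros of $P$. As $n\to\infty$ the phase $\Phi_n$ tends to $\Phi(u,z):=\log P(u)-\al\log(u-z)$, whose real part equals $(d-\al)H(z,u)$. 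The saddle-point equation $\partial_u\Phi=0$ reads $P'(u)/P(u)=\al/(u-z)$, which is exactly the defining relation of the saddle-point curve $\cD\subset\bC_z\times\bC_u$ of \S\ref{sec:contour}.

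Next I would run a steepest-descent analysis. For $z$ in the generic open set $O\subset\bC_z$ there are finitely many saddles $u^*_1(z),\ldots,u^*_d(z)\in\cD_z$, and one deforms $\Ga$ onto a union of Lefschetz thimbles through the \emph{relevant} saddles in $U_{\mathrm{rel}}$, i.e.\ those reachable from a large circle by monotone descent of $\mathrm{Re}\,\Phi$. The dominant contribution to the integral is supplied by the relevant saddle $u^\ast(z)$ that maximizes $H(z,u^\ast)$, which is precisely the fiberwise maximum defining $\tilde\pi_*H(z)$. Combining the classical saddle-point expansion with Stirling's formula for $[\al n]!$ and the normalization $\deg\R_{[\al n],n,P}=dn-[\al n]\sim(d-\al)n$ yields, pointwise for $z\in O$ outside the zero locus of $P$ and the Stokes set,
\[
L_{\mu_{[\al n],n,P}}(z)\;=\;\frac{\log\bigl|\R_{[\al n],n,P}(z)\bigr|-\log\bigl|\mathrm{lc}(\R_{[\al n],n,P})\bigr|}{dn-[\al n]}\;\longrightarrow\; B+\tilde\pi_*H(z),
\]
with $B$ the explicit Stirling/leading-coefficient constant identified in Lemma~\ref{lem:constantas}.

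To promote this pointwise statement to the $L^1_{\mathrm{loc}}$ equality asserted, I would exploit the fact that, by the Gauss--Lucas theorem, every measure $\mu_{[\al n],n,P}$ is a probability measure supported in the fixed compact set $\Conv(P^{-1}(0))$. Their logarithmic potentials therefore form a uniformly integrable, precompact family in $L^1_{\mathrm{loc}}(\bC)$, so a.e.\ convergence combined with uniform bounds upgrades to convergence in $L^1_{\mathrm{loc}}$, and the (subharmonic) limit is identified with the $L^1_{\mathrm{loc}}$ extension of $B+\tilde\pi_*H$ supplied by Prop.~\ref{prop:cont}. The remaining two assertions then follow by distributional differentiation: the Cauchy-transform statement from $\C_\mu=2\partial_z L_\mu$, and the measure statement from $\mu=\tfrac{1}{2\pi}\Delta L_\mu=\tfrac{2}{\pi}\partial_z\partial_{\bar z}L_\mu$, with positivity of $\mu_{\al,P}$ inherited from the weak-$*$ limit of probability measures.

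The main obstacle is the middle step: carrying out the global steepest-descent deformation uniformly in $z$ on compact subsets of $O$, and controlling what happens across the Stokes/caustic loci where the dominant relevant saddle switches --- these loci being exactly where $\tilde\pi_*H$ ceases to be smooth and where the support $S_{\al,P}$ of the limiting measure is concentrated. Strong genericity of $P$ is used here to guarantee that $\cD$ is smooth, that the saddles remain simple, and that the caustic structure is tame enough for the contour surgery to be performed with the uniform control needed to pass from pointwise to $L^1_{\mathrm{loc}}$ convergence.
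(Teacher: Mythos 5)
Your proposal follows essentially the same approach as the paper: Cauchy's integral formula for higher derivatives, saddle-point/steepest-descent analysis identifying the saddle-point curve $\cD$ and the dominant (maximally relevant) saddle with the tropical trace $\tilde\pi_*H$, Stirling's formula for the constant $B$, and an upgrade from pointwise a.e.\ convergence to $L^1_{\mathrm{loc}}$ convergence via uniform integrability of the potentials over the fixed compact set $\Conv(P^{-1}(0))$ (which the paper implements through a Vitali-type lemma). The "main obstacle" you flag --- performing the contour surgery uniformly in $z$ and tracking the Stokes/caustic loci where the dominant saddle switches --- is exactly what the paper works out in detail via the classification of maximally relevant saddle points and Proposition~\ref{prop:cont}.
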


Theorem~\ref{prop:primitive} combined with the definition of the tropical trace provide important general information about the support of the measure $\mu_{\alpha,P}$, e.g., that it  consists of certain level curves of explicit harmonic functions associated with the above function $H(z,u)$ together with the algebraic function defined by the saddle point curve, see Theorems \ref{lm:tropicaltrace} -- \ref{th:main}.
Fig.~\ref{fig:frozenDerivative} and ~\ref{fig:polShadowExamples} show several illustrations of (approximations to) the supports of the measures $\mu_{\al,P}$ and their union taken  over  $0< \al <\deg P$. 

\begin{remark} {\rm We want to mention that the previous theorems remain true for any sequence of Rodrigues descendants $\{\R_{a_n,n,P}(z)\}$ such that $\lim_{n\to \infty} a_n/n=\alpha$. (The only changes in our proofs needed to cover this case are setting $m:=a_n$ and $s_n:=n-a_n/\alpha$ in the proof in \S~\ref{sec:MainThs}, and observing that $s_n/n\to 0$ as $n\to\infty$, so that Corollary \ref{cor:uniform} applies, and finally checking that Lemma \ref{lem:constantas} still is valid.)  We want to thank an anonymous referee for suggesting this generalization.}

\end{remark}



\begin{figure}[htp]
\begin{center}
\includegraphics[width=.48\textwidth]{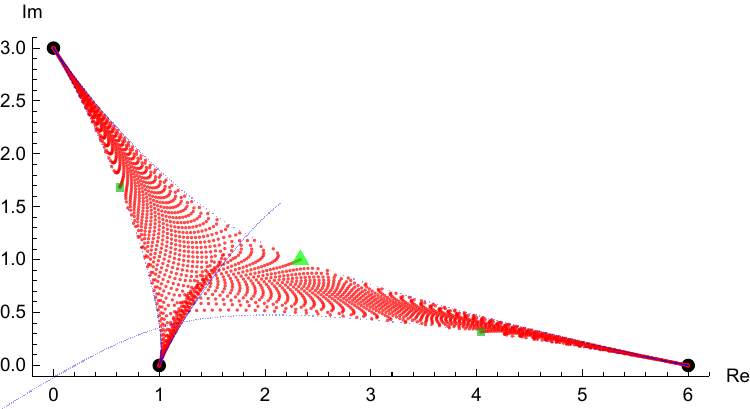}\hfill
\includegraphics[width=.48\textwidth]{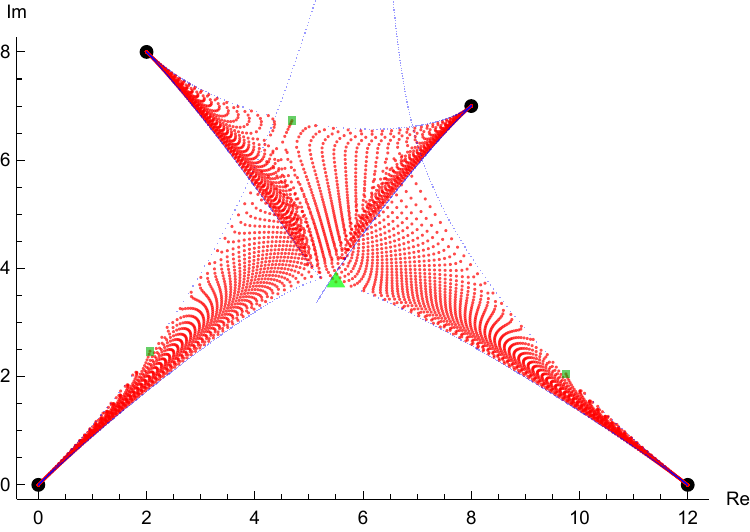}
\includegraphics[width=.48\textwidth]{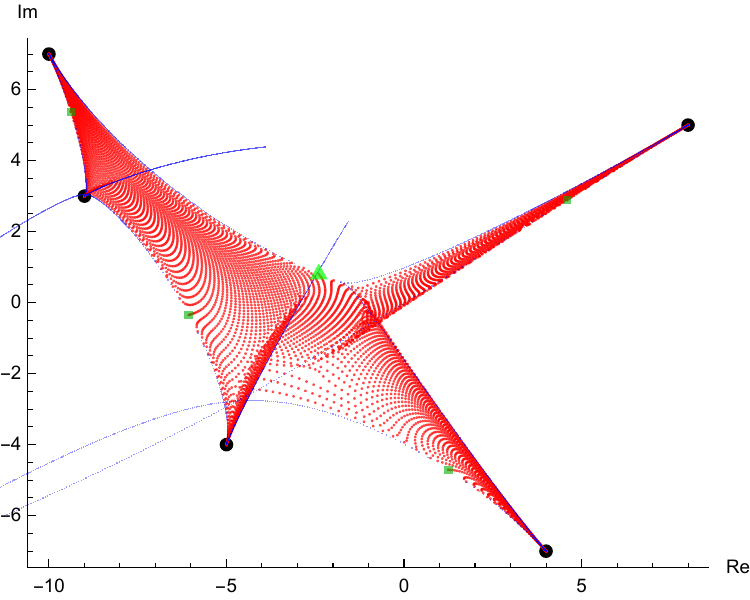}\hfill
\includegraphics[width=.48\textwidth]{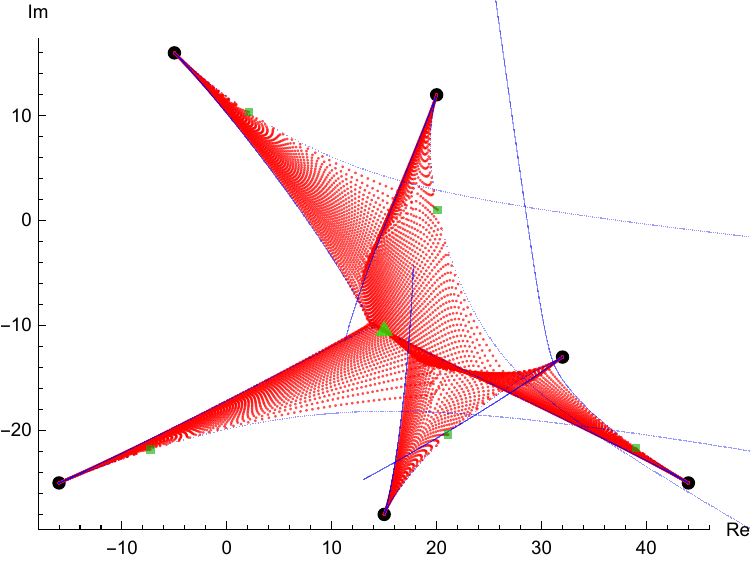}
\end{center}
\caption{The union of all zeros of $\R_{m,30,P}(z)$ for $m= 0,1,$ $\dotsc,30\deg{P}-1$ shown by small red dots. (Other symbols have the same meaning as in Fig.~\ref{fig:frozenDerivative}.)}
\label{fig:polShadowExamples}
\end{figure}

\subsection{Methods}
Let us first sketch the proof of the main technical result Theorem \ref{prop:primitive}, from which many other results are formal consequences.
Cauchy's formula for higher order derivatives gives 
\begin{equation}
\q_n(z)=\frac{([\alpha n] - 1)!}{2\pi i}\int_{c} \frac{P^n(u)\,\diff u}{(u-z)^{[\alpha n]}}=\frac{([\alpha n]-1)!}{2\pi i}\int_{c}\mathrm{exp}\bigg(n\log P(u)-([\alpha n]\log(u-z)\bigg)\diff z.
\end{equation}
Here $c$ is any simple closed curve in $\bC$ encircling $z$   once in the counterclockwise direction. 
The saddle point method  heuristically implies  that 
$$
\frac{1}{n} \log \q_n(z)\approx \log P(u)-\alpha \log(u-z),
$$
where $u(z) $ is some solution of the saddle point equation
$$
\frac{P'(u)}{P(u)}-\frac{\alpha}{u-z}=0, 
$$
determining the critical points of the integrand.
 The degree of the polynomial $\q_n(z)$ equals $d_n:=dn-[\alpha n]+1$. Therefore, up to scaling, $\frac{1}{n} Re\, (\log \q_n(z))$  equals the logarithmic potential $\{L_{\mu_n}(z)\}$ of the root-counting measure $\mu_n$, which we hence  understand asymptotically.

The main difficulty in making the above sketch rigorous is to describe which particular branch $u(z)$ of solution to the latter saddle point equation  to choose. We address this issue  by applying  to our specific situation a general framework developed in \S~\ref{sec:sub}.  We hope that this framework can be useful in other asymptotic questions involving sequences of polynomials originating from  families of linear ordinary differential equations.  

\medskip
Namely,  given $\bC^2\simeq \bC_z\times \bC_u$ with  coordinates $(z,u)$, we define  below a special class of plane algebraic curves which we call \emph{affine Boutroux curves} (shorthand aBc). Such a  curve $\Upsilon\subset \bC^2$ is  characterized by the fact that the standard $1$-form $u\minispace\diff z$ has only imaginary periods on the normalization of the compactification of $\Upsilon$ in $\bC P^1_z\times \bC P^1_u$.   

A version of Boutroux curves has been earlier introduced in \cite{BM} where also the term  ``Boutroux curves" was coined. This notion was  further elaborated in \cite {Be}  and later used by a number of authors. 

Given an  affine Boutroux curve, we define on it a natural harmonic function which is  essentially the real part of a primitive function of $u\minispace\diff z$, as well as the push-forward of this function to $\bC P^1_z$. This push-forward ---which we call the \emph{tropical trace}, and which is our crucial tool---is  piecewise-harmonic and its Laplacian (considered as a $2$-current on $\bC P^1_z$) is a signed measure supported on a finite union of segments of analytic curves and isolated points. The most essential property of this measure is that its Cauchy transform satisfies almost everywhere (a.e.)  in $\bC P^1_z$ the same algebraic equation which defines the initial affine Boutroux curve. We will also apply this construction to certain open (in the usual topology) subsets  of a Boutroux curve.

\medskip
The structure of the paper is as follows. After recalling some basic notions in \S~\ref{sec:basics}, we introduce  in \S\S~\ref{sec:sub} -- \ref{sec:BoutrouxCauchy} affine Boutroux curves  (aBc) as well as related harmonic functions, tropical traces   and their measures.  In particular, we give a simple general construction of Boutroux curves, of which the one used in this paper is a special case.   In \S~\ref{sec:perfect}, we prove  that the algebraic curves given by \eqref{eq:algebraicDiffEq}  and \eqref{eq:algebraicDiffEq2} are affine Boutroux curves.  In \S~\ref{sec:MainThs}, we settle   Theorems~\ref{th:Cauchy} and \ref{prop:primitive} and related results by applying the saddle point method to Cauchy's integral, in a very classical way. In \S~\ref{sec:proofs}, we derive linear differential equations satisfied by Rodrigues' descendants.  In \S~\ref{sec:quadratic}, we discuss in detail the case of a quadratic polynomial $P$. Finally, in \S~\ref{sec:final}, we suggest a generalization of our set-up to non-discrete measures and pose a number of open problems related to the asymptotic of Rodrigues descendants.

\begin{remark} {\rm This text has been mainly written, but for various reasons not completely finished already in Spring 2018; its content has been presented during a workshop ``Hausdorff geometry of polynomials and polynomial sequences"  at the Mittag-Leffler institute in Stockholm.  Since then several relevant papers discussing  similar questions about the behavior of roots of polynomials under consecutive differentiations appeared, see e.g., \cite {St1, St2, HoKa, KiTa}. In particular, paper \cite{St2} contains a heuristic deduction of an intriguing partial differential equation satisfied (under several additional assumptions) by the density of roots under differentiation. This equation has been further studied  in \cite{KiTa}. In addition, a recent contribution \cite{HoKa} contains a number of results in the case of polynomials of degree $2$ which are quite close to those in our \S~\ref{sec:quadratic}. }  
\end{remark} 

\medskip
\noindent{Acknowledgements.} The authors are sincerely grateful  to the Mittag-Leffler institute for the hospitality  in June 2018.  The third author wants to thank Professors Arno Kuijlaars, Maurice Duits, Zakhar Kabluchko, and Andrei Mart\'inez-Finkelshtein for their interest in this subject and discussions. The first author wants to thank Helga Lundholm for her support and interest in this project. Finally, we are very grateful to the anonymous referees for their high quality comments which enabled us  to improve  the content as well as the exposition of the paper. 


  \section{Various preliminaries} 
  
   \subsection{Basics of logarithmic potential theory}\label{sec:basics}
     For the convenience of our readers, let us briefly recall some notions and facts used throughout the text. Let $\mu$ be a finite compactly supported positive Borel  measure in the complex plane $\bC$.  Define the {\it logarithmic potential} of $\mu$ as
 \begin{equation}\label{eq:logpot}    
 L_\mu(z):=\int_\bC \ln|z-\xi|\,\diff \mu(\xi)
 \end{equation}
  and the {\it Cauchy transform} of $\mu$ as 
\begin{equation}\label{eq:CauchyTr}
\C_\mu(z):=\int_\bC \frac{\diff \mu(\xi)}{z-\xi}.
\end{equation}

Standard facts about the logarithmic potential and the Cauchy transform include the following. 
\begin{itemize}

\item  $\C_\mu$ and $L_\mu$ are locally integrable; in particular they define  distributions on $\bC$ and therefore can be acted upon by $\frac{\partial}{\partial z}$ and $\frac{\partial}{\partial \bar z}.$

\item $\C_\mu$ is analytic in the complement of the support of $\mu$ considered in $\bC P^1\simeq \bC\cup \{\infty\}$. For example,  if $\mu$ is supported on the unit circle, then $\C_\mu$ is analytic both inside the open unit disc and outside the closed unit disc.

\item The main relations between $\mu$, $\C_\mu$ and $L_\mu$ are as follows: 
$$\C_\mu=2 \frac{\prt L_\mu}{\prt z} \quad \text{and} \quad  
\mu=\frac{1}{\pi}\frac{\prt \C_\mu}{\prt \bar z}=\frac{2}{\pi}\frac{\prt^2 L_\mu}{\prt z \prt \bar z}=\frac{1}{2\pi}\left(\frac{\partial^2L_\mu}{\partial x^2}+\frac{\partial^2L_\mu}{\partial y^2}\right).$$
(They should be understood as equalities of distributions.)

\item The Laurent series of $\C_\mu$ in a neighborhood of $\infty$ is given by
$$\C_\mu(z)=\frac{m_0(\mu)}{z}+\frac{m_1(\mu)}{z^2}+\frac{m_2(\mu)}{z^3}+\ldots ,$$
where 
$$m_k(\mu)=\int_\bC z^k\,\diff \mu(z),\; k=0,1,\dots$$
are the harmonic moments of the measure $\mu$. 
\end{itemize}

\medskip Given a polynomial $p$, we associate to $p$ its standard \emph{root-counting measure} $$\mu_p=\frac{1}{\deg p} \sum_i m_i\delta (z_i),$$ where the sum is taken over all distinct roots $z_i$ of $p$ and $m_i$ is the multiplicity of $z_i$. Here $\delta(a)$ stands for the standard Dirac measure supported at $a$. 

\smallskip
One can easily check that the Cauchy transform of $\mu_p$  is given by 
$$\C_{\mu_p}=\frac{1} {\deg p}\cdot\frac{p^\prime}{p}.\quad
$$

For more relevant information on the Cauchy transform we  recommend the short and well-written treatise \cite{Ga}. 

\medskip
The above notions of a Borel measure $\mu$ compactly supported in $\bC$, its logarithmic potential $L_\mu$, and its Cauchy transform $\C_\mu$ have natural extensions to $\bC P^1\supset \bC$; we denote these extensions as   $\bar \mu, \bar L_{\bar \mu}, \bar \C_{\bar \mu}$ respectively. (The main relations between them will be preserved under such extension.) These notions are constructed as follows. 

(i) For a  finite positive measure $\mu$ compactly supported in $\bC$, we introduce the signed measure $\bar \mu$ of total mass $0$ defined on $\bC P^1$ by adding to $\mu$  the point measure $-\mathfrak{m} \cdot \delta(\infty)$ placed at $\infty$, where $\mathfrak m=\int_\bC \diff\mu$. (It is natural to think of $\bar \mu$ as an exact $2$-current on $\bC P^1$.)

(ii) The logarithmic potential $L_\mu$ is originally defined as a function on $\bC\subset \bC P^1$ with a logarithmic singularity at $\infty$. In terms of a local coordinate $w=1/z$ at $\infty$ the logarithmic potential is a $L^1_{loc}$-function near $\infty$ which implies that we can define its derivatives (at least in the sense of distributions).  We denote  by  $\bar L_{\bar \mu}$ the function $L_\mu$ considered as a $L^1_{loc}$-function on the whole $\bC P^1$. 


 
 Recall that on any complex manifold, the exterior differential $\diff$ (acting on differential forms and currents) is standardly decomposed as $\diff=\diff^\prime+\diff^{\prime\prime}$, where $\diff^\prime$ is its holomorphic and $\diff^{\prime\prime}$ is its anti-holomorphic parts.  
 For a function $f$ on a Riemann surface with  a local holomorphic coordinate $z$, we get  $$\diff'f=\frac{\prt f}{\prt z}\diff z\quad \text{and}\quad 
 \diff''f=\frac{\prt f}{\prt \bar z}\diff\bar z.$$
In the above notation, the quantities $\bar \mu$ and $ \bar L_{\bar \mu}$ satisfy  the relation 
$$
\bar \mu \,\diff x \wedge \diff y = \frac{i}{\pi} \diff'\diff'' \bar L_{\bar \mu}.
$$
More explicitly, we have that  
  
$$\bar \mu\minispace\diff x \wedge\diff y=\frac{1}{2\pi} \left( \frac{\prt^2 \bar L_{\bar \mu}}{\prt x^2}+ \frac{\prt^2 \bar L_{\bar \mu}}{\prt y^2} \right)\,\diff x\wedge\diff y=\frac{2}{\pi}\frac{\prt^2 \bar L_{\bar \mu}}{\prt z \prt \bar z}\,\diff x\wedge\diff y=\frac{i}{\pi}\frac{\prt^2 \bar L_{\bar \mu}}{\prt z \prt \bar z}\,\diff z \wedge\diff \bar z,$$
where $\frac{\prt^2 \bar L_{\bar \mu}}{\prt z \prt \bar z}$ is understood as a distribution on $\bC P^1$.

(iii) Finally, the Cauchy transform  $\bar \C_{\bar \mu}$ is naturally interpreted  as a  
$1$-current defined by the relation 
$$\bar \C_{\bar \mu}=2\diff' \bar L_{\bar \mu}=  2\frac{\prt \bar L_{\bar \mu}}{\prt z}\,\diff z.$$
With this convention  we get 
$$\bar \mu\minispace\diff x\wedge\diff y=\frac{i}{\pi}\diff'\diff'' \bar L_{\bar \mu} = -\frac{i}{2\pi}\diff''\bar \C_{\bar \mu}=\frac{i}{2\pi}\frac{\prt \bar C_{\bar \mu}}{ \prt \bar z}\,\diff z\wedge\diff\bar z.$$

\medskip
\subsection{Differentials with imaginary periods}
\label{sec:imaginaryperiods}

\label{sec:sub}
To settle Theorem~\ref{th:Cauchy} and other related results, we  need to introduce a special class of plane algebraic curves and show how they give rise to measures on (open subsets of) $\bC P^1$. This is an instance of a more  general construction which can be carried out for  Riemann surfaces  endowed with an   abelian differential 
  and a meromorphic function.

\smallskip
Typically multi-valued (harmonic and subharmonic) functions on Riemann surfaces originate from the integration of meromorphic $1$-forms. For some special types of differentials however, one can get functions that are uni-valued instead of multi-valued which is exactly the situation which we want to capture. 

As usual, by a {\it period} of a meromorphic $1$-form on a Riemann surface $Y$ we mean the integral of this form over a $1$-cycle in $H_1(Y\setminus Pol,\bR)$, where $Pol$ is the set of poles of the form under consideration.  

\begin{definition}\label{def:1} {\rm A meromorphic $1$-form $\omega$ defined on a compact orientable Riemann surface $Y$ is said to have \emph { purely imaginary periods} if all of its periods are  purely imaginary complex numbers. 
 }
\end{definition}

\begin{remark}{\rm  
Observe that the periods of $\omega$ can be roughly subdivided into two different types: a) periods related to the poles of $\omega,$ i.e. integrals of $\omega$  over small loops surrounding the poles, and b) periods related to the non-trivial $1$-dimensional homology classes of $Y$, i.e., integrals of $\omega$ over the global cycles in $H_1(Y, \bR)$.  (Observe however that these two types of periods are, in general, dependent.)

Note that the first type of periods are purely imaginary if and only if all residues of $\omega$  are real and that the second type of periods do not occur if $Y$ has genus $0$. }
\end{remark}

\begin{remark}
\label{rmk:notneccompact}{\rm  
In some situations  Definition~\ref{def:1}  makes sense even if $Y$ is  a non-compact Riemann surface. For our purposes, it will be suffficient  to consider the case when $Y$ is an open subset of a compact Riemann surface $\widetilde Y$ such that $\widetilde Y\setminus Y$ consists of a finite number of points.
This will always be the case for, e.g., smooth quasi-affine plane algebraic curves. Note that a meromorphic $1$-form $\omega$ on $\widetilde Y$ has purely imaginary periods if and only if the restriction of $\omega$ to $Y$ has purely imaginary periods.}
\end{remark}

For a meromorphic $1$-form $\omega$ with purely imaginary periods defined on a compact Riemann surface $Y$, denote by $Pol_\omega^- \subset Y$ (resp. $Pol_\omega^+\subset Y$) the set of all poles of $\omega$ with negative (resp. positive) residues. Set $Pol_\omega :=Pol_\omega^+\cup Pol_\omega^-$. 

\medskip
Meromorphic $1$-forms, i.e.,  abelian differentials with purely real periods were introduced by I.~Krichever in the 1980's in connection with the theory of integrable systems and have been discussed since then in a number of his papers. In particular, they were considered in  \cite{GK} where they were used to study  the moduli spaces of  Riemann surfaces with marked points. (In the present article we consider purely imaginary periods, but the translation is trivial.)  One of the results of \cite{GK} is as follows; see Proposition 3.4 in loc. cit.

\begin{PROP}\label{prop:GrKr}
 For any compact Riemann surface $Y$, a set of marked points $ p_1,\dots ,p_n \in Y$, any set of positive
integers $h_1,\dots, h_n$,  any choice of $h_i$-jets of local coordinates $z_i$ in the
neighborhood of marked points $p_i$, of the singular parts (i.e., for $i = 1,\dots,n,$ the choice of Taylor coefficients $c^1_i,\dots ,c^{h_i}_i$, with all imaginary residues 
$c_i^1\in i\bR$ and the sum of the residues $\sum c_i^1 $ vanishing), there exists a unique differential $\Psi$ on $Y$ with purely real periods and prescribed singular parts. In other words,  in a neighborhood $U_i$ of each $p_i$ the differential $\Psi$ satisfies the condition
$$\Psi\vert_{U_i}=\sum_{j=1}^{h_i}c_i^j\frac{\diff z}{z_i^j}+O(1).$$
\end{PROP}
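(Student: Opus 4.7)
The plan is to prove existence and uniqueness jointly via a Mittag-Leffler-type construction for meromorphic differentials combined with the classical period bijectivity on the compact Riemann surface $Y$ of genus $g$.

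First, I would establish existence of \emph{some} meromorphic differential with the prescribed singular parts, setting aside the period condition. A Mittag-Leffler argument on Riemann surfaces, combined with Serre duality $H^1(Y, \Omega^1_Y) \simeq \bC$, shows that the cohomological obstruction to realizing a prescribed finite collection of Laurent tails at the points $p_i$ by a global meromorphic $1$-form is precisely the sum of residues. Since $\sum_i c_i^1 = 0$ by hypothesis, some meromorphic $1$-form $\Psi_0$ with exactly the prescribed singular parts exists.

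Second, any other meromorphic differential with the same singular parts differs from $\Psi_0$ by a holomorphic $1$-form $\omega \in H^0(Y, \Omega^1_Y)$, a $g$-dimensional complex vector space of real dimension $2g$. It remains to choose $\omega$ so that $\Psi := \Psi_0 + \omega$ has purely real periods. Periods of $\Psi$ around small loops about the $p_i$ equal $2\pi i \cdot c_i^1$, which automatically lie in $\bR$ since $c_i^1 \in i\bR$; those conditions are therefore vacuous. Only $2g$ real linear equations remain:
$$\mathrm{Re}\int_{\gamma_k}(\Psi_0 + \omega) = 0, \qquad k = 1, \dots, 2g,$$
for a basis $\gamma_1, \dots, \gamma_{2g}$ of $H_1(Y, \bZ)$.

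Third, consider the $\bR$-linear map $\Phi: H^0(Y, \Omega^1_Y) \to \bR^{2g}$ given by $\omega \mapsto \bigl(\mathrm{Re}\int_{\gamma_k}\omega\bigr)_k$. Since source and target have equal real dimension $2g$, bijectivity follows from injectivity. If $\omega$ is holomorphic with all periods purely imaginary, then the primitive $F(z) := \int_{z_0}^z \omega$ is defined modulo $i\bR$, so $\mathrm{Re}\,F$ descends to a single-valued harmonic function on the compact surface $Y$ and must therefore be constant. Together with the Cauchy-Riemann equations this forces $F$ to be locally constant, whence $\omega = dF = 0$. Consequently $\Phi$ is a real-linear isomorphism, yielding a unique $\omega$ that produces $\Psi$ with the required period condition; the same injectivity gives uniqueness of $\Psi$.

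The main obstacle I anticipate is the first step: one must realize not merely the prescribed residues $c_i^1$ but all prescribed higher-order Taylor coefficients $c_i^2, \dots, c_i^{h_i}$ of the singular parts, and one must check carefully that the sole cohomological obstruction to this richer interpolation problem is still the sum of residues (this is where Serre duality and an explicit identification of the connecting homomorphism are used). The remainder is a standard manifestation of the period bijectivity for holomorphic differentials on compact Riemann surfaces.
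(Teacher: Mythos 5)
The paper does not prove this statement: it is quoted verbatim from \cite{GK} (Proposition~3.4 there), cited as background without any proof. So there is no ``paper's own proof'' to compare against; what can be assessed is whether your argument stands on its own.

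Your argument is the standard Mittag--Leffler-plus-normalization proof and it is essentially correct, with one sign inconsistency worth fixing. Your first two steps are fine: the short exact sequence $0 \to \Omega^1_Y \to \Omega^1_Y(D) \to \mathcal{PP} \to 0$ together with Serre duality $H^1(Y,\Omega^1_Y)\cong \bC$ shows the only obstruction to realizing the prescribed Laurent tails is the sum of residues, and the solution is unique modulo $H^0(Y,\Omega^1_Y)$. The sign issue is in the third step: the Proposition asks for \emph{purely real} periods, and you correctly observe that the small-loop periods $2\pi i\,c_i^1$ are automatically real, so what must be imposed on the remaining $2g$ periods is that their \emph{imaginary} parts vanish, i.e. $\mathrm{Im}\int_{\gamma_k}(\Psi_0+\omega)=0$. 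You instead write $\mathrm{Re}\int_{\gamma_k}(\Psi_0+\omega)=0$, which would produce purely imaginary $\gamma_k$-periods. The fix is harmless because the map $\omega\mapsto\big(\mathrm{Im}\int_{\gamma_k}\omega\big)_k$ is likewise an $\bR$-linear isomorphism (it differs from your $\Phi$ by pre-composition with multiplication by $i$), and the injectivity proof goes through mutatis mutandis: if all periods of a holomorphic $\omega$ are real, then $\mathrm{Im}\int^z\omega$ is a single-valued harmonic function on the compact surface, hence constant, forcing $\omega=0$. One further point worth making explicit: once poles are present, $\mathrm{Re}\int_{\gamma_k}\Psi_0$ depends on the chosen representative of $\gamma_k$ in $Y\setminus Pol$, because two representatives differ by small loops contributing real quantities $2\pi i\,c_i^1$; with the corrected condition $\mathrm{Im}\int_{\gamma_k}(\Psi_0+\omega)=0$ this ambiguity disappears, which is another sign that $\mathrm{Im}$, not $\mathrm{Re}$, is the right normalization here. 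With these adjustments the proof is complete and correct.
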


\medskip
Proposition~\ref{prop:GrKr} implies that  on an arbitrary compact Riemann surface $Y$ there exists a large class of meromorphic $1$-forms with purely imaginary periods.

 Furthermore, we can associate to each meromorphic differential with imaginary periods on $Y$ a real-valued function $Y\setminus Pol_\omega \to \mathbb R$ as follows.  Fix a point $p_0\in Y\setminus Pol_\omega$ and
consider the multi-valued primitive function 
  $$\Psi(p):=\int_{p_0}^p \omega.$$
  $\Psi(p)$  is a well-defined uni-valued function on the universal covering of $ Y \setminus Pol_{\omega}$. The next statement is trivial.

\begin{lemma}\label{lm:trivial}
In the above notation,   $\omega$  has purely imaginary periods on $Y$ if and only if the multi-valued primitive function $\Psi(p)$ has a uni-valued real part $\mathrm{Re}\,\Psi(p)$. In other words,  
$$H(p):=\mathrm{Re}\,\Psi(p)$$
is  a well-defined uni-valued function on $Y \setminus Pol_\omega$. 
 \end{lemma}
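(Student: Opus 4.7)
The statement is essentially tautological, so the plan is very short. The proof amounts to unpacking what it means for $\Psi(p)$ to be multi-valued and then separating the period group into its real and imaginary parts.

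First I would recall the source of the multi-valuedness of $\Psi(p) = \int_{p_0}^p \omega$. Since $\omega$ is a closed holomorphic 1-form on $Y\setminus Pol_\omega$ (being meromorphic with poles removed), the integral depends on the choice of path from $p_0$ to $p$ only through its homotopy class in $Y\setminus Pol_\omega$. Two paths $\gamma_1, \gamma_2$ from $p_0$ to $p$ differ by a cycle $\gamma := \gamma_1 - \gamma_2 \in H_1(Y\setminus Pol_\omega, \mathbb{Z})$, and the two values of $\Psi(p)$ they produce differ by exactly $\int_\gamma \omega$, which by definition is a period of $\omega$.

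Consequently, $\Psi(p)$ descends to a well-defined function on $Y\setminus Pol_\omega$ valued in $\mathbb{C}/\Lambda_\omega$, where $\Lambda_\omega \subset \mathbb{C}$ is the additive period subgroup generated by all integrals $\int_\gamma \omega$ for $\gamma \in H_1(Y\setminus Pol_\omega, \mathbb{Z})$. Taking real parts yields a well-defined map into $\mathbb{R}/\mathrm{Re}(\Lambda_\omega)$. Therefore $\mathrm{Re}\,\Psi$ is uni-valued as a real-valued function on $Y\setminus Pol_\omega$ if and only if $\mathrm{Re}(\Lambda_\omega) = \{0\}$, which is precisely the condition that every period of $\omega$ is purely imaginary.

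There is no real obstacle here; the only minor point to be careful about is confirming that the homology group $H_1(Y\setminus Pol_\omega, \mathbb{Z})$ is the right object to test periods against. This follows from the fact that $\omega$ is holomorphic off $Pol_\omega$ and closed, so Stokes' theorem guarantees $\int_\gamma \omega$ depends only on the homology class of $\gamma$ in $Y\setminus Pol_\omega$. One could briefly note that in Remark~\ref{rmk:notneccompact}'s setting (where $Y$ may be an open subset of a compact $\widetilde Y$), the argument is unchanged: the primitive is taken on $Y\setminus Pol_\omega$ and the periods are those listed in Definition~\ref{def:1}.
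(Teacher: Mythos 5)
Your proposal is correct, and since the paper labels Lemma~\ref{lm:trivial} as ``trivial'' and offers no proof, your unpacking of the definitions is exactly the intended argument. The one cosmetic point worth noting is that Definition~\ref{def:1} speaks of $1$-cycles in $H_1(Y\setminus Pol,\bR)$ while you work with $H_1(Y\setminus Pol_\omega,\bZ)$; this is harmless, since the integral $\int_\gamma\omega$ is $\bR$-linear in the cycle and the ambiguity of $\Psi$ under change of path is indeed generated by integer cycles, so the two formulations of ``all periods purely imaginary'' coincide.
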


Note that $H(p) $ is continuous and harmonic in a neighborhood of any point in $Y\setminus Pol_\omega$. The local behavior of $H$ near a pole $p$ is determined by the sign of the residue of $\omega$ at $p$. Namely,  let $\omega$ be a meromorphic $1$-form with purely imaginary periods and only simple poles. For $p\in Pol_\omega$, let $z$ be a local coordinate at $p$, and denote by $r$ the residue of $\omega $ at $p$.

\begin{lemma}\label{lm:trivial2}   In the above notation,  $H$ is a subharmonic $L^1_{loc}$-function on $Y\setminus Pol_\omega^-$ which is harmonic on $Y\setminus Pol_\omega$.  Locally, for the restriction of $H$ to a suitable neighborhood of $p$,  the following holds:
\begin{enumerate}
\item $H(z)=r\log\vert z\vert +\widetilde H(z)$, where $\widetilde H(z)$
 is a function harmonic in a neighborhood of $p$. Consequently, $\frac{\partial^2 H}{\partial z\partial \bar z}=\frac{r\pi}{2} \delta(p)$, where $\delta(p)$ is the Dirac measure at $p$, and the derivatives are taken in the sense of distributions.
\item If $p\in Pol_\omega^+$, then there is a neighborhood of $p$ in which $H$ is a well-defined  subharmonic function and $\lim_{z\to p}H(z)=-\infty$.

\item If $p\in Pol_\omega^-$, then $\lim_{z\to p}H(z)=+\infty$.

\item $\frac{\partial H(z)}{\partial z}=\frac{1}{2}\omega$.
\end{enumerate}
 \end{lemma}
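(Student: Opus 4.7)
The plan is to work entirely locally near a pole $p$ of $\omega$ in a coordinate $z$ centered at $p$, then patch the pieces together. The imaginary-period hypothesis forces every residue to be real: indeed, the integral of $\omega$ around a small loop encircling $p$ equals $2\pi i\,r$, and this quantity is imaginary by assumption, so $r\in\bR$. Consequently near $p$ we may write
$$\omega = \left(\frac{r}{z}+h(z)\right)\diff z,$$
with $h$ holomorphic on a disc about $0$.

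First I would choose a holomorphic primitive $G(z)$ of $h(z)\diff z$ on that disc. On any simply connected slit neighborhood every determination of the primitive $\Psi$ from Lemma~\ref{lm:trivial} has the form $\Psi(z)=r\log z + G(z)+c$ for a constant $c$, and taking real parts gives
$$H(z)=r\log|z|+\widetilde H(z),\qquad \widetilde H(z):=\mathrm{Re}\,G(z)+\mathrm{Re}\,c.$$
This expression is single-valued precisely because $r\in\bR$ kills the multi-valuedness of $r\log z$ (the ambiguity lies entirely in $\mathrm{Im}\,\Psi$, namely in $r\arg z$), which is consistent with Lemma~\ref{lm:trivial}. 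The function $\widetilde H$ is harmonic on the disc, and the distributional identity in (1) follows from the classical formula $\partial_z\partial_{\bar z}\log|z|=\tfrac{\pi}{2}\delta_p$ together with harmonicity of $\widetilde H$.

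Items (2) and (3) are immediate consequences of the local decomposition in (1). If $r>0$ then $r\log|z|\to-\infty$ as $z\to p$, and since $r\log|z|$ is subharmonic with distributional Laplacian $2\pi r\,\delta_p\ge 0$, the sum $H=r\log|z|+\widetilde H$ is subharmonic on the disc and locally integrable, proving (2). If $r<0$ the same term drives $H(z)\to+\infty$, giving (3). For (4), on $Y\setminus Pol_\omega$ the primitive $\Psi$ is locally holomorphic, hence $H=\tfrac12(\Psi+\overline{\Psi})$ yields $\partial_z H=\tfrac12\,\partial_z\Psi$, whose $\diff z$-coefficient is exactly $\tfrac12\omega$. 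To obtain the global statement, I would cover $Y\setminus Pol_\omega^-$ by the open set $Y\setminus Pol_\omega$, where $H$ is harmonic, together with small disc neighborhoods of each point of $Pol_\omega^+$, where (2) applies. Since subharmonicity and local integrability are local conditions, this patchwork finishes the argument.

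The only genuine obstacle is notational care at step two: one must invoke Lemma~\ref{lm:trivial} to guarantee that the candidate $H$ already makes sense as a single-valued function near the pole, so that the decomposition $H=r\log|z|+\widetilde H$ really identifies $\widetilde H$ as a \emph{bona fide} harmonic function rather than one defined only up to a locally constant multi-valued correction; once this is clean, the remainder is standard distributional calculus.
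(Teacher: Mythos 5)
Your proposal is correct and follows essentially the same route as the paper's (much terser) proof: locally split $\omega = \frac{r}{z}\diff z + (\text{holomorphic part})$, integrate, take real parts to get $H = r\log|z| + \widetilde H$, and read off (1)--(4) from that decomposition together with $\partial_z\partial_{\bar z}\log|z| = \frac{\pi}{2}\delta_0$ and $H = \tfrac12(\Psi+\overline\Psi)$. The extra care you take to justify single-valuedness via Lemma~\ref{lm:trivial} is a point the paper glosses over but is worth making explicit.
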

 
 \smallskip
 \begin{proof} Item (1) is a consequence of the fact that $\omega$ has a simple pole at $p$ and hence locally it can be written  as $\omega=\frac{r}{z}\minispace\diff z+\widetilde \omega$, where $\widetilde \omega $ is holomorphic at $p$. Then (2) and (3) follow, while (4) follows from  the standard relation
 $$\omega=\frac{\partial\Psi}{\partial z}=2\frac{\partial(\text{Re }\Psi)}{\partial z}.$$ 
 \end{proof}

\subsection{Tropical trace}
\label{sec:trace}
Given a branched covering $\nu: Y \to Y^\prime$ of Riemann surfaces, and a function $f: Y\to \bR$, we will define the induced function  on $Y^\prime$ by taking the maximum of the values of $f$ over each fiber. 
Notice that in the case of the usual trace one uses the summation/integration over the fiber. The basic idea of  tropical geometry is to substitute the operation of summation/integration by the operation of taking the maximum,  which provides a motivation for our terminology. It seems that this construction which regularly occurs in the study of the root asymptotic for polynomial sequences has not been given any special name yet.
 
 \begin{definition} \label{def:tr}{\rm Given a branched covering $\nu: Y \to Y^\prime$ and a real-valued function $f:Y\to \bR$, we define the  \emph {tropical trace  $\nu_*f: Y^\prime \to \bR$ } of this pair 
as
$$\nu_*f(z)=\max_{y_i \in \nu^{-1}(z)}f(y_i).$$
The same definition extends to real-valued functions $f$ defined on $Y\setminus S$, where $S$ is a discrete set such that for any $s\in S$, $\lim_{z\to s}f(z)$  exists either as a real number or $\pm \infty$. (In other words, we allow $f$ to attain values $\pm \infty$.)} 
\end{definition}

\begin{example}
\label{ex:trace} {\rm Let $H_i,\; i\in \{1,2....,n\}=:[n]$  be an $n$-tuple of real pair-wise different harmonic functions on $Y'$. They define  a harmonic function $H $ on the product $Y:=Y'\times [n]$ by setting $$H(z,i)=H_i(z).$$ For  the canonical projection  $\nu: Y \to Y'$  given by $\nu(z,i)=z$, we get 
$$\nu_*H(z)=\max_{i \in [n]}H_i(z).$$ 

Let $C$ be the union of the segments of analytic curves given by $H_i(z)=H_j(z), \ i< j$. Notice that $\nu_*H(z)$ is a subharmonic function which is harmonic and coinciding with the unique $H_i$ on each connected component of the complement $Y\setminus C$.   Hence $\nu_*H$ is a piecewise-harmonic function 
and  its Laplacian  is supported on $C$.  In fact,  this Laplacian (considered as a measure on $C$) can be  explicitly given by the Plemelj-Sokhotski formula, in terms of the analytic functions $\frac{\partial H_i(z)}{\partial z}$ and the curve $C$, see e.g. \cite{BB}. Additionally,  the derivative $E(z):=\frac{\partial \nu_*H(z) }{\partial z}$ exists for $z$ in $Y'\setminus C$ and satisfies a.e. on $Y^\prime$ the equation 
$$\prod_{i=1}^n\left(E(z)-\frac{\partial H_i(z)}{\partial z}\right)=0$$ which is then an instance of an algebraic equation with analytic coefficients satisfied a.e. by  the derivative of a subharmonic function. Such equations often appear in the study of the asymptotic Cauchy transform of the  root-counting measures for polynomial sequences and (under some extra conditions) they imply that this asymptotic  Cauchy transform  is locally given as a  maximum of a finite number of harmonic functions,  i.e. is  their tropical trace as happens in the above example, see e.g. \cite{BBB}.}
\end{example}

\begin{remark} {\rm 
Definition~\ref{def:tr} is applicable to an arbitrary finite map which is a branched cover of complex manifolds. The elementary fact that the maximum of a finite number of subharmonic functions is subharmonic implies certain restrictions on the support of the Laplacian of $\nu_*f$, which makes Definition~\ref{def:tr}  useful. In particular, the tropical trace of a subharmonic function is subharmonic (except possibly at its poles). We describe the situation in more detail in   Theorem~\ref{lm:tropicaltrace} below. }
\end{remark} 

\smallskip
Further, let $Y$ and $Y^\prime$ be  Riemann surfaces and  let $\nu: Y \to Y^\prime$ be a branched covering. Take  a real-valued function $f:Y\to \bR$ which is harmonic except at a finite set where it has logarithmic singularities. (In other words, in a neighborhood of a singular point $p\in Pol$, $f(z)=r\log\vert z\vert+\widetilde f(z)$, where $z$ is a local coordinate and $\widetilde f(z)$ is harmonic.) Let as above $Pol^-_f$ (resp.  $Pol_f^+$) be the set of those points $p\in Pol$ at which  the residue $r$ is negative (resp. $r$ is positive). Then $f$ is subharmonic in $Y\setminus Pol_f^-$. Note that  $Pol=Pol_f^-\cup Pol_f^+$  supports all  the point masses of the Laplacian of $f$ considered as a measure.

\begin{theorem}
\label{lm:tropicaltrace}
{\rm 
Under the above assumptions, the tropical trace  $\nu_*f$ is continuous and piecewise-harmonic  in the open set $ Y^\prime\setminus \nu(Pol)$,  subharmonic in $U=Y^\prime\setminus \nu(Pol_f^-)$, and  has at most logarithmic singularities. The Laplacian of $\nu_*f$ in $U$ is supported on a finite union of  segments of real analytic curves 
 and points; the latter set is contained in the set of the images  of all poles of $f$ under the map $\nu$.  }  
 \end{theorem}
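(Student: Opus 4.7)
The plan is to establish the statement by a purely local analysis on $Y^\prime$, distinguishing points $z_0 \in Y^\prime$ according to the nature of the fiber $\nu^{-1}(z_0) \subset Y$, and then gluing. Fix $z_0 \in U := Y^\prime \setminus \nu(Pol_f^-)$ and a small disc $V \ni z_0$ whose preimage in $Y$ splits as a disjoint union of open neighborhoods $W_1, \dots, W_k$ of the distinct preimages $y_1, \dots, y_k$. The idea is to push $f\vert_{W_i}$ down to a subharmonic function $g_i$ on $V$ via each local branch and then to use that $\nu_\ast f = \max_i g_i$ is again subharmonic, with Laplacian concentrated on the loci where the maximum is realized by several sheets simultaneously.

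If $y_i$ is unramified for $\nu$ and is not a pole of $f$, then $\nu\vert_{W_i}$ is a biholomorphism onto $V$ and $g_i := f \circ (\nu\vert_{W_i})^{-1}$ is harmonic on $V$; this is the situation of Example~\ref{ex:trace}, which already shows that switchings between such sheets contribute a finite collection of real semi-analytic arcs to the Laplacian support. If $y_i$ is a branch point of $\nu$ of ramification index $m$ (still not a pole of $f$), choose local coordinates with $\nu\colon y \mapsto y^m = z$. The values of $f$ over the fiber of $z$ near $z_0$ are given by $f(\zeta^j z^{1/m})$, $j = 0, \dots, m-1$, and their maximum $g_i(z) := \max_j f(\zeta^j z^{1/m})$ is single-valued because the set of values is invariant under the monodromy $z^{1/m} \mapsto \zeta z^{1/m}$. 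Harmonicity of $f$ at $y_i$ guarantees that each branch is continuous at $z_0$ with common limit $f(y_i)$, so $g_i$ extends continuously across $z_0$ and is piecewise-harmonic on $V$. Finally, if $y_i \in Pol_f^+$, then $f \to -\infty$ at $y_i$ by Lemma~\ref{lm:trivial2}(2); the $i$-th sheet is then strictly dominated near $z_0$ by any other sheet and does not contribute to the maximum, unless $k = 1$, in which case the push-forward inherits a negative logarithmic singularity via Lemma~\ref{lm:trivial2}(1) and produces an isolated negative Dirac mass in the Laplacian at $z_0 \in \nu(Pol_f^+) \cap U$. Points in $\nu(Pol_f^-)$ are excluded from $U$ by assumption, so the $+\infty$ case does not appear in $U$.

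Gluing these local descriptions, $\nu_\ast f$ is continuous and piecewise-harmonic on $Y^\prime \setminus \nu(Pol)$, and is subharmonic and $L^1_{\mathrm{loc}}$ on $U$ as the pointwise maximum of finitely many subharmonic functions with at most negative logarithmic singularities. Its Laplacian on $U$ is supported on the real semi-analytic switching curves $\{g_i = g_j\}$ where two sheets realize the maximum, together with isolated negative point masses located in $\nu(Pol_f^+) \cap U \subset \nu(Pol)$, matching the claim. The step that I expect to demand the most care is the ramified case: one must verify that the maximum of the multi-valued family $\{f(\zeta^j z^{1/m})\}$ genuinely extends continuously at $z_0$ and that no spurious component of the Laplacian sits along the discriminant of $\nu$ itself; both facts follow by a direct Puiseux-type expansion at $y_i$ combined with the harmonicity of $f$ there.
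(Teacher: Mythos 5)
Your approach is essentially the same as the paper's: both reduce to the local statement that a pointwise maximum of finitely many harmonic (or log-singular) functions is continuous, piecewise-harmonic, and subharmonic, and both treat branch points of $\nu$ by a local $w\mapsto w^m$ model. You work a little more explicitly with the Puiseux/root-of-unity description of the fiber, whereas the paper covers $Y'\setminus Cv$ by small simply-connected sets and then states that a "similar argument" handles the critical values; the content is the same.

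Two slips should be corrected. First, the sign of the isolated point masses: if $y_i\in Pol_f^+$, then $f$ has a local expansion $r\log|w|+\widetilde f(w)$ with $r>0$, so $f\to-\infty$ (as you say), but the Dirac contribution to the Laplacian is $\tfrac{r\pi}{2}\delta$, which is \emph{positive}. You write "negative Dirac mass" and "isolated negative point masses located in $\nu(Pol_f^+)\cap U$", which is the wrong sign and, as written, contradicts the subharmonicity on $U$ you assert in the very next sentence; the negative masses live only over $Pol_f^-$, which you have excluded from $U$. Second, the condition under which a $Pol_f^+$ pole actually produces a point mass in the tropical trace is not ``$k=1$'' but that \emph{all} preimages $y_1,\dots,y_k$ over $z_0$ lie in $Pol_f^+$: if even one preimage has $f$ finite there, that sheet dominates near $z_0$ and $\nu_*f$ is bounded, whereas if all $k\ge 1$ preimages are in $Pol_f^+$, the sheet with the smallest residue dominates and the point mass is governed by $\min_i r_i$ (exactly as in the paper's proof). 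With these two corrections your argument is sound.
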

 \begin{proof}Note first that the maximum $h(z)=\max f_i(z)$ of a finite number of harmonic functions $f_i, \ i=1,...,\ell$, defined on an open set $V^\prime\subset Y^\prime$ is subharmonic and continuous;  its Laplacian is supported on (some parts of) the level curves $f_i=f_j$, $i< j$. Furthermore these level curves are real analytic.  
 In each connected component $C$ of the complement to the union of  all level curves $f_i=f_j$, $i< j$, there exists an $i$ such that $h(z)=f_i(z)$ for all $z\in C$. Hence $h(z)$ is also piecewise-harmonic.
 
 One gets $h(p)=-\infty$ only in the case where $f_i(p)=-\infty$ for all $i=1,2,...,\ell$. In addition to a measure supported on a union of segments of real analytic curves,  the Laplacian of $h$ will contain the point mass  $\min_i{r_i} \cdot \delta(p)$  at $p$ where the $r_i$'s are the respective residues. 
 If some $r_i<0$, then $h$ will not be subharmonic at $p$, but it still has a logarithmic singularity at $p$ which implies that its Laplacian contains a negative point mass at $p$.
 
 \smallskip
 Let $Cr\subset Y$ denote the set of all  critical points of the map $\nu$ and denote by  $Cv:=\nu(Cr)\subset Y^\prime$ its set of critical values.  
 If $V^\prime\subset Y^\prime\setminus Cv$ is a simply connected  open set, then the inverse image $\nu^{-1}(V^\prime)$ is  a disjoint union $\cup V_i$ of open sets $V_i\subset Y$, such that the restriction $\nu:V_i\to V^\prime$ is a local biholomorphism  which we denote by $\nu_i$. 
 
 In the above notation, $\nu_*f(z):=\max_i f(\nu_i^{-1}(z)),\ z\in V^\prime$, is a subharmonic function in $V^\prime\setminus \nu((Pol_f^-))$, and as shown above, its Laplacian in $V^\prime$ is supported on a union of segments of real analytic curves  and (possibly) at some points lying in $\nu(Pol_f)$. A similar argument works for  a critical value $p\in Cv$. Namely, in suitable local coordinates $w$ on $Y$ and $z$ on $Y^\prime $ respectively, where the point $p\in Cv$ is given by $z=0$, the map $\nu$ can  be written as $w\mapsto z=\nu(w)=w^k$ for some positive integer $k\ge 2$. The rest of Theorem~\ref{lm:tropicaltrace} follows since it is always possible to cover $Y'\setminus Cv$ by a finite number of open simply-connected sets such that the above argument holds.
  \end{proof}

Given a branched covering $\nu: Y \to Y^\prime$ and a real-valued harmonic and continuous function $f:Y\to \bR$, we can consider the restriction $\nu_U$ of $\nu$ to an open subset $U\subset Y$. Generally, the corresponding tropical trace $\nu_{U_*}$ can have a rather wild behavior on the boundary $\nu(\partial U$). We will now provide conditions on $U$ that ensure that  $\nu_{U_*}f$  is still a continuous function and inherits the nice properties of $\nu_{*}$ formulated in the above theorem.

\smallskip
In the notation from the proof of the above theorem, 
for a simply-connected open domain $V^\prime \subset Y'\setminus Cv$  and $i\neq j$, set 
$$\Delta^{ij}_{V^\prime}:=\{ z\in V^\prime: f(v_i(z))=f(v_j(z))\}.$$
 Then either $\Delta^{ij}_{V^\prime}$ coincides with  $V^\prime$ or it will be an analytic curve.
Now define the \emph{non-simple locus} of the pair $(\nu,f)$ as $$\Delta:=\cup\Delta^{ij}_{V^\prime}\subset Y^\prime,$$ where the union is taken over all $V^\prime$ and all $i\neq j$.

Note that if $\Delta\subset Y^\prime$ is a segment of a real analytic curve,  then 
the ordering  of the  values of $f$ on the different branches $v_i(z), \ i=1,\dots ,d,$ will be the same for all points in any chosen connected component $C$ of $\mathcal{O}:=Y^\prime\setminus \Delta$; that is, there is a permutation $(i_1, i_2, \dots, i_d)$ of $(1, 2, \dots, d)$ such that, for all $z\in C$, 
\begin{equation}
\label{eq:orderedfibre}
f(v_{i_1}(z))> f(v_{i_2}(z))>\dots >f(v_{i_d}(z)).
\end{equation} 
This implies that $\nu^{-1}(C)= \cup_{i=1}^dV_i$ is a union of disjoint open sets $V_i$ each of which is biholomorphic to $C$. Hence choosing a connected component $C$ we may speak about the uni-valued branches $v_1,v_2,\dots, v_d$ (where $v_i(z)\in V_i, \ z\in C$).

  \begin{proposition} 
  \label{prop:tropicaltraceopen} In the above notation, 
  assume that $\Delta$  is a real analytic curve in $Y^\prime$ with  a locally  finite number of self-intersections. Given an open subset $U\subset Y$, assume that $\nu(U)$ is dense in $Y'$. 
   Then the following facts hold: 

 \smallskip
 \noindent
  {\rm (i)} $\nu_{*}f(z)\geq\nu_{U_*}f(z)$.
  
  \noindent
 {\rm (ii)} The trace $\nu_{U_*}f(z)$ is continuous on $Y'\setminus Cv$ if and only if 
   
   {\rm {a)}} in each connected component $C$ of $\mathcal{O}$, $\nu_{U_*}f(z)$  is equal to $f(v_{i}(z))$ for some $i=i(C)$, and 
   
   {\rm b)} if $C_1$ and $C_2$ share a boundary $\Delta^{ij}_{\mathfrak O} $, and $i(C_1)\neq i(C_2)$, then $\{ i,j\}=\{i(C_1),i(C_2)\}$.
  
  \noindent
  {\rm (iii)}  Up to $L^1_{loc}$-equivalence, the set  $\{\nu_{U*}f(z),\; U\subseteq Y\}$ of functions such that  $\nu_{U*}f(z)$ is continuous  is locally finite. Each of these functions is subharmonic and piecewise-harmonic, except possibly at its poles.

  \end{proposition}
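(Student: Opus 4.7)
The plan is to reduce the whole statement to a local analysis on simply connected open sets $V^\prime\subset Y^\prime\setminus Cv$ that avoid the critical values of $\nu$. On such a $V^\prime$ the preimage $\nu^{-1}(V^\prime)$ splits as $\bigsqcup_{i=1}^{d} V_i$ with biholomorphic inverse branches $v_i\colon V^\prime\to V_i$, and on each connected component $C$ of $\mathcal{O}\cap V^\prime$ the strict ordering \eqref{eq:orderedfibre} of the values $f(v_i(z))$ is fixed. The assumption that $\Delta$ is real semi-analytic with locally finite self-intersections ensures that there are only locally finitely many such components $C$. Part (i) is then immediate from the definition of the tropical trace, since $\nu^{-1}(z)\cap U\subseteq \nu^{-1}(z)$ forces the corresponding maxima to obey the claimed inequality.

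For (ii), I will introduce the index set $I_U(z)=\{i:v_i(z)\in U\}$. Openness of $U$ combined with continuity of each branch $v_i$ makes the sets $\{z:i\in I_U(z)\}$ open in $V^\prime$. Hence on any fixed $C$, the argmax index $\arg\max_{i\in I_U(z)}f(v_i(z))$ is upper semicontinuous with respect to the fixed order on $C$; because the values $f(v_i(z))$ are strictly separated on $C$, continuity of $\nu_{U_*}f$ on $C$ is equivalent to this argmax being locally, and therefore (by connectedness of $C$) globally, constant, which yields (a). Continuity across a shared boundary $\Delta^{ij}_{\mathcal{O}}$ between components $C_1$ and $C_2$ then reduces to matching the one-sided limits $f(v_{i(C_1)}(z_0))$ and $f(v_{i(C_2)}(z_0))$; along $\Delta^{ij}_{\mathcal{O}}$ the only pair of branches whose $f$-values coincide are $v_i$ and $v_j$, which gives (b). The points lying over critical values are handled by the standard $w\mapsto w^k$ local uniformization, exactly as in the proof of Theorem~\ref{lm:tropicaltrace}.

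For (iii), local finiteness follows by counting: over a relatively compact $V^\prime$ there are finitely many components of $\mathcal{O}\cap V^\prime$ and each one carries an index $i(C)\in\{1,\dots,d\}$, giving only finitely many distinct restrictions $\nu_{U_*}f|_{V^\prime}$ up to $L^1_{loc}$-equivalence. Piecewise harmonicity is automatic, since on each $C$ the function $\nu_{U_*}f=f\circ v_{i(C)}$ is the pullback of a harmonic function by a biholomorphism. The main obstacle I expect is subharmonicity at a shared boundary $z_0\in\Delta^{ij}_{\mathcal{O}}$: the argmax prescription embedded in the definition of $\nu_{U_*}f$ must be used to force the local identification $\nu_{U_*}f=\max(f(v_i),f(v_j))$, which is a maximum of two harmonic functions and hence subharmonic. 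One has to carefully rule out the superharmonic alternative $\min(f(v_i),f(v_j))$ that would appear if on both sides of $\Delta^{ij}_{\mathcal{O}}$ the lower-$f$ branch in $\{i,j\}$ were selected; this is where I expect the proof to be the most delicate, and where some further restriction on the admissible $U$ may be needed. Once this structural point is settled, the behaviour near the critical values and near the poles of $f$ reduces to that already analysed in Theorem~\ref{lm:tropicaltrace}.
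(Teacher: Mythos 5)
Your approach to parts (i) and (ii) mirrors the paper's own proof: the paper settles (ii a) by partitioning each component $C$ into the sets $A_i = \{z \in C : \nu_{U_*}f(z) = f(v_i(z))\}$, observing these are disjoint, closed in $C$, and cover $C$, then invoking connectedness; your locally-constant-argmax argument accomplishes exactly the same thing, and your treatment of (b) via matching one-sided boundary limits is likewise the paper's. The concern you raise in (iii) --- that across a boundary $\Delta^{ij}$ one might a priori get the superharmonic $\min(f(v_i), f(v_j))$ instead of the subharmonic $\max$ --- is a legitimate criticism of the paper's proof as written, which simply asserts the local $\max$ structure near any point of $\Delta'$ without justification. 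However, your suggestion that ``some further restriction on the admissible $U$ may be needed'' is wrong: the gap closes within the stated hypotheses, and the crucial ingredients are the openness of $U$ together with the fact that the harmonic function $f(v_i) - f(v_j)$ changes sign across a generic point of $\Delta^{ij}$.

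Here is the missing step. Suppose $\nu_{U_*}f = f(v_i)$ on $C_1$, $\nu_{U_*}f = f(v_j)$ on $C_2$, and (toward contradiction) $f(v_i) < f(v_j)$ throughout $C_1$ near the boundary. Since $f(v_i) - f(v_j)$ is harmonic and vanishes precisely on $\Delta^{ij}$, it changes sign across a generic boundary point, so $f(v_i) > f(v_j)$ on $C_2$ near that point. Now pick such a generic $z_1 \in \Delta^{ij}$. For $\nu_{U_*}f$ to be finite and continuous at $z_1$, the fiber $\nu^{-1}(z_1) \cap U$ must be nonempty and its maximal $f$-value must equal $f(v_i(z_1)) = f(v_j(z_1))$; since $z_1$ lies on $\Delta^{ij}$ only, the only points in the fiber attaining this value are $v_i(z_1)$ and $v_j(z_1)$, so at least one of them lies in $U$. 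If $v_i(z_1) \in U$, then by openness $v_i(z) \in U$ for $z \in C_2$ near $z_1$, whence $\nu_{U_*}f(z) \geq f(v_i(z)) > f(v_j(z))$, contradicting $\nu_{U_*}f = f(v_j)$ on $C_2$. If $v_j(z_1) \in U$, the symmetric argument contradicts $\nu_{U_*}f = f(v_i)$ on $C_1$. Either way, the assumed ordering is impossible, so $f(v_i) \geq f(v_j)$ on $C_1$ and $f(v_j) \geq f(v_i)$ on $C_2$, giving the $\max$ structure in a neighborhood of $\Delta^{ij}$ and hence subharmonicity. In short, the superharmonic alternative cannot coexist with continuity of a trace over an \emph{open} $U$, so no extra hypothesis is required.
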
 
 \begin{proof}  Observe first that  $\mathcal{O}=Y^\prime\setminus \Delta$ is open and dense and the boundary $\partial C\cap \mathfrak O$ in a neighbourhood $\mathfrak O $ of each connected component $C$ of $\mathcal{O}$ consists of a finite number of curves $\Delta^{ij}_{\mathfrak O}$. 
 
  Item (i) is trivial. Finiteness in item (iii) follows from item (ii) since the number of components $C$ is locally finite because $f$ and $\nu$ are real-analytic functions.  To settle  (ii), it suffices to prove that if $\nu_*f$ is continuous, then a) and b) hold. Let $A_i, i=1,\dots, d,$ be the subset of a connected component $C$ such that $\nu_{U_*}f(z)=f(v_i(z))\iff z\in A_i$. Then these sets are disjoint (since $C\subset \mathcal{O}$),  closed in $C$, and $C$ is their union. Since $C$ is connected, only one $A_i$ can be non-empty, which is exactly condition a). Furthermore, the continuity implies that the boundary $\Delta^\prime  $ between two components $C_1$ and $C_2$ with $i(C_1)\neq i(C_2)$ must be given by $f(v_{i(C_1)}(z))=f(v_{i(C_2)}(z))$. Finally, the subharmonicity in (iii) follows from the fact that  in a sufficiently small neighbourhood $M$ of any point in $\Delta^\prime  $ we have that
$$\nu_{U *}f(z)=\max \{ f(v_{i(C_1)}(z)),f(v_{i(C_2)}(z))\}\; \text{ for }\; z\in M.$$ 
Here we have analytically continued $v_{i(C_1)}$ and $v_{i(C_2)}$ across the boundary to all of $M$.
 \end{proof}

 Proposition~\ref{prop:tropicaltraceopen}
  implies that given that conditions a)  and b) are satisfied, most of the properties of the tropical trace described in 
 Theorem \ref{lm:tropicaltrace} remain true for  $\nu_{U_*}f(z)$ as well.

 \smallskip

\begin{remark}{\rm 
 Conditions (ii) a) and (ii) b) in  Proposition~\ref{prop:tropicaltraceopen} are requirements on the open set $U\subset Y$ that can be interpreted as follows. The branched covering $\nu$ together with the function $f$ induce a presheaf 
$F$ on $\mathcal{O}$ whose stalks are finite ordered (sub)sets of fibers $F(z):=(v_{i_1}(z),\dots ,v_{i_d}(z))$,  with the ordering of the indices given by \eqref{eq:orderedfibre}. For a connected component $C\subset \mathcal{O}$, the section is $F(C)=(V_{i_1}>V_{i_2}>...>V_{i_d})$, where $\nu^{-1}(C)=\cup_{i=1}^d V_i$ is the disjoint union of the different sheets $V_i=\{v_i(z),\ z\in C\}$ over $C$.
There is a sub-presheaf $F\cap U$ induced by the map $z\mapsto F(z)\cap U$, and the above condition  a) then says that, for a connected component $C\subset \mathcal{O}$, there is a maximal sheet $V_{i(C)}\subset U$ such that $U$ contains no elements of any larger sheets $V_j$.
   Condition  b) says that
 for two neighboring connected components in $\mathcal O$, either the maximal sheets in $U$ over each of these components are analytic continuations of each other or the boundary between these two components in $\mathcal O$ is determined by their equality after the fiberwise composition with $f$.}
 \end{remark}

   \begin{example} {\rm 
   Let $H_1(z):=\log\vert z\vert$ and $H_2(z):=\log\vert z-2\vert$ and let $Y$ and $Y'$ 
   be as in Example \ref{ex:trace}. Then $\Delta$ is the line given by $\mathrm{Re}(z)=1$, and there are three possibilities for a continuous $\nu_{U_*}f(z)$. Firstly, it can be equal to $\vert z\vert$ in the whole plane $\bC$ which occurs when e.g., $U=\mathbb C\times 1 $. Secondly, it can be equal to $\vert z-2\vert$ when e.g., $U=\mathbb C\times 2 $. Finally,  it can be equal to  $\max\{\log\vert z\vert,\log\vert z-2\vert \}$, when e.g., $U=Y$.}
   \end{example}
   Another concrete example of $\nu_*f$ and $\nu_{U_*}f(z)$ is given in Fig.~\ref{fig: trace} of \S~\ref{sec:quadratic}.
  
  \subsection{Branched push-forwards and  piecewise-analytic $1$-forms}
The derivative of a tropical trace is a piecewise-analytic differential $1$-form in the sense that we will now clarify.
\begin{definition}

{\rm Given a branched covering $\nu: Y \to Y^\prime$ of  compact Riemann surfaces,  by a \emph{uni-valued branch of $\nu$ } we mean an open subset $U\subset Y$ such that 
$\nu$ maps $U$ diffeomorphically onto its image $\nu(U)=Y^\prime\setminus \mC$, where $\mC$ is a finite union of smooth compact curves and points in $Y^\prime$.} 
\end{definition}

An easy way to  simultaneously construct several uni-valued branches for $\nu$ is to fix a cut $\mC\subset Y^\prime$ such that  

\noindent 
(i) $\mC$ contains all the branch points of $\nu$;

and

\noindent
(ii) $Y^\prime \setminus \mC$ consists of open contractible connected components. 

\smallskip
Then if $\deg \nu=d$ and $Y^\prime \setminus \mC$ is connected, the surface $Y\setminus \nu^{-1}(\mC)$ splits into $d$ disjoint  sheets such that  $\nu$ is a uni-valued function on each of these sheets. 
 \medskip
 \begin{definition} {\rm Given a meromorphic $1$-form $\omega$ on a compact Riemann surface $Y$ and a branched covering $\nu: Y \to Y^\prime$ of  degree $d$, where $Y^\prime$  is also a compact Riemann surface,  we define a \emph {branched  push-forward $\nu_*\omega$ } as a $d$-valued $1$-form on $Y^\prime$ obtained by assigning to a tangent vector $v$ at any point $p\in Y^\prime\setminus Cv$ one of the $d$ possible values $\omega(\nu^{-1}_j v),\;j=1,\dots, d$. Here $\nu^{-1}(v)_j$ is one of the $d$ possible pull-backs of $v$ to the tangent bundle of $Y$ and  $Cv$ is the set of all critical values of $\nu$. (Observe that $\nu$ is a local diffeomorphism near any point of $Y$ which is not its critical point.)}  
 \end{definition}

Using a somewhat fancier language, we can interpret the above definition as consideration of a set-theoretic section $\theta: Y'\to Y$ of the covering $\nu: Y\to Y'$, which  at each point of $Y'$ (with a finite number of exceptions) chooses one of the $d$ possible points in the fiber. This operation induces a branched  push-forward $\nu_*\omega$ as a set-theoretical section of the bundle of meromorphic 1-forms on $Y'$. (We can use set-theoretical sections, since we are not requiring any differentiability of $\theta$.)
 
 \smallskip
Now, in order to obtain a $d$-fold covering of an open subset of $Y^\prime$ by  disjoint sheets,  we want $\theta$ to satisfy certain conditions similar to those which we get by fixing an appropriate cut $\mC\subset Y'$. In other words,  we want to remove from $Y'$  a subset $E$ of Lebesgue measure $0$, and decompose the remaining surface into open domains on each of which $\theta$ is biholomorphic. More precisely, assume that 
$Y'\setminus Cv=\cup_{i=1}^n Y_i^\prime \cup E $, where all $Y_i^\prime$'s are disjoint open sets, 
and $\theta$ is a section of $\nu$ that is biholomorphic on each $Y_i^\prime$. 
In this case we will say that the associated $1$-current $\nu_*\omega$ on $Y^\prime$ 
is a {\emph {piecewise-analytic} $1$-form}.
 
 \smallskip
The Cauchy transform of the asymptotic root-counting measure which we will construct later will be piecewise-analytic in the above sense. (Recall that we interpret the Cauchy transform as a $1$-current on $\bC P^1$, i.e., as a generalized 1-form.) The piecewise-analytic character of our construction stems from the fact that the  Cauchy transform will be associated to a section of a finite cover. 

\smallskip
The following relation to the tropical trace in the previous section is obvious. (We use a simple fact that a local variable $z$ on $Y^\prime$ is a local coordinate on $Y$ if $z\in Y^\prime\setminus Cv$.)

\begin{lemma}  In notation of Proposition \ref{prop:tropicaltraceopen}, assume  that $\nu_{U,*}f(z)$ is continuous in $Y^\prime\setminus Cv$. 
Then the 1-current  $\frac{\partial(\nu_{U,*}f(z))}{\partial z}$ is a branched push-forward $\nu_*\omega$ of $\omega:=\frac{\partial f}{\partial z}$ considered in the sense of distributions. 
\end{lemma}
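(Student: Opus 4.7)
My plan is to reduce the statement to a piecewise computation using the chain rule, with the continuity assumption doing all the work of showing that no singular contribution appears along the non-simple locus $\Delta$.

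First, I would unpack the data provided by Proposition~\ref{prop:tropicaltraceopen}. Since $\nu_{U,*}f$ is assumed continuous on $Y'\setminus Cv$, condition~(ii) of that proposition tells us that on every connected component $C$ of $\mathcal{O}\setminus Cv=Y'\setminus(\Delta\cup Cv)$ there is a distinguished index $i(C)$ such that
\[
\nu_{U,*}f(z)=f(v_{i(C)}(z))\quad\text{for }z\in C,
\]
where $v_{i(C)}$ is the local holomorphic inverse of $\nu$ corresponding to that sheet. Gluing these local inverses together defines a set-theoretic section $\theta:Y'\setminus E\to Y$, $E:=\Delta\cup Cv$, which is biholomorphic on each component $C$. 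Since $E$ is a countable union of semi-analytic curves and isolated points, it has Lebesgue measure zero, and $\theta^{*}\omega$ is by construction a piecewise-analytic $1$-form of the type described in the definition of a branched push-forward; this is exactly $\nu_{*}\omega$ relative to the section $\theta$.

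Second, I would compute both sides on $Y'\setminus E$. On each component $C$, the chain rule together with the fact that $v_{i(C)}$ is holomorphic (so $\bar\partial v_{i(C)}=0$) gives
\[
\frac{\partial}{\partial z}\bigl(f\circ v_{i(C)}\bigr)(z)=\frac{\partial f}{\partial z}\bigl(v_{i(C)}(z)\bigr)\cdot v_{i(C)}'(z)=(\theta^{*}\omega)(z),
\]
which is the value of $\nu_{*}\omega$ at $z$ through the chosen section. Hence the classical, pointwise derivative of $\nu_{U,*}f$ agrees with $\nu_{*}\omega$ everywhere off the measure-zero set $E$.

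Third---and this is the step requiring care---I would argue that the distributional $\partial_{z}$ of $\nu_{U,*}f$ is represented by this classical derivative, i.e.~no singular contribution is carried by $\Delta$. Because $\nu_{U,*}f$ is globally continuous on $Y'\setminus Cv$ and piecewise holomorphic off the locally finite family of real semi-analytic arcs comprising $\Delta$, a standard integration-by-parts argument against a test form shows that the boundary terms arising along each smooth piece of $\Delta$ involve the jump $[\nu_{U,*}f]=0$ and thus cancel. Consequently $\partial_{z}\nu_{U,*}f$, computed as a $1$-current, coincides with its pointwise derivative a.e., and by the previous step this equals $\nu_{*}\omega$.

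The main obstacle is precisely this last step: one must verify that continuity across the semi-analytic set $\Delta$ is enough to prevent any distributional $\delta$-type contribution supported on $\Delta$. The finiteness of self-intersections of $\Delta$ assumed in Proposition~\ref{prop:tropicaltraceopen} ensures that $\Delta$ is a locally rectifiable set along which one can stratify and apply the standard jump formula; since the jump vanishes by continuity, the singular part of the distributional derivative vanishes, and the identification $\partial_{z}\nu_{U,*}f=\nu_{*}\omega$ follows.
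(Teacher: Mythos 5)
Your proposal is correct and takes essentially the same route as the paper: set $E$ to be the measure-zero bad set (the paper takes $E=\Delta$; you take $\Delta\cup Cv$, which makes no material difference), define the section $\theta$ sheet-by-sheet over the components of $\mathcal O$, verify via the chain rule that the classical derivative of $\nu_{U,*}f$ off $\Delta$ agrees with $\theta^*\omega$, and then observe that the continuity of $\nu_{U,*}f$ forces the distributional derivative to have no singular part along $\Delta$. The one place you diverge is that you re-derive the last ingredient from scratch via a jump-formula/integration-by-parts argument along the rectifiable arcs of $\Delta$, whereas the paper simply cites \cite[Prop.~2]{BB} for the statement that the distributional derivative of a continuous, piecewise-harmonic $L^1_{loc}$-function agrees a.e.\ with its pointwise derivative. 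Your argument is a correct (sketched) proof of that cited fact, so this is not a real departure; it just makes the lemma self-contained at the cost of some length. One small slip worth fixing: you describe $\nu_{U,*}f$ as ``piecewise holomorphic'' off $\Delta$, but it is a real-valued function and hence piecewise-\emph{harmonic}; what is holomorphic on each piece is the local inverse $v_{i(C)}$ (and hence the $\partial_z$-derivative), which is what actually enters the chain-rule computation and the jump argument.
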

\begin{proof} As the set $E$ of Lebesgue measure 0 in the above definition of a branched  push-forward take the set $\Delta$ defined in Proposition \ref{prop:tropicaltraceopen}. Furthermore, as the section used in the description of $\nu_*\omega$ take a connected component $C\subset Y^\prime\setminus \Delta$ biholomorphically equivalent to $V_{i(C)}$. Finally, use the fact that the distributional derivative of a piecewise-harmonic and continuous $L^1_{loc}$-function is equal to its usual derivative a.e., see e.g., \cite[Prop.2]{BB}.
\end{proof}



\medskip
\subsection{Defining affine Boutroux curves}
\label{sec:Boutroux}

Consider an irreducible affine algebraic curve $\Upsilon \subset \bC_z \times \bC_u,$ where the product $\bC_z \times \bC_u$ is equipped with coordinates $(z,u)$. 

\smallskip
Denote by $\widehat \Upsilon \subset \bC P^1_z\times \bC P^1_u$ the closure of $\Upsilon$. If $\Upsilon$ is the zero locus of the irreducible polynomial $f(z,u)$ then $\widehat \Upsilon$ is the image of the zero locus of the polynomial $q(z,t,u,s):=s^at^bf(z/t,u/s)$ given in $\bC^2\times \bC^2 $, under the product of two standard maps $\bC^2\to \bC P^1_z$ and $\bC^2\to \bC P^1_u$. Here $a,b$ are minimal (in the lexicographic order) positive integers such that $q$ is an irreducible polynomial. An alternative definition is that $\widehat \Upsilon\subset \bC P^1_z\times \bC P^1_u$ as a set coincides with the topological closure of $\Upsilon\subset \bC\times\bC $ in the ambient space $\bC P^1_z\times \bC P^1_u$.\\

Let $\pi : \bC_z\times \bC_u \to \bC_z$
(resp. $\pi : \bC P^1_z\times \bC P^1_u \to \bC P^1_z$) be the standard projection onto the first 
coordinate. 
Additionally, denote by $\mathfrak{n}: \widetilde \Upsilon\to \widehat \Upsilon$ 
the normalisation map. (Recall that the smooth compact Riemann surface $\widetilde \Upsilon$ is birationally equivalent to $\Upsilon$.)

\medskip 
 Now consider the standard meromorphic $1$-form
$$\Omega:=u\minispace\diff z$$
defined on $\bC_z\times \bC_u$ (resp. on $\bC P^1_z\times \bC P^1_u$).

\begin{remark}{\rm 
One can easily show that the zero divisor of $\Omega$ on $\bC P^1_z\times \bC P^1_u$ is a copy of $\bC P^1$ given by $u=0$; (the closure of) its pole divisor is the union of two intersecting copies of $\bC P^1$ given by $u=\infty$ and $z=\infty$}. 
\end{remark} 

Given a curve $\Upsilon \subset \bC_z \times \bC_u$ as above,
consider the meromorphic $1$-form $$\Omega_\Upsilon :=\Omega\vert_\Upsilon \quad (\text{resp.} \quad \Omega_{\widehat \Upsilon} :=\Omega\vert_{\widehat \Upsilon})$$ obtained by the restriction of $\Omega$ to $\Upsilon$ (resp. to $\widehat \Upsilon$). Denote by $\widetilde \Omega$ the pullback of $\Omega_{\widehat \Upsilon}$ to $\widetilde \Upsilon$ under the normalisation map $\mathfrak{n}: \widetilde \Upsilon\to \widehat \Upsilon$. This form will be the key 
ingredient below. 

\smallskip 
\begin{remark}{\rm 
The zero divisor of $\Omega_{\widehat \Upsilon}$ consists of the intersection points $\widehat \Upsilon$ with the line $u=0$ and all the singularities of $\widehat \Upsilon\subset \bC P^1_z\times \bC P^1_u$. 
The pole divisor of $\Omega_{\widehat \Upsilon}$ consists of all non-singular points of the intersection of $\widehat \Upsilon$ with the union of the lines $z=\infty$ and $u=\infty$.
}
\end{remark}



   \medskip
  Further, given an irreducible affine curve $\Upsilon \subset \bC_z\times \bC_u$ as above and the corresponding meromorphic $1$-form $\widetilde \Omega$ on $\widetilde \Upsilon$, consider the multi-valued primitive function 
  $$\Psi(p)=\int_{p_0}^p \widetilde \Omega.$$
  $\Psi(p)$  is a well-defined uni-valued function on the universal covering of $\widetilde \Upsilon \setminus Pol$, where $Pol \subset \widetilde \Upsilon$ is the set of all poles of $\widetilde \Omega$ and $p_0\in \widetilde \Upsilon\setminus Pol$ is some fixed base point. The next statement is trivial, cf. Lemma~\ref{lm:trivial} in \ref{sec:imaginaryperiods}.

\begin{lemma}\label{lm:trivial3}
In the above notation, $\widetilde \Omega$  has purely imaginary periods if and only if the multi-valued primitive function $\Psi(p)$ has a uni-valued real part $\mathrm{Re}\,\Psi(p)$. In other words, $\mathrm{Re}\,\Psi(p)$ is a well-defined uni-valued function on $\widetilde \Upsilon \setminus Pol$. 
 \end{lemma}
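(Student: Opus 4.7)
The plan is to establish this as an immediate consequence of the definition of the periods and the behavior of line integrals under change of path, exactly paralleling the proof of Lemma~\ref{lm:trivial}.

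First, I would fix a base point $p_0 \in \widetilde \Upsilon \setminus Pol$ and recall that the multi-valuedness of $\Psi(p)=\int_{p_0}^p \widetilde\Omega$ comes precisely from the freedom in choosing a path from $p_0$ to $p$ inside $\widetilde \Upsilon \setminus Pol$. Given two such paths $\gamma_1$ and $\gamma_2$, their concatenation $\gamma_1 - \gamma_2$ is a $1$-cycle in $H_1(\widetilde \Upsilon \setminus Pol, \bR)$, and the difference between the two corresponding values of $\Psi(p)$ is
\begin{equation*}
\int_{\gamma_1} \widetilde \Omega - \int_{\gamma_2} \widetilde \Omega = \int_{\gamma_1 - \gamma_2} \widetilde \Omega,
\end{equation*}
which by definition is a period of $\widetilde \Omega$. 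So the set of values of $\Psi(p)$ at a fixed point $p$, reached from $p_0$ along different paths, is a coset of the period lattice $\Lambda_{\widetilde \Omega} \subset \bC$ generated by integrating $\widetilde \Omega$ over a basis of $H_1(\widetilde \Upsilon \setminus Pol, \bR)$.

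For the implication ($\Rightarrow$), if all periods are purely imaginary, then $\Lambda_{\widetilde \Omega} \subset i\bR$, so every two determinations of $\Psi(p)$ differ by a purely imaginary number; hence $\mathrm{Re}\,\Psi(p)$ is independent of the chosen path and defines a uni-valued function on $\widetilde \Upsilon \setminus Pol$. For the converse ($\Leftarrow$), if $\mathrm{Re}\,\Psi$ is uni-valued, then for every cycle $\gamma \in H_1(\widetilde \Upsilon \setminus Pol, \bR)$, running $\Psi$ around $\gamma$ must return a value with the same real part, so $\mathrm{Re} \int_\gamma \widetilde \Omega = 0$; since this holds for every cycle, every period of $\widetilde \Omega$ is purely imaginary.

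There is essentially no obstacle here — the statement is formal, and indeed the paper flags it as trivial. The only mild subtlety is making sure that the space $\widetilde \Upsilon \setminus Pol$ is path-connected (which follows from $\widetilde \Upsilon$ being a connected compact Riemann surface with finitely many points removed) and that $Pol$ is a discrete set, so that the notion of period over a cycle in $H_1(\widetilde \Upsilon \setminus Pol, \bR)$ is well-defined; both are immediate from the construction of $\widetilde \Omega$ as the pullback of a meromorphic form under the normalization map.
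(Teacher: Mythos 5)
Your proof is correct and is exactly the standard argument one would give; the paper itself declares the lemma trivial and offers no proof, so there is nothing to compare against beyond noting that your path-difference/cycle argument is precisely what "trivial" points to here. One tiny remark: the set of periods is in general only a subgroup of $\bC$, not necessarily a lattice, but this does not affect any step of your argument.
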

The following class of curves has been introduced in \cite{Be,BM} and extensively studied there in the context of hyperelliptic curves and orthogonal polynomials.
 \begin{definition}
 \label{def:boutroux} {\rm A plane affine irreducible curve $\Upsilon\subset \bC_z\times \bC_u$  is called  an \emph{affine Boutroux curve } (aBc, for short) if the meromorphic $1$-form $\widetilde \Omega$ has purely imaginary periods on $\widetilde \Upsilon$. }
 \end{definition} 
 
 \begin{remark}{\rm 
 We can reformulate the latter definition as follows. 
  Let $\Upsilon_{sm}\subseteq \Upsilon$ be the smooth part of $\Upsilon$. Then $\Upsilon$ is an aBc if and only if the restriction of $\Omega=u\minispace\diff z$ to $\Upsilon_{sm}$  has on it purely imaginary periods. In fact,  this is equivalent to the requirement that $\Omega$ has purely imaginary periods on any smooth Riemann surface $\Upsilon_1\subseteq \widetilde \Upsilon $ such that $\widetilde\Upsilon\setminus  \Upsilon_1$ is a finite set.}
  \end{remark}

\subsection{How to construct affine Boutroux curves}
\label{sec:scheme} 
\bigskip
In this section we present an easy way to produce affine Boutroux curves.  A different combinatorial way to construct hyperelliptic  Boutroux curves can be found in \cite[App.~A-B]{BM}.  
After proving by brute force in \S~\ref{sec:perfect} that the curve \eqref{eq:algebraicDiffEq} is an aBc  we will later explain  that this statement is, in fact,  an instance of the construction in the present section, see \S~\ref{sec:Boutroux3}. 

\smallskip
 Let us first sketch the basic idea. 
We start with a real-valued harmonic function $H(z,v)$ on an open subset of $\bC_z\times \bC_v$, 
such that the holomorphic differential $$\diff'H=R_1(z,v)\,\diff z+R_2(z,v)\,\diff v$$ has bivariate rational coefficients. We assume that $H$ is harmonic on the set where both $R_1$ and $R_2$ are defined. Consider the curve $\E$  given by $R_2=0$, and change variables $(z,v)$ to $(z, u)$ where $u=R_1$ which implies that $\E$ embeds in  $(\bC_z\times \bC_u)$. Then (under certain additional genericity assumptions) this variable change will produce another curve  $\Upsilon$ which will be an aBc, since the real part of the integral of $udz$ will coincide with the restriction to $\Upsilon$  of the harmonic function $H/2$. 

\smallskip
Let us now explicate the details.
Expressing $R_1(z,v)=P_1(z,v)/Q_1(z,v)$ and $R_2(z,v)=P_2(z,v)/Q_2(z,v)$ with relatively prime polynomial numerators and denominators, let  $Q(z,v)$ be the least common multiple of the polynomials $Q_1(z,v)$ and $Q_2(z,v)$. Further assume that 

\smallskip \noindent
(*) $P_2(z,v)$ is an irreducible polynomial which is relatively prime with respect to $Q(z,v)$ and that $P_2(z,v),Q(z,v)  $ are not contained in the ring $\bC [z]$ (i.e. they do not depend only on the single variable $z$).

\medskip
Define the curve $\E\subset \bC_z\times \bC_v$ as the zero locus of $P_2(z,v)$, and let $\pi : \E\to \bC_z$ be the restriction to $\E$ of the standard projection  $\pi : (z,v)\to z$.  
Denote by $U\subset \bC_z\times \bC_v$ the complement to the zero locus of $Q$. Define by $\eta(z,v):=(z,R_1(z,v))$  the map 
$$\eta:\bC_z\times \bC_v\to \bC_z\times \bC_u,$$  and let $\Upsilon$ be the topological closure of $\eta(\E\cap U)$. We claim that $\Upsilon$ is an affine curve, and that the projection $ \Upsilon\to \bC_z$  has finite fibers and a dense image. 
\smallskip
The affine property of $\Upsilon$ is easiest to check using commutative algebra.
The affine ring of functions on $U$ coincides with the (localized) ring $\bC [z,v]_Q$ of rational functions whose denominators are powers of $Q$. The affine ring of functions on $\E\cap U$ is the domain given by 
$$\bC [z,v]_Q/(R_2)=\bC [z,v]_Q/(P_2).$$ 
The map $\eta$ restricted to $\E\cap U$ corresponds  to the map $\bC[z,u]\to \bC[z,v]_Q/(P_2)$,
 given by $z\mapsto z,\ u\mapsto R_1(z,u)$. The kernel $K$ of this map is an irreducible prime ideal  since  the irreducibility of  $P_2$ implies that $\bC[z,v]_Q/(P_2)$ is a domain. Finally, it is standard that
$$
 \Upsilon=
\{(z,u):\ p(z,u)=0\ \text{ for }p\in K \} \subset \bC_z\times \bC_u.
 $$ 
   To prove the finiteness,  notice first  that since $P_2 $ and $Q$ are relatively prime by (*), the set $\E\setminus \E\cap U$ is finite. Since $P_2$ is irreducible and not contained in $\bC [z]$ we get  that the map $$
\pi :\E\to \bC_z 
$$
has finite fibers and dense image.  Similarly, the projection $
\pi :\Upsilon\to \bC_z 
$
 has finite fibers. Indeed if the fiber over $z=a$ were infinite, this would imply that $K\subset (x-a)$, and since $K$ is prime,  that  $K=(z-a)$. This contradicts to the fact  that $\bC[z]\subset \bC[z,v]_Q/(P_2) $.

\medskip
\begin{proposition} In the above notation, the curve  $\Upsilon \subset \bC_z\times \bC_u$ is an aBc. 
 \label{prop:perfectconstruction}
\end{proposition}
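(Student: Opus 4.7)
The plan is to establish the purely imaginary periods property first on the auxiliary curve $\widetilde\Upsilon$---where it follows almost tautologically from the single-valuedness of the real function $H$---and then to transport it down to $\widetilde{\mathcal E}$ through the finite covering induced by the factorisation \eqref{eq:factorization}. The crucial point is that the defining polynomial of $\Upsilon$ is precisely the numerator of $R_2$: by assumption $(*)$, $P_2$ and $Q$ are coprime, so the rational function $R_2=P_2/Q_2$ vanishes identically on $\Upsilon$, and consequently the restriction $\diff' H|_\Upsilon$ reduces to $R_1(z,v)\,\diff z$ as a meromorphic $1$-form on $\widetilde\Upsilon$.

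Next I would verify that $R_1\,\diff z|_\Upsilon$ has only purely imaginary periods on $\widetilde\Upsilon$. Let $\mathrm{Pol}\subset\widetilde\Upsilon$ be the finite set consisting of the preimages of $M$, of the zero locus of $Q$, and of the points of $\widehat\Upsilon$ at infinity. On $\widetilde\Upsilon\setminus\mathrm{Pol}$ the restriction $H|_\Upsilon$ is a real-valued, single-valued function, so $\int_\gamma \diff(H|_\Upsilon)=0$ for every $1$-cycle $\gamma$ in $\widetilde\Upsilon\setminus\mathrm{Pol}$, including small loops surrounding the points of $\mathrm{Pol}$. Since $H$ is real, $\diff''H=\overline{\diff'H}$, and combining these two facts gives
$$0\;=\;\int_\gamma \diff H\;=\;\int_\gamma \diff'H|_\Upsilon+\overline{\int_\gamma \diff'H|_\Upsilon}\;=\;2\,\mathrm{Re}\int_\gamma R_1\,\diff z|_\Upsilon,$$
so $\int_\gamma R_1\,\diff z|_\Upsilon\in i\bR$ for every such cycle.

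To descend to $\widetilde{\mathcal E}$, I would use that \eqref{eq:factorization} induces a dominant morphism $\pi_2:\Upsilon\to\mathcal E$, $(z,v)\mapsto(z,R_1(z,v))$, whose extension to the smooth projective normalisations is a finite branched covering $\tilde\pi_2:\widetilde\Upsilon\to\widetilde{\mathcal E}$ with $\tilde\pi_2^*(u\,\diff z)=R_1\,\diff z|_\Upsilon$ by construction. For any $1$-cycle $\tilde\gamma$ in $\widetilde{\mathcal E}$ avoiding branch values and poles, the preimage $\tilde\pi_2^{-1}(\tilde\gamma)$ is a $1$-cycle in $\widetilde\Upsilon$ and
$$\int_{\tilde\pi_2^{-1}(\tilde\gamma)} R_1\,\diff z|_\Upsilon\;=\;\deg(\tilde\pi_2)\cdot\int_{\tilde\gamma} u\,\diff z.$$
The left-hand side is purely imaginary by the previous paragraph, hence so is the right-hand side; this proves that $\mathcal E$ is an aBc.

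The main technical obstacle will be careful bookkeeping at the boundary: verifying that $R_2|_\Upsilon \equiv 0$ extends as the zero rational function across the preimages of $\{Q_2=0\}$ in the normalisation, confirming that $R_1$ is algebraic over $\bC[z]$ so that $\mathcal E$ genuinely is a curve (this follows from $\pi:\Upsilon\to\bC_z$ having finite fibres, which in turn is forced by $P_2\notin\bC[z]$), and ensuring that the degree identity above persists when $\tilde\gamma$ surrounds a pole---a point which reduces to the standard compatibility of residues with ramification indices.
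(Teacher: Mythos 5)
Your proof is correct, and it actually patches a point that the paper leaves implicit. The paper's argument proceeds by asserting that the harmonic function $H$ ``restricts'' to a uni-valued function $H_\E$ on $\mathcal E$ whose holomorphic differential is $u\,\diff z$; but $H$ lives on $\bC_z\times\bC_v$, while $\mathcal E$ lives in $\bC_z\times\bC_u$, and the two are related only through the finite morphism $\pi_2:\Upsilon\to\mathcal E$, $(z,v)\mapsto(z,R_1(z,v))$. For $H|_\Upsilon$ to descend to $\mathcal E$ one needs either that $\pi_2$ is generically injective or that $H$ is constant on the fibres of $\pi_2$ --- neither of which is guaranteed by hypothesis $(*)$. (In the paper's actual application to the symbol curve, $\pi_2$ is a rational coordinate change, hence birational, so this is a non-issue there; but it is a gap in the generality claimed by the proposition.) You avoid the problem entirely by working upstairs on $\widetilde\Upsilon$, where single-valuedness of $H|_\Upsilon$ gives $\mathrm{Re}\int_\gamma R_1\,\diff z=0$ directly, and then transferring the conclusion down to $\widetilde{\mathcal E}$ via the identity $\int_{\tilde\pi_2^{-1}(\tilde\gamma)}\tilde\pi_2^*\omega=\deg(\tilde\pi_2)\int_{\tilde\gamma}\omega$; since $\deg(\tilde\pi_2)$ is a positive integer, dividing by it preserves pure imaginariness. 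This is the same underlying mechanism as the paper's (single-valuedness of $\mathrm{Re}\,\Psi$ on the curve where $\diff'H$ has been computed) but routed correctly through the finite covering, and it holds without the unstated birationality assumption. One small stylistic note: your worry about ``$R_2|_\Upsilon\equiv0$ extending across $\{Q_2=0\}$'' is harmless --- a rational function on an irreducible curve vanishing on a Zariski-dense open set vanishes identically on the normalisation --- so that piece of bookkeeping can be dispensed with in a sentence.
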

\smallskip
\begin{proof}  
Except for a finite number of points $M^\prime\subset \Upsilon$,  $z$ is a local coordinate on $\Upsilon$ and the map $\pi:\Upsilon\to \bC_z$ is a branched cover.
By the remark following Definition \ref{def:boutroux} of affine Boutroux curves, it suffices to check whether the periods of $u\minispace\diff z$ on the open and Zariski dense subset $\Upsilon\setminus M^\prime$ are purely imaginary.  Indeed, let $H_\Upsilon$ be the restriction to $\Upsilon$ of the  harmonic function $H(z,v)$ that we started with. Observe that 
$\diff'H_\Upsilon=P\minispace\diff z=u\minispace\diff z$  on $\Upsilon$. The statement then trivially follows  from the fact that (up to an additive constant) the real part of a primitive function $\int u\minispace\diff z$ (which is well-defined on the universal cover) is actually given by the uni-valued function $\frac{1}{2}H_\Upsilon$ defined on $\Upsilon$. 
\end{proof}

\begin{example}{\rm 
Set $H(z,v)=2(\log\vert v^2+1\vert -\log\vert v-z\vert)$ which is pluriharmonic except for $v=\pm i$ and $v=z$. Its differential is given by  
$$
\diff'H=\frac{1}{v-z}\,\diff z+\left(\frac{2v}{v^2+1}-\frac{1}{v-z}\right)\diff v=R_1(z,v)\diff z+R_2(z,v)\diff v.
$$
In the above notation, $P_2(z,v)=2v(v-z)-(v^2+1)=v^2-2vz-1$ and $Q(z,v)=(v-z)(v^2+1)$, and hence $\E \cap U$ is the open subset of the curve $v(v-2z)=1$ in $\bC_z\times \bC_v$ where $v\neq \pm i$ and  $v\neq z$. Clearly the fibers of the map $\pi:\E \to \bC_z$ have cardinality at most 2. Since $R_1=\frac{1}{v-z}$ we set $u=\frac{1}{v-z}\iff v=z+\frac{1}{u}$. Substituting this in the relation $v(v-2z)=1$, we obtain $(z^2+1)u^2-1=0$. Hence $K$ is the prime ideal generated by the polynomial $(z^2+1)u^2-1$, and the irreducible curve $\Upsilon$ defined by $(z^2+1)u^2-1=0$ is an aBc. (This is the special case $\alpha=1$ and $P(u)=u^2+1$ of \S~\ref{sec:Boutroux3} which corresponds to the Legendre polynomials.) }
\end{example}

 \subsection{Affine Boutroux curves, induced signed measures on $\bC P^1$,   and their Cauchy transforms}
 \label{sec:BoutrouxCauchy}
\medskip
In this subsection we will combine the notions from the previous sections to show that given an affine Boutroux curve $\Upsilon\subset \bC_z\times \bC_u$, we can  under some additional assumptions construct a signed measure on $\bC P^1_z$ whose Cauchy transform 
satisfies the algebraic equation defining $\Upsilon$.

\medskip
Indeed, given a plane curve $\Upsilon\subset \bC_z\times \bC_u$ as in \S~\ref{sec:Boutroux}, we have a natural meromorphic function $\tau: \widetilde \Upsilon \to \bC P^1_z$  induced by the composition of the normalisation map $\mathfrak{n}: \widetilde \Upsilon\to \widehat \Upsilon$  with the  standard projection   $\pi : \widehat \Upsilon\subset \bC P^1_z\times \bC P^1_u\to \bC P^1_z$. The next result will be crucial later.

\begin{theorem}\label{th:main} In the above notation, given an affine Boutroux curve $\Upsilon \subset \bC_z \times \bC_u$ such that: 

{\rm a)} near the line $\{z=\infty\}$ the curve $\widehat \Upsilon \subset  \bC P^1_z \times \bC P^1_u$ consists of smooth branches transversally intersecting this line;

{\rm b)} the restriction of the canonical form $\Omega=u\minispace\diff z$ to the latter  branches of $\widehat \Upsilon$ has simple  poles among which there exists a unique one with minimal negative residue; 

\smallskip
\noindent
then there exists a  signed measure $\bar \mu_\Upsilon$ of total mass $0$ supported on $\bC P^1_z$ with the following properties: 
 
 \smallskip \noindent 
{\rm (i)} its support $S_{\bar \mu_\Upsilon}:=\mathrm{supp} (\bar\mu_\Upsilon)\subset \bC P^1_z$ consists of finitely many compact real analytic curves 
and isolated points; 
 
 \smallskip
 \noindent
{\rm (ii)} the support $S_{\bar \mu_\Upsilon}^-$ of the negative part of $\bar\mu_\Upsilon$ coincides with $\tau\left(Pol^-_{\Omega}\right)\subset \bC P^1_z$;

\smallskip \noindent
 {\rm (iii)} its Cauchy transform     $ \;\bar \C_{\bar \mu_\Upsilon}$   (considered as a  $1$-current; see  \S~2.1) coincides with a uni-valued piecewise-analytic branch  of $\tau_*\widetilde \Omega$  in $\bC P^1_z\setminus S_{\bar \mu_\Upsilon}$. In other words,  if we write $\bar \C_{\bar \mu_\Upsilon}=\C(z)\minispace\diff z$ in the affine chart $\bC_z\subset \bC P^1_z$, where $\C(z)$ is piecewise-analytic in $\bC_z\setminus S_{\bar \mu_\Upsilon}$, then $\C(z)$ satisfies there the algebraic equation defining the aBc $\Upsilon$.

\end{theorem}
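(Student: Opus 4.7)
The plan is to realize $\bar\mu_\Upsilon$ as the distributional Laplacian of a suitable tropical trace of the real part of a primitive of $\widetilde\Omega$ on $\widetilde\Upsilon$, and then to read off properties (i)--(iii) directly from Lemmas~\ref{lm:trivial3} and \ref{lm:trivial2}, Theorem~\ref{lm:tropicaltrace}, Proposition~\ref{prop:tropicaltraceopen}, and the push-forward lemma at the end of \S~2.4. Since $\Upsilon$ is an affine Boutroux curve, Lemma~\ref{lm:trivial3} produces a uni-valued function $H := \mathrm{Re}\,\Psi$ on $\widetilde\Upsilon \setminus Pol$, harmonic off $Pol$ and carrying logarithmic singularities of strength $r_p = \mathrm{res}_p \widetilde\Omega$ at each $p\in Pol$; Lemma~\ref{lm:trivial2} then yields $\partial H/\partial z = \tfrac{1}{2}\widetilde\Omega$ and shows that $H$ is subharmonic off $Pol^-$, tending to $+\infty$ at $Pol^-$ and to $-\infty$ at $Pol^+$. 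I then turn to the branched covering $\tau: \widetilde\Upsilon \to \bC P^1_z$. Hypothesis (a) makes the fibre over $z=\infty$ smooth and transverse, while hypothesis (b) singles out a unique branch $U_\infty$ on which $\Omega$ has the minimal negative residue: passing to the local coordinate $w=1/z$ at $\infty$ shows that this is the branch on which $H$ strictly dominates all other branches for large $|z|$. Using $U_\infty$ as an anchor, I build an open set $U \subset \widetilde\Upsilon$ with $\tau(U)$ dense in $\bC P^1_z$ by propagating the \emph{currently maximal} sheet across the real semi-analytic locus $\Delta\subset \bC P^1_z$ where two sheets become $H$-equal, switching to the newly-maximal sheet at each crossing. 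By construction $U$ satisfies the compatibility conditions (ii)(a)--(b) of Proposition~\ref{prop:tropicaltraceopen}, so the tropical trace $L(z) := \tau_{U*}H(z)$ is a continuous, piecewise-harmonic $L^1_{loc}$ subharmonic function on $\bC P^1_z \setminus \tau(Pol^-)$ with at most logarithmic singularities.

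With $L$ in hand, I define the candidate measure as the distributional Laplacian
$$\bar\mu_\Upsilon := \frac{2}{\pi}\,\frac{\partial^2 L}{\partial z\,\partial \bar z}.$$
Its total mass is $0$ because $L$ is a globally defined $L^1_{loc}$-function on compact $\bC P^1$. By Theorem~\ref{lm:tropicaltrace}, the support of $\bar\mu_\Upsilon$ is a finite union of real semi-analytic arcs (coming from the active portions of $\Delta$) together with the discrete set $\tau(Pol)$, which gives (i). Claim (ii) follows from Lemma~\ref{lm:trivial2}(1): each $p \in Pol^-$ selected by $U$ contributes a negative point mass $r_p\,\delta(\tau(p))$, while the piecewise-maximum nature of $L$ rules out negative densities along the arcs of $\Delta$; moreover, every point of $Pol^-$ is automatically captured by $U$, since $H\to +\infty$ there forces such branches to be locally maximal. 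Property (iii) then follows from Lemma~\ref{lm:trivial2}(4) together with the push-forward lemma of \S~2.4: away from $S_{\bar\mu_\Upsilon}$,
$$\bar\C_{\bar\mu_\Upsilon} \;=\; 2\,\frac{\partial L}{\partial z}\,\diff z \;=\; \tau_*\widetilde\Omega$$
as a branched push-forward, i.e., in the affine chart $\bC_z$ this reads $\C(z)\,\diff z$ with $(z,\C(z))\in \Upsilon$ almost everywhere, so $\C$ satisfies the polynomial equation defining $\Upsilon$.

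The main obstacle, as I see it, is the combinatorial step of assembling $U$: one must verify that the maximal-sheet assignment can be consistently extended globally from the anchor $U_\infty$ across every arc of $\Delta$ to produce a single open set fulfilling the compatibility conditions (ii)(a)--(b) of Proposition~\ref{prop:tropicaltraceopen}, and that upon each crossing of $\Delta$ the two maximal sheets on either side are precisely those whose $H$-values coincide on the arc (so that the trace is genuinely continuous). Hypothesis (b) is exactly what is needed to anchor this extension at $\infty$ and to force the resulting measure to have the correct mass behavior; once it is established, the analytic content of (i)--(iii) becomes a direct application of the machinery developed in \S\S~2.2--2.4.
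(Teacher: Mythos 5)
Your proposal tracks the paper's proof closely in spirit, but there are two places where it either adds unnecessary machinery or leaves a genuine gap.

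First, a matter of economy rather than correctness: you build an open set $U$ by propagating the ``currently maximal'' sheet from the anchor at $\infty$ and then apply the \emph{restricted} trace $\tau_{U*}H$ via Proposition~\ref{prop:tropicaltraceopen}, and you flag this combinatorial propagation as the main obstacle. The paper avoids this entirely by taking the \emph{full} tropical trace
$\Theta_\Upsilon(z):=\max_{v_i\in\tau^{-1}(z)}\mathrm{Re}\,\Psi(v_i)$
over the whole fibre; by definition this already equals the fibrewise maximum, so no consistency-of-propagation issue arises, and Theorem~\ref{lm:tropicaltrace} directly delivers continuity, piecewise-harmonicity, and subharmonicity off $\tau(Pol^-)$. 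Your $U$ would in any case be exactly the set of maximal sheets, so $\tau_{U*}H=\tau_*H$ — but you have manufactured an obstacle that does not exist if one uses the unrestricted trace. (Hypothesis (b) is not needed to ``anchor a propagation''; it is needed so that the dominant branch near $z=\infty$ is unique, giving the well-defined asymptotic $\Theta_\Upsilon(z)\sim -r_{\min}\log|z|$ and hence a well-defined negative point mass at $\infty$.)

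Second, and more seriously, there is a gap in your proof of (iii). You define $\bar\mu_\Upsilon:=\frac{2}{\pi}\frac{\partial^2 L}{\partial z\partial\bar z}$ and then immediately write $\bar\C_{\bar\mu_\Upsilon}=2\frac{\partial L}{\partial z}\,\diff z$. But the Cauchy transform of $\bar\mu_\Upsilon$ is by definition $2\,\partial \bar L_{\bar\mu_\Upsilon}/\partial z\,\diff z$, where $\bar L_{\bar\mu_\Upsilon}$ is the \emph{logarithmic potential} of $\bar\mu_\Upsilon$, and it is not automatic that $\bar L_{\bar\mu_\Upsilon}$ and your tropical trace $L$ have the same $\partial/\partial z$-derivative. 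What is needed is the observation that $L$ and $\bar L_{\bar\mu_\Upsilon}$ are both $L^1_{\mathrm{loc}}$-functions on the compact surface $\bC P^1$ whose distributional Laplacians produce the same measure $\bar\mu_\Upsilon$; hence their difference is globally harmonic on $\bC P^1$, therefore constant, and only then does $\bar\C_{\bar\mu_\Upsilon}=2\,\partial L/\partial z\,\diff z$ follow. This Liouville-type step is exactly how the paper closes its argument, and without it your chain of equalities in (iii) is unjustified. The remainder — identifying $2\,\partial L/\partial z\,\diff z$ on each connected component of the complement of $\Delta$ with $u(z)\,\diff z$ for a branch $u$ of the algebraic function cutting out $\Upsilon$ — is correct and matches the paper.
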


\smallskip
\begin{remark} {\rm Observe that the points at which the branches in Condition a) intersect  the line $\{z=\infty\}$ do not have to be distinct. Furthermore,  the restriction of the canonical form $\Omega=u\minispace\diff z$ to all branches of $\widehat \Upsilon$ near the line $\{z=\infty\}$ in Condition b) must have  poles at all points of its intersection with this line since $\Omega$ has poles along this line in $\bC P^1_z\times \bC P^1_u$. The residues at all these poles must be real since $\Upsilon$ is an aBc. Thus the only essential requirement in  this condition is the existence of a unique minimal  negative residue.  We suspect that the requirement of  simplicity of poles in item b) can be  weakened with the same conclusions as in Theorem~\ref{th:main}.  

}\end{remark}

\smallskip
\begin{remark}
{\rm Prior to proving Theorem~\ref{th:main}, observe that, in general, its converse is  false, i.e., there exist curves  for which conditions (i), (ii) and (iii) hold, but  which are {\bf not}  necessarily affine Boutroux curves, see e.g., \S~4  of \cite{BoSh}. Thus being an aBc  provides a sufficient (but not necessary) condition for the validity of (i) -- (iii). Observe additionally  that if we remove condition (iii), then there exist situations in which $\bar \mu_\Ga$ is not unique,  see e.g., Theorem~4 of \cite{ShTaTa}.   
  We   also want to point out a close connection of Theorem~\ref{th:main} with some results of \cite{BarSh} where condition ${\rm (iii)}$ is called the existence of  \emph{clean poles}. }
\end{remark}

\begin{proof}[Proof of Theorem~\ref{th:main}] Choose an arbitrary point $p_0\in \widetilde\Upsilon\setminus  Pol_{\widetilde\Omega}$ and,  as in Lemma \ref{lm:trivial2}, consider the function $$\text{Re } \Psi(p)=\text{Re }\left[ \int_{p_0}^p \widetilde \Omega\right],$$ 
where $\Omega=u\minispace\diff z$ and $ \widetilde \Omega$ is its pullback to the normalization $\widetilde\Upsilon$. Note that
$$\frac{\partial\text{Re } \Psi}{\partial z}\diff z= \frac{\Omega}{2}.$$
Since $\Upsilon$ is an aBc,  then  $\widetilde \Omega$ has purely imaginary periods on $\widetilde\Upsilon$ and $\text{Re } \Psi(p)$ is a  uni-valued harmonic function on $\widetilde \Upsilon\setminus Pol_{\widetilde\Omega}$. (One can consider $\text{Re } \Psi(p)$ as defined on all of $\widetilde \Upsilon$ if one allows it to attain values $\pm \infty$.) Let $Cr\subset \widetilde\Upsilon$ be the set of critical points of the meromorphic function $\tau: \widetilde \Upsilon \to \bC P^1_z$  obtained as the composition $\tau=\mathfrak{n}\circ \pi $ and let $\tau(Pol)\subset \bC P^1$ be the image of the set $Pol\subset \widetilde\Upsilon$ of all poles of $\widetilde \Omega$.   Recall that  the finite set $Cv\subset \bC P^1_z$ is defined as the set of all critical values of the meromorphic function $\tau: \widetilde\Upsilon \to \bC P^1_z$. 

\smallskip
Now, for  any $z\in \bC P^1_z\setminus \left(Cv \cup \tau(Pol)\right)$, define the function $\Theta_\Upsilon$ on $\bC P^1_z$ given by 
$$\Theta_\Upsilon(z):=\max_{v_i(z)\in \tau^{-1}(z)} \{ \text{Re } \Psi(v)\}.$$   In other words,  $\Theta_\Upsilon(z)$ is the \emph{tropical trace} of the projection $\tau$ of the function $\text{Re } \Psi$ to $\bC P^1_z$. 
  
  \medskip
Observe that if $z$  lies in  $\bC P^1_z\setminus \left(CV\cup \tau(Pol)\right)$, then it is a local parameter on every  branch of  $\widetilde \Upsilon$ near each point belonging to  the fiber $\tau^{-1}(z)$, which implies that  each function $H_i(z):=\text{Re } \Psi(v_i(z))$ is a well-defined harmonic function near $z$. Moreover,  outside of its poles,   $\Theta_\Upsilon(z)$ is  a continuous subharmonic function.
 
 \medskip
The above definition of $\Theta_\Upsilon(z)$ also makes sense  if $z$ is  a critical value or the image of a pole; in the latter case  $\Theta_\Upsilon(z)$  might attain infinite values. Namely, if  $v_i \in  \widetilde\Upsilon$ is a pole with residue $r$ and $z_v := \tau(v)$, then locally near $z_v$ the corresponding $H_i(z)$ has the  asymptotic  $r\log\vert z-z_p\vert$. Hence, if $r$ is positive, then $\lim_{z\to z_p}H_i(z)=-\infty$ in a sufficiently small  neighbourhood of $z_p$.  Analogously,  if $r$ is negative, then $\lim_{z\to z_p}H_i(z)=+\infty$. 
 Finally,   $\Theta_\Upsilon(z)=-\infty$ if and only if every point in   $\tau^{-1}(z)$ is a pole of 
 $\widetilde \Omega$ with a positive residue.   Near $\infty \in \bC P^1_z$,  the tropical trace $\Theta_\Upsilon(z)$  has the asymptotic $-r_{min}\log|z|$, where $r_{min}$ is the unique minimal negative residue guaranteed by  Condition b). 
 \medskip
 
Now let us define the $2$-current $\bar \mu_\Upsilon$ on $\bC P^1_z$ as given by 
\begin{equation}\label{Levy}
\bar \mu_\Upsilon:=\frac{1}{2\pi}\left(\frac{\partial^2 \Theta_\Upsilon}{\partial x^2}+ \frac{\partial^2 \Theta_\Upsilon}{\partial y^2} \right) \,  \diff x\diff y=\frac{i}{\pi}\frac{\partial^2\Theta_\Upsilon}{\partial z\partial \bar z}\,\diff z\diff \bar z,
\end{equation}
 where $(x,y)$ are the real and the imaginary parts of the affine coordinate $z$.  
We will call the  function $\Theta_\Upsilon$   
the \emph{logarithmic prepotential} of the $2$-current $\bar \mu_\Upsilon$. 

\medskip
The $2$-current $\bar \mu_\Upsilon$ given by \eqref{Levy} satisfies conditions (i)-(ii) of Theorem~\ref{th:main} which immediately follow from Theorem \ref{lm:tropicaltrace} saying that $\bar \mu_\Upsilon$  is actually a signed measure on $\bC P^1$  supported on finitely many segments of analytic curves belonging to the level sets $\{\text{Re } \Psi(v_i(z))=\text{Re } \Psi(v_j(z))\}$, $i\neq j$, and finitely many isolated points including $\tau(Pol^-)$ and possibly some part of $\tau(Pol^+)$.  By the above asymptotic of  $\Theta_\Upsilon(z)$ at $\infty$, the $2$-current $\bar \mu_\Upsilon$ has a negative point mass $r_{min}$ at $\infty\in \bC P^1_z$. The rest of its support lies in a bounded domain in the affine chart $\bC_z=\bC P^1_z\setminus \infty$.  Observe that since $\bar \mu_\Upsilon$ has a (pre)potential, it must necessarily be exact which is equivalent to  
\begin{equation}
\label{eq:exact potential}
\int_{\bC P^1} \bar \mu_\Upsilon=0.
\end{equation}
(Since $\bar \mu_\Upsilon$ is a $2$-current on the $2$-dimensional manifold $\bC P^1$ it is automatically closed; in order to be exact its integral over $\bC P^1$ must vanish which is given by \eqref{eq:exact potential}. Observe that $H_2(\bC P^1, \mathbb Z)=\mathbb Z$.) 
 Finally notice that the negative part of $\bar \mu_\Upsilon$ is supported  on  $\tau(Pol^-)$, by construction. 

\smallskip
To settle (iii), assume that $V\subset \bC P^1\setminus\{\supp (\mu_\Ga) \cup \B\cup \tau(\textit{Pol\,})\}$  is a  simply connected subset. Then
$\Theta_\Upsilon(z)=\Psi(\nu_U(z))$ for a certain choice of a branch $\nu_V: V\to \Upsilon\subset \bC_z\times \bC_u$. Set $\nu_V(z)=(u(z),z)$,  
where $u$ is a branch of the algebraic function defined by $\Upsilon$. Clearly $z$ is a local coordinate both in $V$ and in $\nu_V(V)$ and therefore 
$$
d'(\Theta(z))=\frac{\partial\Theta(z)}{\partial z}\diff z=\frac{
u(z)
}{2}
\diff z.
$$
Thus $2\frac{\partial\Theta(z)}{\partial z}$ satisfies the equation defining $\Upsilon$. 

Next restrict $\bar \mu_\Upsilon$ to the affine chart $\bC_z=\bC P^1_z\setminus \infty$, denote this restriction by $ \mu_\Upsilon$, and define the (usual) logarithmic potential of $ \mu_\Upsilon$ as 
 $$L_{\mu_\Upsilon}(z):=\int_\bC \ln|z-\xi|\,\diff \mu_\Upsilon(\xi).$$ 
 As we explained in \S~2.1, $$\bar \mu_\Upsilon=\frac{1}{2\pi} \left( \frac{\prt^2 \bar L_{\bar \mu_\Upsilon}}{\prt x^2}+ \frac{\prt^2 \bar L_{\bar \mu_\Upsilon}}{\prt y^2} \right)\,\diff x\wedge\diff y=\frac{2}{\pi}\frac{\prt^2 \bar L_{\bar \mu_\Upsilon}}{\prt z \prt \bar z}\,\diff x\wedge\diff y=\frac{i}{\pi}\frac{\prt^2 \bar L_{\bar \mu_\Upsilon}}{\prt z \prt \bar z}\,\diff z \wedge\diff \bar z,$$
 where $\bar L_{\bar \mu_\Upsilon}$ is the the latter logarithmic potential $L_{\mu_\Upsilon}(z)$ considered as a $L^1_{loc}$-function on $\bC P^1$. 

Observe that  the application of Laplace operator to both $\Theta_\Upsilon$ and $\bar L_{\bar \mu_\Upsilon}$ defined in $\bC P^1$ gives exactly the same measure $\bar \mu_\Upsilon$.  (This is clear in the affine plane, and for the isolated point mass at $\infty$  follows from \eqref{eq:exact potential} together with the definition of $\bar \mu_\Upsilon$.)  

Hence the difference  $\Theta_\Upsilon - \bar L_{\bar \mu_\Upsilon}$ is a global harmonic function of the whole $\bC P^1$. Thus this difference has to be constant. Therefore, $$\C_{\mu_\Upsilon}=2\frac{\partial L_{\bar \mu_\Upsilon}(z)}{\partial z}=2\frac{\partial\Theta(z)}{\partial z}$$
has to satisfy the algebraic equation defining $\Upsilon$.
 \end{proof}

\section{Affine Boutroux curves related to Rodrigues descendants}\label{sec:perfect}

We start with the observation that, after a  scaling of the Cauchy transform,  the second part of Theorem~\ref{th:Cauchy} is equivalent to the following claim.

\begin{proposition}\label{lm:simplpol}
For the asymptotic root-counting measure $\mu_{\al,P}$ as in Theorem~\ref{th:Cauchy}, its scaled Cauchy transform  $\W$  defined by 
\begin{equation}\label{eq:scaled}
\W:=\W_{\al,P}:=\frac{d-\alpha}{\alpha}\C_{\al,P}
\end{equation}

 satisfies a.e. in $\bC$  the algebraic equation
 
\begin{equation}\label{eq:algebraicDiffEq2}
\sum_{k=0}^{d} \frac{\alpha-k}{k!}\,P^{(k)}\W^{d-k} = 0.
\end{equation}
\end{proposition}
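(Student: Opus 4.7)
The plan is to observe that Proposition~\ref{lm:simplpol} is purely a change of variables: the equation \eqref{eq:algebraicDiffEq2} for the scaled transform $\W$ must be a multiple of the equation \eqref{eq:algebraicDiffEq} for $\C$ after the substitution $\C = \frac{\alpha}{d-\alpha}\W$ dictated by \eqref{eq:scaled}. So the proof reduces to a direct algebraic verification.

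First, I would assume the part of Theorem~\ref{th:Cauchy} that provides the algebraic equation \eqref{eq:algebraicDiffEq} satisfied a.e.\ by $\C = \C_{\al, P}$. Then, substituting $\C = \frac{\alpha}{d-\alpha}\W$ in \eqref{eq:algebraicDiffEq} yields
\begin{equation*}
\sum_{k=0}^{d} \frac{\alpha^{k-1}\,(\alpha-k)(d-\alpha)^{d-k}}{k!}\,P^{(k)}\left(\frac{\alpha}{d-\alpha}\right)^{d-k}\W^{d-k} = 0.
\end{equation*}
The factor $(d-\alpha)^{d-k}$ cancels against $(d-\alpha)^{-(d-k)}$ in every term, and the powers of $\alpha$ combine as $\alpha^{k-1}\cdot \alpha^{d-k} = \alpha^{d-1}$, producing
\begin{equation*}
\alpha^{d-1}\sum_{k=0}^{d} \frac{\alpha-k}{k!}\,P^{(k)}\W^{d-k} = 0.
\end{equation*}
Since $0 < \alpha < d$, the prefactor $\alpha^{d-1}$ is nonzero and can be divided out, yielding exactly \eqref{eq:algebraicDiffEq2}.

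Conversely, multiplying \eqref{eq:algebraicDiffEq2} by $\alpha^{d-1}$ and substituting back $\W = \frac{d-\alpha}{\alpha}\C$ reverses the computation, so the two equations are indeed equivalent under the scaling \eqref{eq:scaled}. Hence the statement of Proposition~\ref{lm:simplpol} is equivalent to the second assertion of Theorem~\ref{th:Cauchy}, and the proof is complete.

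There is no real obstacle here; the entire content lies in verifying the bookkeeping of the exponents of $\alpha$ and $d-\alpha$. One minor point worth flagging explicitly is the non-vanishing of $\alpha^{d-1}$, which is guaranteed by the standing hypothesis $0 < \alpha < d$. Should one instead wish to prove Proposition~\ref{lm:simplpol} directly (independently of Theorem~\ref{th:Cauchy}), one would follow the same saddle-point analysis used for $\C$, but it is cleaner to simply quote Theorem~\ref{th:Cauchy} and perform this rescaling.
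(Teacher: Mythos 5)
Your proof is correct and matches the paper's treatment: the paper states the proposition as an immediate reformulation of the second part of Theorem~\ref{th:Cauchy} via the rescaling $\W = \frac{d-\alpha}{\alpha}\C$, without even writing out the elementary exponent bookkeeping that you verify explicitly.
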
 

\smallskip
In what follows we will denote


\smallskip
\noindent
{\rm (i)} by $\Ga:=\Ga_{\al,P}\subset \bC_z\times \bC_\C$  the affine algebraic  curve given by \eqref{eq:symbolcurve1} or equivalently \eqref{eq:algebraicDiffEq}

\smallskip
and  

\smallskip
\noindent
{\rm (ii)}  by $\La:=\Lambda_{\al,P}\subset \bC_z\times \bC_\W$ the affine algebraic  curve given by \eqref{eq:algebraicDiffEq2}.

\smallskip 
We will refer to $\Ga_{\al,P}$ as the \emph{symbol curve} of the pair $(\al,P)$ and to $\Lambda_{\al,P}$ as the \emph {scaled symbol curve} of $(\al,P)$. 
To prove the second part of Theorem~\ref{th:Cauchy}  (or, its equivalent Proposition~\ref{lm:simplpol}) using the above Theorem~\ref{th:main}, we need to study in detail the algebraic curves  $\Ga_{\al,P}$ and $\Lambda_{\al,P}$. 

\medskip
 Our goal is to show that for any strongly generic $P$ and $0<\al<\deg P$, the  irreducible curve   $\Ga_{\al,P}\subset \bC_z\times \bC_\C$  is an  aBc as defined above. In fact, we will prove this property for  the curve    $\Lambda_{\al,P}$. 
Since $\Ga_{\al,P}$ is obtained from $\La_{\al,P}$  by a real scaling of the first coordinate, the claim that  $\Ga_{\al,P}$ is an  aBc follows from that for $\La_{\al,P}$. 

\begin{remark}{\rm 
Observe that   \eqref{eq:algebraicDiffEq} defines the closure    $\widehat \Ga_{\al,P}$  in $\bC P^1_z\times \bC P^1_\C$ of the bidegree $(d,d)$. By the adjunction formula, a smooth curve in $\bC P^1_z\times \bC P^1_\C$ of the bidegree $(d,d)$ has genus $(d-1)^2$. However,  the curve given  by \eqref{eq:algebraicDiffEq} is rational and therefore highly singular.}
\end{remark}

The next  technical theorem explicates the algebraic geometric properties of  $\Lambda_{\al,P}$ and its canonical differential $\Omega=\W\minispace\diff z$ which are central for the application of the tropical trace to our problem.               
   In Theorem~\ref{pr:rational} below,  $\widehat \La:=\widehat \La_{\al,P}$ denotes the closure of $ \La_{\al,P}$ in $\bC P^1_z\times \bC P^1_\W$, and $\widetilde \La:=\widetilde \La_{\al,P}$ denotes the normalisation of 
$\widehat \La_{\al,P}$. Recall that $\W=\frac{d-\al}{\al}\C$ is a coordinate obtained by rescaling the coordinate $\C$. 

\begin{theorem}\label{pr:rational} Let $P$ be a strongly generic polynomial of degree $d\ge 2$ and let $0<\al< d$ be a positive number. 
Then 
the algebraic curve  $ \La \subset \bC_z\times \bC_\W$   given by  \eqref{eq:algebraicDiffEq2}  
is an aBc. 
More exactly, the following properties hold:  

\medskip
\noindent
\rm{(i)}  $\La$ is an irreducible rational curve.

\smallskip
\noindent
\rm{(ii)} The inverse image $\pi^{-1}(\infty)\subset \widehat \La\subset \bC P^1_z\times \bC P^1_\W$ 
consists only of $(\infty,0)$; that is, $\infty\in \bC P^1_z$ is a complete ramification point of the  function  $\tau: \widetilde \La\to \bC P^1_z$.

\smallskip
\noindent
\rm{(iii)} 
The equation defining the slopes $s$ of different branches of $\widehat \La$ 
at $\infty\in \bC P^1_z$ is given by 
 \begin{equation}\label{eq:slopes}
 (s+1)^{d-1}(\al(s+1)-d)=0.
 \end{equation}
 It has only two distinct solutions which are  $s=\frac{d}{\al}-1$  and  $s=-1$ of multiplicity $d-1$. 

\smallskip
\noindent
\rm{(iv)}  
 The only singularity of $\widehat \La\subset  \bC P^1_z\times \bC P^1_\W$ 
  is  $(\infty,0)$. As a consequence, the normalisation map $\mathfrak{n}:  \widetilde \La \to \widehat \La$ is one-to-one at all points except for $(\infty,0)\in \widehat \La$ whose preimage consists of $d$ points of $\widetilde \La$.

\smallskip
\noindent 
\rm{(v)} 
All $d$ local branches of $\widehat \La$ 
at the point $(\infty,0)$  are smooth. 

\smallskip
Finally, the set of all poles of $\widetilde \Omega= \mathfrak{n}^{-1}(\W\minispace\diff z)$ on $\widetilde \La$ is described in (vi) -- (vii) below.

\smallskip
\noindent
\rm{(vi)}  $\widetilde \Omega$  has a simple pole at each of the points $p_i\in\widetilde \La,\; i=1,\dots,d$, 
whose images are given by $\mathfrak{n}(p_i)=(z_i,\infty)\in\widehat \La\subset \bC P^1_z\times \bC P^1_\W$, where $z_i$ runs over the set of zeros of $P$. At each such point $p_i$,  $\widetilde \Omega$ has the same residue equal to $\frac{1-\al}{\al}$. 


\smallskip
\noindent
\rm{(vii)}
 $\widetilde \Omega$ has a pole with real residue at each of  the $d$ preimages  of the singular point  $(\infty,0)\in \widehat \La$ 
 under the normalization map $\mathfrak{n}:  \widetilde \La \to  \widehat \La$. This residue equals $1$ for each of the $d-1$ preimages coming from the branches with slope $-1$ at $(0,\infty)$ and the remaining residue equals  $\frac{\al-d}{d}$ for the preimage coming from the branch with the   slope  $\frac{d}{\al}-1$. 
 \end{theorem}

\smallskip
In what follows we will refer to the solution $s=\frac{d}{\al}-1$ of the equation \eqref{eq:slopes} as the \emph{essential slope} since it defines the asymptotic at $\infty$ of the scaled Cauchy transform $\W(z)$, see \eqref{eq:scaled}.

\begin{remark} {\rm Condition (vii) implies that on the curve $\widetilde \Ga$ which is the normalization of $\widehat \Ga \subset \bC P^1_z\times \bC P^1_\C$, the canonical form has poles at all $d$ preimages of the point $(\infty,0)\in \bC P^1_z\times \bC P^1_\C$, $d-1$ of which have the same positive residue $\frac{\al}{d-\al}$ and the remaining point has residue $-1$. The latter value is related to the fact that the Cauchy transform of the asymptotic root-counting measure $\mu_{\al,P}$ (which is a compactly supported probability measure) has the standard asymptotic $\frac{1}{z}$ near $\infty$ in the $z$-plane.}
\end{remark} 

\begin{proof}[Proof of Theorem~\ref{pr:rational}]

To prove (i), observe that the global rational change of variables  $(\W=\W,\, \tilde z=z+\W^{-1})$ transforms \eqref{eq:algebraicDiffEq2}   
into 
\begin{equation}
\label{eq:extra+}
\al \W=  \frac {P^\prime(\tilde z)}{P(\tilde z)}.
\end{equation} 
Since $\al\neq 0$, this equation allows us to consider $\W$ as the graph of a rational function in the variable $\tilde z$. The latter fact implies that $\Lambda$  is a rational curve, and in addition,   $\Lambda$ is irreducible since it is a graph.

\medskip
To prove (ii), we argue as follows. Assuming that all zeros of $P(z)=(z-z_1)\dotsm(z-z_{d})$ are simple, we obtain
\begin{equation}\label{eq:extra}
\al \W=\frac{P^\prime(z+\W^{-1})}{P(z+\W^{-1})}=\sum_{i=1}^{d}\frac{1}{z+\W^{-1}-z_i}.
\end{equation}

Substituting $z=\frac{1}{y}$ in  \eqref{eq:extra} and clearing the denominators, we get 
\begin{equation}\label{eq:mod}
\al \widetilde P =y\sum_{j=1}^{d}\widetilde P_j,
\end{equation}
where
$$\widetilde P:= \prod_{i=1}^{d}(y+\W-z_iy\W)\quad\mathrm{and}\quad\widetilde P_j:=\frac{\widetilde P}{y+\W-z_jy\W}.$$ 

\smallskip
 To obtain the fiber over $z=\infty\in \bC P^1_z$, i.e., over the point $y=0$, one should  substitute $y=0$ in \eqref{eq:mod}. One can easily check that the result of this substitution is 
$\al  \W^d=0,$ implying that the only point in the fiber $\tau^{(-1)}(\infty)$ is $\W=0$. (This argument works even if $P$  does not have simple zeros.) 

\smallskip
To settle (iii), we need to calculate the slopes of the branches of $\Lambda_{\al,P}$ 
 at $(0,\infty)$, for which one should substitute $\W=s(y)y$ in \eqref{eq:mod}. These slopes coincide with $s:=s(0)$. After the substitution $\W=s(y)y$ in \eqref{eq:mod}, the factor $y^{\deg P}$ can be cancelled on both sides, which then, by letting $y=0$, yields
$$\al\prod_{i=1}^{d}(1+s)=d \prod_{i=1}^{ d-1}(1+s)
$$
or, equivalently,
$$\al (1+s)^d=d(1+s)^{d-1} \Leftrightarrow 
 (1+s)^{d-1}(\al(s+1) -d)=0,
$$ 
which is the required statement.

\medskip
To prove (iv), we need to show that there are no singularities of $\widehat \La$ 
above the affine part of $\bC P^1_z$, i.e., for all  $z\neq \infty$ and $\W\in \bC P^1$. Notice first 
that $\W=0$ is impossible for finite $z$. Further notice that $\W=0$ is equivalent to $\W^{-1}=\infty$ and rewrite  (\ref{eq:extra}) as
\begin{equation}
\label{eq:rewrite}
G(\W^{-1},z)=\alpha P(z+\W^{-1})-\W^{-1} P'(z+\W^{-1})=0.
\end{equation}
A simple calculation shows that the coefficient of the highest power of $\W^{-1}$ is $\al-d,$ which is negative since by our assumption $\al<d$. Hence, $z$ finite and $\W=0$ is impossible.
 In other words, the curve $\widehat \La$ intersects the coordinate line $\bC P^1_z$ only at $z=\infty$, and its part  $\La\subset \bC_z\times \bC_\W $ is contained in the Zariski open set $A\subset \bC_z\times \bC_\W$ given by $ \W\neq 0$. 
  
 \smallskip 
Secondly, observe that the rational change of coordinates $(\W,z)\mapsto (\W,\tilde z)$  given by  $\W=\W,\; \tilde z=z+\W^{-1}$ is a diffeomorphism between the above open set $A: \subset \bC_z\times \bC_\W$ and the open set  $B\subset \bC_{\tilde z}\times \bC_\W$ given by $\W\neq 0$.  
The curve given by (\ref{eq:extra+}) in the coordinates $(\tilde z,\W) $  is clearly smooth when $\tilde z$ is not a root of $P$. Additionally,  $\W=\infty$ at any root of $P$ implying that our curve is smooth in all of $B$. Any diffeomorphism preserves the smoothness property, and hence $\widehat \La$ is smooth in $A$. By the first observation, it is  therefore smooth at all points in $\bC_z\times \bC_\W$.

It remains to check the points of $\widehat \La$ with $\W=\infty$,
which occurs exactly at the roots of $P$. We can do this  by setting $\W^{-1}=0$ in (\ref{eq:extra}). Assume that $p\in \bC P^1_z\times \bC P^1_\W$ is of the form $(z_i,\infty)$ where   $P(z_i)=0$ and that $p$ a singular point of $\widehat \La$. Then at this point $p$, the partial derivatives of  $G(z,\W^{-1})$ in \eqref{eq:rewrite}  with respect to the variables $ \W^{-1}$ and $z$  must vanish. A short calculation  shows that 
$$\frac{\partial G(z,\W^{-1})}{\partial \W^{-1}}-\frac{\partial G(z,\W^{-1})}{\partial z}=-P'(z,\W^{-1}).$$ 
Since at $p$ one has $\W^{-1}=0$, $P(z_i)=0$ and we have assumed that $P(z)$ has only simple roots, we get that $P^\prime(z_i)\neq 0$  which implies that the latter difference between the partial derivatives cannot vanish at $p$, a contradiction.

\medskip
To prove (v), we first consider the essential branch at $\infty$, i.e., the branch whose slope is given by $s=\frac{d}{\al}-1$. By our assumption, $0<\al<d$, which, in particular, implies that  this slope differs from $ -1$ which is the slope for all other branches. By the implicit function theorem, the essential branch is smooth at $(0,\infty)$.  

\smallskip
Let us now consider the remaining cases for which 
\begin{equation}
\label{eq:infbranch}
\W=ys(y)=y(-1+s_1 y+s_2 y^2+\dots)=-y+y^2u(y).
\end{equation} We will first show that if $P^\prime$ has simple roots, then there exist $d-1$ distinct solutions for the variable $s_1$. Rewriting \eqref{eq:infbranch} in terms of $s(y)$ corresponds to the blow-up of the curve at the origin, and then rewriting it in terms of $u=u(y)$ corresponds to  still another blow-up.

\smallskip
Note that $$y^{-1}+\W^{-1}=\frac{u}{yu-1,}$$
and  substitution of  (\ref{eq:infbranch}) in equation (\ref{eq:extra}) results in
\begin{equation}
\label{eq:logder}
\alpha y=\sum_{i=1}^{d}\frac{1}{u-z_i(yu-1)}.
\end{equation}
If we now set $y=0$, the latter equation becomes
\begin{equation}
\label{eq:v1}
0=\sum_{i=1}^{d}\frac{1}{u+z_i}=-\frac{P'(-u)}{P(-u)}.
\end{equation}
Further $$\frac{P'(u)}{P(u)}=0\iff P'(u)=0$$ which is an equation in $u$ of degree $d-1$  and its solutions are exactly the zeros of $P^\prime(u)$. Thus there exist $d-1$ solutions $u(0)=s_1$ of \eqref{eq:v1}. Moreover they are all distinct by the assumption that $P^\prime$ has only simple roots. Additionally, we can observe that equation (\ref{eq:logder}) defines a curve $\al y=F(u,y)$ in $\bC_u\times \bC_y$ with coordinates $u$ and $y$. This curve will be smooth and transversal to $y=0$ at a point $(s,0)$ if 
$$
\frac{\partial F(u,y)}{\partial u}\big \vert_{(s_i,0)}=\sum_{i=1}^{d}\frac{1}{(s+z_i)^2}=\left(\frac {P'(-s)}{P(s)}\right )'\neq 0.
$$
On the other hand, if we assume that $s=s_1$ is one of the distinct roots of $P'(-s)$ we obtain
 $$\left(\frac {P'(-s)}{P(s)}\right)'=-\frac{P''(-s)P(-s)-(P'(-s))^2}{P^2(-s)}=-\frac{P''(-s)P(-s)}{P^2(-s)}\neq 0.$$

\smallskip 
This argument shows that in a neighbourhood of the line $y=0$  in $\bC_y\times \bC_u$ with coordinates $(y,u)$, there exist $d-1$ branches of the affine curve
 (\ref{eq:logder})  intersecting this line at the $d-1$ different smooth points $(s_i,0)$, where each $s_i$ is a root of  $P'(-s)$. If we now consider these
branches in the space $\bC_y\times \bC_\W$ with coordinates $(y,\W)$ using the coordinate change $\iota: (y,u)\mapsto (y,\W)=(y,-y+y^2u)$, then an easy calculation shows that they  will become  $d-1$ distinct branches each  having the slope $-1$. This argument proves that these branches are smooth at $y=0$. Note that $\iota$ is the composition of two blow-ups: $(y,u)\mapsto (y,-1+yu)=(y,\tilde u)$  which blows up the point $(0,-1)$ and $(y,\tilde u)\mapsto(y,\tilde u y)$ which blows up the origin $(0,0)$. We have deduced the desired results from the strict transform given by \eqref{eq:logder}.

\smallskip
Summarizing we get the following. At the complete ramification point besides the smooth essential branch, there are $d-1$ additional smooth branches  with the same slope of $-1$ and distinct coefficients of $y^2$.


\medskip
To prove (vi), observe that for $z\neq \infty$, the poles of the $1$-form $\widetilde \Omega$ (obtained as the pullback to $\widetilde \La$ under the normalisation map $\mathfrak{n}$ of the form $\W\minispace\diff z$ restricted to $\widehat \La$)  occur at the pullbacks of the non-singular points of $\widehat \La \cap H_\W^\infty$, where $H_\W^\infty\subset \bC P^1_z\times \bC P^1_\W$ is a projective line over the point $\W=\infty$.  Since $\W=\infty$ corresponds to $\W^{-1}=0$ and $\al\neq0$, then for $z \neq \infty$, we immediately observe from \eqref{eq:extra} that the poles of $\W\,\diff z$ restricted to $\widehat \La$ occur at the points  $(z_i,\infty)$,  where $z_i$ is a root of $P$.  

\smallskip
Using \eqref{eq:extra}, we can  calculate the residues of  $\W(z)\minispace\diff z$ restricted to $\widehat \La$ at each point $(z_i,\infty)$. Dividing   equation~\eqref{eq:extra}  
by $\W(z)$ and introducing  the local coordinate $\xi_i=z-z_i$, we get 
$$\al=\frac{\W^{-1}(\xi_i)}{\W^{-1}(\xi_i)+\xi_i}+\sum_{j\neq i} \frac{\W^{-1}(\xi_i)}{\W^{-1}(\xi_i)+\xi_i-(z_j-z_i)}.$$ 

By expanding $\W^{-1}(\xi_i)$ as $\delta_i\xi_i+\dots$ and letting $\xi_i\to 0$ in the right-hand side of the above equation, we obtain
$$\al=\frac{\delta_i\xi_i}{\delta_i\xi_i+\xi_i}=\frac{\delta_i}{\delta_i+1}$$
which immediately implies that $\delta_i=\frac{\al}{1-\al}$. 
Thus 
$$\W(\xi_i)=\frac{1-\al}{\al \xi_i} +\dots\; \Rightarrow \quad \text {Res}|_{(\infty, z_i)}\; \W\,\diff z=\frac{1-\al}{\al}.$$


\medskip

 To settle (vii) and  to study the behavior of $\widetilde \Omega=\W\minispace\diff z$ at the singular point $(0,\infty)$, we need to use the change of variable $z=\frac{1}{y}$. Then  $\widetilde \Omega=-\W \frac{\diff y}{y^2}$.  Observe that  under the assumptions of (v), each local branch of $\widehat \La$ at $(0,\infty)$ is smooth which implies that  the normalisation map is a local diffeomorphism of the corresponding small neighborhood of $\widetilde \La$ with this branch. Thus we have the following expansion of $\W(y)$ for each local smooth branch with slope $-1$ and the residue of $\W\,\diff z$ restricted to this branch: 
$$\W(y)=-y+\dots, \quad \Rightarrow -\W(y)\,\frac{\diff y}{y^2}=\frac{(1+\dots)\,\diff y}{y} \Rightarrow \text{Res\,}{\vert_{(\infty,0)}}\left(-\W(y)\,\frac{\diff y}{y^2}\right)=1.$$

Analogously, for the essential branch whose slope equals $\frac{d}{\al}-1$, we get 
$$\W(y)=\left(\frac{d}{\al}-1\right)y+\dots \Rightarrow \text{Res}{\vert_{(\infty,0)}}\,\left(-\W(y)\,\frac{\diff y}{y^2}\right) =\frac{\al-d}{\al}.$$

\smallskip
Finally, we can conclude that the  curve  $ \La \subset \bC_z\times \bC_\W$   given by  \eqref{eq:algebraicDiffEq2} is an aBc since it is rational, irreducible and the form  $\widetilde \Omega$ on $\widetilde \La$ has only simple poles  with real residues. 
\end{proof}


\begin{remark} {\rm Under the  above assumptions of (vi) and (vii), the total number of poles of $\widetilde \Omega$ on $\widetilde \La$ equals $2d=2\deg P$, all of them  having real residues. Observe that if all zeros of $P$ are simple, the singular point $(0,\infty)$ on $\widehat \La$ reduces its genus  by $(d-1)^2$ which means that this point is a rather complicated singularity. Under the above assumptions, the number of critical points (values) of $\tau$ equals $2d-2$ which checks with the Riemann-Hurwitz formula saying that the Euler characteristic of $\bC P^1$ coincides with the number of poles of $\widetilde \Omega$ minus the number of  its zeros: If  poles and zeros  of $\widetilde \Omega$ are counted with multiplicities we get the correct value $2$ for the Euler characteristics of $\bC P^1$}. 
\end{remark}

\begin{remark} 
{\rm The sum of all residues of any meromorphic form on any compact Riemann surface must vanish. Our count gives the following sum:
$$\Sigma=\frac{1-\al}{\al} \cdot d +d -\frac{d}{\al} =0.$$
}
\end{remark}


\begin{remark}  {\rm Observe that the bivariate polynomial in the left-hand side of \eqref{eq:algebraicDiffEq} defining the curve $\Ga_{\al,P}$ belongs to the class of balanced algebraic functions  introduced in  \S~3 of \cite{BoSh}. For any balanced algebraic function,  it has been  conjectured in  loc. cit. that there always exists a probability measure whose Cauchy transform satisfies the respective equation a.e. in $\bC$. However not all balanced algebraic functions correspond to affine Boutroux  curves.   
}\end{remark} 

\medskip 
Next we introduce yet another algebraic curve  which will naturally reappear  later in connection with the application of the saddle point method.

\begin{definition} Given a polynomial $P$ of degree $d$ and $0<\al<d$, we define its affine \emph{saddle point curve}  
$
\D:=\D_{\al,P}\subset \bC_z\times \bC_u
$ 
as the curve given by the equation: 
\begin{equation}
\label{eq:saddle-pointsu}
\frac{P'(u)}{\alpha P(u)}-\frac{1}{u-z}=0.
        \end{equation}
 Following our notational conventions, we denote by $\widehat \D$ the closure of $\D$ in $\bC P_z^1\times \bC P_u^1$.
\end{definition}

  It turns out that $\D:=\D_{\al,P}$  is closely related to the symbol curve $\Ga:=\Ga_{\al,P}$. Namely, consider the birational transformation $\chi: \bC P_z^1\times \bC P_u^1 \to \bC P_z^1\times\bC P_\C^1$  sending $(z,u)$ to $(z,\C)$ where $z\mapsto z$ and 
\begin{equation}
\label{eq:transf}
\C=\frac{\alpha}{d-\alpha}\cdot\frac{1}{u-z}\iff u=z+\frac{\alpha}{d-\alpha}\cdot \C^{-1}.
\end{equation}
Under this change of variables equation~\eqref{eq:saddle-pointsu} transforms into equation~\eqref{eq:symbolcurve1} which is equivalent to \eqref{eq:algebraicDiffEq}. Thus the restriction $\chi :\widehat\D\to \widehat\Ga$ provides a birational isomorphism. (Observe that $\chi$ sends the complement of the line $z=u$  isomorphically to the complement of the line $z=\infty$.)
Therefore we can apply the results of Theorem~\ref{pr:rational} to analyze the saddle point curve $\D$ and its closure $\widehat \D$.  Observe however that  it follows from \eqref{eq:saddle-pointsu}  that $\widehat \D\subset \bC P_z^1\times \bC P_u^1$ has bidegree $(1,d)$ while $\widehat\Ga\subset  \bC P_z^1\times \bC P_\C^1$ has bidegree $(d,d)$. Below we collect a number of properties of $\D$ and its closure $\widehat \D$ which we will need later.

\begin{corollary}
\label{prop:saddle-pointcurveprop} Assume that $P$ is strongly generic, i.e., that $P$ and $P^\prime$ have simple zeros. Then  the following statements hold:
 \begin{itemize}
\item[(i)] The affine curve $\D \subset \bC_z\times \bC_u$ and its closure $\widehat \D\subset \bC P_z^1\times \bC P_u^1$ are  irreducible, rational and smooth.  The birational equivalence $\chi$ gives the normalization map $\widehat \D\to \widehat \Ga$.  In other words,  $\widehat \D$ coincides with the normalization $\widetilde \Ga$ of $\widehat \Ga$ and $\chi$ is the normalization map. 

\item[(ii)]$\widehat\D\subset \bC P^1_z\times \bC P^1_u$ has $d$ branches over a neighborhood of the point $z=\infty$ in $\bC P^1_z$. One branch passes through  $(\infty,\infty)$. 
The remaining $d-1$ branches pass through  $d-1$ points of the form $(\infty, q_i)$, where $q_1,\dots ,q_{d-1}$ are  the $d-1$ (distinct) critical points of $P$, i.e., the roots of $P'(u)=0$; these critical points are distinct since $P$ is strongly generic.

\item[(iii)] There are no points on the line  $u=\infty$ in $\bC P^1_z\times \bC P^1_u$ belonging to  $\widehat\D$ except for $(\infty,\infty)$.
The intersection of $\widehat\D$ with the diagonal line $u=z$ consists of  $(\infty,\infty)$    together with  the points $(z_j,z_j),\ j=1,\dots, d$, where $z_1,\dots,z_{d}$ are the roots of $P$. 

\item[(iv)]  The  pullback  to $\widehat\D$ of the meromorphic form $\C\,\diff z$ defined on $\widehat\Gamma$ under the normalization map $\chi$ is given by 
\begin{equation}
\label{eq:difform}
\chi^{*}(\C\,\diff z):=\frac{\alpha}{d-\al}\cdot \frac{\diff z}{u-z}. 
\end{equation} 

The poles of this pullback are all simple and located at the points:
\\
{\rm (a)}  $(z,u)=(z_j,z_j),\ j=1,\dots, d$ with the residue equal to $\frac{1-\al}{d-\al}$;
\\
{\rm (b)}  $(z,u)=(\infty, q_i)$, $i=1,\dots, d-1$ with the residue equal to $\frac{\al}{d-\al}$;
\\
 {\rm (c)}  $(\infty,\infty)$ with the residue equal to $-1$.
\end{itemize}
\end{corollary}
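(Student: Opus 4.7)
The defining equation $(u-z)P'(u)-\alpha P(u)=0$ is linear in $z$, yielding the explicit rational parametrization $z(u)=u-\alpha P(u)/P'(u)$. This immediately realizes $\D$ as a rational graph, so both $\D$ and $\widehat\D$ are irreducible and rational. Since $\widehat\D\subset\bC P^1_z\times\bC P^1_u$ has bidegree $(1,d)$, its arithmetic genus equals $(1-1)(d-1)=0$; combined with rationality this forces all $\delta$-invariants to vanish and hence $\widehat\D$ is smooth. (A direct check confirms this: in the affine chart $\partial_z F=-P'(u)$ can vanish on $\D$ only if $P'(u)=0$, but then the equation becomes $-\alpha P(u)=0$, contradicting strong genericity; the remaining points at infinity can be verified one by one using items (ii)--(iii) below.) Smoothness of $\widehat\D$ combined with the birationality of $\chi:\widehat\D\to\widehat\Ga$ then identifies $\chi$ with the normalization map, settling (i).

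For (ii), the parametrization yields $z(u)\sim\frac{d-\alpha}{d}u$ as $u\to\infty$, so one branch of $\widehat\D$ passes through $(\infty,\infty)$. The only other way to send $z\to\infty$ is to force $P'(u)\to 0$; strong genericity ensures that at each of the $d-1$ simple zeros $q_1,\dots,q_{d-1}$ of $P'$ one has $P(q_i)\neq 0$, so each $q_i$ produces one branch of $\widehat\D$ through $(\infty,q_i)$. This exhausts the $d$ preimages of $z=\infty$.

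For (iii), substituting $u=\infty$ with $z$ finite into the defining equation leaves leading term $(d-\alpha)u^d\neq 0$ (since $\alpha<d$), so $\widehat\D$ meets the line $\{u=\infty\}$ only at $(\infty,\infty)$. Substituting $u=z$ into the equation forces $\alpha P(u)=0$, giving precisely the diagonal intersections $(z_j,z_j)$ for $j=1,\dots,d$, together with $(\infty,\infty)$ already identified.

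Finally for (iv), I would compute the residues of $\chi^*(\C\,\diff z)=\frac{\alpha}{d-\alpha}\cdot\frac{\diff z}{u-z}$ one pole at a time from the parametrization, using $\diff z=\left(1-\alpha+\alpha\,\frac{P(u)P''(u)}{P'(u)^2}\right)\diff u$. Near $(z_j,z_j)$, $u-z=\alpha P(u)/P'(u)\sim\alpha(u-z_j)$ and $\diff z\sim(1-\alpha)\,\diff u$, yielding residue $\frac{1-\alpha}{d-\alpha}$. Near $(\infty,q_i)$, $P'(u)\sim P''(q_i)(u-q_i)$ so $u-z$ has a simple pole while $\diff z$ has a double pole in $u-q_i$; their ratio produces residue $\frac{\alpha}{d-\alpha}$ in the local parameter $u$. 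Near $(\infty,\infty)$, $u-z\sim(\alpha/d)u$ and $\diff z\sim\frac{d-\alpha}{d}\diff u$, and passing to the local parameter $w=1/z$ (equivalently $1/u$) produces residue $-1$. The main subtlety is picking local coordinates at the two ``infinite'' poles consistent with the smooth structure of $\widehat\D$ established in (i); once fixed, the residue computations are routine, and the sum $d\cdot\frac{1-\alpha}{d-\alpha}+(d-1)\cdot\frac{\alpha}{d-\alpha}-1=0$ on the compact Riemann surface $\widehat\D$ provides a consistency check.
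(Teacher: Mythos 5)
Your proof is correct and is a genuinely more direct route than the paper's. The paper treats this Corollary as a transfer of the properties established for the scaled symbol curve $\Lambda$ in Theorem~\ref{pr:rational}, whose proof rests on the coordinate change $\tilde z = z+\W^{-1}$ turning the equation into the graph $\al\W = P'(\tilde z)/P(\tilde z)$, followed by a fairly laborious local analysis (two blow-ups) to show the $d$ branches at the singular point $(\infty,0)\in\widehat\Lambda$ are smooth and distinct. You instead work entirely on $\D$ via the explicit parametrization $z(u)=u-\al P(u)/P'(u)$, and your key simplification is the bidegree/arithmetic genus argument: since $\widehat\D$ has bidegree $(1,d)$ in $\bC P^1_z\times\bC P^1_u$, adjunction gives $p_a=0$, which together with rationality ($p_g=0$) forces all delta-invariants to vanish — killing the need for any explicit branch analysis. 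This is a real improvement in economy. One small point worth spelling out: you deduce that $\chi$ is \emph{the} normalization map from smoothness of $\widehat\D$ and birationality of $\chi$; this implicitly uses that a rational map from a smooth projective curve to a projective variety automatically extends to a morphism (valuative criterion), and that a birational morphism from a smooth projective curve is the normalization. Your residue computations at $(z_j,z_j)$, $(\infty,q_i)$ and $(\infty,\infty)$ are all correct, match the paper's values (after rescaling $\W=\tfrac{d-\al}{\al}\C$), and the residue-sum consistency check is a nice touch.
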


\section{Proofs of the main theorems}\label{sec:MainThs}

Our main tool in this section will be the classical saddle point method 
as presented in e.g., \cite{OS}, see also \cite{Bl}, \S~7.3.11, and \cite{Br}.
  Let as above $P$ be a monic polynomial of degree $d\ge 2$ and $\alpha \in (0,d)$. Slightly abusing our previous notation,  let  $\mu_n:=\mu_{[\al n]-1, n, P}$ be the  root-counting  measure of the Rodrigues' descendant
$$
\q_n(z):=\R_{[\alpha n]-1, n, P}(z)=(P^n)^{([\alpha n]-1)}(z).
$$ 
\smallskip
(Note that the order of derivative here is one less than in \S~1, but this will not effect the asymptotic result.)

\medskip
As already mentioned in the introduction, the proof of Theorem~\ref{th:Cauchy} is as follows.
For any $z\in \bC$, Cauchy's formula for higher order derivatives gives 
\begin{equation}
\label{eq:cauchy}
\q_n(z)=\frac{([\alpha n]-1)!}{2\pi i}\int_{c} \frac{P^n(u)\,\diff u}{(u-z)^{[\alpha n]}},
\end{equation}
where $c$ is any simple closed curve in $\bC$ encircling $z$   once in the counterclockwise direction. (Here we use the fact that $P$ has no poles.)

\smallskip
The saddle point method allows us to analyze the asymptotic of \eqref{eq:cauchy} when $n\to \infty$. The degree of the polynomial $\q_n(z)$ equals $d_n:=dn-[\alpha n]+1$. Below we will calculate the limit of the sequence  $\{L_{\mu_n}(z)\}$ of logarithmic potentials of $\mu_n$, where $L_{\mu_n}(z):=\frac{1}{d_n}\log\vert \q_n(z)/a_n\vert$ and $a_n$ is the leading coefficient of $\q_n(z)$. 

\smallskip
We will show that the critical points of the integrand in \eqref{eq:cauchy} belong to the above saddle point curve  $\mathcal D:=\D_{\al,P}$ given by \eqref{eq:saddle-pointsu}, which is birationally equivalent to the symbol curve $\Ga:=\Ga_{\al,P}$ given by  \eqref{eq:algebraicDiffEq}. Furthermore we will see that the critical points which will play an important role in our asymptotic calculation form an open subset $U \subset \mathcal D$.  These facts enable us to identify   
the limit $L_\mu(z):=\lim_{n\to\infty}L_{\mu_n}(z)$ with the tropical trace of a natural harmonic function defined on $ U$. Finally,  applying the  Laplace operator to $L_\mu$,  one obtains as an immediate consequence that the limiting asymptotic measure $\mu:=\lim_{n\to\infty}\mu_n$
exists and that its Cauchy transform satisfies the algebraic equation \eqref{eq:cauchy}. Let us now provide the relevant details dividing them into several  subsections.

\subsection{Root asymptotic via the saddle point method} Given $\alpha> 0$, define 
\begin{equation}
\label{eq:fractionalpart} s_n:=n-\frac{[\alpha n]}{\alpha},
\end{equation}
where $0\leq s_n<1/\alpha$ and set $m:=[\alpha n]$. Consider the integral  
$$I(z)=\int_\gamma \frac{P^n(u)\,\diff u}{(u-z)^{[\al n]}}$$ 
over a curve segment $\gamma$ that neither contains $z$  nor the zeros of $P$.  (For the moment we are suppressing the dependence of the integral on $P, n, \al, \ga$.)  On a sufficiently small neighboorhood $\mathcal O$ of any point in $\gamma$ there exists a single-valued branch of the logarithm $\log P$ which is well-defined in $\mathcal O$. Using this branch of the logarithm we can define real and complex powers of $P$  in $\mathcal O$ and ensure that they satisfy the relation $(P^{1/\alpha})^mP^{s_n}=P^n$. We may then analytically continue our choice of branch along $\gamma$. Then
\begin{equation}
\label{eq:cauchy2} 
I_{P, m,s_n,\gamma}(z):=\int_{\gamma} \bigg(\frac{P^{1/\alpha}(u)}{u-z}\bigg)^{m}P^{s_n}(u)\,\diff u =
\int_{\gamma} e^{k(z,u)m}(P(u))^{s_n}\,\diff u,
\end{equation} 
where \begin{equation}
\label{eq:defk}
k(z,u):=\frac{ 1}{\alpha}\log P(u)-\log(u-z).
\end{equation}
 Clearly, for fixed $z$, $k(z,u)$ is holomorphic w.r.t the second variable $u\in \mathcal O$ if $u$ avoids both $z$ and the zeros of $P$. 
 
 \begin{definition} For fixed $z$,
a \emph{saddle point} of $k(z,u)$ is  a zero of $\frac{\partial k}{\partial u}(z,u).$
 \end{definition}
  
 \smallskip
 The exact version of the saddle point method  which we will apply to the function 
 $$
 h(u)=k(z,u) \text { for  fixed } z 
 $$
  is formulated in Lemma \ref{lm11} below, compare Theorem 1.2 and Corollary 1.4 of \cite{OS}. Namely, assume that
 
 \smallskip 
 \noindent
  (i) $h(u)$ is any function holomorphic in a neighbourhood $\mathcal O$ of a simple curve $\gamma$;  
 
  \noindent 
  (ii)  $u^\ast\in \gamma$ is a saddle point of $h(u)$ and it is an inner point of $\ga$; 

 \noindent  
  (iii) $\forall u\in \gamma\text{ such that } u\neq u^\ast$, $\text{Re } h(u)< \text{Re } h(u^\ast)$. 

\medskip  
  Finally, let $\ell\geq 2$ be the order of the saddle point $u^\ast$, i.e., 
 \begin{equation}
\label{eq:ordersaddle-point}
h(u)=h(u^\ast)-h_0(u-u^\ast)^\ell(1-\phi(u)),
 \end{equation}
where $h_0\neq 0$   and $\phi(u)$ is a  function which vanishes at $u^\ast$ and is holomorphic in a small neighborhood of $u^\ast$.  
 
 \begin{LEMM} \label{lm11} Using the above notation, for $m\in \bN$ and $0\leq s\leq A<\infty$, consider 
 $$I_{m,s,\gamma}:=\int_{\gamma} e^{h(u)m}P^{s}(u)\,\diff u.$$ 
Then,  under the above assumptions {\rm(i)--(iii)}, 
\begin{equation}
\label{eq:saddleas} 
I_{m,s,\gamma}= e^{h(u^\ast)m}\left(\Gamma(\ell^{-1})\frac{\beta_0(\epsilon_1-\epsilon_2)}{m^{\frac{1}{\ell}}}+O\left (\frac{K(P)}{m^{\frac{2}{\ell}}}\right)\right), 
\end{equation}
where $\epsilon_1$ and $\epsilon_2$ are two distinct $\ell$-th roots of unity  depending  only on $\gamma$ and $K(P)$ is an upper bound of $\vert P^{s}(u)\vert$ in $\mathcal O$. Here $\Ga$ stands for the gamma function (not to be confused with the curve $\Ga$ introduced in \S~\ref{sec:perfect}). The constant $\beta_0$ is given by 
$$
\beta_0=\frac{1}{\ell}\cdot {h_0^{-1/\ell}(P^s(u^\ast))}
$$
and the implicit constant in the  remainder term $O(\dots)$ of \eqref{eq:saddleas}  is independent of $P$, $s$, and $m$. 
\end{LEMM}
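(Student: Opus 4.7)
The plan is to reduce the estimate to the standard Laplace/Watson lemma by a local change of variables near $u^\ast$, after first showing that only an arbitrarily small neighbourhood of $u^\ast$ contributes asymptotically.

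First I would split $\gamma = \gamma_\delta \cup \gamma'_\delta$, where $\gamma_\delta := \gamma \cap \{|u-u^\ast|<\delta\}$ for some small $\delta>0$. Since $\text{Re } h(u)$ is continuous and strictly less than $\text{Re } h(u^\ast)$ on the compact set $\gamma\setminus \gamma_\delta$, there exists $\eta=\eta(\delta)>0$ with $\text{Re } h(u)\le \text{Re } h(u^\ast)-\eta$ there. Hence
\begin{equation*}
\Bigl|\int_{\gamma'_\delta} e^{mh(u)}P^s(u)\,\diff u\Bigr|\;\le\; \mathrm{length}(\gamma)\cdot K(P)\cdot e^{m\,\text{Re } h(u^\ast)}\, e^{-m\eta},
\end{equation*}
which is exponentially smaller than the claimed main term and absorbs into the remainder.

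Next, on $\gamma_\delta$ I would put $h$ into normal form. Writing $h(u)=h(u^\ast)-h_0(u-u^\ast)^\ell(1-\phi(u))$ with $\phi(u^\ast)=0$ and choosing a branch of $(1-\phi)^{1/\ell}$ near $u^\ast$, the map $w:=(u-u^\ast)(1-\phi(u))^{1/\ell}$ is a local biholomorphism sending $u^\ast\mapsto 0$, and $h(u)=h(u^\ast)-h_0 w^\ell$. Setting $\zeta:=h_0^{1/\ell}w$ for a fixed $\ell$-th root, the integral over $\gamma_\delta$ becomes
\begin{equation*}
e^{mh(u^\ast)}\int_{\tilde\gamma_\delta} e^{-m\zeta^\ell}\,G(\zeta)\,\diff\zeta,
\end{equation*}
where $G(\zeta):=P^s(u(\zeta))\cdot (\diff u/\diff\zeta)$ is holomorphic with $G(0)=h_0^{-1/\ell}P^s(u^\ast)$, and $\tilde\gamma_\delta$ is the image of $\gamma_\delta$, a simple arc through the origin whose two ends enter the two ``valley'' sectors $\{\text{Re }\zeta^\ell>0\}$ (this is forced by condition (iii) after shrinking $\delta$).

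Then I would use Cauchy's theorem in the simply connected domain $\{|\zeta|<\delta'\}\setminus\{0\}$ (the integrand being holomorphic there) to deform $\tilde\gamma_\delta$ to the union of two straight segments along the rays $\zeta=r\epsilon_j$, $r\in[0,\delta']$, where $\epsilon_1,\epsilon_2$ are the unique $\ell$-th roots of unity in the sectors containing the two tangent directions of $\tilde\gamma_\delta$ at the origin; the boundary arc at $|\zeta|=\delta'$ contributes only exponentially small error by the same argument as above. On each ray the substitution $t=m r^\ell$ reduces the integral to a Gamma integral:
\begin{equation*}
\int_0^{\delta'} e^{-m r^\ell}\bigl(G(0)+O(r)\bigr)\,\diff r=\frac{G(0)\,\Gamma(1/\ell)}{\ell\,m^{1/\ell}}+O\!\left(\frac{1}{m^{2/\ell}}\right).
\end{equation*}
Subtracting the two ray contributions (orientation gives the sign $\epsilon_1-\epsilon_2$) yields the stated expansion with $\beta_0=\ell^{-1}h_0^{-1/\ell}P^s(u^\ast)$. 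Uniformity of the $O(\cdot)$ constant in $P$, $s$, $m$ follows because the Taylor remainder of $G$ is controlled by $\sup_{\mathcal O}|P^s|=K(P)$ together with fixed geometric quantities ($\delta$, $h_0$, $\ell$, and the map $u\mapsto\zeta$).

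The main obstacle, in my view, is purely bookkeeping: verifying carefully that the tangent directions of $\tilde\gamma_\delta$ at $0$ are indeed two admissible valleys (so that the deformation to steepest descent rays is legal and the arc at $|\zeta|=\delta'$ is exponentially small), and identifying the correct pair of $\ell$-th roots of unity $\epsilon_1,\epsilon_2$ from the orientation of $\gamma$. Everything else is a direct application of Watson's lemma to the one-variable integral $\int_0^{\delta'} e^{-mr^\ell} G(r\epsilon)\,\diff r$.
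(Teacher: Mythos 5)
The paper does not actually prove Lemma~A: it states it as an imported fact, referring the reader to Theorem~1.2 and Corollary~1.4 of~\cite{OS} (O'Sullivan, ``Revisiting the saddle point method of Perron''). So there is no internal proof to compare against. Your sketch supplies a self-contained argument, and it is the standard steepest-descent/Watson's-lemma proof that underlies the cited result: localize near $u^\ast$ (the far part of $\gamma$ is exponentially small because, by condition~(iii) and compactness, $\text{Re } h \le \text{Re } h(u^\ast)-\eta$ on $\gamma\setminus\gamma_\delta$); put $h$ into the normal form $h(u^\ast)-\zeta^\ell$ by the local biholomorphism $\zeta = h_0^{1/\ell}(u-u^\ast)(1-\phi(u))^{1/\ell}$; deform the transformed arc onto two steepest-descent rays; and evaluate each ray integral by the substitution $t=mr^\ell$. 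The Jacobian at the origin gives $G(0)=h_0^{-1/\ell}P^s(u^\ast)$ and hence the constant $\beta_0$, the next term gives the $O(K(P)/m^{2/\ell})$ remainder, and uniformity in $P,s,m$ follows from a Cauchy estimate bounding $G'$ by $K(P)$ on a fixed smaller disk. This all checks out.

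Two points are worth making explicit. First, when you deform $\tilde\gamma_\delta$ onto the rays along $\epsilon_1,\epsilon_2$, the connecting arcs at radius $\delta'$ must stay in the (closed) valleys; since condition~(iii) gives strict inequality at the fixed endpoints of $\gamma_\delta$, the maximum of $e^{-m\text{Re}\zeta^\ell}$ on each connecting arc is achieved at that endpoint and is $e^{-cm}$ for some $c>0$ depending only on $\gamma$ and $\delta$, so this contribution is indeed negligible. Second, your argument silently assumes that the two ends of $\tilde\gamma_\delta$ leave the origin into two \emph{different} valley sectors, i.e.\ that $\epsilon_1\neq\epsilon_2$ (otherwise the main term cancels). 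This is part of the hypotheses implicit in the statement of Lemma~A, and in the paper's actual application one always has $\ell=2$ (the saddle points are simple), in which case the two tangent directions of $\gamma$ at $u^\ast$ are antipodal and necessarily fall in distinct valleys, so the assumption holds automatically.
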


 \smallskip 
 We are going to apply Lemma~\ref{lm11} to the integral \eqref{eq:cauchy2} when setting $ h(u)=k(z,u)$ with  fixed $z$ and using a contour on which $k(z,u)$ is possibly multi-valued. Therefore condition (i) above is not necessarily valid.  
 However, it is enough to note that
  $$\text{Re }k(z,u)=\frac{ 1}{\alpha}\log \vert P(u)\vert -\log\vert (u-z)\vert $$ is defined independently of the above choice of a  branch of the logarithm.
 
 \begin{CORR}  
\label{cor:uniform} For fixed $z$, assume that $u^\ast$ and the path $\gamma$ satisfy the  assumptions {\rm (ii)--(iii)} of Lemma~\ref{lm11}. Then, in  notation of (\ref{eq:cauchy2}), 
for any sequence   $\{s_n\}$, such that $\lim_{n\to \infty} s_n/n= 0 $,  one has
\begin{equation}
\label{eq:uniform2}
\lim_{n\to \infty} \vert I_{P, m,s_n,\gamma}(z)\vert ^{1/m} =  e^{\text{Re } k(z,u^\ast)}. 
\end{equation}  
 \end{CORR}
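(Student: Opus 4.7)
The plan is to derive the corollary directly from the asymptotic expansion \eqref{eq:saddleas} of Lemma~\ref{lm11}, applied with $h(u):=k(z,u)$ (which is holomorphic in a neighborhood $\mathcal{O}$ of $\gamma$ by the hypothesis that $\gamma$ avoids $z$ and the zeros of $P$). Since $u^\ast$ and $\gamma$ satisfy assumptions (ii) and (iii), Lemma~\ref{lm11} applies with some finite order $\ell\ge 2$, yielding
\[
I_{P,m,s_n,\gamma}(z) = e^{k(z,u^\ast)m}\left(\Gamma(\ell^{-1})\frac{\beta_0(s_n)(\epsilon_1-\epsilon_2)}{m^{1/\ell}}+O\!\left(\frac{K(P)}{m^{2/\ell}}\right)\right),
\]
where I write $\beta_0(s_n)=\frac{1}{\ell}h_0^{-1/\ell}P^{s_n}(u^\ast)$ to emphasize its dependence on $s_n$.

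Next I take absolute values and extract the $m$-th root. Since $|e^{k(z,u^\ast)m}|^{1/m}=e^{\operatorname{Re}k(z,u^\ast)}$, it suffices to prove that the factor
\[
J_{n}:=\left|\Gamma(\ell^{-1})\frac{\beta_0(s_n)(\epsilon_1-\epsilon_2)}{m^{1/\ell}}+O\!\left(\frac{K(P)}{m^{2/\ell}}\right)\right|^{1/m}
\]
tends to $1$ as $n\to\infty$, uniformly in $s_n\in[0,1/\alpha)$. The standard fact $\lambda^{1/m}\to 1$ whenever $\lambda>0$ behaves polynomially in $m$ is precisely what is needed, provided we can sandwich the bracketed expression between two positive quantities of polynomial order in $m$ with constants independent of $s_n$.

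The uniformity in $s$ is the only substantive point, and it is straightforward from the boundedness of $s_n$: since $0\le s_n<1/\alpha$, the function $P^{s_n}(u)$ stays between two positive bounds on the compact set $\gamma$ (and throughout a fixed relatively compact neighborhood of it in $\mathcal{O}$), so the quantity $K(P)$, which majorizes $|P^{s_n}(u)|$, can be chosen independent of $n$. Likewise, $|P^{s_n}(u^\ast)|$ is bounded above and away from zero uniformly in $n$, since $P(u^\ast)\ne 0$ (indeed $u^\ast$ is a critical point of $k(z,u)$, so in particular not a zero of $P$ — this follows from inspection of $\partial_u k(z,u)=P'(u)/(\alpha P(u))-1/(u-z)$, whose vanishing is incompatible with $P(u^\ast)=0$); hence $|\beta_0(s_n)|$ is bounded uniformly from above and away from zero. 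Combined with $\epsilon_1\ne\epsilon_2$, this gives constants $0<c<C<\infty$ independent of $n$ with $cm^{-1/\ell}\le J_n^{m}\le Cm^{-1/\ell}$ for all large $m$. Taking $m$-th roots yields $J_n\to 1$ uniformly in $s_n$, which together with the extracted exponential factor completes the proof of \eqref{eq:uniform2}.

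The main (and in fact only) obstacle is precisely the uniformity claim: one must check that the hidden constants in the error term of Lemma~\ref{lm11} together with $|\beta_0(s_n)|$ can be controlled by bounds that do not depend on the sequence $\{s_n\}$. Once this uniform boundedness has been secured via the compactness of $[0,1/\alpha]$ and the fact that $P(u^\ast)\ne 0$, the rest is the elementary observation that polynomially growing (or decaying) positive sequences raised to the power $1/m$ tend to $1$.
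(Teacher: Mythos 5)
Your proof is correct and takes essentially the same route as the paper: apply Lemma~\ref{lm11}, peel off $e^{\operatorname{Re}k(z,u^\ast)}$ after taking $m$-th roots, and deduce uniformity in $s$ from the fact that $K(P)$ and $\beta_0$ are bounded (and $\beta_0$ bounded away from zero) uniformly over the compact range $s\in[0,1/\alpha]$. The paper additionally mentions splitting $\gamma$ into a piece $\gamma_1$ containing the saddle point and a complementary piece $\gamma_2$ whose contribution is $o(e^{m\operatorname{Re}k(z,u^\ast)})$, but since Lemma~\ref{lm11} is assumed to apply to the full contour with an implicit constant independent of $P$, $s$, and $m$, your more direct extraction of the exponential factor is equally valid and captures the same idea.
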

\begin{proof} Let $\beta_0$ be the parameter used in the estimate of $I_{P, m,s_n,\gamma}(z)$ in \eqref{eq:saddleas}. The condition on $s_n$ implies that $\vert P(u^\ast)\vert^{s_n/m}\to 1$ as $n\to\infty$ (or, equivalently, as $m\to\infty$). Hence
$$
\lim_{m\to\infty } \left(\Gamma(\ell^{-1})\frac{\beta_0(\epsilon_1-\epsilon_2)}{m^{\frac{1}{\ell}}}\right)^{1/m}=1.
$$
A similar estimate of $\vert K(P)\vert^{^{1/m}}$ together with 
 Corollary  \ref{lm11} then clearly imply 
 \eqref{eq:uniform2},
for a $\gamma$ that satisfies (i)-(iii). If (i) is not satisfied, split $\gamma=\gamma_1+\gamma_2$ into two disjoint contours such that the saddle point $u^\ast$ is contained in $\gamma_1$ and to which Lemma~\ref{lm11} applies.  On the second contour $\gamma_2$, the integral $\vert I_{P,m,s_n,\gamma_2}\vert$ will be of  order $o(e^{m\text{Re } k(z, u^\ast)})$ when $m\to \infty$;  hence it will not contribute to the value of the limit.
\end{proof}

\label{subsection:cauchyasymptotics}

\subsection{Deformation of the contour}
\label{sec:contour} The next step in the proof of Theorem~\ref{th:Cauchy} is to find an appropriate integration contour to which Corollary \ref{cor:uniform} can be applied. Note that the only a priori condition imposed on the simple contour $c$ in the integral \eqref{eq:cauchy} is that it encircles the fixed complex number  $z\in \bC$ once counterclockwise. 
 
 \smallskip
 For all pairs $(z,u)\in \bC^2$ except for those for which either $P(u)=0$ or $u=z$,  define
\begin{equation}
\label{eq:guz}
G(z,u):=\frac{1}{\al} \log\vert P(u)\vert-\log \vert u-z\vert. 
\end{equation}

Observe that 
\begin{equation}
\label{eq:guzku}G(z,u)=\text{Re } k(z,u)
\end{equation}
and that $G(z,u)$ is a harmonic function of the variable $u$, except at a finite number of logarithmic singularities.

For fixed $z$, the saddle points $(z,u)$ of $k(z,u)$ 
are given by the values of $u$ for which the relation
\begin{equation}
\label{eq:saddle-points1}
2\frac{\partial G}{\partial u}(z,u)=\frac{\partial k}{\partial u}(z,u)=\frac{P'(u)}{\alpha P(u)}-\frac{1}{u-z}=0
        \end{equation}
        holds. 
Observe that for any fixed $z$, there are at most $d=\deg P$ such saddle points since for any $0<\alpha<d$, the polynomial $P'(u)(u-z)-\alpha P(u)$  has degree $d$ in $u$.  
 In particular, the projective closure of the set of these saddle points coincides with the algebraic curve 
$$
\widehat\D\subset \bC P_z^1\times \bC P_u^1
$$ 
determined by equation \eqref{eq:saddle-points1} in the affine $(z,u)$-plane. Under the projection $\pi : (z,u)\to z$, the smooth curve $\widehat \D$ becomes a branched covering of $\bC P_z^1$ with a finite branching locus $B$. (Its properties have been described in detail in the above Corollary \ref{prop:saddle-pointcurveprop}).
Over any simply connected domain $\bD\subset \bC_z\setminus B$, the curve $\widehat \D$ splits into $d$ distinct branches which we denote by $u_i^\D(z), \ i=1,\dots,d$. 

\smallskip 
Next let us restrict $z$ in such a way that we will see a  clear interaction between the function $G(z,u)$ and the saddle point curve $\D$. 
\begin{lemma}
\label{lemma:distinctharmonic}The set $\mathcal O=\{z\in \bC_z\setminus B: \forall i \neq  j, \; G(z,u_i^\D(z))\neq G(z,u_j^\D(z))\}$ is an open and dense subset of $\bC_z\setminus B$.  Locally the complement $\Delta:=\bC_z \setminus \mathcal O $  is  a finite union of segments of real analytic curves (and possibly isolated points). 
 
\end{lemma}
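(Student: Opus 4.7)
The plan is to argue locally on $\bC_z\setminus B$. I would fix a simply connected domain $\bD\subset \bC_z\setminus B$ on which the $d$ saddle-point branches $u_1^\D(z),\dots,u_d^\D(z)$ are well-defined, holomorphic, and pairwise distinct, and on which one can choose uni-valued branches of $\log P(u_i^\D(z))$ and $\log(u_i^\D(z)-z)$. I then form the holomorphic functions
\begin{equation*}
F_i(z):=k(z,u_i^\D(z))=\tfrac{1}{\alpha}\log P(u_i^\D(z))-\log(u_i^\D(z)-z),\quad i=1,\dots,d,
\end{equation*}
so that $G(z,u_i^\D(z))=\mathrm{Re}\,F_i(z)$ by \eqref{eq:guzku}. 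The set $\Delta\cap\bD$ then becomes the union, over $i\neq j$, of the zero loci of the real-analytic functions $\mathrm{Re}(F_i-F_j)$, and the real content of the lemma is to verify that none of these loci is all of $\bD$, i.e., that no $F_i-F_j$ has identically zero real part.

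This is where the saddle-point equation \eqref{eq:saddle-points1} does the work and is the step I would view as the heart of the argument. Since by construction $u_i^\D(z)$ is a zero of $\partial k/\partial u$, the chain rule collapses and yields the clean identity
\begin{equation*}
F_i'(z)=\frac{\partial k}{\partial z}(z,u_i^\D(z))=\frac{1}{u_i^\D(z)-z}.
\end{equation*}
Were $\mathrm{Re}(F_i-F_j)$ to vanish on some nonempty open subset of $\bD$, the holomorphic function $F_i-F_j$ would have vanishing real part on a connected component, hence be a purely imaginary constant there, and by analytic continuation on all of $\bD$. Differentiating would then force $u_i^\D(z)=u_j^\D(z)$ on $\bD$, contradicting the distinctness of branches over $\bC_z\setminus B$.

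To conclude, I would glue: covering $\bC_z\setminus B$ by simply connected domains $\bD$ of the above type, the intersection $\Delta\cap\bD$ is a finite (at most $\binom{d}{2}$) union of zero sets of nontrivial real-analytic functions, hence a finite union of real semianalytic curves together with at most isolated points. Consequently $\mathcal O=(\bC_z\setminus B)\setminus\Delta$ is open, and since $\Delta$ has empty interior locally, $\mathcal O$ is dense in $\bC_z\setminus B$. The only non-routine ingredient is the saddle-point identity for $F_i'$; the semianalytic structure of $\Delta$ and the density of $\mathcal O$ are then formal consequences.
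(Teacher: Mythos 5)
Your proof is correct, and the key step is genuinely different from the paper's. Both proofs reduce to the same local question: showing that on a simply connected domain $\bD\subset\bC_z\setminus B$ (with poles removed) the harmonic functions $G(z,u_i^\D(z))$, $i\ne j$, are not identically equal. You resolve this locally by exploiting the saddle-point identity $F_i'(z)=\frac{\partial k}{\partial z}(z,u_i^\D(z))=\frac{1}{u_i^\D(z)-z}$ (the $\frac{\partial k}{\partial u}$ term drops out precisely because $u_i^\D$ parametrizes the saddle-point curve), together with the Cauchy--Riemann equations: $\mathrm{Re}(F_i-F_j)\equiv 0$ forces $F_i-F_j$ to be an imaginary constant, hence $F_i'=F_j'$, hence $u_i^\D=u_j^\D$, contradicting distinctness of branches off $B$. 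The paper instead argues globally: it invokes irreducibility of $\widehat\D$ (Corollary~\ref{prop:saddle-pointcurveprop}(i)) to analytically continue $u_1^\D$ and $u_2^\D$ to a neighborhood of $z=\infty$, then compares the asymptotics $G(z,u^\D(z))\sim\frac{d-\alpha}{\alpha}\log|z|$ on the branch escaping to infinity against $G(z,\tilde u^\D(z))\sim-\log|z|$ on a branch converging to a critical point of $P$ (Corollary~\ref{prop:saddle-pointcurveprop}(ii)), and derives a contradiction from the mismatch. Your version is more local and self-contained --- it needs only the pointwise distinctness of branches and the explicit formula for $k$, not the irreducibility of $\D$ or the structure of its fiber over $\infty$ --- whereas the paper's version reuses the machinery of Corollary~\ref{prop:saddle-pointcurveprop} that is developed anyway. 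Both are sound; yours is arguably the cleaner argument for this particular lemma.
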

\begin{proof}
Let $\bD\subset \bC_z\setminus B$  be a simply connected domain. The functions
$G(z,u_i^\D(z)), z\in \bD, \ i=1,\dots, d$, are harmonic and have at most a finite number of poles. As a consequence, for $i\neq j$, the equation $G(z,u_i^\D(z))=G(z,u_j^\D(z))$ is either satisfied identically for all $z\in \bD$ or  it holds only on a set whose complement $U_{ij}$ is open and dense in $\bD$. If we can exclude the former case for any simply connected $\bD$, then we can conclude that $\mathcal O$ being the union of the intersections 
$\cap_{i<j} U_{ij}$ taken over all possible simply connected $\bD$ 
is open and dense.

Indeed, suppose  that $G(z,u_1^\D(z))\equiv G(z,u_2^\D(z))$ for two distinct branches and all $z\in \bD$. 
Using the irreducibility of $\widehat \D$ (see Corollary \ref{prop:saddle-pointcurveprop} (i)), all other branches can  be obtained by the analytic continuation of the branch representing $u_1(z)$. In particular, $u_1^\D(z)$ can be analytically continued via a sequence of disks tending to $\infty$ to the unique branch $u^\D(z)$ for which $u^\D(z)\to \infty$ as $z\to\infty$; see Corollary~\ref{prop:saddle-pointcurveprop} (ii).

 \smallskip
  The corresponding analytic continuation of $u_2^\D(z)$ along the same sequence of disks must become a  branch $\tilde u^\D(z)$ near infinity which is different from $u^\D(z)$. Therefore by Corollary \ref{prop:saddle-pointcurveprop} (ii), we have that  as $z\to\infty$ then $\tilde u\to q_i\in \bC$  for some critical point  $q_i$, i.e., a root of $P'(z)=0$. On the other hand, by our assumptions, we have that 
  \begin{equation} 
  \label{eq:lemma ii one}
  G(z,u^\D(z))\equiv G(z,\tilde u^\D(z)) 
  \end{equation}
  in some neighborhood of $\infty$. 
But this cannot be the case. Namely, for the second branch  $\tilde u^\D(z)$, we have $G(z,\tilde u^\D(z))\sim -\log\vert z\vert$ since $ \tilde u^\D$ has a finite limit $q_i$ as $z\to \infty$. On the other hand, for the first branch   $u^\D(z)$, we have that $G(z,u^\D(z))\sim \frac{d-\al}{\al}\log\vert z\vert$. 

This can be checked by a calculation using Corollary~\ref{prop:saddle-pointcurveprop} which gives that $u^\D(z)\sim \frac{d}{d-\al} z$  and hence  $\log\vert u^\D(z)-z\vert\sim\log \vert u^\D(z)\vert \sim\log\vert z\vert$ as $z\to \infty$. 
Hence, \eqref{eq:lemma ii one}  fails in a neighborhood of infinity implying that $\mathcal O$ is an open and dense subset of $\bC_z\setminus B$.
\end{proof}
Next we prove that under the assumption of strong genericity of $P$ and for fixed $z$, $G(z,u)$ is a simple Morse function of the variable $u$.

 \begin{lemma} For any  strongly generic $P$, all saddle points of $G(z,u)$ are simple, i.e., have order 2.
 \end{lemma}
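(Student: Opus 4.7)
The plan is to reduce the statement to a direct computation of the complex second derivative $\partial_u^2 k(z,u)$ at a critical point, and to use the saddle-point equation itself to simplify that second derivative into a transparent algebraic form whose non-vanishing follows from strong genericity. Since $G(z,\cdot) = \text{Re}\,k(z,\cdot)$ is harmonic in $u$, its critical points as a real function coincide with the zeros of $\partial_u k(z,\cdot)$, and the order of a saddle as a critical point of $G$ agrees with the order of vanishing of $\partial_u k$. So ``simple of order $2$'' means precisely $\partial_u^2 k(z,u^*)\neq 0$.

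First I would compute
$$\partial_u k(z,u) = \frac{P'(u)}{\al P(u)} - \frac{1}{u-z},\qquad \partial_u^2 k(z,u) = \frac{P''(u)P(u)-P'(u)^2}{\al P(u)^2} + \frac{1}{(u-z)^2}.$$
At a saddle $u^*$ the identity $\partial_u k(z,u^*)=0$ rewrites as $\frac{1}{u^*-z}=\frac{P'(u^*)}{\al P(u^*)}$; squaring and substituting into the second derivative gives the clean form
$$\partial_u^2 k(z,u^*) \;=\; \frac{\al\,P''(u^*)P(u^*)+(1-\al)\,P'(u^*)^2}{\al^{\,2}\,P(u^*)^2}.$$
The second step is to notice that strong genericity forces both $P(u^*)\neq 0$ and $P'(u^*)\neq 0$ at every saddle: if $P(u^*)=0$, the saddle-point relation $(u^*-z)P'(u^*)=\al P(u^*)$ together with $u^*\neq z$ (otherwise $k$ is singular, not critical) would force $P'(u^*)=0$, contradicting the simplicity of the roots of $P$; the analogous argument rules out $P'(u^*)=0$. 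So the denominator above is nonzero at every saddle, and simplicity is equivalent to $Q(u^*)\neq 0$, where $Q(u):=\al\,P(u)P''(u)+(1-\al)\,P'(u)^2$.

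Finally I would control the zeros of $Q$. A quick leading-order computation shows $Q$ has degree exactly $2d-2$, with leading coefficient $d(d-\al)\neq 0$ since $0<\al<d$; hence $Q$ has only finitely many roots $u_0^{(1)},\dotsc,u_0^{(r)}$. Because $P'(u_0^{(j)})\neq 0$ at any such root (if $P'$ and $Q$ shared a root $u_0$ then $P(u_0)P''(u_0)=0$, and strong genericity of $P$ together with $Q(u_0)=0$ is easily seen to be violated), the saddle-point relation determines at most one $z = u_0^{(j)}-\al P(u_0^{(j)})/P'(u_0^{(j)})$ for each root, producing a finite exceptional set of $z$-values. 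Away from this set — in particular over the open dense locus relevant for our later application of the saddle-point method — every saddle of $G(z,\cdot)$ is simple. The only mildly delicate step is the leading-coefficient computation for $Q$ and verifying that the roots of $Q$ are separated from roots of $P$ and $P'$; both are routine polynomial bookkeeping once strong genericity is in hand.
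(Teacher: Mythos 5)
Your opening reduction is identical to the paper's: differentiate $k$ twice in $u$, use the saddle relation $1/(u^\ast-z)=P'(u^\ast)/(\alpha P(u^\ast))$, and reduce simplicity to the non-vanishing at $u^\ast$ of $Q(u):=\alpha P(u)P''(u)+(1-\alpha)P'(u)^2$, whose leading coefficient is $d(d-\alpha)\neq 0$ for $0<\alpha<d$. Your explicit check that $P(u^\ast)\neq 0$ and $P'(u^\ast)\neq 0$ at every saddle, which licenses that substitution and clears the denominator, is a useful step the paper leaves implicit.

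Where you genuinely part ways is the conclusion, and here you are the careful one. The paper's proof passes from ``the leading coefficient of $Q$ is nonzero'' directly to ``$\partial_u^2 k(z,u^\ast)\neq 0$'', which as written is a non-sequitur: non-vanishing of the leading coefficient only shows $Q\not\equiv 0$, not that $Q$ avoids every saddle $u^\ast$. You handle this correctly. Since $Q$ has degree $2d-2\geq 2$, it has roots; under strong genericity none of them lies in $P^{-1}(0)$ or $(P')^{-1}(0)$; and each root $u_0$ is paired by the saddle relation with exactly one $z_0=u_0-\alpha P(u_0)/P'(u_0)$, so there is a nonempty finite set of $z$-values over which $G(z,\cdot)$ has a non-simple saddle. (One can also check this exceptional set is precisely the branching locus $B$ of $\pi:\D\to\bC_z$, since $\partial_u\bigl(P'(u)(u-z)-\alpha P(u)\bigr)=Q(u)/P'(u)$ on $\D$.) Read literally, then, the lemma's ``for all $z$'' claim is false, and your weaker conclusion — simplicity off a finite exceptional set — is the provable and correct one. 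It is also exactly what the rest of the paper uses, since the subsequent lemmas restrict to the open dense set $\mathcal{O}\subset\bC_z\setminus B$, from which these exceptional $z$ are excluded.
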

 \begin{proof} For a fixed $z$, a saddle point  $(z,u^\ast)$  is simple if and only if $\ell=2$ in formula \eqref{eq:ordersaddle-point} which is equivalent to $\frac{\partial^2k} {\partial ^2u}(z,u^\ast)\neq 0$. But $\frac{\partial k} {\partial u}(z,u)=\frac{P'(u)}{\alpha P(u)}-\frac{1}{u-z}=0$ which implies that   
 $$\frac{\partial^2k} {\partial ^2u}(z,u)=\frac{P''(u)P(u)-(P^\prime(u))^2}{\alpha (P(u))^2}+\frac{1}{(u-z)^2}.$$
 Assuming that $\frac{\partial k} {\partial u}(z,u^\ast)= \frac{\partial^2k} {\partial ^2u}(z,u^\ast)  =0$, we get 
 \begin{equation}
 \label{eq:simplesaddle}
 \alpha P''(u^\ast)P(u^\ast)+(1-\alpha)(P'(u^\ast))^2=0.
 \end{equation} 
 Since $\deg P=d$, by looking at the  leading term in  the variable $u^\ast$ in \eqref{eq:simplesaddle}, we derive that
 $$
 \alpha d(d-1)+(1-\alpha) d^2=0\iff \alpha=d.
 $$
But, since $0<\alpha<d$, we obtain that $\frac{\partial^2k}{\partial^2u}(z,u^\ast)\neq 0$, which implies that  $(z,u^\ast)$ is a simple saddle point. 
 \end{proof}
 
 Now observe that for fixed $z$, the level curve $G(z,u)=G(z,u^\ast)$ passing through a simple saddle point $(z,u^\ast)$ has  two local curve segments (branches) near $u^\ast$.  The analytic continuations of these branches must end  at some  saddle point, since $\lim_{\vert u \vert \to\infty} \vert G(z,u)\vert =\infty$. If, additionally, $z\in \mathcal O$, 
 then the analytic continuations of both  branches have to come back to the same saddle point. Again, since $z\in \mathcal O$,  these curves  will be  non-intersecting, and hence they form two closed ovals $C_i,\ i=1,2$, disjoint from each other everywhere except at the initial saddle point. There exist two possible topological configurations of such ovals in $\bC$. Namely, they either form a figure eight, see Fig.~\ref{fig:integrationcontour2} a), or one of the ovals contains the other, see Fig.~\ref{fig:integrationcontour2} b) and c).

 On the one side of each oval, the function $G(z,u)$ will increase, and on the other side it will decrease (which is marked by the $\pm$-signs in Fig.~\ref{fig:integrationcontour2}). Furthermore, by the maximum principle, each connected component of the complement of the level curve must contain a pole. 
 Now notice that the plane $\pi^{-1}(z)=\{ (z,u):u\in \mathbb CP^1\}$ contains two poles of $G(z,u) $ with positive residues, namely, $(z,z)$
 and $(z,\infty)$ and $d $ poles with negative residues, namely, $(z,z_j),\; j=1,\dots,d,$ where $P(z_j)=0$. Hence, there exist only three topological possibilies to place the pole $P^+:=(z,z)$ relative to the level curve under consideration which are 
 shown in Fig.~\ref{fig:integrationcontour2} a) -- c). 

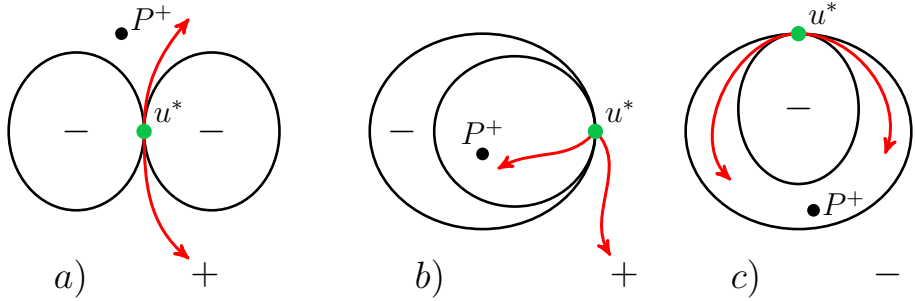
\begin{figure}[H]
\begin{center}
\usetikzlibrary {calc,intersections,through,arrows}
\begin{tikzpicture}[->,>=stealth',auto,node distance=3cm,
  thick,main node/.style={circle,draw,font=\sffamily\Large\bfseries}]
\definecolor{wColor}{RGB}{11,195,71}

\coordinate[label={[label distance=0.0cm,,font=\LARGE]0:$a)$}] (A) at (0.47,-1.956); 
\coordinate[label={[label distance=0.0cm,,font=\LARGE]0:$b)$}] (B) at (5.27,-1.956); 
\coordinate[label={[label distance=0.0cm,,font=\LARGE]0:$c)$}] (C) at (9.47,-1.956); 
\coordinate (LS1) at (0,0);
\coordinate (RS1) at (3.6,0);
\coordinate[label={[label distance=0.0cm,,font=\Large]20:$u^\ast$}] (MID1) at ($(LS1)!0.5!(RS1)$); 
\coordinate[label={[shift={(0.0,-0.299)},font=\LARGE]$-$}] (MIDLEFT1) at ($(LS1)!0.5!(MID1)$);
\coordinate[label={[shift={(0.0,-0.299)},font=\LARGE]$-$}] (MIDRIGHT1) at ($(MID1)!0.5!(RS1)$);
\coordinate (LS2) at (4.8,0);
\coordinate[label={[label distance=0.0cm,,font=\Large]20:$u^\ast$}] (RS2) at (7.8,0); 
\coordinate (MID2) at ($(LS2)!0.5!(RS2)$);
\coordinate (LS3) at (9,0);
\coordinate (RS3) at (12,0);
\coordinate (MID3) at ($(LS3)!0.5!(RS3)$);
\coordinate (IEMID2) at (6.7285714,0); 
\coordinate[label={[shift={(0.0,-0.299)},font=\LARGE]$-$}] (IEMID3) at (10.5,0.3); 
\coordinate[label={[shift={(0.0,-0.299)},font=\LARGE]$-$}] (MINUS3one) at (5.22857145,0.0); 
\coordinate[label={[label distance=0.0cm,,font=\Large]35:$u^\ast$}] (W3) at (10.5,1.3); 
\coordinate[label={[shift={(0.4,-0.08)},font=\Large]$P^+$}] (P1) at (1.5,1.3); 
\coordinate[label={[shift={(-0.0,0.0)},font=\Large]$P^+$}] (P2) at (6.3,-0.3); 
\coordinate[label={[shift={(0.4,-0.2)},font=\Large]$P^+$}] (P3) at (10.7,-1.05); 
\coordinate (TARGET1) at (2.4,1.5);
\coordinate (TARGET2) at (6.5,-0.5);
\coordinate (TARGET3A) at (9.6,-0.65);
\coordinate (TARGET3B) at (11.65,-0.3);
\coordinate[label={[label distance=-0.15cm,font=\LARGE]320:$+$}] (PLUS1) at (2.4,-1.698);
\coordinate[label={[label distance=-0.15cm,font=\LARGE]340:$+$}] (PLUS2) at (8,-1.655);
\coordinate[label={[label distance=-0.15cm,font=\LARGE]340:$-$}] (MINUS3two) at (11.5,-1.652);

\tkzDrawPoint[size=4,color=black](P1)
\tkzDrawPoint[size=4,color=black](P2)
\tkzDrawPoint[size=4,color=black](P3)
\draw[line width=0.35mm] (MIDLEFT1) ellipse (.9cm and 1.05cm);
\draw[line width=0.35mm] (MIDRIGHT1) ellipse (.9cm and 1.05cm);
\draw[line width=0.35mm] (MID2) ellipse (1.5cm and 1.3cm);
\draw[line width=0.35mm] (MID3) ellipse (1.5cm and 1.3cm);
\draw[line width=0.35mm] (IEMID2) ellipse (1.0714285cm and 1cm);
\draw[line width=0.35mm] (IEMID3) ellipse (.8cm and 1cm);

  \path[every node/.style={font=\sffamily\small},color=red,line width=0.4mm]
	(MID1) edge[out=90, in=230] node [left] {} (TARGET1); 
  \path[every node/.style={font=\sffamily\small},color=red,line width=0.4mm]
	(MID1) edge[out=270, in=140] node [left] {} (PLUS1);
	
  \path[every node/.style={font=\sffamily\small},color=red,line width=0.4mm]
	(RS2) edge[out=230, in=30] node [left] {} (TARGET2); 
  \path[every node/.style={font=\sffamily\small},color=red,line width=0.4mm]
	(RS2) edge[out=315, in=120] node [left] {} (PLUS2);
	
  \path[every node/.style={font=\sffamily\small},color=red,line width=0.4mm]
	(W3) edge[out=180, in=135] node [left] {} (TARGET3A); 
  \path[every node/.style={font=\sffamily\small},color=red,line width=0.4mm]
	(W3) edge[out=0, in=70] node [left] {} (TARGET3B); 

\tkzDrawPoint[size=5,color=wColor](RS2)
\tkzDrawPoint[size=5,color=wColor](MID1)
\tkzDrawPoint[size=5,color=wColor](W3)

\end{tikzpicture}
\caption{Three possible shapes of the level curve passing through a saddle point. The (red) curve segments with arrowheads represent the paths of steepest ascent of $G(z,u)$.} 
\label{fig:integrationcontour2}
\end{center}
\end{figure}

The situation that will be of a special interest to us is presented in Fig.~\ref{fig:integrationcontour2} b), and we then say that such saddle point is {\it maximally relevant.}

\begin{lemma}
\label{lemma:situation} For each $z\in \mathcal O$,
there exists a unique saddle point $(z,u^\ast_{max}(z))$, such that
\begin{enumerate}
 \item[i)] the connected component of the level curve $G(z,u)=G(z,u^\ast_{max}(z))$ passing through $u_{max}^\ast(z)$ is the union $C_1\cup C_2$ where $C_1$ and $C_2$ are closed ovals such that the interior of $C_1$ contains the pole $P^+=(z,z)$ and $C_1$ is contained in the interior of $C_2$, see Fig.~\ref{fig:integrationcontour}.
  \item[ii)] For all saddle points $(z,u^\ast)$ satisfying condition {\rm i)}, $G(z,u^\ast_{max}(z))>G(z,u^\ast)$.
 \end{enumerate} 
\end{lemma}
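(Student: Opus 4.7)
The plan is to combine a Morse-theoretic count for the harmonic function $G(z,\cdot)$ on $\bC P^1_u$ with direct tracking of how super-level components merge as $c$ decreases from $+\infty$. Since $\alpha<d$, the function $G(z,\cdot)$ has exactly two positive poles (at $u=z$ and $u=\infty$, where $G\to+\infty$) and $d$ negative poles (at the roots of $P$, where $G\to-\infty$), and by the preceding lemma all $d$ saddles are simple; the hypothesis $z\in\mathcal O$ then forces their $G$-values to be pairwise distinct.

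First I would track the topology of $V(c):=\{u:G(z,u)\geq c\}$ as $c$ decreases. For $c$ very large, $V(c)$ splits into two disjoint disks, a disk $V_z(c)$ around $u=z$ and a disk $V_\infty(c)$ around $u=\infty$; for $c$ very negative, $V(c)$ equals $\bC P^1$ minus $d$ small disks around the roots of $P$, that is one connected open set with $b_1=d-1$. Each saddle crossing changes $\chi(V(c))$ by $-1$, and since the total $(b_0,b_1)$ change is $(-1,\,d-1)$ across $d$ saddles, exactly one of them is a merging saddle (two components fusing) and the remaining $d-1$ are handle-adding saddles (one component gaining a new cycle).

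Next I would identify the unique merging saddle $u^*_m$ with configuration (i). For $c$ slightly above $G(u^*_m)$ the components $V_z(c)$ and $V_\infty(c)$ are disjoint; in the plane chart $\bC_u$, $V_z(c)$ is bounded while $V_\infty(c)$ is the component containing $\infty$, so disjointness forces the outer boundary of $V_z(c)$ to lie strictly inside the bounded region cut out by $\partial V_\infty(c)$, i.e.\ the two boundaries are nested. Passing to $c\to G(u^*_m)^+$ yields two closed ovals meeting at $u^*_m$: take $C_1$ to be the outer boundary of $V_z(G(u^*_m))$ and $C_2=\partial V_\infty(G(u^*_m))$, so that $u=z\in V_z$ lies inside $C_1$ and $C_1$ lies inside $C_2$, which is precisely (i). A side-by-side (figure-8) merging is ruled out in the plane chart because a bounded lobe cannot enclose $u=\infty$, so the two merging super-level regions must be nested. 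Note that even if $V_z$ or $V_\infty$ has already acquired handles via earlier handle-adding saddles (making one of them annular), only the outer boundary of $V_z$ meets $\partial V_\infty$ at $u^*_m$, so the connected component of the level curve through $u^*_m$ is still exactly $C_1\cup C_2$.

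Finally I would check that no handle-adding saddle $u^*$ satisfies (i). At such a saddle the two local super-level sectors belong to the same global super-level component, so the connected level-curve component through $u^*$ either is a figure-8 whose two enclosed lobes are both sub-level (each eventually containing a root of $P$) with $P^+$ in the common super-level exterior, or it consists of two nested ovals whose annular region is super-level (this is the component receiving the new cycle) and whose inner disk is sub-level; in the latter case $P^+$ must lie in the annular super-level region, not inside the sub-level inner disk. In neither sub-case is $P^+$ inside the inner oval, so (i) fails. Uniqueness in (ii) is then automatic, since only $u^*_m$ satisfies (i) and $z\in\mathcal O$ ensures $G$-values are pairwise distinct. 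The main obstacle will be the careful local bookkeeping at the saddle itself: the two smooth branches of the level curve cross transversally, whereas each of $\partial V_z$ and $\partial V_\infty$ only acquires a $90^\circ$ corner at $u^*_m$, and one must identify precisely which local sector belongs to which global super-level component to exclude figure-8 merging in the plane chart and pin down the nested shape.
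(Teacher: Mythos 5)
Your proof is correct and follows essentially the same strategy as the paper's, namely tracking the topology of the super-level sets $\Sigma^+(c)=\{u:G(z,u)\geq c\}$ as $c$ decreases and singling out the unique merging saddle. The organizational differences are minor: you use an Euler-characteristic count to sort the $d$ saddles into one merging and $d-1$ handle-adding, whereas the paper observes directly that the number of components of $\Sigma^+(c)$ drops from two to one at a single threshold $t_0$; and you exclude the non-merging saddles by a case analysis of the level-curve configuration (figure-eight with sub-level lobes vs.\ nested with super-level annulus), whereas the paper invokes a paths-of-steepest-ascent criterion together with a brief remark that $P^+$ cannot lie inside any oval of the level set when $c<t_0$. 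Your case analysis is a welcome amplification of the paper's rather terse argument, but one step should be tightened: in the nested-super-level-annulus sub-case you assert that the inner disk is sub-level and that $P^+$ must lie in the annulus, rather than deriving these facts. The clean justification is the maximum principle applied to a bounded region on whose boundary $G$ equals the critical value $t$: such a region with no positive pole of $G$ in its interior has $G\leq t$ throughout. Hence if the annulus is super-level it must contain the unique finite positive pole $P^+$; the inner disk, being then free of positive poles, is indeed sub-level, and condition (i) fails as you claim.
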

\begin{proof} 

For fixed $z$ and $t\gg 0$, the level set $G(z,u)=t$ consists of two enclosed ovals $C_i(t),\; i=1,2,$ and the set $\Sigma^+(t):=G(z,u)\geq t$ has two connected components both of which are topologically cylinders. The boundary of one of these components is the union of $C_2(t)$ and $(z,\infty)$ while  the  other one has the union of $C_1(t)$ and $P^+$ as its boundary. But
$\Sigma^+(t)$  is connected for $t\ll 0$, and thus there exists the minimal value $t_0$ of the parameter $t$ such that $\Sigma^+(t_0+\epsilon) $ is connected for $\epsilon\leq 0$ and 
  disconnected for $\epsilon>0$. This change of  topology occurs when the ovals $C_i:=C_i(t_0), i=1,2,$ in the formulation of Lemma~\ref{lemma:situation} touch each other, 
  which can only happen  at a saddle point $(z,u^\ast(z))$ of the type shown in Fig.~\ref{fig:integrationcontour2} b). Furthermore, this critical point is the unique maximally relevant saddle point.  Indeed, for any saddle point $u^\ast$ of $G$ with the critical value  $t>t_0$, the set $\Sigma^+(t) $ is disconnected. Therefore it is impossible to connect the saddle point $u^\ast$  both to the positive pole $P^+$ and to $\infty$ by using paths along which the function $G$ is increasing.  On the other hand, it is clearly possible to find such paths for a saddle point  shown in Fig.~\ref{fig:integrationcontour2} b).  
   
   Similarly, a saddle point of $G$ with a critical value $t<t_0$ cannot be maximally relevant, since 
   $P^+$ cannot  be contained in the interior of any oval in the level set $G(z,u)=t<t_0$. In fact, the level curve passing through such $u^\ast$  has to look as  in Fig.~\ref{fig:integrationcontour2} a)
which finishes the proof.
\end{proof}

\begin{figure}[H]
\begin{center}
\usetikzlibrary {calc,intersections,through,arrows}
\begin{tikzpicture}[->,>=stealth',auto,node distance=3cm,
  thick,main node/.style={circle,draw,font=\sffamily\Large\bfseries}]
\definecolor{wColor}{RGB}{11,195,71}
\definecolor{shadeColor}{RGB}{206,228,251}

\coordinate (LS1) at (0,0);
\tikzset{
    ellipses/.pic = {
        \draw (0,0) ellipse (2.4cm and 2cm);
		\fill[color=shadeColor] (0.6,0) ellipse (1.8cm and 1.5cm);
        \draw (0.6,0) ellipse (1.8cm and 1.5cm);
		\fill[color=white] (0.95,0) ellipse (1.45cm and 1.3cm);
		\draw (0.95,0) ellipse (1.45cm and 1.3cm);
        \draw[-,color=red,line width=0.45mm] (0.6,0) -- (7,0);
		\draw[color=red,line width=0.45mm] (2.4,0) -- (3.8,0);
		\draw[color=red,line width=0.45mm] (2.4,0) -- (1,0);
		\fill[color=wColor] (2.4,0) circle (0.12);
    }
}
\path[rotate=7,transform shape]  (0,0) pic{ellipses};
\coordinate[label={[font=\Large]$C_1$}] (C1) at (1.1,-1.12);
\coordinate[label={[font=\Large]$C_2$}] (C2) at (1.6,-2.2);
\coordinate[label={[font=\Large]$\tilde C_1$}] (TILDE) at (-0.93,1);
\coordinate[label={[font=\Large]$\infty$}] (TILDE) at (7.35,0.66);
\coordinate[label={[font=\huge]$+$}] (PLUS) at (0.98,0.6);
\coordinate[label={[font=\huge]$+$}] (PLUS) at (7.2,-0.25);
\coordinate[label={[font=\huge]$-$}] (MINUS) at (-1.795,-0.325);
\coordinate[label={[font=\Large]$N$}] (N) at (-0.825,-0.25);
\coordinate[label={[shift={(0.4,-0.08)},font=\Large]$P^+$}] (P) at (0.2,-0.5);
\coordinate[label={[label distance=0.0cm,,font=\Large]35:$u^\ast_{max}$}] (W1) at (2.34,0.37);
\tkzDrawPoint[size=4,color=black](P)

\end{tikzpicture}
\caption{The integration contour.}
\label{fig:integrationcontour}
\end{center}
\end{figure}
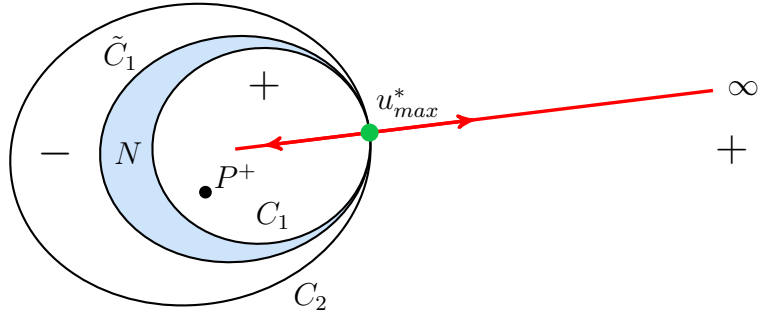

\begin{remark}
\label{remark:path} {\rm  Of the two paths of maximal ascent  starting at  a maximally relevant saddle point $u^\ast_{max}(z)$, one necessarily 
goes to $P^+$ and the other one to $(z,\infty)$, see Fig.~\ref{fig:integrationcontour}. To prove this fact notice that  there exist  paths going into each of the regions marked with the $+$-sign. Moreover they have to approach the pole with the negative residue contained in the respective region.}
\end{remark}

We say that a saddle point $(z,u^\ast)$ is  {\it relevant} if it is either maximally relevant or
 there exists a maximally relevant saddle point $(z,u^\ast_{max})$ such that 
 $G(z,u^\ast_{max})>G(z,u^\ast)$.
The next notion is very important for our story.

\begin{definition} In the above notation, we denote by $U_{rel}\subset \widehat \D$  the set of all relevant saddle points of the function $G(z,u)$, and by $U_{max}\subset \widehat \D$ the set of all maximally relevant saddle points.  
\end{definition}
Some  examples  of $U_{rel}$ and $U_{max}$ are given in \S~\ref{sec:quadratic}. Our main use of these sets will be to construct the tropical trace of $H$ and hence, in practice,  we  only need $U_{max}$  since it contains all the maximally relevant saddle points. We believe however that, conceptually,   $U_{rel}$ is more appropriate, as it encodes the ordering of branches by their height (given by $G(z,w)$) for different components of $\mathcal O$. It also is better suited for our sheaf-theoretical interpretation. 

\begin{lemma}
\label{lemma:andronov} 
{\rm i)} The set  $U_{rel}\cap \pi^{-1}(\mathcal O)\subset \widehat \D$ is open where the set   $\mathcal O$ has been defined in Lemma \ref{lemma:distinctharmonic}.

\noindent
{\rm ii)} 
 Let $\bD\subset \mathcal O$ be an open simply-connected  subset. Then there exists a branch $u_i^\D(z)$ of $\widehat\D$, such that for each $z\in \bD$,  the maximally relevant saddle point of $G(z,u)$ is given by $(z,u^\ast_{max}(z))=(z,u_i^\D(z))$.
\end{lemma}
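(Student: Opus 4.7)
The plan is to deduce both parts from the topological stability of the picture in Lemma~\ref{lemma:situation} under small perturbations of the parameter $z$.

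For part (i), the argument will proceed in two cases. Fix $(z_0, u^\ast_0) \in U_{rel}\cap \pi^{-1}(\mathcal O)$. First, suppose $(z_0, u^\ast_0)$ is maximally relevant. Because all saddle points are simple and $z_0 \in \mathcal O$, the implicit function theorem provides a holomorphic branch $u^\ast(z)$ of $\widehat\D$ through $(z_0, u^\ast_0)$ defined near $z_0$, and the function $(z,u)\mapsto G(z,u)$ together with its level curves varies continuously in $z$. The defining conditions for maximal relevance---namely that the level set $\{u : G(z, u) = G(z, u^\ast(z))\}$ consists of two ovals $C_1 \subset \mathrm{int}(C_2)$ with $P^+ = (z, z) \in \mathrm{int}(C_1)$---are open conditions, so they will persist for $z$ in a small neighborhood of $z_0$ in $\mathcal O$. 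Second, if $(z_0, u^\ast_0)$ is relevant but not maximally relevant, then by definition there is a maximally relevant saddle $(z_0, u^\ast_{max,0})$ with $G(z_0, u^\ast_{max,0}) > G(z_0, u^\ast_0)$. Both extend to local holomorphic branches $u^\ast(z)$ and $u^\ast_{max}(z)$; the first case shows $u^\ast_{max}(z)$ stays maximally relevant, while continuity of $G$ preserves the strict inequality $G(z, u^\ast_{max}(z)) > G(z, u^\ast(z))$ in a neighborhood. Hence $(z, u^\ast(z))$ remains relevant.

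For part (ii), observe that by Lemma~\ref{lemma:distinctharmonic} the open set $\bD$ lies in $\mathcal O \subset \bC_z\setminus B$, so $\pi^{-1}(\bD)$ decomposes into $d$ disjoint holomorphic sheets $u_1^\D(z), \dots, u_d^\D(z)$. Then set
\[
V_i := \{z \in \bD : u^\ast_{max}(z) = u_i^\D(z)\}, \quad i = 1, \dots, d.
\]
By Lemma~\ref{lemma:situation}, the $V_i$ are pairwise disjoint and together cover $\bD$; by the maximally relevant case of (i) each $V_i$ is open in $\bD$; hence each is also closed in $\bD$ as the complement of the union of the remaining open $V_j$. Since $\bD$ is connected, exactly one $V_i$ equals all of $\bD$, providing the required global branch $u_i^\D$.

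The main obstacle will be justifying the continuity claim underlying the first case of (i): that the nested-oval configuration survives small perturbations of $z$. This reduces to standard Morse-theoretic stability, using that $G(z_0, \cdot)$ is Morse with distinct critical values (because $z_0 \in \mathcal O$) and has only a fixed finite set of logarithmic poles. Away from critical values the sublevel sets of a Morse function vary continuously in $C^0$, and in $\mathcal O$ the critical values depend continuously on $z$ while remaining distinct, so the oval structure and the location of the pole $P^+$ relative to the ovals are preserved under small motions of $z$.
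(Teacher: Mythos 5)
Your proof is correct and follows essentially the same route as the paper. The paper isolates a claim (*) — stability of maximal relevance under small motions of $z$ within $\mathcal O$, justified by exactly the same topological observation you make (the critical values are distinct for $z\in\mathcal O$, the level set through the saddle stays two nested ovals, and $P^+$ cannot cross the inner oval without passing through a level set containing a saddle point) — and then derives both i) and ii) from it; your "first case" of part (i) is precisely this (*), and your derivation of the relevant-but-not-maximally-relevant case from it matches the paper's. The one genuine improvement in your write-up is that you spell out the open-closed/connectedness argument for part (ii) explicitly via the sets $V_i$, which the paper only gestures at; a minor inaccuracy is your phrase "a fixed finite set of logarithmic poles," since the pole $P^+=(z,z)$ moves with $z$ — but your argument already treats the location of $P^+$ separately, so nothing is affected.
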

\begin{proof}It suffices to prove ii) which will follow the next claim.

\smallskip
\noindent 
(*) Suppose that $N\subset \mathcal O$ is a neighborhood of $z^\ast$, $u_i^\D$ is a branch of $\widehat \D$ defined in $N$ and $(z^\ast,u_i^\D(z^\ast))$ is a maximally relevant saddle point. Then there exists a neighborhood $\bD\subset N$ of $z^\ast$ such that $(z,u_i^\D(z))$ is maximally relevant for all $z\in \bD$.

Clearly (*) implies ii) as well as i).
Namely, if  $(z,u_j^\D(z))$ is relevant, but not maximally relevant, then,  by definition,  there is a maximally relevant $(z,u_i^\D(z))$ such that  $G(z, u_i^\D(z))>G(z,u_j^\D(z))$. Then (*) together with the continuity of $G$ and the assumption that $z\in \mathcal O$ implies that $(z,u_j^\D(z))$ is relevant in some neighborhood of $z$.
On the other hand,  in the case when $(z,u_j^\D(z))$ is maximally relevant, then $\{(z,u_j^\bD(z)), z\in \bD\})$ is an open neighborhood of the saddle point which implies that  i) is valid in this case as well.

\smallskip 
 In order to settle  (*), notice that by the definition of a maximally relevant saddle point,  the (connected component of the) level set of $G(z,u)$ passing through the saddle point has the following properties. Firstly, it consists of two enclosed ovals disjoint from each other except at  the saddle point and secondly,  $P^+$  is contained in the inner oval. In a neighbourhood of $z$,  the first property is obvious since, firstly, the compact level sets $G(z,u) =G(z,u_i^\D(z))=t$ vary continuously with $t$, and, secondly, since $z\in \mathcal O$ they only contain one saddle point. Therefore in a neighborhood of $z$,  these level sets cannot  change from being two enclosed ovals into a figure eight shape. The second property  is also obvious
since when $z$ varies the pole $P^+$ cannot escape from the inner oval  as long as this oval exists. Hence $(z,u_i^\D(z))$ is maximally relevant in some neighborhood $N$ of $z$, 
 and (*) is proved. 
\end{proof}


 Now let us consider the situation  as in Lemma \ref{lemma:situation} and Fig.~\ref{fig:integrationcontour}. Denote the region between $C_1$ and $C_2$ by $E$. Since the positive pole $P^+$ is contained in the interior of $C_1$, $(z, \infty)$ lies in the exterior of $C_2$, and since there are no other poles with positive residue, one has $G(z,u)<G(z,u^\ast_{max}(z))$ for $u\in E$. Hence there is a half-tubular neighborhood $N$  contained in $D$ with the boundary $C_1\cup \tilde C_1$, such that $\tilde C_1\cap C_1=u^\ast_{max}$, see Fig.~\ref{fig:integrationcontour}.  
 Clearly $\tilde C_1$ can be used as an integration contour in \eqref{eq:cauchy} and,  additionally,  it passes through $u_1$.  Further,   $G(z,u^\ast)< G(z,u^\ast_{max})$ for $u^\ast\in \tilde C_1$ and $u^\ast\neq u_{max}^\ast$.  Thus $\tilde C_1$ satisfies the condition of Corollary \ref{cor:uniform} for being a suitable integration contour.
Hence, we obtain the following key result. 

\begin{corollary}\label{cor:imp} Assume that $z\in \mathcal O$, $\D$ is the saddle point curve \eqref{eq:saddle-points1}, and $(z,u^\ast_{max}(z))\in \D$ is the maximally relevant saddle point of $G(z,u)$.
Then, 
\begin{equation}
\label{eq:cauchy4} 
\lim_{m\to\infty}\vert I_{P,m,s_n,c}(z)\vert^{1/m}=e^{G(z, u^\ast_{max}(z))},
\end{equation}
where $c$ is any contour encircling $z$ once counterclockwise. 
\end{corollary}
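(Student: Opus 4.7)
The plan is to combine Cauchy's theorem with the single-contour saddle-point asymptotic of Corollary~\ref{cor:uniform}, applied to the contour $\tilde C_1$ constructed in the paragraph preceding the statement. Since $P^n(u)/(u-z)^{[\alpha n]}$ is a single-valued meromorphic function of $u$ with unique pole at $u=z$ (inside $\tilde C_1$ because $z$ lies in the interior of $C_1$, which itself contains $P^+=(z,z)$), Cauchy's theorem permits replacing $c$ by $\tilde C_1$ without changing the value of $I_{P,m,s_n,c}(z)$. By Lemma~\ref{lemma:situation} we have $G(z,u)<G(z,u^\ast_{max}(z))$ for every $u\in\tilde C_1\setminus\{u^\ast_{max}(z)\}$, provided the tubular neighbourhood $N$ defining $\tilde C_1$ is chosen narrow enough; we shrink $N$ further, if necessary, so that $\tilde C_1$ contains no other saddle point and stays away from the zeros of $P$.

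Next I would split $\tilde C_1=\gamma_1\cup\gamma_2$, where $\gamma_1$ is a short closed arc containing $u^\ast_{max}(z)$ as an interior point and lying in a simply-connected open set $W\subset\bC\setminus(\{z\}\cup P^{-1}(0))$. On $W$ we can fix single-valued branches of $\log P(u)$ and $\log(u-z)$ and hence of $k(z,u)$, and the correspondence $(P^{1/\alpha})^m P^{s_n}=P^n$ makes the integrand along $\gamma_1$ coincide with $e^{m k(z,u)}P^{s_n}(u)$. Since $u^\ast_{max}(z)$ is by construction a simple saddle point of $k(z,\cdot)$ with $\mathrm{Re}\,k(z,u)=G(z,u)<G(z,u^\ast_{max}(z))$ on the rest of $\gamma_1$, Corollary~\ref{cor:uniform} (applied with the $s_n$-uniform version there) yields
\[
\lim_{m\to\infty}\bigl|I_{P,m,s_n,\gamma_1}(z)\bigr|^{1/m}=e^{G(z,u^\ast_{max}(z))}.
\]

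For $\gamma_2$, which is a compact set on which $G(z,u)\le G(z,u^\ast_{max}(z))-\delta$ for some $\delta>0$, I would use the trivial bound on the single-valued integrand: with $m=[\alpha n]$ and $n=m/\alpha+s_n$ one has $|P^n(u)/(u-z)^{[\alpha n]}|=\exp\bigl(m G(z,u)+s_n\log|P(u)|\bigr)$, and since $s_n$ ranges in the bounded interval $[0,1/\alpha)$ while $|P(u)|$ is bounded on $\gamma_2$, a direct length-times-sup estimate gives $|I_{P,m,s_n,\gamma_2}(z)|\le C\,e^{m(G(z,u^\ast_{max}(z))-\delta)}$ with $C$ independent of $m$. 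Consequently $\gamma_2$ contributes at most $e^{G(z,u^\ast_{max}(z))-\delta}$ in the $m$-th root limit. Combining this with the saddle-point estimate on $\gamma_1$, the contribution of $\gamma_1$ dominates exponentially and an elementary $\limsup$/$\liminf$ argument on $|I_{\gamma_1}+I_{\gamma_2}|^{1/m}$ establishes \eqref{eq:cauchy4}.

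The main obstacle is the branch-cut issue: because $\tilde C_1$ encircles the point $z$, no single-valued branch of $\log(u-z)$ exists along the whole closed contour, so Lemma~\ref{lm11} cannot be invoked globally. The decomposition $\tilde C_1=\gamma_1\cup\gamma_2$ circumvents this, with $\gamma_1$ kept short enough to lie in a simply-connected domain where local branches are well defined while $\gamma_2$ is controlled by soft estimates on the intrinsically single-valued integrand of \eqref{eq:cauchy}. A subsidiary technical point is to verify that $\tilde C_1$ can actually be chosen inside the open neighbourhood $N$ of $C_1$ (i.e.\ that $\tilde C_1$ touches $C_1$ only at $u^\ast_{max}(z)$ and otherwise lies strictly in $E$); this follows from Remark~\ref{remark:path} together with the Morse property of the simple saddle point.
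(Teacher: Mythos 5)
Your proof is correct and follows the same route as the paper: Cauchy's theorem lets you replace $c$ by the contour $\tilde C_1$ built from the maximally relevant saddle point, and the $m$-th root asymptotics then follow from Corollary~\ref{cor:uniform} applied at $u^\ast_{max}(z)$. The explicit $\gamma_1/\gamma_2$ split you carry out to sidestep the multivaluedness of $k(z,\cdot)$ on the closed contour is in fact already built into the proof of Corollary~\ref{cor:uniform} (which invokes Lemma~\ref{lm11} only on a short arc through the saddle point and discards the remainder as exponentially subleading), so that corollary applies to $\tilde C_1$ directly; your extra care merely makes the branch-cut point, which the paper leaves implicit, explicit.
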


In the following sections we will use Corollary~\ref{cor:imp} to prove that, up to  an additive constant, the logarithmic potential of the asymptotic  root-counting measure of the Rodrigues' descendants is the tropical trace  of $G$ taken on the set $U_{rel}\subset \widehat \D$ of relevant saddle points. (A similar fact can be found in the proof of Theorem~\ref{th:main}.) As before, let $\pi:(z,u)\mapsto z$ be the standard projection.

\begin{proposition}\label{prop:cont} The trace $\pi_{U_{rel *}}G(z), \ z\in \bC$, is a continuous and piecewise-harmonic function in the complement to the finite set of its poles. These poles are  logarithmic and have positive residues. Therefore the trace is a subharmonic $L^1_{loc}$-function. 
\end{proposition}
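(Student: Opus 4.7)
The plan is to interpret $\pi_{U_{rel*}}G$ as a tropical trace in the sense of Section~\ref{sec:trace} and invoke the general machinery of Proposition~\ref{prop:tropicaltraceopen}. Taking the branched covering $\pi : \widehat{\D} \to \bC P^1_z$, the function $G(z,u)$ pulled back to $\widehat{\D}$, and the open subset $U = U_{rel}$, the structural hypotheses are all in place: by Corollary~\ref{prop:saddle-pointcurveprop} the curve $\widehat{\D}$ is smooth rational and $G|_{\widehat{\D}}$ has only finitely many logarithmic singularities; by Lemma~\ref{lemma:distinctharmonic} the non-simple locus $\Delta = \bC_z \setminus \mathcal{O}$ is a locally finite union of real semi-analytic curves; and by Lemmas~\ref{lemma:andronov} and \ref{lemma:situation}, $U_{rel}$ is open in $\widehat{\D}$ and $\pi(U_{rel}) \supseteq \mathcal{O}$ is dense in $\bC_z$.

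Next I would verify the continuity conditions (ii)~a) and (ii)~b) of Proposition~\ref{prop:tropicaltraceopen}. For (ii)~a): on each connected component $C \subset \mathcal{O}$, Lemma~\ref{lemma:andronov}~(ii) provides a locally defined branch $u^{\D}_{i}$ realizing the unique maximally relevant saddle point; since the index $i$ is locally constant on $\mathcal{O}$ and $C$ is connected, a single branch $u^{\D}_{i(C)}$ serves all of $C$. As relevant saddle points have $G$-values dominated by the maximally relevant one, the trace coincides with $G(z, u^{\D}_{i(C)}(z))$ on $C$. For (ii)~b): across a boundary arc in $\Delta$ separating two components $C_1, C_2$ with $i(C_1) \neq i(C_2)$, continuity of the fiberwise maximum forces equality of the two $G$-values on the arc, placing it in $\Delta^{i(C_1), i(C_2)}$. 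Proposition~\ref{prop:tropicaltraceopen}~(iii) then delivers continuity, piecewise-harmonicity, and subharmonicity of the trace away from its poles.

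Finally, I would locate the poles and determine the signs of their residues. By Corollary~\ref{prop:saddle-pointcurveprop}, the only finite poles of $G$ on $\widehat{\D}$ lie on the branches through $(z_j, z_j)$ for roots $z_j$ of $P$. A local expansion using the defining equation $P'(u)(u-z) = \alpha P(u)$ gives $u - z_j \sim (z-z_j)/(1-\alpha)$ and $u - z \sim \alpha(z-z_j)/(1-\alpha)$, whence
\begin{equation*}
G(z, u(z)) = \tfrac{1-\alpha}{\alpha}\log|z - z_j| + \widetilde{H}(z),
\end{equation*}
with $\widetilde H$ harmonic near $z_j$. For $\alpha < 1$ the coefficient is positive and, when this branch is maximally relevant near $z_j$, the trace acquires a positive-residue logarithmic pole. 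The main obstacle I anticipate is excluding the apparent negative-residue pole when $\alpha \geq 1$: the resolution is that in this regime the branch through $(z_j, z_j)$ cannot be maximally relevant in a punctured neighborhood of $z_j$, because the saddle point $u^\ast$ coalesces with the positive pole $P^+ = (z,z)$ and the inner oval $C_1$ of Lemma~\ref{lemma:situation} degenerates, so the configuration of Fig.~\ref{fig:integrationcontour2}~b) cannot persist there. Consequently every pole of the trace is a positive-residue logarithmic singularity, and the trace is a subharmonic $L^1_{loc}$-function on $\bC_z$.
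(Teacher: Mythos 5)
Your structural approach matches the paper's --- both check the hypotheses of Proposition~\ref{prop:tropicaltraceopen} and then analyze residues --- and your residue computation at $(z_j,z_j)$ is correct. There are, however, two genuine gaps. First, your verification of condition (ii)~b) is circular: you invoke ``continuity of the fiberwise maximum'' to place the boundary arc in $\Delta^{i(C_1),i(C_2)}$, but (ii)~b) is a hypothesis you must establish \emph{before} Proposition~\ref{prop:tropicaltraceopen} yields continuity of $\pi_{U_{rel *}}G$; nor can you substitute the automatically continuous unrestricted trace $\pi_{*}G$, since it need not equal $\pi_{U_{rel *}}G$ (they already differ inside the two ovals in the quadratic case, see Fig.~\ref{fig: trace}). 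The paper supplies a topological persistence argument in its place: if the boundary arc were $\Delta^{jk}_{\mathfrak O}$ with $j\notin\{i(C_1),i(C_2)\}$, then crossing it does not collide the maximally relevant saddle point $(z,u_{i(C_1)}^{\D}(z))$ with any other saddle point, so the configuration of Fig.~\ref{fig:integrationcontour2}~b) persists and $i(C_1)=i(C_2)$ after all; this is the step missing from your sketch.

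Second, your exclusion of negative-residue poles when $\al>1$ rests on a picture that is not correct. You say the saddle point ``coalesces with the positive pole $P^+$'' and that ``the inner oval degenerates.'' But after normalizing $z_j=0$, the saddle point of the local model $B(z,u)=\frac{1}{\al}\log|u|-\log|u-z|$ is $v^{*}=-z/(\al-1)$, a fixed nonzero multiple of $z$; the points $v^{*}$, $P^+=z$, and $z_j=0$ all approach $z_j$ at the same linear rate, so at the scale $|z-z_j|$ the level-curve configuration is self-similar rather than degenerating, and nothing coalesces before the limit. What actually rules out maximal relevance is the paper's explicit check (Fig.~\ref{fig:Bztz}) that for $\al>1$ the level curve through $v^{*}$ is of the Fig.~\ref{fig:integrationcontour2}~c) type, with $z_j$ --- not $P^+$ --- inside the inner oval; this violates condition~i) of Lemma~\ref{lemma:situation} throughout a punctured neighborhood of $z_j$, so the branch through $(z_j,z_j)$ never enters $U_{max}$ there.
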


\begin{proof}First we will show that $\pi_{U_{rel *}}G(z)$ satisfies the conditions of Proposition \ref{prop:tropicaltraceopen} guaranteeing  continuity. The conditions  on $\Delta $ and $\pi(U_{rel})$ are true by Lemma \ref{lemma:distinctharmonic} and Lemma \ref{lemma:situation}, respectively.
By Lemma \ref{lemma:andronov} ii) the first condition (ii) a) is true. To settle (ii) b), assume that $C_1$ and $C_2$ are two adjacent connected components of $ \mathcal O$ 
and that $\pi_{U_{rel *}}G(z)=G(z,u_i^\D(z))$ if $z\in C_i,\ i=1, 2$.
 Let their common boundary be given by $G(z,u_j^\D(z))=G(z,u_k^\D(z)) $. We have to prove that either $\{j,k\}=\{1,2\}$, or else $u_1=u_2$. %
 Assume first that $j\neq 1,2$. Then, as we move $z$ from $C_1$ across the boundary to $C_2$, the saddle point $(z,u_1(z))$ will not collide with any other saddle point. Hence if we are in the situation of   Fig.~4b) then nothing will happen. Indeed,  the continuously changing level curve passing through the saddle point can neither  change from the type of Fig.~4b) to the type of Fig.~4a)  nor can the pole $P^+$ escape from its inner oval  to create the shape shown in Fig.~4c). This means that $(z,u_1(z))$ will remain a maximally relevant saddle point, and  thus $u_1=u_2$ by Lemma \ref{lemma:andronov}. By symmetry, this proves the first part of  Proposition~\ref{prop:cont}.

Finally, we have to show that the tropical trace has no poles with negative residue in the finite plane. We argue by contradiction. Suppose that the tropical trace has such a pole. Then it must originate from a pole  of $G(z,u)$ on $\D$ with a negative residue. That is, this pole is of the form $(z,z)$. By Corollary \ref{prop:saddle-pointcurveprop} iii), the only possibilities for this pole are $(z_i,z_i),\; i=1,\dots ,d,$ where $P(z_i)=0$. In addition,  the negativity of the residue of a pole clearly implies that $\alpha>1$.

Without loss of generality,  assume that this pole of $G(z,u)$ coincides with $(z_1,z_1)$. Since it also induces a pole of the tropical trace, we get that  $\lim_{z\to z_1}(z,u^\ast_{max}(z))=(z_1,z_1)$.
 In a neighbourhood of $(z_1,z_1)\in \bC_z\times \bC_u$, we have 
  $$
  G(z,u)=\frac{1}{\alpha}\log\vert u-z_1\vert-\log\vert z-u\vert+O(1)=: B(z,u)+O(1).$$
  For fixed $z$, in a sufficiently small neighbourhood, the graph of $G(z,u)$ with respect to the variable $u$ will be close to the graph of $B(z,u)$. Making an affine change of coordinates, one can  assume that $z_1=0$, in which case  the only saddle point of $B(z,u)$ is $v^\ast=-\frac{z}{\alpha-1}$. Plotting the graph of $B(z,tz)$, for $t\in \bR$, we can find the positions of the poles with respect to the level curve passing through the saddle point $v^\ast$, see Fig.~\ref{fig:Bztz}.  One can easily conclude that this curve is of the type in Fig.~4 c) and hence the saddle point under consideration is not maximally relevant. This claim gives a contradiction and finishes the proof of the proposition.
  \end{proof}
  
  \begin{figure}
\begin{center}
\includegraphics[scale=.4]{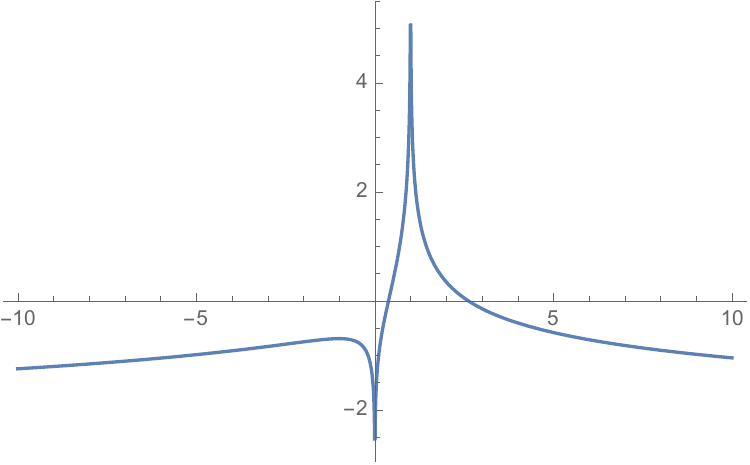}
\end{center}
\caption{Graph of $B(z,tz)$  with the local maximum $t=-\frac{1}{\alpha-1}$ corresponding to the saddle point $v^\ast=-\frac{z}{\alpha-1}$. The level curve passing through $v^\ast$ contains two other points which are visible in this  graph. They are obtained by intersecting the graph with the horizontal tangent at the latter local maximum. Their positions guarantee that the pole $z_1=0$ (which has a positive residue) is contained in the inner oval which implies that  this level curve is of the non-relevant type shown in Fig. 4 c). }
\label{fig:Bztz}
\end{figure}

\medskip 

 \subsection{Convergence of the logarithmic potentials almost everywhere}
 \label{sec:stirling}
 
By  Cauchy's integral formula  (\ref{eq:cauchy}),  the monic polynomial $\tilde \q_n$ which is proportional to the polynomial $\q_n$  is given by 
\begin{equation}
\label{eq:monicq}
\tilde \q_n(z):=\frac{(nd-([\alpha n]-1))!}{(nd)!} \q_n(z)=\frac{([\alpha n]-1)!(nd-([\alpha n]-1))!}{2\pi  i \cdot(nd)!}
\int_{c} \frac{P^n(u)\,\diff u}{(u-z)^{[\alpha n]}}.
 \end{equation}
 
 The degree of the polynomial $\q_n$ equals $d_n=nd-(m-1)$, where  $m=[\alpha n]$.  Recall that the logarithmic potential $L_{\mu_n}(z)$ of the root-counting measure $\mu_n$ of $\tilde \q_n$ can be expressed as 
 $$
 L_{\mu_n}(z)=\frac{1}{d_n}\log \vert \tilde \q_n(z)\vert.
 $$ 
 By (\ref{eq:fractionalpart}),  $$n=\frac{m}{\alpha}+s_n,$$ where $ 0\leq s_n< 1/\alpha$.
 Hence
 \begin{equation}
\label{eq:degree2}
d_n=\left(\frac{d-\alpha}{\alpha}\right)m+(s_nd+1)=\beta m+O(1),
 \end{equation}
where $\beta:=\frac{d-\alpha}{\alpha}$.
  \begin{lemma}
  \label{lem:constantas} In the above notation, 
$$  \lim_{n\to\infty}\frac{1}{d_n} \log \left(\frac{(m-1)!(nd-(m-1))!}{(nd)!}\right)=
\frac{\beta\log \beta-(\beta+1)\log(\beta+1)}{\beta} =: B. $$
 
  \end{lemma}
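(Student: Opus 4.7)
\medskip
\noindent\textbf{Proof plan.} The plan is to apply Stirling's approximation $\log k! = k\log k - k + O(\log k)$ to each of the three factorials and track the main terms carefully. Setting $\ell := nd-(m-1) = d_n$ and noting the identity $m + d_n = nd + 1$, we write
\begin{equation*}
\log\!\left(\tfrac{(m-1)!(nd-(m-1))!}{(nd)!}\right)
= (m-1)\log(m-1)+d_n\log d_n - nd\log(nd) + r_n,
\end{equation*}
where $r_n = O(\log n)$ collects the linear terms $-(m-1)-d_n+nd = O(1)$ together with the Stirling error. The main step is then to show that, after division by $d_n$, the right-hand side converges to $B$.

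Next I would substitute the asymptotics $m = \alpha n + O(1)$ and $d_n = \beta m + O(1)$ (from \eqref{eq:degree2}), together with the identity $nd = (1+\beta)m + O(1)$. Using $\log(m-1) = \log m + O(1/m)$ and similarly $\log d_n = \log m + \log\beta + O(1/m)$ and $\log(nd) = \log m + \log(1+\beta) + O(1/m)$, the coefficient of $m\log m$ becomes
\begin{equation*}
1 + \beta - (1+\beta) = 0,
\end{equation*}
so the $m\log m$ terms cancel exactly. What remains at leading order is
\begin{equation*}
\beta m\log\beta - (1+\beta)m\log(1+\beta) + O(\log n).
\end{equation*}

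Finally, dividing by $d_n = \beta m + O(1)$ and letting $n\to\infty$ yields
\begin{equation*}
\lim_{n\to\infty} \frac{1}{d_n}\log\!\left(\tfrac{(m-1)!(nd-(m-1))!}{(nd)!}\right)
= \log\beta - \tfrac{1+\beta}{\beta}\log(1+\beta) = \tfrac{\beta\log\beta-(\beta+1)\log(\beta+1)}{\beta} = B,
\end{equation*}
as required. There is no serious obstacle here; this is a routine Stirling computation. The only point requiring any attention is bookkeeping of the $O(\log n)$ error terms to confirm they are absorbed after division by $d_n$, which is of order $n$; since $(\log n)/n \to 0$, no issue arises. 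The algebraic cancellation of the $m\log m$ coefficient (which is forced by $m + d_n = nd + 1$) is the reason a finite limit exists in the first place.
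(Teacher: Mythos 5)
Your proposal is correct and takes the same approach as the paper, which simply states that the lemma follows by a "straight-forward calculation using Stirling's formula"; you have filled in the bookkeeping explicitly, including the crucial cancellation of the $m\log m$ coefficient forced by $m+d_n=nd+1$, and the argument is sound.
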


\begin{proof} Straight-forward calculation using Stirling's formula.
 \end{proof}

Now we can calculate the limit of the sequence $\left\{\frac{1}{d_n}\log \vert \tilde \q_n\vert\right\} $ of logarithmic potentials. Note that $d_n\sim \beta m$, 
take the logarithm of (\ref{eq:monicq}), and use Lemma \ref{lem:constantas} together with (\ref{eq:cauchy4}) in Corollary \ref{cor:imp}. 
 
 \begin{corollary}
  \label{cor:first limit} For any point $z\in \mathcal O$, 
$$  \lim_{n\to\infty}L_{\mu_n}(z)=
B+ \frac{1}{\alpha\beta }\left(\log\vert P(u^\ast_{max}(z))\vert-\alpha \log \vert u^\ast_{max}(z)-z\vert\right).$$
  \end{corollary}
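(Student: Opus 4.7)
The plan is to read the statement as a simple combination of the two asymptotic facts already assembled in this subsection, namely Lemma~\ref{lem:constantas} and Corollary~\ref{cor:imp}. Starting from the integral representation \eqref{eq:monicq} of the monic polynomial $\tilde q_n$, I take moduli and logarithms, divide by $d_n$, and split the result into the factorial prefactor and the Cauchy integral. The factorial term, once divided by $d_n$, has limit $B$ by Lemma~\ref{lem:constantas}. For the integral term I would write
$$
\frac{1}{d_n}\log\left|\frac{1}{2\pi}\int_{c}\frac{P^n(u)\,\diff u}{(u-z)^{[\al n]}}\right|
= \frac{m}{d_n}\cdot\frac{1}{m}\log\bigl|I_{P,m,s_n,c}(z)\bigr| + o(1),
$$
where the $o(1)$ absorbs the normalization $\frac{1}{2\pi}$ and the difference between the exponent $[\al n]$ and $m$ (which is at most a bounded shift). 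Combining $\frac{m}{d_n}\to 1/\beta$ from \eqref{eq:degree2} with the pointwise limit $\frac{1}{m}\log|I_{P,m,s_n,c}(z)|\to G(z,u^*_{max}(z))$ from Corollary~\ref{cor:imp}, the integral contribution converges to $\beta^{-1}G(z,u^*_{max}(z))$.

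Putting the two pieces together yields
$$
\lim_{n\to\infty} L_{\mu_n}(z) = B + \frac{1}{\beta}\,G(z,u^*_{max}(z))
= B + \frac{1}{\al\beta}\bigl(\log|P(u^*_{max}(z))| - \al\log|u^*_{max}(z)-z|\bigr),
$$
by the definition \eqref{eq:guz} of $G$, which is exactly the claimed formula.

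The only nontrivial point, and the one I would be most careful with, is justifying that the $o(1)$ terms coming from (a) replacing $[\al n]$ by $m$, (b) the constant prefactor $\frac{1}{2\pi}$, and (c) the precise value of $s_n\in[0,1/\al)$, are genuinely negligible after division by $d_n$. All three are transparent: the first two contribute $O(\log m)/d_n$, while the uniformity in $s_n$ asserted in Corollary~\ref{cor:uniform} guarantees that the $s_n$-dependence does not affect the $m$-th root asymptotic. Finally, the hypothesis $z\in\mathcal O$ is exactly what is needed to invoke Corollary~\ref{cor:imp}, since by Lemma~\ref{lemma:situation} the maximally relevant saddle point $u^*_{max}(z)$ is uniquely defined on $\mathcal O$, and by Lemma~\ref{lemma:andronov} it depends analytically on $z$ in each connected component of $\mathcal O$, so no ambiguity arises in the limit.
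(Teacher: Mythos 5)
Your proposal is correct and follows essentially the same route as the paper: take logarithms of \eqref{eq:monicq}, divide by $d_n$, send the factorial prefactor to $B$ via Lemma~\ref{lem:constantas}, and send the integral factor to $\frac{1}{\beta}G(z,u^*_{max}(z))$ via Corollary~\ref{cor:imp} and $d_n\sim\beta m$. (One small redundancy: since $m$ is defined as $[\al n]$, there is no ``shift'' between the two to absorb — but this does not affect the argument.)
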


\subsection{Convergence  of $\{L_{\mu_n}\}_{n=1}^\infty$ in $L^1_{loc}$ and final steps of the proofs of Theorems~\ref{th:Cauchy} and  \ref{prop:primitive}}
\label{section:finalsteps}




Corollary \ref{cor:first limit} provides the limit when $n\to \infty$ of the sequence $\{L_{\mu_n}\}$ a.e. in $\bC$, but to settle Theorem~\ref{prop:primitive} we need to prove that this limit also holds in $L^1_{loc}$. Vitali's convergence theorem (see e.g. \cite[Thm. 4.5.4 and Cor 4.5.5]{Bo}) gives an appropriate criterion for this to hold. In our situation it provides the following corollary.

\begin{lemma}
\label{lem:vitalis}Let $\{p_n\}$ be a sequence of monic polynomials of strictly increasing degrees  $d_n:=
\deg p_n\to\infty$ as 
$n\to\infty$. Denote by $\mu_n:=\frac{1}{d_n}\sum_{i=1}^{d_n}\delta(\zeta_i)$ the root-counting measure of $p_n$ and let $L_n(z):=\frac{1}{d_n}{\log\vert p_n(z)\vert}$ be the logarithmic potential of $\mu_n$.
Assume that
\begin{enumerate}
\item[(i)] there is a compact set $K\subset \bC$ containing all the zeros $\zeta_1,\dots,\zeta_{d_n}$ of $p_n$ for all $n=1,2,\dots $;
  \item[(ii)] the sequence $\{L_n(z)\}$ converges to some locally integrable function $L(z)$ pointwise a.e. in $\bC$.
\end{enumerate}
Then, $L(z)$ is a $L^1_{loc}$-function and $\lim_{n\to \infty} L_n(z) = L(z)$ in the  $L^1_{loc}$-sense. 

\end{lemma}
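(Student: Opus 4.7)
The plan is to reduce the statement to an application of Vitali's convergence theorem on each compact subset of $\bC$. Vitali's theorem requires two ingredients: pointwise a.e.\ convergence (which is hypothesis (ii)) and uniform absolute continuity (uniform integrability) of the sequence $\{L_n\}$ on the given compact set. So the whole task reduces to proving uniform integrability on an arbitrary compact $\Omega\subset\bC$.

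The upper bound is easy and I would dispose of it first. Because every zero of $p_n$ lies in a fixed compact $K$, with $R:=\sup_{\zeta\in K}|\zeta|$ we get $|p_n(z)|\le (|z|+R)^{d_n}$, hence $L_n(z)\le \log(|z|+R)$ uniformly in $n$. Thus the positive part $L_n^+$ is dominated by an $L^1_{loc}$ function, and only the negative part requires work.

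For the negative part the key idea is to write $L_n$ as an integral against the (probability) root-counting measure,
\[
L_n(z)=\int_K \log|z-\zeta|\,\diff\mu_n(\zeta),
\]
and then to bound $L_n$ uniformly in $L^p_{loc}$ for some $p>1$, which by the de la Vallée Poussin criterion implies uniform integrability. Concretely, for fixed $\Omega$ and any $p\in(1,\infty)$, Minkowski's integral inequality applied to the measure $\mu_n$ gives
\[
\|L_n\|_{L^p(\Omega)}\le \int_K \bigl\|\log|\cdot-\zeta|\bigr\|_{L^p(\Omega)}\diff\mu_n(\zeta)
\le \sup_{\zeta\in K}\bigl\|\log|\cdot-\zeta|\bigr\|_{L^p(\Omega)}.
\]
The right-hand side is finite because $\log|z-\zeta|$ has only a logarithmic singularity (which is in $L^p_{loc}$ for every finite $p$), and it is uniformly bounded in $\zeta\in K$ by continuity of the map $\zeta\mapsto \bigl\|\log|\cdot-\zeta|\bigr\|_{L^p(\Omega)}$ together with the compactness of $K$. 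Thus $\sup_n\|L_n\|_{L^p(\Omega)}<\infty$, which yields the desired uniform integrability on $\Omega$.

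With this in hand, Vitali's convergence theorem applies on every compact $\Omega$: the a.e.\ limit $L$ is locally integrable and $L_n\to L$ in $L^1(\Omega)$. Since this holds for every compact $\Omega$, we obtain convergence in $L^1_{loc}$, as required. The main (and really the only) obstacle is the clean verification that the logarithmic singularities of the individual factors do not conspire to create unbounded mass on small sets; the $L^p_{loc}$ estimate above handles this uniformly in $n$ by exploiting that $\mu_n$ is a probability measure supported in the fixed compact set $K$, so averaging cannot worsen the integrability of a single log kernel.
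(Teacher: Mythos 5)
Your proof is correct and shares the paper's overall strategy: reduce to Vitali's convergence theorem, so that the only real work is uniform integrability of $\{L_n\}$ on compacta. Where you differ from the paper is in how that uniform integrability is established. The paper works directly with the $\epsilon$-$\delta$ formulation: it splits $|\log|z-\zeta||$ into a ``near-singularity'' piece $f_{<\epsilon}$ and a bounded piece $f_{\geq\epsilon}$, integrates the first piece over a disk of radius $\epsilon$ around each root (yielding $O(\epsilon^2|\log\epsilon|)$ per root, which survives averaging over the $d_n$ roots), and controls the second by $\lambda(E)$ times a sup bound; summing gives $\sup_n\int_E|L_n|\to 0$ as $\lambda(E)\to 0$. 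You instead prove the stronger statement that $\sup_n\|L_n\|_{L^p(\Omega)}<\infty$ for some $p>1$, using Minkowski's integral inequality against the probability measure $\mu_n$ to push the $L^p(\Omega)$-norm inside the integral, then bound by $\sup_{\zeta\in K}\|\log|\cdot-\zeta|\|_{L^p(\Omega)}$, and invoke de la Vallée Poussin (equivalently Hölder) to pass from uniform $L^p$-boundedness to uniform integrability. Your route is a bit slicker and yields the quantitatively stronger $L^p$-bound; the paper's is more elementary in that it avoids Minkowski's integral inequality and the de la Vallée Poussin criterion and instead estimates the small-set integrals by hand. Both are complete and correct.

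One small stylistic point: you handle the positive part of $L_n$ separately by the crude bound $L_n(z)\le\log(|z|+R)$; this is fine, but strictly unnecessary once you have the Minkowski argument, since $\|\log|\cdot-\zeta|\|_{L^p(\Omega)}$ already controls both signs. The paper likewise treats both signs at once by working with $|\log|z-\zeta||$ from the start.
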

\begin{proof} By Vitali's convergence theorem, we only need to check the uniform integrability of our functions on an arbitrary fixed compact set $M\supset K$.
Let $E$ be a set with Lebesgue measure $\lambda(E)<\epsilon<1$. Introduce   
$$
\log_+(x):=\vert \log\vert x\vert\vert=f_{<\epsilon}(x)+f_{\geq\epsilon}(x),
$$ 
where $f_{<\epsilon}(x)=\log_+(x)=-\log \vert x\vert$ if $0<x\leq \epsilon$, and
$f_{<\epsilon}(x)=0$ if $x> \epsilon$. (Thus,  
$f_{\geq \epsilon}(x)=\log_+(x)$ if $x>\epsilon$, and $f_{\geq \epsilon}(x)=0$ 
if $0<x\leq \epsilon$).

\smallskip
We obtain
\begin{align}
\label{eq:vitali1}
&\int_E \left\vert L_{\mu_n}(z)\right\vert\,\diff\lambda(z)\leq \frac{1}{d_n}\sum_{i=1}^{d_n}\int_E \log_+(z-\zeta_i)\,\diff \lambda(z)\leq\\
&\leq \frac{1}{d_n}\sum_{i=1}^{d_n}\int_E f_{<\epsilon}(z-\zeta_i)\,\diff \lambda(z)+\frac{1}{d_n}\sum_{i=1}^{d_n}\int_E 
f_{\geq\epsilon}(z-\zeta_i)\,\diff \lambda(z):=I_1+I_2.
\label{eq:vitali2}
\end{align}
If $\bD_\epsilon(\zeta_i)$ is a disk of radius $\epsilon$ centered at $\zeta_i$, then
\begin{equation}
\label{eq:logR}
\int_{\bD_\epsilon(\zeta_i)}\vert \log \vert z-\zeta_i\vert \vert\,\diff z =-\pi \epsilon^2\left(\log \epsilon-\frac{1}{2}\right).
\end{equation} 
Hence 
$$
\int_E f_{<\epsilon}(z-\zeta_i)\,\diff\lambda(z)\leq \int_{\bD_\epsilon(\zeta_i)} f_{<\epsilon}(z-\zeta_i)\,\diff\lambda(z)=-\pi \epsilon^2\left(\log \epsilon-\frac{1}{2}\right)
$$
which implies that $$I_1\leq-\frac{1}{d_n}\left(d_n \pi \epsilon^2\left(\log \epsilon-\frac{1}{2}\right)\right)=O(\epsilon)$$ with a constant depending only on $\epsilon$. Let $\delta$ be the diameter of $M$. For the second sum  in \eqref{eq:vitali2}, let $m:=\max\{ -\log\epsilon, \log_+(\delta)\}$ be the upper bound of $f_{\geq\epsilon}(x-\zeta)$ for $x,\zeta\in M$. Then $I_2\leq m\lambda(E)\leq m\epsilon =o(1)$ as $\epsilon \to 0$. The estimates for $I_1$ and $I_2$ and \eqref{eq:vitali1}-\eqref{eq:logR} prove that 
$$\lim_{\lambda(E)\to 0 }\underset{n}{\sup} \int_E \vert L_n(z)\vert \diff\lambda(z)=0.$$ By Vitali's theorem  the desired
convergence in $L^1_{loc}$ then follows from the convergence a.e., see e.g., \cite[4.5.2-4.5.5]{Bo}. 
\end{proof}

  We now finalize our proof of Theorem \ref{prop:primitive}. Observe that  Corollary \ref{cor:first limit}, reformulated in terms of the tropical trace, says that we have pointwise convergence a.e. provided by the formula
  $$  \lim_{n\to\infty}L_{\mu_{[\al n],n,P}}(z)=
B+ \tilde\pi_* H(z),$$ where 
$\tilde\pi: {U}_{rel}\to \bC_z$ and $H(z,u)=\frac{1}{\beta}G(z,u)$.
Together with Lemma \ref{lem:vitalis} this fact implies that the sequence $\{L_{\mu_{[\al n],n,P}}(z)\}$    converges to the right-hand side of the latter formula in $L^1_{loc}$, and a fortiori is  convergent as a sequence of distributions. This is the first part of Theorem \ref{prop:primitive}. Since a measure $\mu_{\al,P}$ and its Cauchy transform are distributional derivatives of the logarithmic potential of $\mu_{\al,P}$, the other parts follow from the basic properties of distributions.

 Next we will settle Theorem~\ref{th:Cauchy}. The convergence $\mu_n\to \mu$ of Theorem \ref{prop:primitive} implies that $L_{\mu_{|\al n],n,P}}(z)\to L_{\mu_{\al,P}}(z)$ a.e.
and hence $$L_{\mu_{\al,P}}(z)=B+ \tilde\pi_* H(z)$$ as $L^1_{loc}$-functions. Taking  distributional derivatives gives for the Cauchy transform the relation
$$
\mathcal C_{\mu_{\al,P}}=2 \frac{\prt L_{\mu_{\al,P}}}{\prt z}=2\frac{\prt \tilde\pi_* H(z)}{\prt z}.$$

The distributional derivative of a continuous piecewise-harmonic subharmonic function is equal to  its usual derivative a.e., see e.g. \cite[Prop. 2]{BB}. By 
Proposition \ref{prop:cont}, the tropical trace $\tilde\pi_* H(z)$ is such a function. Let us now calculate its derivative a.e. using the statement of Lemma \ref{lemma:andronov} ii) saying that $\bC$ can be covered a.e. by open sets $O_i\subset \mathcal O,\ i\in I$, such that in each $O_i$ there is a
branch $u=u^\D(z)$ of the saddle point curve $\mathcal D$ for which the equality
\begin{equation}
\label{eq:theorem19}
H(u^\D(z),z)=\tilde\pi_* H(z)
\end{equation}
holds. 

\smallskip
In other words, in each $O_i$ we get 
\begin{equation}
\label{eq:cauchy19}
2\beta^{-1}\frac{\partial G(z,u^\D(z))}{\partial z}=2\beta^{-1}\left(\frac{\partial G}{\partial z}(z,u^\D(z))+\frac{\partial G}{\partial u}(z,u^\D(z))\frac{\partial u^\D(z)}{\partial z}\right)=\C_{\mu_{\al,P}}(z).
\end{equation}  The algebraic equation defining $\D$ (which is obviously satisfied by $u^\D$) says exactly that
$\frac{\partial G}{\partial u}=0$. Hence

\begin{equation*}
\label{eq:cauchy192}
\beta^{-1}\frac{1}{u^\D-z}
=\C_{\mu_{\al,P}}(z)\iff u^\D=z+(\beta \C_{\mu_{\al,P}}(z))^{-1}.
\end{equation*} 
On the other hand,  $u^\D$ satisfies equation \eqref{eq:saddle-pointsu}, and therefore 
the Cauchy transform $\C=\C_{\mu_{\al,P}}$ satisfies a.e. in $\bC$ the equation
\begin{equation}
\label{eq:symbolcurve2}(d-\al) \C=\frac{\diff\left(\log{P}\left(z+(\beta \C)^{-1}\right)\right)}{\diff z}.
\end{equation}

\smallskip
Formula~\eqref{eq:symbolcurve2} coincides with equation (\ref{eq:algebraicDiffEq}) which settles Theorem~\ref{th:Cauchy}, up to a small shift of the order of the derivative. We have actually proven that the sequence of root-counting measures for $\{(P^n)^{([\alpha n]-1)}\}$ converges, but using e.g., the main result of \cite{To}, we  also get that the sequence considered in 
Theorem~\ref{th:Cauchy} has the same limit as that of  $\{(P^n)^{([\alpha n]-1)}\}$. \qed


 \subsection{The  symbol curve $\Ga$ is  an instance of our general construction of affine Boutroux curves}
 \label{sec:Boutroux3}   
 
 Recall the general construction of an aBc in \S~\ref{sec:scheme}. By following its steps  we will see now that the symbol curve $\Ga$ is a particular instance of this construction.  
 
\medskip 
The starting point is the function
$$H(z,u):=\frac{1}{\beta}G(z,u):=\frac{1}{d- \alpha}(\log\vert P(u)\vert-\alpha \log \vert u-z\vert).$$
It is well-defined and pluriharmonic 
for all $(z,u)\in \bC^2$ except at points where either $P(u)=0$ or $u=z$. Its differential is  the meromorphic $1$-form given by   
$$
d(H(z,u)):= \frac{1}{2(d-\alpha)}\left(   \frac{\al}{u-z}\diff z+\left(\frac{P'(u)}{ P(u) }-\frac{\alpha}{u-z}\right)\diff u\right).
$$ 
The saddle point curve  $\mathcal D\subset \bC_z\times \bC_u$ is the rational plane 
curve given by $$2(d-\alpha)\frac{\partial H}{\partial u}=\frac{P'(u)}{ P(u) }-\frac{\alpha}{u-z}=0.$$ Restricting $H$ to $\mathcal D$, we get a simplified expression for its differential given by
$$dH(z,u)=\frac{1}{2\beta}\cdot  
\frac{1}{u-z}\,\diff z,\quad (z,u)\in \mathcal D .$$

Consider the usual projection $\pi : \mathcal D\to \bC_z$ sending $(z,u)$ to $z$. Except for a finite number of branch points, $z$ is a local coordinate on $\mathcal D$.  
Since $\D$ is smooth by Corollary \ref{prop:saddle-pointcurveprop}, in a neighborhood of every point $p=(z,u)\in \mathcal D$, the restriction of $H$ to $\mathcal D$ is a real-valued harmonic function satisfying
$$H(p)-H(p_0)=\text{Re } \int_{p_0}^{p}\frac{1}{\beta(u-z)}\,\diff z,$$  
where $p_0$ is another fixed point on $\D$. In particular, this implies that the form 
$$\omega = \frac{\diff z}{\beta(u-z)}\,$$ has imaginary periods on $\mathcal D$  which also follows from Theorem~\ref{pr:rational}. Notice that $\mathcal D$ is not an aBc, but if we change coordinates as explained below  the resulting curve will become an aBc.

\smallskip
Namely,  
the affine curve $\E $ introduced in  \S~\ref{sec:scheme} is constructed from the differential of $H$ as $\mathrm{Spec}\ \bC[z, \frac{1}{\beta(u-z)}]$. In our case this step just corresponds to the change of coordinates
$$
v=\frac{1}{\beta(u-z)},\ z=z\quad \iff\quad u=z+\frac{1}{\beta v},\; z=z.
$$
Hence, for the above pluriharmonic function $H$, the Boutroux curve $\E $ given in \S~\ref{sec:scheme} is precisely the symbol curve $\Ga$ defined by equation \eqref{eq:symbolcurve1} and  is satisfied by the asymptotic Cauchy transform $\C_{\mu_{\al,P}}$ according to Theorem \ref{th:Cauchy}.



\section{Differential equations satisfied by Rodrigues' descendants}\label{sec:proofs} 

\subsection {Deriving the differential equations}
In this section we obtain linear differential equations satisfied by the Rodrigues' descendants and use them to deduce equations ~\eqref{eq:algebraicDiffEq} and ~\eqref{eq:symbolcurve1} independently of most of the machinery in this paper, given a few additional assumptions. Having in mind future applications and generalisations, we derive a differential equation for the Rodrigues' descendants not just for a polynomial $P$, but for a more general  meromorphic function of the form $$f(z):=P(z)e^{T(z)}/Q(z),$$ where $P(z)\not\equiv 0,\,Q(z)\not\equiv 0$ and $T(z)$ are polynomials with $\gcd(P,Q) = 1$. In case $T\equiv 0$ and $Q\equiv 1$, we have $f(z)=P(z)$ considered in the present paper, see Corollary~\ref{cor:pnmDE}.

\begin{proposition}\label{prop:qnmDEMoreGeneral}
In the above notation  and for $d := \deg{P} + \deg{Q} + \deg{T}$, the Rodrigues' descendant $\R_{m,n, Pe^T/Q}(z)$ satisfies the linear homogeneous differential equation
\begin{equation}\label{eq:qnmMoreGeneral}
\begin{split}
& \sum_{i=0}^d\sum_{j=0}^i\sum_{k=0}^j \frac{(m+d-i+n(2j-i))\delta_{k,0} - nkT^{(k)}}{(m+d-i)!(i-j)!(j-k)!k!} P^{(i-j)}Q^{(j-k)}y^{(d-i)} = 0
\end{split}
\end{equation}
of order $d$. Here $\delta_{k,0}=\begin{cases} 1,  \text{if}\;  k=0\\ 0,\; \text{otherwise.}\end{cases}$ 
 
\end{proposition}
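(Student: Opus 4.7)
The plan is to derive the ODE by applying Leibniz's rule to a simple first-order linear ODE satisfied by $u:=f^n$. Taking the logarithmic derivative, $u'/u = n(\log f)' = n(P'/P + T' - Q'/Q)$; clearing denominators gives
$$
A\,u' + B\,u = 0,\qquad A := PQ,\qquad B := -n\bigl(P'Q - PQ' + PQT'\bigr),
$$
with the crucial degree bounds $\deg A \le d$ and $\deg B \le d-1$ (the latter because $\deg(PQT') = d-1$ while $\deg(P'Q), \deg(PQ') \le d - \deg T - 1$).

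Next, I would differentiate this first-order relation exactly $m+d-1$ times:
$$
\sum_{k=0}^{m+d-1}\binom{m+d-1}{k}\bigl[A^{(k)}u^{(m+d-k)}+B^{(k)}u^{(m+d-1-k)}\bigr]=0.
$$
Because $A^{(k)}=0$ for $k>d$ and $B^{(k)}=0$ for $k>d-1$, every surviving summand involves a derivative $u^{(j)}$ with $j\in [m,m+d]$, which can be rewritten as $y^{(d-i)}$ for $y:=u^{(m)}=\R_{m,n,Pe^T/Q}$ and $i=m+d-j\in[0,d]$. The result is a linear ODE of order exactly $d$ in $y$, of the intermediate form
$$
\sum_{i=0}^{d}\!\left[\binom{m+d-1}{i}A^{(i)}+\binom{m+d-1}{i-1}B^{(i-1)}\right]\!y^{(d-i)}=0,
$$
with the convention $\binom{m+d-1}{-1}=0$ that kills the $B$-contribution at $i=0$.

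Finally, I would expand $A^{(i)}=(PQ)^{(i)}$ and $B^{(i-1)}$ via the multinomial Leibniz rule, indexing the orders of the derivatives of $P$, $Q$, $T$ by $a=i-j$, $b=j-k$, $c=k$ (so $a+b+c=i$). The $A^{(i)}$ contribution produces the $(m+d-i)\delta_{k,0}$ piece of the numerator in \eqref{eq:qnmMoreGeneral}. The $-n(P'Q-PQ')$ part of $B^{(i-1)}$ produces the $n(2j-i)\delta_{k,0}$ correction by means of the elementary identity
$$
\binom{i-1}{a-1}-\binom{i-1}{b-1}=\frac{(i-1)!\,(a-b)}{a!\,b!}\qquad(a+b=i),
$$
since $2j-i=b-a$. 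The $-nPQT'$ part yields, after reindexing $c\mapsto c-1$, the $-nk\,T^{(k)}$ terms (supported on $k=c\ge 1$, which is why the first numerator piece carries the factor $\delta_{k,0}$). Dividing through by the common factor $(m+d-1)!$ then produces the stated normalization.

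The main obstacle is the purely combinatorial bookkeeping: verifying that $\binom{m+d-1}{i}A^{(i)}$ and $\binom{m+d-1}{i-1}B^{(i-1)}$, after expansion and the final division by $(m+d-1)!$, collapse exactly to the denominator $(m+d-i)!(i-j)!(j-k)!k!$ displayed in \eqref{eq:qnmMoreGeneral}. A sanity check on Legendre ($P=z^2-1$, $Q=1$, $T=0$, $m=n$, $d=2$) recovers, after multiplying through by $2(n+1)!$, the classical equation $(z^2-1)y''+2zy'-n(n+1)y=0$, confirming both the order $d$ and the coefficients.
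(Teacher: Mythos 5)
Your proof takes essentially the same route as the paper's: starting from the first-order ODE $PQw'+n(PQ'-P'Q-PQT')w=0$ satisfied by $w=f^n$, differentiating $m+d-1$ times by Leibniz, arriving at the same intermediate form $\sum_{i=0}^{d}\bigl[\binom{m+d-1}{i}(PQ)^{(i)}+\binom{m+d-1}{i-1}B^{(i-1)}\bigr]y^{(d-i)}=0$, and then expanding the coefficients by the multinomial Leibniz rule. Your binomial identity $\binom{i-1}{a-1}-\binom{i-1}{b-1}=\frac{(i-1)!(a-b)}{a!\,b!}$ merely streamlines the recombination of the $P'Q-PQ'$ contribution that the paper handles as two separate sums, and the only slip is in the final sanity check: multiplying through by $(n+1)!$, not $2(n+1)!$, already recovers the classical Legendre equation.
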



\smallskip
As special cases of the latter statement we obtain the following three corollaries. 

\medskip
\begin{corollary}\label{cor:qnmDE}
The Rodrigues' descendant $\R_{m,n,P/Q}(z)$ of a rational function $P(z)/Q(z)$ satisfies the linear homogeneous  differential equation
\begin{equation}\label{eq:qnm}
\sum_{i=0}^{d}\sum_{j=0}^{i}\frac{m+d+(n-1)i-2nj}{(m+d-i)!\,(i-j)!\,j!}P^{(j)}Q^{(i-j)}y^{(d-i)} = 0
\end{equation}
of order $d = \deg{P} + \deg{Q}$.
\end{corollary}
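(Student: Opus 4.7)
The plan is to deduce this corollary as the direct specialisation of Proposition~\ref{prop:qnmDEMoreGeneral} to the case $T\equiv 0$. With that choice one has $f=P/Q$ and, with the convention $\deg 0:=0$ for the exponent of $e^{T}$, $d=\deg P+\deg Q$, which matches both the function and the order of the equation as stated in the corollary. Nothing more than a verification that \eqref{eq:qnmMoreGeneral} collapses to \eqref{eq:qnm} is required.

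The first step is to track what happens to the triple sum in \eqref{eq:qnmMoreGeneral} when $T\equiv 0$. All derivatives $T^{(k)}$ then vanish identically, so the contribution $-nkT^{(k)}$ in the numerator drops out for every $k\ge 1$; for $k=0$ it is zero as well because of the factor $k$ in front. What remains of the summand is exactly the part carrying the Kronecker delta $\delta_{k,0}$, which forces $k=0$ and collapses the inner sum over $k$ to a single term.

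The second step is purely combinatorial. After setting $k=0$, the summand reduces to
$$\frac{(m+d-i)+n(2j-i)}{(m+d-i)!\,(i-j)!\,j!}\,P^{(i-j)}Q^{(j)}\,y^{(d-i)},$$
and the substitution $j\mapsto i-j$ in the inner sum converts $P^{(i-j)}Q^{(j)}$ into $P^{(j)}Q^{(i-j)}$ while transforming the numerator into $(m+d-i)+n(i-2j)=m+d+(n-1)i-2nj$. This is precisely the coefficient appearing in \eqref{eq:qnm}, and the reindexing leaves the range $0\le j\le i$ unchanged, so the two double sums agree term by term.

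I do not anticipate any real obstacle here: all of the content lies in establishing Proposition~\ref{prop:qnmDEMoreGeneral}, and the corollary is merely its $T\equiv 0$ specialisation followed by one reindexing of the inner summation variable. The only mild point worth flagging is the degree convention above, which one can alternatively justify by noting that with $T\equiv 0$ the factor $e^{T}\equiv 1$ introduces no additional derivatives into the resulting linear ODE.
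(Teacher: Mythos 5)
Your proposal is correct and takes the same route as the paper, which simply declares the corollary an immediate consequence of Proposition~\ref{prop:qnmDEMoreGeneral}; you have just made the specialization $T\equiv 0$, the collapse of the $k$-sum via $\delta_{k,0}$, and the reindexing $j\mapsto i-j$ explicit. The arithmetic check $(m+d-i)+n(i-2j)=m+d+(n-1)i-2nj$ and the invariance of $(i-j)!\,j!$ under the swap both hold, so nothing is missing.
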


\begin{corollary}\label{cor:pnmDE}
The Rodrigues' descendant $\R_{m,n,P}(z)$ of a polynomial $P(z)$ satisfies the  linear homogeneous differential equation
\begin{equation}\label{eq:pnm}
\sum_{i=0}^{d}\frac{(m-nd)-(i-d)(n+1)}{(d+m-i)!\,i!}P^{(i)}y^{(d-i)} = 0
\end{equation}
 of order $d = \deg{P}$.
\end{corollary}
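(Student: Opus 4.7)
My plan is to give a direct Leibniz-rule derivation of \eqref{eq:pnm}; this is essentially the specialization $Q\equiv 1$, $T\equiv 0$ of Proposition~\ref{prop:qnmDEMoreGeneral}, but the direct approach makes the underlying mechanism more transparent, and the same idea drives the proof of the more general statement.

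The starting point I will use is the elementary polynomial identity
$$P(z)\cdot\bigl(P^n(z)\bigr)' \;=\; n\,P'(z)\cdot P^n(z),$$
which is nothing but $(P^n)'/P^n=nP'/P$ cleared of denominators. Writing $y:=\R_{m,n,P}(z)=(P^n)^{(m)}$, so that $y^{(\ell)}=(P^n)^{(m+\ell)}$ for every $\ell\ge 0$, the key step is to differentiate this identity exactly $m+d-1$ times. By the Leibniz rule, truncating at $j=d$ on the left using $P^{(j)}=0$ for $j>d$, and shifting the summation index $j\mapsto j-1$ on the right, one arrives at
$$\sum_{j=0}^{d}\left[\binom{m+d-1}{j}-n\binom{m+d-1}{j-1}\right]P^{(j)}\,y^{(d-j)} \;=\; 0.$$

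The remaining work is purely algebraic bookkeeping. Using the identity $\binom{m+d-1}{j-1}=\tfrac{j}{m+d-j}\binom{m+d-1}{j}$, the bracketed coefficient factors as
$$\binom{m+d-1}{j}\cdot\frac{(m+d)-(n+1)j}{m+d-j};$$
dividing through by the common factor $(m+d-1)!$ and rewriting the numerator via the trivial identity $(m+d)-(n+1)j=(m-nd)-(j-d)(n+1)$ reproduces exactly \eqref{eq:pnm}.

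The only real obstacle is the careful manipulation of binomial coefficients and factorials, which is entirely routine. No analytic input is required beyond the one-line relation $(P^n)'=nP'P^{n-1}$, so this derivation doubles as a warm-up for the parallel but notationally heavier proof of Proposition~\ref{prop:qnmDEMoreGeneral}, where the starting identity is instead the logarithmic derivative $(\log f)'=P'/P-Q'/Q+T'$ of $f=Pe^T/Q$.
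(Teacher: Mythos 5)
Your derivation is correct and is essentially the paper's own approach specialized to $Q\equiv 1$, $T\equiv 0$: the paper proves Proposition~\ref{prop:qnmDEMoreGeneral} by starting from the analogous first-order ODE $PQw'+n(PQ'-P'Q-PQT')w=0$, differentiating $\ell=m+d-1$ times via Leibniz, and reindexing, then declares Corollary~\ref{cor:pnmDE} an immediate consequence. You have simply carried out that specialization directly, and the algebraic bookkeeping (the binomial identity and the rewriting $(m+d)-(n+1)j=(m-nd)-(j-d)(n+1)$) checks out.
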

\begin{remark}{\rm Differential equations satisfied by $\frac{\mathrm{d}^m}{\mathrm{d}z^m}\left(Q_1^{N_1}Q_2^{N_2}\dotsm Q_d^{N_d}\right)$, where $Q_1,\dotsc,Q_d$ are polynomials in $z$ and $N_1,\dots,N_d$ are nonnegative integers, were previously derived by Cior\^{a}nescu (see \cite{Ci}). As Cior\^{a}nescu remarks, one of these differential equations looks strikingly similar to Pochhammer's generalized Gaussian differential equation. A special case was later rediscovered by J.~M.~Horner (see \cite{Ho}).}
\end{remark}
 
The original Rodrigues' formula inspires the following consequence  of Corollary~\ref{cor:pnmDE}.

\begin{corollary}\label{cor:hornerDE}
The Rodrigues' descendant $y = \R_{n,n,P}(z) := \frac{\diff ^n}{\diff z^n}(P^n(z))$ satisfies the linear differential equation
\begin{equation}\label{eq:pnn}
\sum_{i=0}^{d}\frac{(d-1)-(i-1)(n+1)}{(d+n-i)!\,i!}P^{(i)}y^{(d-i)} = 0
\end{equation}
of order $d = \deg{P}$.
\end{corollary}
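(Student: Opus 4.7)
The plan is almost trivial: Corollary~\ref{cor:hornerDE} is simply the specialization $m = n$ of Corollary~\ref{cor:pnmDE}. So the only work is to verify that the numerator coefficient in the general formula collapses to the claimed one when $m = n$.

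Concretely, I would start from
$$\sum_{i=0}^{d}\frac{(m-nd)-(i-d)(n+1)}{(d+m-i)!\,i!}\,P^{(i)}\,y^{(d-i)} = 0,$$
substitute $m = n$ into each term (the factorial $(d+m-i)!$ becomes $(d+n-i)!$ immediately), and simplify the numerator. A direct expansion gives
\begin{align*}
(n - nd) - (i-d)(n+1) &= n - nd - (in + i - dn - d)\\
 &= n + d - i - in\\
 &= (d - 1) + (n + 1) - i(n+1)\\
 &= (d - 1) - (i - 1)(n + 1),
\end{align*}
which is precisely the numerator appearing in \eqref{eq:pnn}. Hence, term by term, the equation of Corollary~\ref{cor:pnmDE} at $m = n$ coincides with the equation claimed in Corollary~\ref{cor:hornerDE}, and since $\R_{n,n,P}(z)$ solves the former, it also solves the latter.

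There is no real obstacle here: the only thing to be careful about is the bookkeeping of signs in the arithmetic simplification of $(m-nd) - (i-d)(n+1)$ at $m=n$, and keeping track of the denominator $(d+n-i)!\,i!$ matching the one in \eqref{eq:pnn}. Both checks are mechanical. Thus the corollary is immediate given Corollary~\ref{cor:pnmDE}.
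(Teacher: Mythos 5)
Your proof is correct and follows the same route the paper intends: the paper explicitly remarks that Corollaries~\ref{cor:qnmDE}--\ref{cor:hornerDE} are immediate specializations of Proposition~\ref{prop:qnmDEMoreGeneral}, and your derivation of Corollary~\ref{cor:hornerDE} by setting $m=n$ in Corollary~\ref{cor:pnmDE} and simplifying the numerator $(n-nd)-(i-d)(n+1)=d+n-i-in=(d-1)-(i-1)(n+1)$ is the mechanical verification the paper leaves to the reader.
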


\begin{proof}[Proof of Proposition~\ref{prop:qnmDEMoreGeneral}]
Consider the first-order differential equation
\begin{equation}\label{eq:qnmFirstOrderDEMoreGeneral}
PQw' + n(PQ'-P'Q-PQT')w = 0,
\end{equation}

Clearly, if $f = Pe^T/Q$, then $w = f^n$ satisfies \eqref{eq:qnmFirstOrderDEMoreGeneral}. By differentiating both sides of \eqref{eq:qnmFirstOrderDEMoreGeneral} $\ell\ge d-1$ times (or $\ell > d-1$ times if $d=0$) and using Leibniz's rule for the derivative of a product, we get

\begin{equation}\label{eq:qnmLeibnitzMoreGeneral}
\sum_{i=0}^{\ell}\binom{\ell}{i} U^{(i)}w^{(\ell+1-i)} + n\cdot\sum_{i=0}^{\ell} \binom{\ell}{i} V^{(i)}w^{(\ell-i)} - n\cdot\sum_{i=0}^{\ell}\binom{\ell}{i} W^{(i)}w^{(\ell-i)}= 0,
\end{equation}
where $U := PQ,\,V := PQ' - P'Q$ and $W := PQT'$. In the first sum, remove the first term and replace $i$ by $r+1$ in the remaining sum. In the second and third sums, replace $i$ by $r$ and remove the last terms. By combining the three resulting sums and simplifying, equation (\ref{eq:qnmLeibnitzMoreGeneral}) becomes
\begin{equation}\label{eq:qnmSumMergeMoreGeneral001}
Uw^{(\ell+1)} + nV^{(\ell)}w - nW^{(\ell)}w + \sum_{r=0}^{\ell-1}  \binom{\ell}{r}\left(\frac{\ell-r}{r+1}U^{(r+1)} + nV^{(r)} - nW^{(r)}\right)w^{(\ell-r)} = 0.
\end{equation}
By changing the upper limit of summation in (\ref{eq:qnmSumMergeMoreGeneral001}) from $\ell-1$ to $\ell$, the terms $nV^{(\ell)}w$ and $-nW^{(\ell)}w$ are encompassed by the sum. Since $U,\,V$ and $W$ are polynomials of degrees at most $d,\,d-1$ and $d-1$, respectively 
and $\ell\ge d-1$, we can change the upper limit of summation further to $d-1$, since higher terms vanish. That is, we obtain the equation
\begin{equation*}\label{eq:qnmSumMergeMoreGeneral002}
Uw^{(\ell+1)} + \sum_{r=0}^{d-1}\binom{\ell}{r} \left(\frac{\ell-r}{r+1}U^{(r+1)} + nV^{(r)} - nW^{(r)}\right)w^{(\ell-r)} = 0,
\end{equation*}
or equivalently, if we replace $r$ by $i-1$, change the lower index of summation to $i=0$, and define $0\cdot V^{(-1)} = 0\cdot W^{(-1)} = 0$ as to not introduce any new terms,
\begin{equation}\label{eq:qnmSumMergeMoreGeneral003}
\sum_{i=0}^{d} \frac{\ell!}{(\ell-i+1)!\,i!}\left((\ell-i+1)U^{(i)} + niV^{(i-1)} - niW^{(i-1)}\right)w^{(\ell-i+1)} = 0.
\end{equation}

Since the terms in $U$ and $V$ contain two factors, while $W$ contains three factors, we expand their derivatives using Leibniz's rule as follows:
\begin{equation}\label{eq:LeibnizExpansion001}
U^{(i)} = (P\cdot Q\cdot 1)^{(i)} = PQ^{(i)} + \sum_{j=0}^{i-1}\sum_{k=0}^j \binom{i}{i-j,j-k,k} P^{(i-j)}Q^{(j-k)}\delta_{k,0},
\end{equation}
\begin{equation}\label{eq:LeibnizExpansion002}
\begin{split}
V^{(i-1)} & = (P\cdot Q'\cdot 1)^{(i-1)} - (P'\cdot Q\cdot 1)^{(i-1)} \\
& = \sum_{j=0}^{i-1}\sum_{k=0}^j\binom{i-1}{i-j-1, j-k,k}\left(P^{(i-j-1)}Q^{(j-k+1)} - P^{(i-j)}Q^{(j-k)}\right)\delta_{k,0},
\end{split}
\end{equation}
\begin{equation}\label{eq:LeibnizExpansion003}
W^{(i-1)} = (P\cdot Q\cdot T')^{(i-1)} = \sum_{j=0}^{i-1}\sum_{k=0}^j\binom{i-1}{i-j-1, j-k,k}P^{(i-j-1)}Q^{(j-k)}T^{(k+1)}.
\end{equation}
By inserting the expressions in \eqref{eq:LeibnizExpansion001}-\eqref{eq:LeibnizExpansion003} into \eqref{eq:qnmSumMergeMoreGeneral003} and simplifying, we see that
\begin{equation}\label{eq:tripleSumSimplify001}
\begin{split}
& \sum_{i=0}^d\binom{\ell}{i}PQ^{(i)}w^{(\ell-i+1)} + \sum_{i=0}^d\sum_{j=0}^{i-1}\sum_{k=0}^j \frac{\ell!}{(\ell-i+1)!(i-j)!(j-k)!k!}\left[(\ell-i+1)P^{(i-j)}Q^{(j-k)}\delta_{k,0}\right. \\
& + \left.n(i-j)\left(\left(P^{(i-j-1)}Q^{(j-k+1)} - P^{(i-j)}Q^{(j-k)}\right)\delta_{k,0} - P^{(i-j-1)}Q^{(j-k)}T^{(k+1)}\right)\right]w^{(\ell-i+1)}=0.
\end{split}
\end{equation}
By changing the upper index of summation from $i-1$ to $i$ in \eqref{eq:tripleSumSimplify001}, and using the convention that $0\cdot P^{(-1)} = 0$ as previously, the first sum is encompassed by the triple sum. Next, reverse the order of summation in the outer sum, and let $m := \ell-d+1$, which gives that
\begin{equation}\label{eq:tripleSumSimplify002}
\begin{split}
& \sum_{i=0}^d\sum_{j=0}^{d-i}\sum_{k=0}^j \frac{(m+d-1)!}{(m+i)!(d-i-j)!(j-k)!k!}\left[(m+i)P^{(d-i-j)}Q^{(j-k)}\delta_{k,0} + n(d-i-j)\times\right. \\
& \left.\left(\left(P^{(d-i-j-1)}Q^{(j-k+1)} - P^{(d-i-j)}Q^{(j-k)}\right)\delta_{k,0} - P^{(d-i-j-1)}Q^{(j-k)}T^{(k+1)}\right)\right]w^{(m+i)}=0,
\end{split}
\end{equation}
for all $\ell=m+d-1\ge d-1\iff m\ge 0$. Now let $(\ast )$ denote the equation obtained by replacing $w^{(m+i)}$ by $y^{(i)}$ in \eqref{eq:tripleSumSimplify002}. Clearly, $y = w^{(m)} = (f^n)^{(m)} = ((Pe^T/Q)^n)^{(m)}$ satisfies $(\ast )$. Thus, by reversing the order of summation in $(\ast )$ and simplifying, the proposition follows.
\end{proof}

Corollaries~\ref{cor:qnmDE},~\ref{cor:pnmDE}, and ~\ref{cor:hornerDE} are immediate consequences of Proposition~\ref{prop:qnmDEMoreGeneral}. 

\subsection{An algorithm for obtaining an algebraic equation satisfied by the asymptotic Cauchy transform $\C$}

In \S~\ref{section:finalsteps} we proved that the Cauchy transform of the asymptotic root-counting measure $\mu_{\al,P}$ satisfies the algebraic equations~\eqref{eq:algebraicDiffEq} and ~\eqref{eq:symbolcurve1}. We will now see that this also follows formally from equation ~\eqref{eq:pnm}, using a scheme suggested in \cite{BBS}. (It is observed that the formal derivation is validated under the assumption that hypotheses i) - iii) of Proposition 3 in loc. cit. hold.) 

\smallskip
In the notation  of \S~\ref{sec:introduction} our algorithm is as follows:

\medskip
\noindent 
Step 1: Multiply both sides of equation ~\eqref{eq:pnm} by the constant $(m+d-1)!$ (which we retained in the proof of Proposition ~\ref{prop:qnmDEMoreGeneral} until the final simplification).

\medskip
\noindent 
Step 2: Replace $m$ by $\alpha n$ and divide both sides by $y$.

\medskip
\noindent 
Step 3: Replace $y^{(d-i)}/y$ by $(n(d-\alpha)\C)^{d-i}$ in the resulting equation and divide both sides by $n^d$.

\medskip
\noindent 
Step 4: Let $n\to\infty$.

\medskip
By carrying out the above four steps, the resulting equation becomes 
\begin{equation}\label{eq:algebraicEqAgain}
\sum_{i=0}^{d}\frac{\al^{i-1}(\al-i)(d-\al)^{d-i}}{i!}P^{(i)}(z)\C^{d-i} = 0
\end{equation}
which is identical to equation ~\eqref{eq:algebraicDiffEq} up to the choice of the index of summation. As previously, using  the scaled Cauchy transform $\mathcal{W} = \frac{d-\al}{\al}\C$   we can  transform equation ~\eqref{eq:algebraicEqAgain} into
\begin{equation}\label{eq:simplerAlgebraicEqAgain}
\sum_{i=0}^{d}\frac{\al-i}{i!}P^{(i)}(z)\mathcal{W}^{d-i} = 0.
\end{equation}
Since $P(z+u) = \sum_{i=0}^{d}\frac{P^{(i)}(z)}{i!}u^i$ and $uP'(z+u) = \sum_{i=0}^{d}i\frac{P^{(i)}(z)}{i!}u^i$ (which can be seen from the Taylor expansions of the left-hand sides of these equations around $u=0$), we can use the change of variable $u = 1/\mathcal{W}$ along with these two sums to transform equation ~\eqref{eq:simplerAlgebraicEqAgain} into
\begin{equation}\label{eq:eqMultiplicity}
\al P(z+\mathcal{W}^{-1}) - \mathcal{W}^{-1} P'(z+\mathcal{W}^{-1}) = 0.
\end{equation}
Notice that if $z=b$ is a multiple zero of $P$, then $\mathcal{W} = (b-z)^{-1}$ solves equation ~\eqref{eq:eqMultiplicity} and consequently such $\mathcal{W}$ also solve equation ~\eqref{eq:simplerAlgebraicEqAgain}. Finally, equation ~\eqref{eq:eqMultiplicity} is easily rewritten in the form  
\begin{equation}\label{eq:CauchyOfPLike}
\al \W = \frac{P'\left(z+\W^{-1}\right)}{P\left(z+\W^{-1}\right)}=\frac{\diff \log{P}(z+\W^{-1})}{\diff z}.
\end{equation}
which can be transformed back into equation ~\eqref{eq:symbolcurve1}.

\smallskip
It should also be noted that the above algorithm  can be used with the differential equation ~\eqref{eq:qnmMoreGeneral} in Proposition ~\ref{prop:qnmDEMoreGeneral} as its starting point. From this procedure, it is possible to derive the algebraic equation
\begin{equation}\label{eq:moreGeneralCurve}
\al \W = \frac{\diff \log{f}(z+\W^{-1})}{\diff z}
\end{equation}
where $f(z) = P(z)e^{T(z)}/Q(z)$ is the meromorphic function defined in the beginning of this section and $\mathcal{W}$ is the scaled Cauchy transform of the asymptotic root-counting measure associated with $\R_{[\al n],n,Pe^T/Q}(z)$. This procedure (which involves slightly more elaborate Taylor expansions of functions such as $P(z+u)Q(z+u),\,u\cdot\frac{\partial}{\partial u}(P(z+u)Q(z+u))$ and $u\cdot Q(z+u)\cdot\frac{\partial}{\partial u}P(z+u)$) strongly suggests that the main results of this paper can be generalized to Rodrigues' descendants of such functions $f$.

\section{Case of a quadratic polynomial $P(z)$}\label{sec:quadratic}

The simplest instance of our 
study
occurs when $P$ is a quadratic polynomial. This case is closely related to the Legendre polynomials and the original Rodrigues' formula. For these polynomials, the asymptotic behavior of their zeros is known since long. In particular, for $\al=1$, the density of the asymptotic root distribution of the polynomials \eqref{eq:Leg} 
 equals
$$\eta(x)=\frac{1
}{\pi}\frac{1}{\sqrt{1-x^2}}\diff x,\; x\in [-1,1].$$  For general $\al$, the asymptotic measure has been recently calculated by Hoskins and Kabluchko \cite{HoKa}, by using methods quite different from ours. (Paper \cite{HoKa} refers to a draft version of the present text.)

\smallskip
In order to illustrate our methods, we will explicitly calculate the asymptotic measure  $\mu_{\al,P}$ for qudratic $P$. Without loss of generality, we can assume that $P(z)=z^2-1$ and $0<\al<2$, and 
 consider the polynomial sequence
$$P_n^{(\al)}(x):= \frac {\diff^{[\al n]} (x^2-1)^n}{\diff x^{[\al n]}}.
$$
By the Gauss-Lucas theorem, the zeros of these polynomials are contained in the interval $[-1,1]$, but we will not use this fact directly taking instead a more circuitous route via the saddle-point analysis suggested in the previous sections.

\smallskip
 For $P(z)=z^2-1$, the saddle point curve $\D\subset \bC_z\times \bC_u$ defined in the previous section gives
 $$
 P'(u)(u-z)=\al P(u)\iff (2-\al)u^2-2u z+\al=0. 
 $$

 The projection $\pi: \D \to \bC_z$ has  two branch points $b_\pm=\pm \sqrt{\al(2-\al)}$.  Since $0<\al<2$, both branch points are real.  The curve $\D$ has two branches: 
 $$
 u_\pm(z)=\frac{1}{2-\al}(z\pm\sqrt{z^2+\al^2-2 \al}).
 $$ 
  The monodromy along a contour that encircles both branch points is trivial, so the two branches are well defined in $V:=\bC\setminus [b_-,b_+]$.  Let $ u_\pm(z)$ be the branch that satisfies
 $u_\pm(z)=\frac{1}{2-\al}(z\pm\sqrt{z^2+\al^2-2 \al})
 $ on the interval $I_\al:=[b_+,\infty[$ of the positive real axis. 
 
 By Corollary \ref{prop:saddle-pointcurveprop}, we know that there is one branch which goes to $\infty$  as $\vert z\vert\to \infty$. This branch is clearly $u_+$.  By the same corollary,  the other branch $u_-$ goes to the root of $P'(u)=0$ which in our case is $u=0$. 
 
On the negative real axis the branch that goes to infinity  as $\vert z\vert\to\infty$ will be given by
$u_+(z)=\frac{1}{2-\al}(z-\sqrt{z^2+\al^2-2 \al})$, and hence  the identity
\begin{equation}
\label{eq;leftright}
u_+(-z)=-u_+(z)
\end{equation} holds everywhere, since it holds on a non-discrete set.
 
 \smallskip
The pluriharmonic function 
given by \eqref{eq:H} equals 
\begin{equation}
\label{eq:Hkvadr}
H(z,u)=\frac{1}{2-\al}\left(\log\vert u^2-1\vert-\alpha\log\vert u-z\vert\right).
 \end{equation}

 For $u\in \mathcal D$, we get 
 \begin{equation}
 \label{eq:Honcurve}
 H(z,u)=\frac{1}{2-\al}\left(\log\left(\frac{2}{\al}\right)+\log\vert u\vert+(1-\al)\log\vert u-z\vert\right).
 \end{equation}

Set $H_\pm(z):=H(u_\pm(z),z)$ and notice that (except at the poles) $H_\pm(z)$ are harmonic functions well-defined in $V=\bC\setminus [b_-,b_+]$ and enjoy the following properties.
\begin{lemma}
\label{lemma:hplus} 

\noindent
{\rm i)} $H_+(-z)=H_+(z)$.

\noindent
{\rm ii)} $H_+$ can be extended to a continuous piecewise-harmonic function in $\bC$, possibly with singularities at $\pm 1$. 

\noindent
{\rm iii)} If $0<\al<1$, $H_+(z)$ has two poles at $z=\pm1$ with the   asymptotic near these poles given by $H_+(z)\sim \frac{1-\al}{2-\al}\cdot\mathrm{log}\vert z \mp 1\vert$. 

\noindent
{\rm iv)} If $1\leq\al<2$, $H_+(z)$ has no poles.
\end{lemma}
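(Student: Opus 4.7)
My plan is to treat the four items separately, using crucially that $H(z,u)$ depends only on the moduli $|u^2-1|$ and $|u-z|$, and that $u_\pm$ are given explicitly. Item (i) is immediate: combining the identity $u_+(-z) = -u_+(z)$ from \eqref{eq;leftright} with the invariance of $|u^2-1|$ and $|u-z|$ under $(z,u)\mapsto(-z,-u)$, one obtains $H_+(-z) = H(-u_+(z),-z) = H_+(z)$.

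For (ii), observe first that on any simply connected piece of $V$, $H_+$ is the real part of the holomorphic function $\tfrac{1}{2-\al}(\log(u_+^2-1) - \al\log(u_+-z))$ (with local choice of branches), hence harmonic on $V$. To extend continuously across the cut $[b_-,b_+]$, note that for $x\in(b_-,b_+)$ the radicand $z^2+\al^2-2\al$ is negative, so the boundary values $u_+(x+i0^+)$ and $u_+(x-i0^+)$ are complex conjugates of each other; since $H$ depends only on moduli, the two boundary limits of $H_+$ coincide. The resulting extension is continuous on $\bC$ and harmonic on $\bC\setminus([b_-,b_+]\cup\{\pm1\})$, which is the piecewise-harmonic property asserted.

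For (iii) and (iv), the curve equation $(2-\al)u^2-2uz+\al=0$ shows that $u=z$ or $u=\pm1$ forces $z=\pm1$, so $H_+$ can have finite-plane singularities only at $z=\pm1$. Evaluating, $u_+(1) = \tfrac{1}{2-\al}(1+|1-\al|)$ equals $1$ for $\al\le1$ and equals $\al/(2-\al)\ne\pm1$ for $\al>1$. When $\al>1$, both $u_+-z$ and $u_+^2-1$ are nonzero at $z=1$, so $H_+$ is smooth there; the symmetry from (i) handles $z=-1$, proving (iv). When $\al<1$, implicit differentiation of the curve equation at $(z,u)=(1,1)$ gives $u_+'(1)=1/(1-\al)$, whence $u_+(z)-z\sim \tfrac{\al}{1-\al}(z-1)$ and $u_+(z)^2-1\sim \tfrac{2}{1-\al}(z-1)$, yielding $H_+(z)\sim\tfrac{1-\al}{2-\al}\log|z-1|+O(1)$; the analogous asymptotic at $z=-1$ follows from (i). The main technical point is the continuity of the extension in (ii): the conjugation argument is clean for $\al\ne1$, but the borderline case $\al=1$ (where $b_\pm=\pm1$) requires a separate direct check using $u_+=z+\sqrt{z^2-1}$, which yields $(u_+^2-1)/(u_+-z)=u_++1\to 2$ and thus the cancellation $H_+(z)\to\log 2$ at $z=\pm1$, confirming that no pole appears in that case.
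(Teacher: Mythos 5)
Your proof is correct and takes essentially the same route as the paper: the $z\mapsto-z$ symmetry for (i), the complex-conjugation argument across the cut $[b_-,b_+]$ for (ii), and a local expansion of $u_+$ at $z=\pm1$ for (iii)--(iv); the paper differs only superficially in working with the simplified on-curve form of $H$ (which has $\log|u|+(1-\al)\log|u-z|$ in place of $\log|u^2-1|-\al\log|u-z|$) and an explicit Taylor series rather than implicit differentiation. One minor algebraic slip in your $\al=1$ check: for $u_+=z+\sqrt{z^2-1}$ one has $(u_+^2-1)/(u_+-z)=2u_+$, not $u_++1$; since both expressions tend to $2$ as $z\to 1$, your conclusion $H_+\to\log 2$ is unaffected.
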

\begin{proof} Item i) follows from the relation $H(-u,-z)=H(z,u)$ together with \eqref{eq;leftright}.
To settle ii), notice that if $z\in [b_-,b_+]$, then $\sqrt{z^2-b_\pm^2}$ is purely imaginary providing that 
$\vert u_-(z)\vert=\vert u_+(z)\vert$ and $\vert u_-(z)-z\vert=\vert u_+(z)-z\vert$. This implies that $H_+(z)=H_-(z)$. Since the monodromy around the branch points will interchange $u_-(z)$ and $u_+(z)$, $H_+(z)$ is continuous for $z\in ]b_-,b_+[$.  Hence it is continuous except possibly at $\pm 1$.

To prove iii) and iv), observe that  for $\al=1$, equation \eqref{eq:Honcurve} shows that $H_+(z)$ has no pole at $z=\pm 1$.
 If $\al \neq 1$ the Taylor expansion of $u_+(z)$ at $z=1$ gives 
\begin{equation}
\label{eq:taylor}
u_+(z)=\frac{1+\vert \al-1\vert}{2-\al}+ \frac{\vert \al-1\vert +1}{\vert \al-1\vert(2-\al)}(z-1)+O(z-1)^2.
\end{equation}

For $1<\al< 2$,  \eqref{eq:taylor} implies that $u_+(1)=\frac{\al}{2-\al}$ and, in particular, both 
$u_+(1)$ and $u_+(1)-1$ are non-zero. Using \eqref{eq:Honcurve} and i), we obtain iv).
If $0<\al< 1$, then \eqref{eq:taylor} simplifies to 
\begin{equation}
u_+(z)=1+\frac{(z-1)}{1-\al}+O(z-1)^2.
\end{equation}
Thus $\log\vert u_+(z)-z\vert\sim \log\vert z-1\vert$ near $z=1$. Again using \eqref{eq:Honcurve} and i) we obtain iii). 
\end{proof}

\medskip
The level curve $\Delta: H_+(z)=H_-(z)$ and its complement $\mathcal O$  are shown in Fig.~\ref{fig: trace}. (Recall that by Lemma \ref{lemma:distinctharmonic}, we get that $\mathcal O$ is open and  dense in $\bC$ for an arbitrary strongly generic polynomial $P$.  For $P=z^2-1$, this fact is obvious directly. If $\al\neq 1$, then $\gamma$ consists of two ovals centered around $\pm 1$ together with the interval $[b_-,b_+]$ of the real axis. For $\al=1$, $\gamma$ is simply the interval $[-1,1]$. 

The additional circumstance which simplifies the application of  our results in the case of quadratic $P$ is that the set $U_{max}$ of maximally relevant saddle points has an easy description which we will now provide.  Set $W_+=\{ (z,u_+(z)),\quad z\in O\}$ and recall that $\pi: \bC^2\to \bC$ is the standard projection sending $(z,u)$ to $z$. 
\begin{figure}[H]
\begin{center}
\usetikzlibrary {calc,intersections,through,arrows}
\begin{tikzpicture}
    \node[anchor=south west,inner sep=0] at (0,0) {\includegraphics[width=\textwidth]{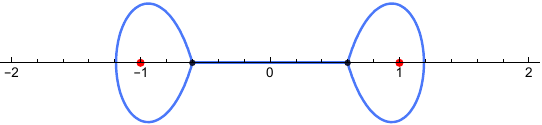}};
    \coordinate[label={[font=\Large]$H_-$}] (HM1) at (3.5,0.3);
	\coordinate[label={[font=\Large]$H_-$}] (HM2) at (9.2,0.3);
	\coordinate[label={[font=\Large]$H_+$}] (HP) at (6.4,0);
	\coordinate[label={[font=\Large]$\Gamma$}] (G) at (9.1,2.95);
		\coordinate[label={[font=\Large]$b_-$}] (BM) at (4.8,1.5);
	\coordinate[label={[font=\Large]$b_+$}] (BP) at (7.9,1.5);
\end{tikzpicture}
\caption{The curve $\Delta$ given  by $H_+(z)=H_-(z)$. The piecewise behavior of $\pi_*H(z)$ in the connected components of $\mathcal O=\bC\setminus\Delta$ is marked in each such component, while $\tilde\pi_*H(z)$ equals $H_+(z)$ in all the components. }
\label{fig: trace}
\end{center}
\end{figure}
The above figure and the next proposition also describe the relevant saddle points, as a presheaf $F$ on $\mathcal O$, in the way that was explicated at the end of \S~2.3. Namely, over the complement of the two finite components of $\mathcal O$ each stalk consists of the two points in the fiber of $\pi$, while the stalk over the two components consists of the only point $(s, u_+(z))$.
\begin{proposition}
\label{prop:maxrelevant} In the above notation, we get 
$${U_{max}}\cap \pi^{-1}O=W_+.$$
\end{proposition}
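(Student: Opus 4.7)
The plan is to invoke Lemma \ref{lemma:andronov}(ii), which guarantees that on each simply connected component of $\mathcal O$ the maximally relevant saddle point is given by a single branch of $\widehat{\mathcal D}$. Hence it will suffice to identify $u^\ast_{max}(z)$ at a single representative point of each connected component of $\mathcal O$. According to Fig.~\ref{fig: trace} and Lemma~\ref{lemma:hplus}, when $\alpha\ne 1$ the set $\Delta$ consists of the segment $[b_-,b_+]$ together with (only in the range $0<\alpha<1$, where $H_+$ acquires poles at $\pm 1$) two ovals encircling $\pm 1$; consequently $\mathcal O$ has at most three connected components, namely the unbounded one and two bounded ones sitting inside these ovals.

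For the unbounded component I would take $z$ real positive with $|z|\to\infty$. From the explicit formulas $u_+(z)\sim \frac{2z}{2-\alpha}$ and $u_-(z)\to 0$, so that $G(z,u_+(z))\sim\tfrac{2-\alpha}{\alpha}\log|z|\to +\infty$ while $G(z,u_-(z))\sim -\log|z|\to -\infty$. A direct real-axis analysis of the paths of steepest ascent emanating from $u_+(z)$ shows that they end, respectively, at the positive pole $P^+=(z,z)$ and at $(z,\infty)$ rather than at the negative poles $u=\pm 1$; by Remark \ref{remark:path} this is exactly the hallmark of a maximally relevant saddle point. For the bounded components, which only arise when $0<\alpha<1$, the symmetry $H(-z,-u)=H(z,u)$ combined with $u_+(-z)=-u_+(z)$ reduces the analysis to the component around $z=1$. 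There $u_+(z)\to 1=z$ (i.e.\ the saddle merges with $P^+$) whereas $u_-(z)\to \alpha/(2-\alpha)\in(0,1)$; using the local Taylor expansion $u_+(z)=1+\tfrac{z-1}{1-\alpha}+O((z-1)^2)$, one verifies that for $z$ near $1$ the level curve through $u_+(z)$ collapses to a tight inner oval around the positive pole $(z,z)$ and a large outer oval enclosing the negative pole $u=1$ (and also $u=-1$), precisely the enclosed-ovals configuration of Fig.~\ref{fig:integrationcontour}. Hence $u_+(z)$ is maximally relevant in this component as well, and Lemma \ref{lemma:andronov}(ii) propagates the pointwise identification to the entire component.

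The main technical obstacle is the verification of the level-curve topology at the chosen representative points, i.e., establishing that the two paths of steepest ascent from $u_+(z)$ terminate at the positive poles $(z,z)$ and $(z,\infty)$ rather than at the negative poles $u=\pm 1$. For $z\in\bR$ this will be handled by exploiting the reflection symmetry $\overline{G(z,u)}=G(z,\bar u)$, which forces the steepest-ascent paths to be either contained in or symmetric about the real line, reducing the question to an elementary monotonicity analysis of $u\mapsto G(z,u)$ on the appropriate real-axis intervals separating the five singular points $u=\pm 1,\,u=z,\,u=\infty,\,u=u_\pm(z)$.
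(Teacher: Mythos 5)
Your strategy is close in spirit to the paper's -- both reduce via Lemma~\ref{lemma:andronov}(ii) to verifying maximal relevance at a single real point per connected component of $\mathcal O$, and both exploit the reflection symmetries $H(-z,-u)=H(z,u)$ and $\overline{G(z,u)}=G(z,\bar u)$. But there is a genuine gap. You assert that the two bounded components of $\mathcal O$ ``only arise when $0<\alpha<1$, where $H_+$ acquires poles at $\pm 1$,'' and consequently you skip the case $1\le\alpha<2$ entirely for the bounded components. This is incorrect: as the paper states (and as is visible from the description preceding Fig.~\ref{fig: trace}), the locus $\Delta=\{H_+=H_-\}$ consists of two ovals around $\pm 1$ together with $[b_-,b_+]$ for \emph{every} $\alpha\neq 1$ in $(0,2)$, not just for $0<\alpha<1$. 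For $1<\alpha<2$ the ovals persist (they are then controlled by the poles of $H_-$ rather than $H_+$), and the real-axis configuration inside the right oval is different, namely $z<1<u_-(z)<u_+(z)$ instead of $u_-(z)<u_+(z)<z<1$. The paper's proof explicitly treats both orderings; your sketch handles only the second, so it cannot conclude for $1<\alpha<2$.

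Beyond that, you and the paper run the real-axis sign analysis in opposite directions. The paper shows that $u_-(z)$ is a local maximum of $t\mapsto H(z,t)$ on $\bR$ (via the explicit factorization \eqref{eq:sign}); by the conjugation symmetry the two steepest-ascent paths from $u_-(z)$ then leave the real axis and, being mirror images, terminate at the same pole, so $u_-(z)$ cannot be maximally relevant; uniqueness from Lemma~\ref{lemma:situation} then forces $u_+(z)$ to be. You instead argue directly that the ascent paths from $u_+(z)$ terminate at $P^+$ and $\infty$ and then invoke Remark~\ref{remark:path}. Be careful: Remark~\ref{remark:path} only says that a maximally relevant saddle \emph{necessarily} has ascent paths ending at $P^+$ and $\infty$; it does not a priori state the converse, so you would need an extra line (e.g.\ combining it with the uniqueness in Lemma~\ref{lemma:situation} and the fact that there are only two saddle points) to turn the observed endpoint behavior of $u_+(z)$ into maximal relevance. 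Your description of the level-curve topology near $z=1$ for $0<\alpha<1$ is consistent with the paper's conclusions, but the missing $1\le\alpha<2$ case means the argument as written does not establish the proposition in full.
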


Note that the tropical trace $\tilde \pi_* H(z)$ for the map $\tilde \pi :U_{rel}\to \bC_z$ equals the tropical trace for the restriction $\tilde \pi:U_{max}\to \bC_z.$
Hence, our earlier results and Proposition~\ref{prop:maxrelevant} easily provide the asymptotic behavior of the zeros of the Rodrigues descendants of $P(z)=z^2-1$. For fixed $0<\al<2$, let $\mu_n:=\mu_{|\al n], n, P}$ be the root-counting measure of $\R_{[\alpha n],n,P}=(P^n)^{([\alpha n])}(z)$ with logarithmic potential $L_{\mu_n}(z)$ and Cauchy transform $\C_{\mu_n}(z)$. (Note the very explicit description of $\tilde \pi_* H(z)$ in item i) below.)

\begin{corollary}
\label{prop:kvadratresultat} 
\begin{enumerate} 
\item[i)] The  equality 
$$\lim_{n\to\infty} L_{\mu_n}(z)=\tilde \pi_* H(z)+B =H_+(z)+B ,$$ 
is valid in the $L^1_{loc}$-sense, where $H(z,u)$ is given by \eqref{eq:Hkvadr} and the constant $B$ is defined in Lemma \ref{lem:constantas} with $d=2$.

\smallskip
\item[ii)] For $0\leq\alpha < 1 $, one has 
$$\mu:=\mu_{\al,P}:= \lim_{n\to\infty} \mu_n= \frac{2}{\pi} \frac{\partial^2 H_+}{\partial z\partial \bar z}.$$ 

\smallskip
The measure $\mu$ is the probability measure explicitly given by 
$$\mu=\frac{1}{(2-\al)\pi}\frac{\sqrt {b_+^2-x^2}}{1-x^2}\diff s +\frac{1-\al}{2-\al}\delta(1)+\frac{1-\al}{2-\al}\delta(-1) , \quad x\in [b_-,b_+],$$ 
and its continuous part is supported on $[b_-,b_+]$.

\smallskip
\item[iii)] For $1\leq\alpha < 2 $, one has 
$$\mu:= \lim_{n\to\infty} \mu_n=  \frac{2}{\pi} \frac{\partial^2H_+}{\partial z\partial \bar z}.$$ 

\smallskip
Here $\mu$ is the probability measure explicitly given by 
$$\mu=\frac{1}{(2-\al)\pi}\frac{\sqrt {b_+^2-x^2}}{1-x^2}\diff s, \quad x\in [b_-,b_+].$$

\smallskip
\item[iv)] The Cauchy transform $\C_\mu(z)=\frac{2\partial L_\mu}{\partial z}$ is the analytic continuation of the function 
$$
\C_\mu(x)=\frac{2\al}{(\al-1)x+ \sqrt{x^2-b_+^2}},\quad x\in ]1,\infty]
$$
 to the domain $\bC\setminus [b_-,b_+]$.

It satisfies equation \eqref{eq:eqMultiplicity} which in our special case reduces to 
\begin{eqnarray*}
\al P\left(z+\frac{\al }{2-\al}\C^{-1}\right)-\frac{\al }{2-\al}\C^{-1}P'\left(z+\frac{\al }{2-\al}\C^{-1}\right)=0\iff\\
\frac{\al}{2-\alpha}+\frac{2(1-\al)}{2-\al}z \C+(1-z^2) \C^2=0.
\end{eqnarray*}
\end{enumerate}
\end{corollary}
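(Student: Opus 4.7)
The plan is to apply Theorem~\ref{prop:primitive} directly, exploiting the explicit description of the maximally relevant saddle point locus furnished by Proposition~\ref{prop:maxrelevant}. For part (i), since $U_{max} \cap \pi^{-1}(\mathcal O) = W_+$, the tropical trace over the relevant saddle locus reduces on the dense open set $\mathcal O$ to the single-valued function $H(z,u_+(z)) = H_+(z)$. Then Theorem~\ref{prop:primitive} applied with $d=2$ yields $\lim_n L_{\mu_n}(z) = B + \tilde\pi_* H(z) = B + H_+(z)$ as $L^1_{loc}$-functions, using Lemma~\ref{lemma:hplus}(ii) to extend $H_+$ continuously across the arcs of $\Delta$.

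For parts (ii) and (iii), the measure is computed as $\mu = \frac{2}{\pi}\partial_z\partial_{\bar z} H_+(z)$ in the sense of distributions (again from Theorem~\ref{prop:primitive}). By Lemma~\ref{lemma:hplus}, $H_+$ is harmonic in $\bC$ except possibly along the branch cut $[b_-,b_+]$ and at the points $\pm 1$. Hence the distributional Laplacian decomposes as: a continuous density on $[b_-,b_+]$, plus possible point masses at $\pm 1$. The density on the cut is extracted via the Plemelj--Sokhotski jump formula
\[
\mu\big|_{[b_-,b_+]} \,=\, \frac{1}{2\pi i}\bigl(\C_\mu(x+i0) - \C_\mu(x-i0)\bigr)\,\diff x,
\]
where the two boundary values of $\C_\mu = 2\partial H_+/\partial z = \frac{\alpha}{(2-\alpha)(u_+(z)-z)}$ are computed from $u_+(x\pm i0) = (2-\alpha)^{-1}\bigl(x \pm i\sqrt{b_+^2-x^2}\bigr)$; simplification gives the stated density $\frac{1}{(2-\alpha)\pi}\cdot\frac{\sqrt{b_+^2-x^2}}{1-x^2}$. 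The point masses at $\pm 1$ are read off from the logarithmic singularity in Lemma~\ref{lemma:hplus}(iii): residue $\frac{1-\alpha}{2-\alpha}$ at each of $\pm 1$ when $0\le\alpha<1$, and nothing when $1\le\alpha<2$ by Lemma~\ref{lemma:hplus}(iv).

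For part (iv), the explicit formula for the Cauchy transform follows directly from substituting $u_+(z) - z = \bigl((\alpha-1)z + \sqrt{z^2-b_+^2}\bigr)/(2-\alpha)$ into $\C_\mu(z) = \frac{\alpha}{(2-\alpha)(u_+(z)-z)}$. The algebraic equation is then just the specialization of equation~\eqref{eq:algebraicDiffEq} of Theorem~\ref{th:Cauchy} to $P(z) = z^2-1$, already recorded as equation~\eqref{eq:LegendreAlgebraicEq} in Example~1(i); a direct substitution using $b_+^2 = \alpha(2-\alpha)$ and $1-b_+^2 = (1-\alpha)^2$ verifies that the formula indeed solves this quadratic in $\C$.

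The principal technical hurdles are of two kinds. First, one must verify that the total mass equals $1$; this reduces to the identity $\int_{-b_+}^{b_+}\frac{\sqrt{b_+^2-x^2}}{1-x^2}\diff x = \pi(1-|1-\alpha|)$, which in the regime $\alpha<1$ conspires with the two point masses of weight $\frac{1-\alpha}{2-\alpha}$ to give $\frac{\alpha + 2(1-\alpha)}{2-\alpha} = 1$, and in the regime $\alpha\ge 1$ gives $1$ directly. Second, one must justify the passage from the pointwise almost-everywhere expressions for $\C_\mu$ and $\mu$ to the distributional identities, which is where Lemma~\ref{lemma:hplus}(ii) and the mild regularity provided by Proposition~\ref{prop:cont} are essential; in particular, the continuity of $H_+$ across the branch cut ensures that no additional concentrated contributions appear on the ovals of $\Delta$ surrounding $\pm 1$, so that the only boundary contribution to the Laplacian comes from the real segment $[b_-,b_+]$.
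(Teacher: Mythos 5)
Your proposal is correct and follows essentially the same route as the paper's proof: Proposition~\ref{prop:maxrelevant} reduces the tropical trace to $H_+$, Theorem~\ref{prop:primitive} gives the $L^1_{loc}$-limit in (i), the Plemelj--Sokhotski jump of $\C_\mu$ across $[b_-,b_+]$ produces the continuous density, and Lemma~\ref{lemma:hplus}(iii)--(iv) accounts for the point masses. The paper computes $\C_\mu$ by differentiating the expression~\eqref{eq:Honcurve} for $H_+$ directly, while you use the equivalent and slightly cleaner form $\C_\mu=\frac{\alpha}{(2-\alpha)(u_+(z)-z)}$ coming from the saddle-point relation (as in \eqref{eq:symbolcurve2}); the total-mass check you add is not in the paper but is a sensible consistency verification. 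One small point worth flagging: your substitution yields $\C_\mu(x)=\frac{\alpha}{(\alpha-1)x+\sqrt{x^2-b_+^2}}$ without the factor $2$ appearing in the statement's display; the factor-free version is the correct one (it has the required asymptotic $\C_\mu\sim 1/z$ at infinity and solves the quadratic), so the $2$ in the statement is a typo, and your derivation quietly corrects it.
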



\begin{remark}{\rm 
The above items ii) and iii) have previously been derived by  Hoskins and Kabluchko in \cite[4.2]{HoKa}. As was pointed  to us by an anonymous  referee item iii) in particular is known in the literature as the Kesten-McKay law, see \cite{DuEd}. 
}
\end{remark}

\begin{proof} The previous proposition implies that the fiberwise maximum on $U_{max}$ in the definition of $\tilde \pi_* H(z)$ is taken just on the single sheet $W_+,$ and hence $\tilde \pi_* H(z) =H_+(z)$ as $L^1_{loc}$-functions. Theorem \ref{prop:primitive} then gives i).

Since by Lemma \ref{lemma:hplus}, $\tilde \pi_* H(z)$ is continuous and harmonic a.e. in $\bC$ its distributional derivative equals its derivative a.e., see e.g. \cite[Prop.2]{BB}. Hence 
$$\C_\mu(z)=
2\frac{\partial L_\mu}{\partial z}=\frac{2}{2-\al}\left(\frac{u_+'(z)}{u_+(z)} +\frac{u_+'(z)-1}{u_+(z)-z}   \right),
$$
which can be reduced to iv). 

By the Plemelj-Sokhotski formula, the distributional derivative $\frac{\partial \C_\mu}{\partial\bar z}$ along  $[b_-,b_+]$ equals one half of the absolute value of the jump  of the function across the line segment $[b_-,b_+]$, i.e., $\frac{\vert C_+-C_-\vert}{2}$, where $C_+$ och $C_-$ are the two values of the analytic continuation of $\C_\mu$ on both sides of $[b_-,b_+]$, see e.g. \cite[Lemma 2]{BB}. This fact explains the expressions for the continuous part  of the measure  in ii) and iii). For $0<\al< 1$, the point mass contributions are immediate from Lemma \ref{lemma:hplus} iii). The equation for the Cauchy transform follows from Theorem~\ref{th:Cauchy} and \eqref{eq:symbolcurve2}.
\end{proof}

The description of $\tilde\pi_*H(z)=H_+(z)$ in  Corollary \ref{prop:kvadratresultat} can be contrasted with the behavior of the fiberwise maximum $\pi_*H(z)$.
 In each of the three regions determined by $\Delta$, $\pi_*H(z)$ equals the maximum of $H_-$ and $H_+$.  It is also clear that $\pi_*H(z)=H_+(z)$ for large $\vert z\vert$, and hence in the whole unbounded region.
Inside the ovals $H_-(z)$ will be the larger function, and hence $\pi_*H(z)=H_-(z)$. Thus $\pi_*H(z)$ is the piecewise-harmonic function shown in Fig.~\ref{fig: trace}. 
By an argument similar to the proof of Lemma \ref{lemma:hplus}, $H_-(z)$ has no poles when $0<\al<1$, and so $\pi_*H(z)$ is subharmonic in the whole plane. 
On the other hand, if $1<\al<2$,  $H_-(z)$  has poles at $z=\pm 1$ and near these poles $H_-(z)\sim \frac{1-\al}{2-\al}\log\vert z \mp 1\vert$ respectively. So $\pi_*H(z)$ is subharmonic only in the complement of the poles. 

\begin{proof}[Proof of Proposition \ref{prop:maxrelevant}]


 By Lemma \ref{lemma:situation} for each $z\in \mathcal O$, there is a unique maximally relevant saddle point. Thus we only need to show that this saddle point is not $u_-(z)$.  
By Lemma \ref{lemma:andronov}, a change of the branch that determines the maximally relevant saddle point can only occur when $z$ moves to a different connected component of $\mathcal O$. Consequently, it suffices to analyse the saddle point behavior of $H(z,u^\ast)$ 
for some points $z$ in each  connected component of $\mathcal O$, say, on the real axis. 

The left-right symmetry induced by $H(-z,-u)=H(z,u)$ and 
$u_\pm(-z)=-u_\pm(z)$ 
shows that the level curves through $u_\pm(-z)$ and $u_\pm(z)$ are the mirror images of each other. Hence, by the definition of maximally relevant saddle points,  it is enough to consider a real point $z_1$ inside the right oval, in addition to a real point 
$z_2$ outside both ovals, i.e., in the unbounded component.  

Observe that  for a saddle point $w(z)$ not to be maximally relevant,  it suffices that the two paths $\gamma_i,$ $i=1,2,$ of maximal growth from $w(z)$ have the same pole as their endpoints; see Remark \ref{remark:path}. Note that such an endpoint can only coincide with  $P^+$, $\infty$ or $u_\pm(z)$, all of which belong to the real axis if $z\geq b_+$. 

For  $z$ real,  we have the additional symmetry 
$H(z,\bar u)=H(z,u)$ which implies the following: 

\noindent
 i) each level set of $H(z,u)$ in the complex $u$-plane passing through a saddle point on the real axis is invariant under the complex conjugation;  

\noindent 
  ii)  the gradient field of $H(z,u)$  in the complex $u$-plane is invariant under complex conjugation w.r.t. the variable $u$.  

\smallskip  
  In particular,  ii) implies that if the oriented paths $\gamma_i,\ i=1,2,$ starting at the real saddle point $u_-(z)\in \bR$, do not initially follow  the real axis, they will  have the same endpoint which necessarily belongs to the real axis. This endpoint  has three possibilities; it can either be $P^+$, $\infty$, or  $u_+(z)$. In the first two cases, $u_-(z)$ is not maximally relevant, by Remark \ref{remark:path}. In the latter case $u_-(z)$ will be relevant, but not maximally relevant. This follows from the fact that in order for a maximally relevant saddle point to exist, the two paths of maximal growth from $u_+(z)$ must have different endpoints $P^+$ and $\infty$. Hence $u_+(z)$ is maximally relevant. Thus  we have established that $u_+(z)$ is maximally relevant in all three cases under the assumption that $\gamma_i, \ i=1,2$, do not initially follow  the real axis. Let us finally show that our latter assumption holds, i.e., indeed $\gamma_i, \ i=1,2$, do not initially follow  the real axis.

\smallskip
A simple computation gives  
\begin{equation}
\label{eq:sign}
\frac {\diff H(z,t)}{\diff t}=\frac{(t-u_-(z))(t-u_+(z))}{(2-\al)(t^2-1)(t-z)}.
\end{equation} Now if $z\ge 0$ is large, then $u_+(z)\ge 0$ will also be large while $u_-(z)$ will be close to $0$. Hence, for $t$ close to $u_-(z)$,  the sign of
 $\frac {\diff H(z,t)}{\diff t}$  will be equal to the sign of $-(t-u_-(z))$, which implies that $u_-(z)$ is a local maximum of $H(z,t)$, $t\in \bR$. In particular, the paths of maximal growth cannot start out along the real axis implying that $u_-(z)$ is not maximally relevant. If $z$ is contained in the oval in the right half-plane, we have that 
 $ u_-(z)<u_+(z) $ and we can assume  that for $\al\neq 1$,  $z<1$. A straightforward calculation shows that if $1<\al< d$, we get $z<1< u_-(z)<u_+(z)$, and if $0<\al< 1$, then 
 $ u_-(z)<u_+(z)<z<1$. Analyzing signs in \eqref{eq:sign} we again conclude that $u_-(z)$ is  a local maximum of $H(z,t)$ for real $t$. The result follows.
\end{proof}

A more instructive illustration of why $u_+(z)$ is a maximally relevant saddle point is given by Fig.~\ref{fig:levelsets} -- \ref{fig:levelsets2} which show the level curves passing through both saddle points.

Let us start with the case $0<\al< 1$ presented in Fig. \ref{fig:levelsets}. The dashed line is the level curve of $H(z,u)$ through $u_+(z)$ and we see from its definition and the location of the poles that  $u_+(z)$  is a maximally relevant saddle point. There exists a path of maximal ascent going from $u_+(z)$  to $P^+$ and another one going from $u_+(z)$  first to $u_-(z)$ and then to $\infty$. The non-dashed curve is the level curve of $H(z,u)$  through  $u_-(z)$. We can conclude that this saddle point is non-relevant, either by definition, or since it is impossible to reach $P^+$ by an ascending path from $u_-(z)$. Hence the only maximally relevant saddle point is $u_+(z)$ and
 inside the right oval we get $\tilde\pi_*H(z)=H(z,u_1(z))=H_+(z)$ as was already proven before.

For $1\leq\alpha < 2$, the level curves look differently, see Fig. \ref{fig:levelsets2}.  But again it is possible to use the position of the poles and the definition of maximally relevant saddle points to obtain the same result.

\begin{figure}[H]
\begin{center}
\usetikzlibrary {calc,intersections,through,arrows}
\begin{tikzpicture}[->,>=stealth',auto,node distance=3cm,
  thick,main node/.style={circle,draw,font=\sffamily\Large\bfseries}]
    \node[anchor=south west,inner sep=0] at (0,0) {\includegraphics[scale=.8]{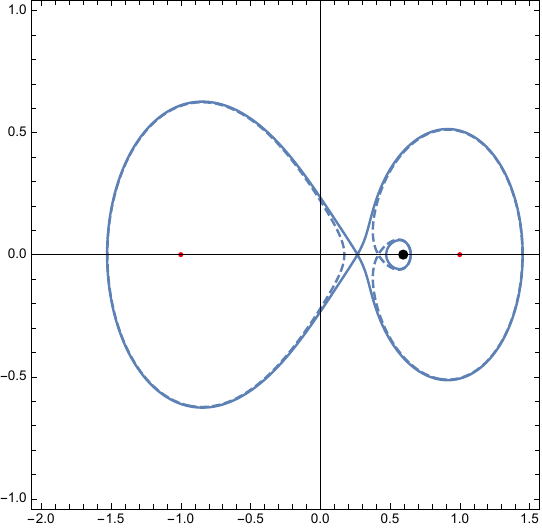}};
    \coordinate[label={[font=\normalsize,red]$-1$}] (N1) at (2.32,3.05);
	\coordinate[label={[font=\normalsize,red]$1$}] (P1) at (6.22,3.05);
	\coordinate[label={[font=\large,red]$u_-(z)$}] (WM) at (4.9,1.5);
	\coordinate[label={[font=\large,red]$u_+(z)$}] (WP) at (6,4.37);
	\coordinate[label={[font=\large,red]$P^+$}] (PP) at (6.1,2.28);
	\coordinate (S1) at (4.77,2);
	\coordinate (G1) at (4.83,3.49);
	\coordinate (S2) at (5.8,4.5);
	\coordinate (G2) at (5.17,3.88);
	\coordinate (S3) at (5.8,2.6);
	\coordinate (G3) at (5.6,3.6);
	\coordinate (RED1) at (2.4646,3.705);
	\coordinate (RED2) at (6.2432,3.705);
	
    \path[every node/.style={font=\sffamily\small},color=red,line width=0.3mm]
	(S1) edge[out=72, in=250] node [left] {} (G1);
	\path[every node/.style={font=\sffamily\small},color=red,line width=0.3mm]
	(S2) edge[out=230, in=80] node [left] {} (G2);
	\path[every node/.style={font=\sffamily\small},color=red,line width=0.3mm]
	(S3) edge[out=135, in=310] node [left] {} (G3);
	
	\tkzDrawPoint[size=3.3,color=red](RED1)
	\tkzDrawPoint[size=3.3,color=red](RED2)
\end{tikzpicture}
\caption{The level curves of $H(z,u)$ through the saddle points, $0<\al< 1$. (Here $\al=0.2$ and $z=0.615$.)}
\label{fig:levelsets}
\end{center}
\end{figure}

Let us now qualitatively describe what happens to $\mu_{\al,z^2-1}$ when $\alpha$ increases from $0$ to $2$. Part of the mass of the two point measures at $\pm 1$ initially moves out of $\pm 1$ to the continuous measure supported on the interval $I_\al:=[b_-,b_+]$. The latter measure then expands with increasing $\al$, until its support becomes the whole interval $[-1,1]$ at $\al=1$. Then when $1<\al<2$ there is no mass left at $\pm1$ and the support of the continuous measure shrinks toward the origin and vanishes when $\al=2$. 
In particular, for all $0<\al<2$, the support of the asymptotic measure is contained in $[-1,1]$, as predicted by the Gauss-Lucas theorem, and for all  $\al\neq 1$, it is (except for the possible point masses) strictly smaller than $[-1,1]$. 

To summarize: we want to find $\tilde\pi_*H(z)$  to obtain the asymptotic root-counting measure. By considering the complement $\mathcal O$ of the non-simple locus $\Delta$,  it suffices to check the situation in the finite number of connected components of $\mathcal O$, see  Proposition \ref{prop:tropicaltraceopen}.  Furthermore, in each such component it is enough to  analyze the behavior  of the paths of maximal ascent at a single point. In principle,  such analysis can be carried out for higher degree polynomials $P$ as well, but is substantially more involved. 
 
\medskip

\begin{figure}
\begin{center}
\includegraphics[scale=.35]{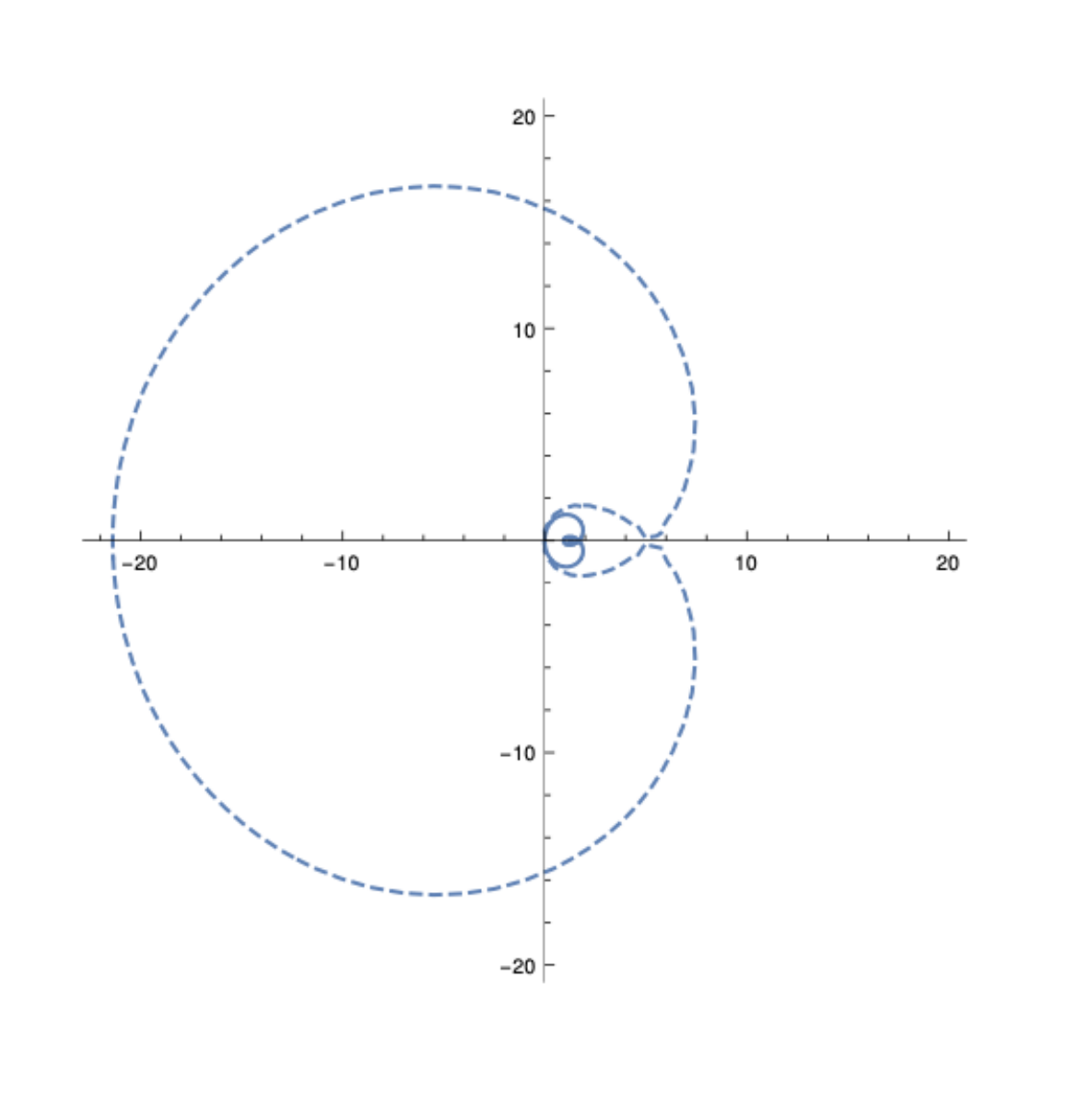}
\end{center}
\caption{The level curves of $H(z,u)$ through the saddle points, $1<\al< 2$. (Here $\al=1.2$ and $z=0.615$.)}
\label{fig:levelsets2}
\end{figure}

\section {Final remarks and open problems}  \label{sec:final} 

\smallskip
\noindent
{\bf 1.} Practically all the results of the present paper can be generalized to the case where $f$ is a rational function instead of a polynomial. 
However poles of a rational function restrict the possibility of deformation of the integration contour used in \S~5 which leads to a more delicate situation  requiring special analysis.

\smallskip
\noindent
{\bf 2.} The set-up of the present paper can also be  randomized and generalized as follows. 
 Let $\xi$ be a probability measure compactly supported in $\bC$. Denote by $P_n=\prod_{i=1}^n(x-\xi_i)$ a random polynomial of degree $n$ whose roots are i.i.d. random variables sampled on $\xi$. Given a sequence $\A=\{\al_n\}$ of non-negative integers, set $Q_n=P_n^{(\al_n)}$ and denote by $\mu_n$ the root-counting measure of $Q_n$. Results from  the recent papers \cite{PR, Ka} motivate the following guess.    

\begin{conjecture}\label{prop:central}
In the above notation, the following two statements hold:

\medskip\noindent
\rm{(i)} if $\frac{\al_n}{n}\to 0$, then the sequence $\{\mu_n\}$  converges in probability to $\xi;$

\medskip\noindent
\rm{(ii)} if $\frac{\al_n}{n}\to \al,\; 0<\al<1$, then the sequence $\{\mu_n\}$ converges in probability to a measure $\xi_\al$  whose support is contained in the convex hull of the support of $\xi$;
\end{conjecture}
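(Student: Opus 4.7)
The plan is to extend the saddle point analysis of Section~\ref{sec:MainThs} to the random setting by replacing deterministic empirical measures with the random empirical measure $\hat\nu_n := \frac{1}{n}\sum_{i=1}^n \delta_{\xi_i}$. Since the $\xi_i$ are i.i.d.\ samples from $\xi$, the strong law of large numbers gives $\hat\nu_n \to \xi$ weakly almost surely, and $\frac{1}{n}\log|P_n(u)| = L_{\hat\nu_n}(u) \to L_\xi(u)$ almost surely for each $u \in \bC\setminus\mathrm{supp}(\xi)$. Bernstein-type concentration applied to the bounded random variables $\log|u-\xi_i|$ (truncated near the support of $\xi$) upgrades this to uniform convergence on compact subsets of $\bC\setminus\mathrm{supp}(\xi)$, which is sufficient to apply the saddle point method to Cauchy's formula
\begin{equation*}
Q_n(z) = \frac{\alpha_n!}{2\pi i}\oint_c \frac{P_n(u)\,\diff u}{(u-z)^{\alpha_n+1}}.
\end{equation*}

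For part~(ii), writing $\alpha_n = \alpha n + o(n)$ and factoring the integrand as $(u-z)^{-1}\exp(n\, k_n(u;z))$ with $k_n(u;z) := \frac{1}{n}\log P_n(u) - \frac{\alpha_n}{n}\log(u-z)$, one has $\mathrm{Re}\, k_n(u;z) \to H_\xi(z,u) := L_\xi(u) - \alpha\log|u-z|$ uniformly on compacts. The saddle point equation in the variable $u$ becomes
\begin{equation*}
\C_\xi(u) = \frac{\alpha}{u-z},
\end{equation*}
where $\C_\xi$ is the Cauchy transform of $\xi$; this is the random analog of \eqref{eq:saddle-pointsu}. I would then define the set $U_{rel}$ of relevant saddle points by the same topological criterion as in Section~\ref{sec:contour} (a level curve of $H_\xi(z,\cdot)$ separating the positive poles), establish the analog of Proposition~\ref{prop:cont}, and use the Vitali-type Lemma~\ref{lem:vitalis} to deduce
\begin{equation*}
L_{\mu_n}(z) \longrightarrow B + \tilde\pi_* H_\xi(z)
\end{equation*}
in $L^1_{\mathrm{loc}}$ in probability. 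The limiting measure is then $\xi_\alpha := \frac{2}{\pi}\partial_z\partial_{\bar z}\tilde\pi_*H_\xi(z)$, and its support lies in the convex hull of $\mathrm{supp}(\xi)$ by Gauss-Lucas applied to each realization, since the roots of $Q_n$ are contained in the convex hull of $\mathrm{supp}(\hat\nu_n)$ whose limit is the convex hull of $\mathrm{supp}(\xi)$.

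For part~(i), the saddle point argument degenerates since the $\alpha\log|u-z|$ term vanishes in the limit. Instead, I would argue by perturbation: by Gauss-Lucas, the roots of the derivative of any polynomial lie in the convex hull of its roots, and a careful residue/interlacing computation shows that one differentiation perturbs the root-counting measure by at most $O(1/n)$ in the Kantorovich-Rubinstein distance. Iterating, one obtains $d_{KR}(\mu_n,\hat\nu_n) \leq C\alpha_n/n \to 0$, and combined with $\hat\nu_n \to \xi$ in probability this yields $\mu_n \to \xi$ in probability.

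The main obstacle will be that the analog saddle point locus $\{\C_\xi(u)(u-z) = \alpha\}$ is no longer algebraic for general $\xi$, so the rich algebro-geometric structure exploited in Theorem~\ref{pr:rational} (smoothness, rationality, explicit description of the branches at infinity, classification of poles of $\widetilde\Omega$) must be recovered by topological/analytic arguments directly from the properties of $\C_\xi$ and $L_\xi$. In particular, the classification of maximally relevant saddle points (Lemma~\ref{lemma:situation}) and the identification of the correct branch of the tropical trace near $\mathrm{supp}(\xi)$ becomes delicate when $\xi$ is singular (atomic, or supported on a real arc), since $L_\xi$ may fail to be smooth or even continuous there. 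A secondary difficulty is quantifying the convergence $L_{\hat\nu_n} \to L_\xi$ strongly enough, and uniformly enough in $u$ close to $\mathrm{supp}(\xi)$, to ensure that the random relevant saddle points converge to the deterministic ones; this likely requires sharper concentration inputs in the spirit of \cite{Ka}.
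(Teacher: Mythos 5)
This statement is Conjecture~\ref{prop:central} in the paper, and the authors themselves do \emph{not} prove it --- they offer only a heuristic reinterpretation of Theorem~\ref{prop:primitive} (formula~\eqref{eq:general}) as motivation and explicitly say the extension to general $\xi$ is a ``hope.'' Your proposal is thus an attempt to close a genuinely open problem, so it must be judged as such rather than compared to a non-existent proof.

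The argument for part~(i) has a concrete error. You assert that one differentiation perturbs the root-counting measure of a degree-$n$ polynomial by $O(1/n)$ in Kantorovich--Rubinstein distance, then iterate. This is false as a statement about arbitrary polynomials: take $p(z)=z^n-1$, whose root-counting measure is uniform on the unit circle, while $p'(z)=nz^{n-1}$ has root-counting measure $\delta(0)$, so a single differentiation moves the root measure by $\Theta(1)$. The true content of Totik's result~\cite{To} and of~\cite{HoKa,St1,St2} is a \emph{limiting} stability (if $\mu_n\to\mu$ weakly then the root measures of $p_n'$ also tend to $\mu$), not a per-step $O(1/n)$ Wasserstein bound; iterating $\alpha_n\to\infty$ times requires uniformity that does not come for free and is precisely where the genuine difficulty of part~(i) lies. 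Any correct argument must exploit the i.i.d.\ randomness of the roots, not a deterministic contraction estimate.

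For part~(ii) your plan is reasonable in outline and you correctly identify the two main obstructions, but you do not resolve them, and they are exactly where the paper's proof breaks down under generalization. First, the ``saddle point curve'' $\{\C_\xi(u)(u-z)=\alpha\}$ is transcendental for generic $\xi$, so the algebro-geometric input of Theorem~\ref{pr:rational} (irreducibility, rationality, classification of branches at $\infty$, residues of $\widetilde\Omega$) --- which underpins Lemma~\ref{lemma:distinctharmonic}, Lemma~\ref{lemma:situation}, Lemma~\ref{lemma:andronov}, and Proposition~\ref{prop:cont} --- is unavailable, and you would need to re-derive those topological facts purely from potential-theoretic properties of $L_\xi$. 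Second, the claimed uniform-on-compacts convergence $L_{\hat\nu_n}\to L_\xi$ fails near $\supp(\xi)$ (the variables $\log|u-\xi_i|$ are unbounded below), and it is precisely near $\supp(\xi)$ that the relevant saddle points and the level curve $\Delta$ live, so the truncated Bernstein concentration sketch does not reach the region where control is needed. Until both of these are addressed the proposal is a program, not a proof, and the statement remains a conjecture.
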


Results of the present paper can be interpreted in the above terms  as follows. We start with a  discrete probability measure $\xi$ assigning the mass $\frac{1}{d}$ to each of the $d$ zeros of $P(z)$. Then we sample this measure uniformly and deterministically $nd$ times, by forming the sequence of polynomials $\{P_n(z):=P^n(z)\}$. Finally, fixing $0<\al<\deg P$,   we differentiate each $P_n(z)$ $[n\alpha]$ times. This procedure creates a sequence of polynomials $\{Q_n(z)\}$ and an associated sequence of root-counting measures $\{\mu_n\}$. The proportion between the number of derivations and the number of sampled points has the limit
$$A:=\frac{\alpha}{d}.$$

Observe now that $\frac{1}{d} \log\vert P(z)\vert$ equals the logarithmic potential $L_\xi(z)$ of the above measure $\xi$. Therefore our presentation of the asymptotic measure in Theorem \ref{prop:primitive} can be interpreted as
\begin{equation}
\label{eq:general}
\lim_{n\to \infty}\mu_n= \frac{2}{\pi}\frac{\partial^2}{\partial z\partial \bar z}\tilde\pi_*\left(\frac{1}{1-A}L_\xi(w)-\frac{A}{1-A} \log \vert w-z\vert\right),
\end{equation}
where $\tilde\pi_*$ denotes the fiberwise maximum of the function $$H(z,w):=\frac{1}{1-A}L_\xi(w)-\frac{A}{1-A} \log \vert w-z\vert$$ considered on the open subset $U_{max}\subset \mathcal D$ of maximally relevant saddle points on the curve 
$$\mathcal D=\{ (z,w)\in \bC_z\times \bC_w\;\vert \;\frac{\partial H(z,w)}{\partial w}=0 \},$$ 
where 
$$2\frac{\partial H(z,w)}{\partial w}=\frac{1}{1-A}\C_\xi(w)-\left (\frac{A}{1-A}\right ) \frac{1}{ w-z}=0\iff \C_\xi(w)=\frac{A}{z-w}.$$
\smallskip
Observe now that  the right-hand side of formula~\eqref{eq:general}  depends on the measure $\xi$ and does not explicitly use the underlying   polynomial $P$.  Hence one can hope that \eqref{eq:general} might  make sense for an arbitrary probability measure $\xi$ where a random polynomial sequence  $\{Q_n(z)\}$ is obtained by independent sampling of roots according to $\xi$. 
At least, it seems plausible that the relation (\ref{eq:general}) holds for a much more general class of probability measures $\xi$ than the  very special  measure originating from a univariate polynomial $P$ which we described above and which implicitly appears in our paper.

\smallskip
\noindent
{\bf 3.} For a given strongly generic polynomial $P$ and $0<\al<\deg P$, the asymptotic root-counting measure $\mu_{\al,P} = \lim_{n\to\infty} \mu_{[\al n], n,P}$ defined in \S~\ref{sec:introduction} is supported inside a certain intriguing domain $\Sh_P\subset \bC$ which we call the \emph{shadow} of a polynomial $P$; see examples in Fig. ~\ref{fig:polShadowExamples} where $\Sh_P$ is shown in red. Further, we say that a polynomial $P$ of degree at least $3$ has roots \emph{in convex position} if  each of them lies on the boundary  of $\mathrm{Conv}_P$, where $\mathrm{Conv}_P$ denotes their convex hull.

\begin{conjecture} \label{conj:main}For any strongly generic polynomial $P$ of degree at least $3$ whose roots are in convex position, but do not form a regular polygon, one has

\noindent
{\rm(i)} $\Upsilon_P\subset \mathrm{Conv}_P$ is a concave domain, i.e., the boundary of $\Upsilon_P$ consists of a finite number of smooth curves $\gamma_k,\,k=1,\dots,\eta$, such that the interior of the line segment connecting any two distinct points on $\gamma_k$ lies entirely in the complement of $\Upsilon_P$.

\noindent
{\rm(ii)} The boundary of $\Upsilon_P$ is contained in the union over $\al\in [0,d]$ of all critical values w.r.t. the variable $z$ of the rational function
\begin{equation}\label{eq:criticalValues}
F_\al(z)=z-\al \frac{P(z)}{P^\prime(z)}.
\end{equation}
 Equivalently, the boundary of $\Upsilon_P$ consists of all values of $u$  for which the family $$\Phi(\al,z,u)=\al P(z)+(u-z)P^\prime(z)$$ has a multiple root w.r.t. $z$ for some fixed $\al\in [0,d]$.
\end{conjecture}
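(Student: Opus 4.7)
The plan is to attack part (ii) first, since (i) will then follow with the aid of the Gauss--Lucas bound already recorded after Theorem~\ref{th:Cauchy}. By Theorem~\ref{th:main}, for each strongly generic $P$ and each $\alpha\in(0,d)$ the support $S_{\alpha,P}$ of $\mu_{\alpha,P}$ is a finite union of real semi-analytic arcs together with isolated points, and these arcs lie inside the non-simple locus $\Delta_\alpha$ of the saddle point curve $\mathcal D$. The endpoints of the arcs in the interior of $S_{\alpha,P}$ correspond to self-intersections of $\Delta_\alpha$, while the \emph{outer} endpoints---those not shared with a second arc---must be points of $\bC_z$ above which two local sheets of $\pi:\widehat{\mathcal D}\to\bC P^1_z$ collide, i.e.\ they are the branch values of $\pi$. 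A direct computation with the defining equation \eqref{eq:saddle-pointsu} shows that $\pi$ branches exactly when the polynomial equation $\alpha P(u)-(u-z)P'(u)=0$ in $u$ has a multiple root, and eliminating this multiplicity condition together with the defining equation yields precisely $\alpha P(u)P''(u)+(1-\alpha)P'(u)^{2}=0$, i.e.\ \eqref{eq:simplesaddle}. On the other hand, this is exactly the numerator of the derivative of $F_\alpha(z)=z-\alpha P(z)/P'(z)$. So for each $\alpha$ the outer boundary of $S_{\alpha,P}$ is contained in $\{F_\alpha(u):F_\alpha'(u)=0\}$, and the equivalent formulation in terms of $\Phi(\alpha,z,u)$ follows from the observation that $\Phi=0$ is precisely $u=F_\alpha(z)$, whose multiplicity condition in $z$ reproduces \eqref{eq:simplesaddle}.

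With (ii) proved at the level of individual $\alpha$, the global statement is obtained by continuity: any $z_0\in\partial\Upsilon_P$ is by definition the limit of points $z_n\in S_{\alpha_n,P}$ while not lying in the interior of any $S_{\alpha,P}$, so after extracting a subsequence one has $\alpha_n\to\alpha_\infty\in[0,d]$ and $z_n\to z_0$ with $z_0$ on the outer boundary of the Hausdorff limit of $S_{\alpha_n,P}$. This outer boundary point is a branch value of $F_{\alpha_\infty}$, whence the claim. For part (i), the inclusion $\Upsilon_P\subset\mathrm{Conv}_P$ is immediate from Gauss--Lucas (as recorded in the paper) applied to every $\R_{[\alpha n],n,P}$ individually. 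For concavity one proceeds by tracking how the envelope of critical values of $F_\alpha$ evolves as $\alpha$ varies across $[0,d]$: at $\alpha=0$ one recovers the roots $z_1,\dots,z_d$ of $P$, and at $\alpha=d$ the map $F_d(z)=z-dP(z)/P'(z)$ degenerates, in the strongly generic case, to the barycenter of the zero locus of $P$ (this is consistent with Fig.~\ref{fig:frozenDerivative}, bottom rows). Between these extremes, the critical-value curves should be shown to sweep monotonically inward from the vertices $z_i$ of $\mathrm{Conv}_P$ toward the barycenter, producing the characteristic indentations near each edge of $\mathrm{Conv}_P$ visible in Fig.~\ref{fig:polShadowExamples}.

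\textbf{Main obstacle.} The hard part is part (i): showing rigorously that the envelope produced by the critical values of $F_\alpha$ actually \emph{bends inward} in each bay between consecutive vertices of $\mathrm{Conv}_P$, rather than bulging out in a way that would produce a locally convex piece of boundary. This amounts to a global monotonicity statement for a real-analytic family of plane arcs in $\alpha$, and would naturally be attacked by computing the sign of the normal velocity $\partial_\alpha F_\alpha(u(\alpha))$ along a critical branch $u(\alpha)$, showing it always points into $\mathrm{Conv}_P$ away from the corresponding vertex. A secondary technical point---which also prevents the passage from per-$\alpha$ statements to the global statement (ii) from being purely formal---is to exclude that a boundary point of $\Upsilon_P$ arises as a limit of \emph{interior} arc points of $S_{\alpha_n,P}$ along a sequence $\alpha_n$ tending to $0$ or $d$, and in particular one has to rule out collapse of an entire semi-analytic arc onto $\partial\Upsilon_P$ that is not an envelope point; this requires continuity of the whole family $\{S_{\alpha,P}\}_{\alpha\in[0,d]}$ in the Hausdorff metric together with the openness statement for relevant saddle points in Lemma~\ref{lemma:andronov}.
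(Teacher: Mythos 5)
This statement appears in \S~\ref{sec:final} as Conjecture~\ref{conj:main}, an explicitly open problem. The paper does not supply a proof, so there is nothing to compare your argument against; the relevant question is whether your sketch closes the gap, and by your own account it does not.

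What you do get right is worth recording: the algebraic identification is correct. Setting $\frac{\partial}{\partial u}$ of the saddle-point equation $\alpha P(u)-(u-z)P'(u)=0$ to zero and eliminating $u-z$ gives exactly $\alpha P(u)P''(u)+(1-\alpha)P'(u)^{2}=0$, which is equation~\eqref{eq:simplesaddle} and which is precisely the numerator of $F_\alpha'(z)$; likewise $\Phi(\alpha,z,u)=0$ is $u=F_\alpha(z)$ and a double root in $z$ reproduces the same condition. So the locus of branch values of $\pi:\widehat\D\to \bC P^1_z$ and the locus of critical values of $F_\alpha$ genuinely coincide, which explains why equation~\eqref{eq:criticalValues} is a natural candidate for the boundary.

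But the two obstacles you flag are exactly what keep this a conjecture, and neither is resolved in your sketch. For (ii) you argue that boundary points of $\Upsilon_P$ are limits of \emph{outer} endpoints of arcs of $S_{\alpha_n,P}$, hence branch values of $F_{\alpha_\infty}$; but you do not rule out that a point of $\partial\Upsilon_P$ is a Hausdorff limit of arc-interior points (e.g.\ an arc that degenerates as $\alpha_n\to 0$ or $\alpha_n\to d$, or a point on $\partial\Upsilon_P$ that is interior to some $S_{\alpha_0,P}$ but extremal in the $\alpha$-family). That requires a uniform Hausdorff-continuity statement for $\alpha\mapsto S_{\alpha,P}$ across all of $[0,d]$, including the endpoints where the measure degenerates, together with a structural claim that arc-interior points never sit on the boundary of the union; Lemma~\ref{lemma:andronov} gives openness of the relevant-saddle set for fixed $\alpha$ but not the needed uniformity in $\alpha$. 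For (i), your concavity argument is a heuristic about the envelope of critical-value arcs sweeping monotonically inward; you name the right quantity to control (the sign of the normal velocity $\partial_\alpha F_\alpha(u(\alpha))$ along a critical branch), but you neither compute it nor explain why it must keep a fixed sign all the way from $\alpha=0$ (the vertices) to $\alpha=d$ (the barycenter), and the hypothesis ``roots in convex position but not a regular polygon'' is not used anywhere in the argument, which is a warning sign that the mechanism producing concavity has not actually been identified. As written, this is a promising reduction and a correct identification of the key identity, but it is a sketch with genuine gaps, not a proof.
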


Curiously, the function $F_\al(z)$ in \eqref{eq:criticalValues} bears a strong resemblance with the iterated expression in the relaxed Newton's method, see e.g. \cite{Su}. Additionally, the family $\Phi(\al,z,u)$ is a natural generalization of the polar derivative $d P(z)+(u-z)P^\prime(z)$ of $P$ with pole $u$, see e.g. \cite{Ma}. (Also compare this family to the polynomial appearing after the relation \eqref{eq:saddle-points1} above). Finally, if the parameter $\al$ runs over the whole real line, the union of all critical values of \eqref{eq:criticalValues} appears to form $d-1$ hyperbola-like branches  in $\mathbb{C}$ that  interact with $\Upsilon_P$.

%
%
%
%
%
%
%
\smallskip
\noindent
{\bf 4.} Let $P$ be a cubic strongly generic polynomial with non-collinear zeros, and let $h=h(\W)$ denote the left-hand side of equation \eqref{eq:algebraicDiffEq2}. If we solve the quartic equation $\mathrm{Resultant}(h, h')/P = 0$ in $z$ for $\al = 0,1,3$, the twelve solutions that arise are various triangle centers associated with the triangle $T_P$ in $\bC$ whose vertices are the roots of $P$. The distinct among these twelve points are: the center of mass of $T_P$, the roots of $P$ and $P^\prime$, the first and second isodynamic points of $T_P$ (denoted by $\mathcal{I}_1$ and $\mathcal{I}_2$, respectively), and an additional point that we denote $\mathcal{A}$. (Here $\mathcal{A}$ is the point $X(26613)$ in the Encyclopedia of Triangle Centers, see \cite{Ki,HaShSh}.) Numerical experiments indicate that the support of the asymptotic root-counting measure $\mu_{1,P}$ (in the notation of Theorem~\ref{th:Cauchy}) has non-obvious connections to some of these points, and is either a tree with three edges that have a vertex in common, or two disjoint edges; see Fig.~\ref{fig:isodynamicPoints}.
\begin{figure}[htp]
\begin{center}
\includegraphics[width=.48\textwidth]{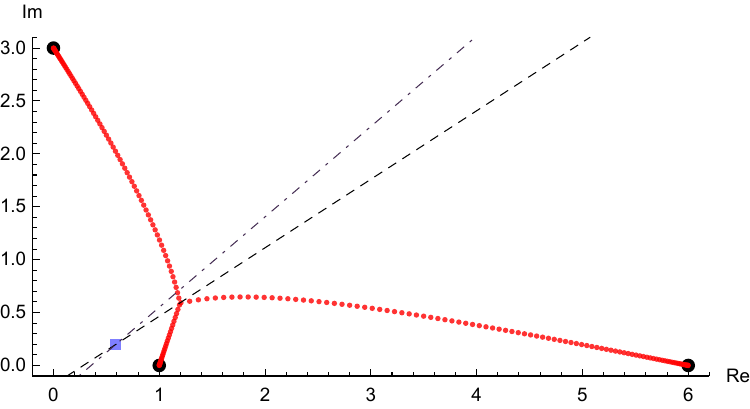}\hfill
\includegraphics[width=.48\textwidth]{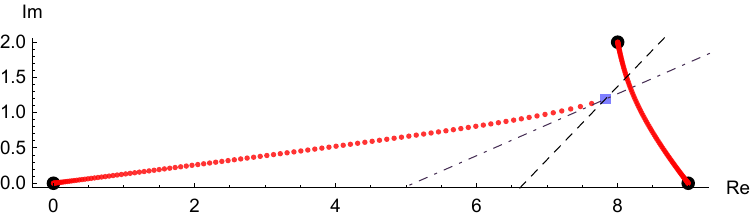}
\end{center}
\caption{The zeros of $\R_{100,100,P}(z)$ (shown by small red dots). Here, the square is the branch point $\mathcal{A}$, and the two lines are $\lline{A \mathcal{I}_1}$ and $\lline{A \mathcal{I}_2}$, respectively. Note that one of the two lines  seems to always pass through the vertex of degree 3 in the tree formed by the support of $\mu_{1,P}$ whenever it exists.}\label{fig:isodynamicPoints}
\end{figure}

More generally, our numerical experiments support the following guess.

\begin{conjecture}\label{conj:forest}
In the notation of Theorem~\ref{th:Cauchy}, for $0<\al<\deg P$,  the support of $\mu_{\al,P}$ is an embedded graph in $\mathbb{C}$ without cycles, i.e., it is a  forest.
\end{conjecture}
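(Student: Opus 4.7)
The plan is to argue by contradiction, reducing the geometric question about the support of $\mu_{\al,P}$ to a maximum-principle statement for the tropical trace $\tilde\pi_*H$ restricted to the domain enclosed by a hypothetical cycle in the support. By Theorem~\ref{prop:primitive} the logarithmic potential of $\mu_{\al,P}$ equals $B+\tilde\pi_*H$ as $L^1_{loc}$-functions, and by Proposition~\ref{prop:cont} this function is piecewise-harmonic on $\mathbb{C}_z$ with only finitely many logarithmic singularities, all located at zeros of $P$ and with residues that can be read off from Theorem~\ref{pr:rational}(vi) together with the local analysis of \S~\ref{sec:quadratic}.

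Suppose for contradiction that $\mathrm{supp}(\mu_{\al,P})$ contains a simple closed cycle $\gamma$ bounding a bounded domain $\Omega\subset\mathbb{C}_z$. In $\Omega$ the tropical trace equals a single branch $H_i(z):=H(z,u_i^{\mathcal{D}}(z))$ of the pullback of $H$ along a section $u_i^{\mathcal{D}}$ of the saddle-point curve $\mathcal{D}$, after passing, if necessary, to a finite cover of $\Omega$ ramified at the critical values of $\pi:\mathcal{D}\to\mathbb{C}_z$ lying inside. On each arc of $\gamma$, $H_i$ is matched by another relevant branch $H_j$, and Proposition~\ref{prop:tropicaltraceopen} together with the continuity of $\tilde\pi_*H$ ensures that the difference $h:=H_i-H_j$ is harmonic on $\Omega$ away from the roots of $P$, vanishes on $\gamma$, and is strictly positive in the interior (since $H_i$ is maximal throughout $\Omega$).

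I would then complete the argument in two regimes. For $\al\leq 1$, the local coefficient $\tfrac{1-\al}{d-\al}$ of $\log|z-z_k|$ in the branch of $H$ passing through $(z_k,z_k)\in\widehat{\Lambda}$ is non-negative, so that branch tends to $-\infty$ at the corresponding root $z_k$; hence the maximal branch $H_i$ in $\Omega$ cannot be the one through $(z_k,z_k)$, so $h$ is harmonic throughout $\Omega$, and the maximum principle yields $h\equiv 0$, contradicting $h>0$. For $\al>1$, the maximal branch can legitimately carry a $+\infty$-singularity at a root of $P$ inside $\Omega$, and the direct maximum-principle argument fails; here the plan is to invoke the Boutroux property (Definition~\ref{def:boutroux} together with Theorem~\ref{pr:rational}) stating that $(u_i-u_j)\,\diff z$ has purely imaginary periods on $\widetilde\Lambda$, combined with the fact that $\widetilde\Lambda$ has genus zero, to show that any such cycle $\gamma$ lifts to a cycle on $\widetilde\Lambda$ enclosing residues of $\widetilde\Omega$ whose sum must simultaneously be real (from $h\equiv 0$ on $\gamma$) and, via the explicit residue values $\tfrac{1-\al}{\al}$ at $(z_k,z_k)$ together with the residues at infinity from Theorem~\ref{pr:rational}(vii), strictly non-cancelling under strong genericity of $P$.

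The principal obstacle is the $\al>1$ regime, where delicate bookkeeping of which branches of the saddle-point curve carry $+\infty$ or $-\infty$ singularities over each root of $P$ is needed, together with careful tracking of the monodromy of $u^{\mathcal{D}}$ around $\gamma$ when $\Omega$ encloses branch points of $\pi:\mathcal{D}\to\mathbb{C}_z$. I expect the rationality of $\widetilde\Lambda$ (Theorem~\ref{pr:rational}(i)) to be the decisive geometric input: on a genus-zero Boutroux curve all periods are residue-type, so no non-trivial global period can support a cycle in $\mathrm{supp}(\mu_{\al,P})$, in analogy with the classical absence of closed horizontal trajectories for meromorphic quadratic differentials on $\mathbb{C}P^1$ with real-residue-only period structure.
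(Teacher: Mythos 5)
This statement is an open conjecture in the paper (\S~\ref{sec:final}, item 4), supported only by numerical experiments; the authors offer no proof. Your proposal is therefore not comparable to a missing paper argument but rather an attempt at resolving an open problem, and as written it does not close it. Several steps are genuinely gappy.

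The central difficulty is your implicit assumption that, across a hypothetical cycle $\gamma$ in $\mathrm{supp}(\mu_{\al,P})$ bounding $\Omega$, the maximal branch $H_i$ is matched by a \emph{single} other branch $H_j$ along all of $\gamma$. A cycle in the support graph generically passes through vertices of that graph, and at a vertex three or more branches of $\tilde\pi_*H$ coincide; on the arcs adjacent to such a vertex the ``second-largest'' branch changes. Thus there is no single harmonic $h=H_i-H_j$ on $\Omega$ to which the maximum principle applies, and your argument would need to be replaced by a more global statement (e.g.\ a jump/flux computation around $\gamma$ of the piecewise-analytic $1$-form $2\partial(\tilde\pi_*H)$), which you do not supply. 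Moreover, even restricting to a cycle with a single matching branch $H_j$, you only verify that the maximal branch $H_i$ is pole-free in $\Omega$ for $\al\le 1$; you never rule out a $-\infty$ logarithmic singularity of $H_j$ (or a branch point of $\pi:\D\to\bC_z$) inside $\Omega$. If $H_j$ has a negative-residue singularity at a root $z_k\in\Omega$, then $h=H_i-H_j$ is merely superharmonic near $z_k$, the minimum (not the maximum) principle applies, and $h>0$ in $\Omega$ with $h\equiv 0$ on $\gamma$ produces no contradiction. You also do not exclude the possibility that $\Omega$ itself contains additional arcs of the support, in which case $\tilde\pi_*H$ is not a single branch on $\Omega$ at all.

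The $\al>1$ regime, which you acknowledge as the obstacle, is left entirely heuristic: the claim that ``any such cycle $\gamma$ lifts to a cycle on $\widetilde\Lambda$'' and that the enclosed residues ``must simultaneously be real (from $h\equiv 0$ on $\gamma$)'' is circular, since $h\equiv 0$ on $\gamma$ is part of the contradiction you are trying to produce, not an available fact, and no computation is offered showing that the residues from Theorem~\ref{pr:rational}~(vi)--(vii) cannot cancel around a hypothetical cycle. The analogy with closed trajectories of quadratic differentials on $\bCP^1$ is suggestive but not an argument: the Boutroux property controls periods of $u\,\diff z$ on $\widetilde\La$, whereas the putative cycle $\gamma$ lives in $\bC_z$ and is a level set of \emph{differences} of branches of $\tilde\pi_*H$, so the translation between the two is precisely the missing step. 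In short, the maximum-principle heuristic is a reasonable place to start, but the branch-switching along $\gamma$, the poles of non-maximal branches inside $\Omega$, and the $\al>1$ residue bookkeeping all require substantive new ideas before this can be considered a proof.
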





\smallskip
\noindent
{\bf 5.} The Rodrigues descendants $\R_{n,n,P}(z)$ satisfy multiple orthogonal conditions in the following sense.

\begin{lemma}\label{prop:multipleOrthogonal}
Assume that $P(z) = (z-z_1)\dotsm (z-z_d)$ has only simple roots and let $\gamma$ be a path connecting $z_i$ and $z_j$, $1\le i < j\le d$. Then
\begin{equation}\label{eq:multipleOrthogonal}
I := \int_\gamma z^{k} \, \R_{n,n,P}(z)\,\diff z = 0
\end{equation}
where $k=0,1,\dots, n-1$.
\end{lemma}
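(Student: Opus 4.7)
The plan is to integrate by parts $k+1$ times, systematically transferring the $n$ derivatives off of $P^n$ and onto $z^k$, and to exploit the fact that $P^n$ has a zero of order exactly $n$ at each root $z_m$ of $P$. This means $\frac{d^{\,j}}{dz^j}(P^n)$ vanishes to order $n-j$ at each $z_m$, and in particular vanishes at $z_i$ and $z_j$ for every $j=0,1,\dots,n-1$.

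More concretely, I would proceed as follows. First, apply integration by parts once:
\begin{equation*}
I = \Bigl[z^k\,\tfrac{d^{n-1}}{dz^{n-1}}(P^n)\Bigr]_{z_i}^{z_j} - k\int_\gamma z^{k-1}\,\tfrac{d^{n-1}}{dz^{n-1}}(P^n)\,dz.
\end{equation*}
The boundary term vanishes because $\frac{d^{n-1}}{dz^{n-1}}(P^n)$ has a simple zero at each root of $P$. Iterating this $\ell$ times (for $1\le \ell\le k$) one obtains
\begin{equation*}
I = (-1)^\ell\,\tfrac{k!}{(k-\ell)!}\int_\gamma z^{k-\ell}\,\tfrac{d^{n-\ell}}{dz^{n-\ell}}(P^n)\,dz,
\end{equation*}
and at each stage the boundary contribution vanishes, since $\frac{d^{n-\ell}}{dz^{n-\ell}}(P^n)$ vanishes to order $\ell\ge 1$ at $z_i$ and $z_j$.

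Taking $\ell=k$ leaves $I = (-1)^k k!\int_\gamma \frac{d^{n-k}}{dz^{n-k}}(P^n)\,dz$, and since $n-k\ge 1$ this integrand is an exact derivative. Thus
\begin{equation*}
I = (-1)^k k!\,\Bigl[\tfrac{d^{n-k-1}}{dz^{n-k-1}}(P^n)\Bigr]_{z_i}^{z_j} = 0,
\end{equation*}
because $n-k-1\le n-1$, so $\frac{d^{n-k-1}}{dz^{n-k-1}}(P^n)$ still vanishes at both endpoints $z_i$ and $z_j$ (in fact to order $k+1\ge 1$). This completes the argument.

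The argument is essentially routine and I do not foresee any real obstacle; the only thing to double-check is the bookkeeping on the orders of vanishing. The assumption that $P$ has simple roots is used only implicitly, in that it guarantees the uniform order-$n$ vanishing of $P^n$ at the endpoints $z_i,z_j$ needed at every stage of the integration by parts; the conclusion in fact requires only that $z_i$ and $z_j$ be (not necessarily simple) zeros of $P$, since then $P^n$ vanishes to order at least $n$ there, which is all the argument uses.
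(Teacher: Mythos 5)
Your proof is correct and follows essentially the same route as the paper's, which simply states ``Follows from integration by parts $n$ times''; you have filled in the bookkeeping on the orders of vanishing that makes the boundary terms drop out at each stage. Your closing remark that simplicity of the roots is not actually needed (only that $z_i,z_j$ are zeros of $P$) is accurate.
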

\begin{proof}
Follows from integration by parts $n$ times.
\end{proof}
Taking $d-1$ homologically non-equivalent different paths $\ga_1, \ga_2, \dots, \ga_{d-1}$ among paths connecting the roots of $P$ 
leads to the multiple orthogonality conditions. For $d>2$, the sequence $\{\R_{n,n,P}(z)\}$ itself does not satisfy any linear recurrence of finite length. On the other hand, fixing a  system $\ga_1, \ga_2, \dots, \ga_{d-1}$ of paths as above, one can introduce the  family of  (type II) multiple orthogonal polynomials indexed by $\mathbf n=(n_1,n_2,\dots, n_{d-1})$ where the polynomial $\R_{\mathbf n}(z)$ has degree $n_1+n_2+\dots+n_{d-1}$ and satisfies the system of orthogonality relations given by 
$$\int_{\gamma_j} z^{k_j} \, \R_{\mathbf n}(z)\,\diff z = 0,\; k_j=0,1,\dots, n_j-1\; \text{and}\; j=1,2,\dots, d-1.$$ (One can check that the above system determines $\R_{\mathbf n}(z)$ up to a scalar factor). Obviously,  $\R_{n,n,P}(z)$ are the special cases of more general polynomials  $\R_{\mathbf n}(z)$ corresponding to $\mathbf n=(n,n,\dots, n)$. The multi-indexed family $\{\R_{\mathbf n}(z)\}$  satisfies a finite recurrence relation of length $d+1$, see \cite{VA}.

\smallskip
\noindent
{\bf 6.} Our final remark concerns Theorem~\ref{th:main}.

\medskip
\begin{conjecture}\label{conj:msin} Under the assumptions of Theorem~\ref{th:main} the signed measure whose existence is proven in this result is unique. \end{conjecture}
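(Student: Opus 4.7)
\medskip
\emph{Proof plan.} Let $\bar\mu_1, \bar\mu_2$ be two signed measures on $\bC P^1_z$ satisfying conditions (i)--(iii) of Theorem~\ref{th:main}. Denote by $\bar L_{\bar\mu_i}$ the extended logarithmic potential of $\bar\mu_i$ and by $\bar\C_{\bar\mu_i}$ the corresponding $1$-current Cauchy transform on $\bC P^1_z$. The objective is to show that $\bar L_{\bar\mu_1} - \bar L_{\bar\mu_2}$ is constant on $\bC P^1_z$; the identity $\bar\mu_i = \frac{i}{\pi}\diff'\diff'' \bar L_{\bar\mu_i}$ of \S~\ref{sec:basics} then yields $\bar\mu_1 = \bar\mu_2$.

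First I would exploit the prescribed singular data. Near $\infty$, condition (b) of Theorem~\ref{th:main} combined with the ``piecewise-analytic branch'' requirement of condition (iii) forces both $\bar\C_{\bar\mu_i}$ to coincide with the unique branch of $\tau_*\widetilde\Omega$ having the minimal negative residue $r_{\min}$, so that $\bar L_{\bar\mu_i}(z) = -r_{\min}\log|z| + O(1)$ near infinity. Near each pole $z_0 = \tau(p)$ with $p\in Pol^-_\Omega$, condition (ii) together with (iii) forces $\bar\C_{\bar\mu_i}$ to have a simple pole at $z_0$ with residue $\operatorname{Res}_p\widetilde\Omega$, so both $\bar L_{\bar\mu_i}$ exhibit identical logarithmic singularities there. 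Consequently $\bar L_{\bar\mu_1} - \bar L_{\bar\mu_2}$ is locally bounded near $\infty$ and near every point of $\tau(Pol^-_\Omega)$.

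Second, a branch-matching argument in $\bC_z \setminus (S_{\bar\mu_1} \cup S_{\bar\mu_2})$ would propagate this agreement globally. On each connected component $C$ of this open set, condition (iii) forces $\bar\C_{\bar\mu_i}|_C$ to be a single branch $v_i(z)$ of the algebraic function defining $\Upsilon$, so that $\bar L_{\bar\mu_i}|_C = \operatorname{Re}\Psi(v_i(z)) + c_i(C)$ for an additive constant, where $\Psi$ is the primitive introduced in Lemma~\ref{lm:trivial3}. The first step shows $v_1 \equiv v_2$ on the unbounded component. Crossing an arc $\gamma\subset S_{\bar\mu_j}$ into an adjacent component, the $L^1_{loc}$--continuity of $\bar L_{\bar\mu_j}$ together with the Plemelj--Sokhotski formula forces $\gamma$ to lie on a level set $\{\operatorname{Re}\Psi(v) = \operatorname{Re}\Psi(v')\}$ of the two branches meeting along it, while the sign condition (ii) forbids any negative-density contribution outside $\tau(Pol^-_\Omega)$. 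Propagating these constraints inductively from the unbounded component should identify the branch configurations of $\bar\mu_1$ and $\bar\mu_2$ with the maximal (tropical-trace) pattern used in the proof of Theorem~\ref{th:main}.

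The hardest part will be this combinatorial rigidity: ruling out globally consistent branch configurations distinct from the tropical trace $\Theta_\Upsilon$ but sharing its asymptotic and singular data. A natural strategy is to establish the one-sided comparison $\bar L_{\bar\mu_i}(z) \le \Theta_\Upsilon(z) + C$ directly from the fiberwise maximum property of $\Theta_\Upsilon$, and to upgrade this inequality to equality using a maximum principle for subharmonic $L^1_{loc}$--functions on $\bC P^1_z$ with prescribed singularities, taking advantage of the fact that both $\bar L_{\bar\mu_i}(z) - \Theta_\Upsilon(z)$ extend as bounded subharmonic functions past their common poles. Alternatively, an inductive argument on connected components of $\bC \setminus (S_{\bar\mu_1} \cup S_{\bar\mu_2})$ ordered by distance from $\infty$ might work, but requires carefully tracking how branch choices force the topology of the support. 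Either route demands a genuinely global input beyond local algebraic continuity, since locally the equation defining $\Upsilon$ admits several branches compatible with Plemelj--Sokhotski matching, and distinguishing the tropical-trace configuration from its competitors is precisely the content of the conjecture.
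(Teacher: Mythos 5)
The statement you are attempting to prove is presented in the paper as Conjecture~\ref{conj:msin}, in the final section of open problems; the authors explicitly leave it unresolved and offer no proof. Your proposal is therefore not being measured against a reference argument, and indeed your text honestly concedes at the end that the core difficulty---``distinguishing the tropical-trace configuration from its competitors''---is exactly the open content of the conjecture. So what you have is a plausible proof \emph{plan}, not a proof.

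Concretely, there is a genuine gap in the third step. After fixing the branch near $\infty$ and near the points of $\tau(Pol_\Omega^-)$, you need to rule out two distinct globally consistent branch configurations for $\bar\C_{\bar\mu_1}$ and $\bar\C_{\bar\mu_2}$. Your two proposed routes both have holes. The one-sided comparison $\bar L_{\bar\mu_i} \le \Theta_\Upsilon + C$ does not follow from $\Theta_\Upsilon$ being the fiberwise maximum: condition (iii) only asserts that $\bar\C_{\bar\mu_i}$ coincides \emph{locally} with some branch of the algebraic function, and it is not at all automatic that the corresponding local primitives $\operatorname{Re}\Psi(v_i(z))$ assemble into something dominated by the max; the additive constants $c_i(C)$ on different components are not controlled by the local algebraic identity alone. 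The inductive route has the additional problem that the supports $S_{\bar\mu_1}$ and $S_{\bar\mu_2}$ need not coincide, so the decomposition into ``components ordered by distance from $\infty$'' refers to two possibly different partitions of the plane, and the Plemelj--Sokhotski sign constraint only gives an inequality on each jump rather than a branch identification. The remark after Theorem~\ref{th:main} in the paper, citing Theorem~4 of \cite{ShTaTa}, shows that dropping condition (iii) actually breaks uniqueness; this confirms that the rigidity you are after must use (iii) in an essentially global way, which is precisely what neither route yet supplies.

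In short: your set-up (reduce to $\bar L_{\bar\mu_1}-\bar L_{\bar\mu_2}$ constant, pin down the branch at $\infty$ and at $\tau(Pol_\Omega^-)$, propagate via Plemelj--Sokhotski plus positivity) is the natural attack and aligns with how the authors constructed $\bar\mu_\Upsilon$ in their proof of Theorem~\ref{th:main}, but the ``combinatorial rigidity'' step is not merely hard, it is the entire conjecture, and it remains open. If you want to make progress, a cleaner target would be to first prove uniqueness within the class of measures of the form $\frac{i}{\pi}\diff'\diff''\,\nu_{U*}(\operatorname{Re}\Psi)$ arising from continuous tropical traces in the sense of Proposition~\ref{prop:tropicaltraceopen}, where the combinatorics are at least finite and explicit, and only afterwards try to show that any measure satisfying (i)--(iii) is necessarily of that form.
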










\begin{thebibliography}{30}

\bibitem[AbSt]{AbSt} \textsc{M.~Abramowitz, I.~A.~Stegun,} {Handbook of mathematical functions with formulas, graphs, and mathematical tables.} 
National Bureau of Standards Applied Mathematics Series, 55, Washington, D.C. 1964 xiv+1046 pp.

\bibitem[Al]{Alt} \textsc{S.~Altmann,} \textit{Olinde Rodrigues, mathematician and social reformer.} Gazeta de matem\'atica, Janeiro (2007), n 152.

\bibitem[AlOr]{AltOr} \textsc{S.~Altman, E.~L.~Ortiz (eds),} {Mathematics and social utopias in France. Olinde Rodrigues and his times.} History of Mathematics, vol. 28, (2005), xi+168 pages. 

\bibitem[As]{As} \textsc{R.~Askey,} \textit{``The 1839 paper on permutations: its relation to the Rodrigues formula and further developments'', in S. Altmann, E.L. Ortiz (eds).} Mathematics and Social Utopias in France: Olinde Rodrigues and His Times, History of Mathematics, vol. 28, p. 105. American Mathematical Society, Providence (2005).

\bibitem[BaSh]{BarSh} \textsc{Y.~Baryshnikov, B.~Shapiro}, \textit{ Quadratic  differentials and  signed measures},  Journal  d'Analyse Math\'ematique, DOI: 10.1007/s11854-021-0158-3.   

\bibitem[Be]{Be}    \textsc{M.~Bertola,}   
\textit{Boutroux curves with external field: equilibrium measures without a variational problem}, Anal. Math. Phys. (2011) 1:167--211. 

\bibitem[BM]{BM}  \textsc{M.~Bertola,  M.~Y.~Mo},  \textit{Commuting difference operators, spinor bundles and the asymptotics of orthogonal polynomials with respect to varying complex weights}. Adv. Math. 220(1), (2009), 154--218.

\bibitem[BBB]{BBB} \textsc{J.-E.~Bj\"ork, J.~Borcea, R.B\o gvad,} \textit{Subharmonic Configurations and
Algebraic Cauchy Transforms of Probability Measures.} Notions of Positivity and the Geometry of Polynomials
Trends in Mathematics (2011)  39--62.

\bibitem[Bi]{Bl} \textsc{N.~Bleistein}, {Mathematical methods for wave phenomena}, Academic Press, Orlando, 1984.

\bibitem[Bo]{Bo} \textsc{V. I.~Bogachev.} {Measure theory. {V}ol. {I}, {II}.} Springer-Verlag, Berlin 2007.

\bibitem[BB]{BB} \textsc{J.~Borcea, R.~B\o gvad, } \textit{Piecewise harmonic subharmonic functions and positive Cauchy transforms} Pacific J. Math 240 (2009) 231-265.

\bibitem [BBS]{BBS} \textsc{R.~B\o gvad, J.Borcea, B.~Shapiro,} \textit{Homogenized spectral problems for exactly solvable operators:
              asymptotics of polynomial eigenfunctions} Publ. Res. Inst. Math. Sci., vol. 45, (2009) 525--568.


\bibitem [BoSh]{BoSh} \textsc{R.~B\o gvad, B.~Shapiro,} \textit{On mother body measures with algebraic Cauchy transform.} L'Enseignement Math., vol. 62, (2016) 117--142.






\bibitem[Br]{Br}  \textsc{N.~G.~De.~Bruijn}, Asymptotic methods in analysis, North-Holland Publishing Co., Amsterdam, 1961.

\bibitem [Ci]{Ci} \textsc{N.~Ciorânescu,} {\it Sur une nouvelle g\'en\'eralisation des polynomes de Legendre}. Acta Math. 61, 135--148 (1933). https://doi.org/10.1007/BF02547789

\bibitem[DuEd]{DuEd} \textsc{A.~Dubbs, A.~Edelman}, {\it Infinite random matrix theory, tridiagonal bordered toeplitz matrices, and the moment problem}, Linear Algebra and its Applications, vol. 467, (2015), 188--201.

\bibitem[Fe] {Fed}  \textsc{H.~Federer}, {Geometric measure theory}, Die Grundlehren der mathematischen Wissenschaften, Band 153 Springer-Verlag New York Inc., New York 1969 xiv+676 pp. 

\bibitem[Ga]{Ga}  \textsc{J.~B.~Garnett}, {Analytic capacity and measure}, LNM 297,  Springer-Verlag, 1972, 138 pp.

\bibitem[GrHa]{GH}  \textsc{P.~Griffiths, J.~Harris}, Principles of algebraic geometry. Reprint of the 1978 original. Wiley Classics Library. John Wiley \& Sons, Inc., New York, (1994) xiv+813 pp. 

\bibitem[GrKr]{GK}  \textsc{S.~Grushevsky, I.~Krichever}, {\it The universal Whitham hierarchy and the geometry of the moduli space of pointed Riemann surfaces}, Surveys in differential geometry. Vol. XIV. Geometry of Riemann surfaces and their moduli spaces, 111--129, Surv. Differ. Geom., 14, Int. Press, Somerville, MA, 2009.

\bibitem[HoKa] {HoKa}  \textsc{J.~Hoskins, Z.~Kabluchko},  \textit{Dynamics of zeroes under repeated differentiation}, arXiv:2010.14320. 


\bibitem[Ho]{Ho} \textsc{J.~M.~Horner,} \textit{Generalized Rodrigues formula solutions for certain linear differential equations.} Tr. AMS (1965)  31--42.

\bibitem[HSS]{HaShSh}  \textsc{C.~H{\"a}gg, B.~Shapiro, M.~Shapiro},  \textit{Introducing isodynamic points for binary forms and their ratios}, arXiv:2207.01658. 


\bibitem[Ka]{Ka} \textsc{Z.~Kabluchko,} \textit{Critical points of random polynomials with independently identically distributed roots.} Proc. AMS,  Volume 143, Number 2, February (2015) 695--702. 


\bibitem[KiTa]{KiTa}  \textsc{A.~Kiselev,  Ch.~Tan,} \textit{The flow of polynomial roots under differentiation}, arXiv: 2012.09080. 

\bibitem[Ki]{Ki} \textsc{C.~Kimberling,} \textit{Encyclopedia of Triangle Centers.} http://faculty.evansville.edu/ ck6/encyclopedia

\bibitem[Ma]{Ma} \textsc{M.~Marden,} {Geometry of Polynomials,} 2nd ed., Math. Surverys 3, Amer. Math. Soc. Providence, RI, 1966. 

\bibitem[MaJo]{MaJo} \textsc{M.~Johansson,} {I grund och botten,} Albert Bonniers F\"orlag, Stockholm, 1956.


\bibitem[Os]{OS}  \textsc{C.~O'Sullivan}, \textit{Revisiting the saddle point method of Perron,} Pacific J. Math., Vol. 298, No. 1, 2019. 

\bibitem[PeRi]{PR} \textsc{R.~Pemantle, I.~Rivin,} \textit{The distribution of zeros of the derivative of a random polynomial}. In I. Kotsireas and E. V. Zima, editors, Springer, New York, 2013. 

\bibitem[Ra]{Ra} \textsc{T.~Ransford,} {Potential theory in the complex plane.} Cambridge University Press, 1995, 232 pp.

\bibitem[STT]{ShTaTa}  \textsc{B.~Shapiro, K.~Takemura, M.~Tater}, \textit{On spectral polynomials of the Heun equation. II, Comm. Math. Phys. vol 311, issue 2 (2012),  277--300. }

\bibitem[St1]{St1}  \textsc{S.~Steinerberger}, \textit{Conservation Laws for the Density of Roots of Polynomials under Differentiation}, arXiv:2001.09967. 

\bibitem[St2]{St2}  \textsc{S.~Steinerberger,}  \textit{A Nonlocal Transport Equation Describing Roots of Polynomials Under Differentiation}, arXiv:1811.04844.

\bibitem[Su]{Su} \textsc{S.~Sutherland,} \textit{Bad Polynomials for Newton's Method}. In Linear and Complex Analysis, Springer-Verlag (1994).

\bibitem[To]{To}  \textsc{V.~Totik}, \textit{The Gauss-Lucas theorem in an asymptotic sense}. Bull. Lond. Math. Soc. 48 (2016), no. 5, 848--854.

\bibitem[VA]{VA} \textsc{W.~Van Assche},  \textit{ Nearest neighbor recurrence relations for multiple orthogonal polynomials}. J. Approx.
Theory 163(10), 1427--1448 (2011). 




\end{thebibliography}
\end{document}